\documentclass[a4paper,11pt]{article}
\usepackage[latin1]{inputenc}
\usepackage[T1]{fontenc}
\usepackage[british,american]{babel}
\usepackage{amsmath}
\usepackage{amsfonts}
\usepackage{mathtools}
\usepackage{hyperref}
\usepackage[T1]{fontenc}
\usepackage[latin1]{inputenc}
\usepackage{ntheorem}
\usepackage{theorem}
\usepackage{enumitem}
\usepackage{makeidx}
\usepackage{fancyhdr}
\usepackage{hyperref}
\usepackage{verbatim}
\usepackage{bbm}
\usepackage{bm}
\usepackage{amssymb}
\usepackage{dsfont}
\usepackage{color}
\usepackage[table]{xcolor}
\usepackage{makecell}
\usepackage{multirow}
\usepackage{arydshln}
\usepackage{booktabs}
\usepackage{relsize}
\usepackage{tikz}
\usepackage{tcolorbox}
\usepackage[permil]{overpic}
\usepackage{graphicx}
\usepackage{subcaption}
\numberwithin{equation}{section}
\usepackage[font=small,labelfont=bf,justification=centering]{caption}
\usepackage{amssymb}
\usepackage{mathtools}

\newcommand{\be}{\begin{equation}}
\newcommand{\ee}{\end{equation}}
\newcommand{\benn}{\begin{equation*}}
\newcommand{\eenn}{\end{equation*}}
\newcommand{\bea}{\begin{eqnarray}}
\newcommand{\eea}{\end{eqnarray}}

\newcommand{\beann}{\begin{eqnarray*}}
\newcommand{\eeann}{\end{eqnarray*}}

\newtheorem{theorem}{Theorem}[section]
\newtheorem{proposition}[theorem]{Proposition}
\newtheorem{corollary}[theorem]{Corollary}
\newtheorem{lemma}[theorem]{Lemma}

\newtheorem{definition}[theorem]{Definition}
\newtheorem{refproof}[theorem]{Proof}
\theorembodyfont{\rm}

\newtheorem{remark}[theorem]{Remark}

\newcommand{\qed}{\hfill $\Box$\smallskip}
\newcommand{\E}{\noindent{$\mathbb{E}$ \ }}

\usepackage[
backend=biber,
bibstyle=numeric,
giveninits=true,
sorting=anyt
]{biblatex}

\definecolor{LightGrey}{RGB}{235,235,235}

\graphicspath{{Fig/}}

\addto\captionsenglish{}

\makeatletter
\newcommand{\thickhline}{%
    \noalign {\ifnum 0=`}\fi \hrule height 2pt
    \futurelet \reserved@a \@xhline
}
\makeatother

\makeatletter
\newcommand{\doubleline}{%
    \noalign{\ifnum0=`}\fi
    \hrule height 0.8pt
    \vskip 1.2 pt
    \hrule height 0.8pt
    \futurelet\reserved@a\@xhline
}
\makeatother

\def\R{\mathbb{R}}
\def\C{\mathbb{C}}
\def\N{\mathbb{N}}

\def\P{\mathbb{P}}
\def\E{\mathbb{E}}

\def\P{\mathbb{P}}

\def\cF{\mathcal{F}}

\def\cO{\mathcal{O}}
\def\cP{\mathcal{P}}

\def\cW{\mathcal{W}}

\def\txtd{{\textnormal{d}}}

\def\Re{{\textnormal{Re}}}
\def\Im{{\textnormal{Im}}}

\def\I{\infty}

\def\I{\infty}
\def\eps{\varepsilon}

\newcommand{\joinR}{\hspace{-.17em}}
\newcommand{\RomanI}{I}
\newcommand{\RomanII}{\RomanI\joinR\RomanI}
\newcommand{\RomanIII}{\RomanI\joinR\RomanII}
\newcommand{\RomanIV}{\RomanI\joinR\RomanV}
\newcommand{\RomanV}{V}

\makeatletter
\newcommand{\assumptionitem}[2]{%
  \item[(#1)]%
  \phantomsection
  \def\@currentlabelname{(#1)}%
  \label{#2}%
}
\makeatother

\allowdisplaybreaks

\newcommand*\samethanks[1][\value{footnote}]{\footnotemark[#1]}
\title{Impact of spinning on the early-warning signs in non-Markovian stochastic systems}
\author{Paolo Bernuzzi\thanks{University of Konstanz, Department of Mathematics and Statistics,  Universit\"atsstra\ss{}e~10 78464 Konstanz, Germany. E-Mail: paolo.bernuzzi@uni-konstanz.de, alexandra.blessing@uni-konstanz.de, dennis.rudik@uni-konstanz.de},
~~~Alexandra Blessing Neam\c tu\samethanks ~~~and~~Dennis Rudik\samethanks
}

\begin{document}

\maketitle

\begin{abstract}
\noindent We construct early-warning signals for impending critical transitions in non-Markovian systems. We analyze stochastic forcings such as fractional Brownian motion, fractional Ornstein-Uhlenbeck processes and red noise in fast-slow systems exhibiting such transitions. We show that the effectiveness of indicators such as autocovariance, autocorrelation, and spectral density depends on several properties of the underlying system. In particular, we compare the influence of the Hurst index and the bifurcation type. We prove that the rotatory dynamics associated with a Hopf bifurcation substantially alters the scaling laws of these observables. Finally, we provide practical guidelines for implementing these signals and validate them on both theoretical and applied models.
\end{abstract}

\section{Introduction}

Many complex systems exhibit abrupt transitions between qualitatively different dynamical regimes.
These events typically happen rather fast after a long period of slow evolution. Although predicting such critical points before they are reached is extremely difficult, M. Scheffer et.al. \cite{Scheffer} introduced generic early-warning signs (EWSs), which might indicate the approach of a critical threshold based on an increasing variance and/or autocorrelation of the corresponding dynamical system.\\

Recently, lots of attention has been devoted to the development of EWSs for critical transitions.~One of the most prominent mechanisms is critical slowing down, whereby the recovery rate from perturbations decreases as stability is lost near a bifurcation.~This phenomenon manifests statistically through increasing variance, rising autocorrelation, spectral reddening, and enhanced sensitivity to stochastic fluctuations.~Such indicators have been observed in both theoretical models \cite{bernuzzi2026early,bernuzzi2024warning,bernuzzi2025early,bernuzzi2026critical,ditlevsen2010tipping,kuehn2022warning} and empirical data from diverse applications, such as climate science \cite{Lucarini1,morr2024detection,Morr_Red_Noise}, physics, e.g.~the behavior of the two-dimensional Ising model near the critical temperature \cite{Ising}, ecology \cite{Ecology} or social sciences \cite{Soc}. \\

The main goal of this work is to construct and analyze different EWSs detecting transitions / bifurcations in non-Markovian stochastic systems as well as to quantify the influence of the memory on EWSs. In this setting, a central tool to study EWSs is given by fast-slow systems. These systems are characterized by the interaction of variables evolving on distinct time scales and are of the form
\[
\dot{x}=f(x,y,\varepsilon) + \sigma \dot{\xi}, \qquad
\dot{y}=\varepsilon g(x,y,\varepsilon),
\]
where $(0<\varepsilon\ll1)$ denotes the ratio of time scales between the fast variable \(x\) and the slow variable \(y\), $f$ and $g$ are nonlinear terms and $\xi$ stands for the noise and $\sigma$ denotes its intensity. Such a time scale separation is common in many applications e.g. in climate sciences \cite{CessiBoxModel,Morr_Red_Noise} or neurosciences \cite{BGN}.~In this framework, critical transitions occur when slow parameter drift drives trajectories toward a bifurcation point of the fast subsystem, resulting in rapid qualitative changes in the full dynamics. For more details on slow-fast systems we refer to \cite{kuehn2015multiple,kuehn2019scaling}.\\

While deterministic slow-fast systems provide valuable insight into the underlying mechanisms, they often fail to capture the variability, irregularity, and spontaneous transitions observed in applications. These phenomena are frequently driven by stochastic fluctuations originating from unresolved degrees of freedom, environmental variability, or intrinsic microscopic processes.~To account for these effects, noise is commonly incorporated into the slow-fast systems.~The stochastic forcing is often assumed to be Markovian, typically represented by white noise with no temporal correlations.~This assumption may not accurately reflect the properties of fluctuations encountered in applications.~Physical, biological, and environmental processes often possess memory, persistence, or delayed interactions, resulting in noise with finite correlation times and nontrivial temporal structure. For this reason, non-Markovian noise has emerged as an important extension of traditional stochastic modeling frameworks \cite{Nils,eichinger2020sample,kuehn2022warning,Morr_Red_Noise}. By incorporating memory effects, non-Markovian fluctuations provide a more faithful description of systems in which past states influence future stochastic behavior.~Such a noise forcing can influence the transition rates, synchronization properties, oscillatory dynamics, and stability characteristics, particularly in fast-slow systems where the interaction between intrinsic time scales and noise correlation times plays a crucial role. Consequently, understanding the influence of non-Markovian noise is essential for accurately describing and predicting the behavior of complex dynamical systems. In this work, we contribute to this aspect and quantify the impact of non-Markovian noise for a dynamical system approaching a critical transition.~Below, we briefly describe the types of bifurcations as well as the non-Markovian fluctuations investigated in this work. 

\paragraph{Types of bifurcations and their applications.}

Sudden real-life phenomena are often associated to tipping points that indicate a threshold upon which abrupt changes in an environment occur.~Although such events are common in many applications, a prime example is climate science. In fact, multiple tipping elements, crucial to the maintenance of the current natural global status, have displayed destabilization within the last century.~Among them are the Greenland Ice Sheet loss, the decline of the Amazon rainforest and the collapse of the Atlantic Meridional Overturning Circulation (AMOC) \cite{boers2025destabilization}. Such abrupt dynamics can be attributed in energy balance models (EBMs) to critical transitions that are driven by the crossing of a slow parameter of a bifurcation threshold.~For such a reason, we observe EWSs to the approach of codimension-$1$ bifurcations, in which the current tracked state loses stability \cite{kuehn2015multiple}. These are listed as:
\begin{itemize}
    \item fold bifurcations, that indicate the merge and collapse of a stable and unstable state;
    \item transcritical bifurcations, which occur in the case a stable and unstable equilibrium cross and exchange stabilities;
    \item pitchfork bifurcations, for which either a stable state splits into a stable and two unstable states (supercritical pitchfork) or an unstable state separates into an unstable and two stable states (subcritical pitchfork);
    \item Hopf bifurcations, associated to the loss of stability of a state and the onset of a stable limit cycle (supercritical Hopf) or to an equivalent event with inverted stabilities (subcritical Hopf).
\end{itemize}
A clear distinction between such bifurcations is the fact that only the latter displays rotatory dynamics.~Consequently, the others can be collected as codimension-$1$ bifurcations in one-dimensional models, upon an appropriate change of variable to the corresponding normal forms \cite{kuehn2015multiple}. In particular, ocean EBMs that observe the status of AMOC are known to display multiple bifurcation types \cite{bailie2024bifurcation,bernuzzi2024warning,ChenPSDforHopf}.~A specific system, the Stommel-Cessi model \cite{CessiBoxModel}, is studied in Section \ref{sec:applications} and used in the cross-validation of the EWSs developed in the article. Furthermore, we investigate the rise of limit cycles in a theoretical model inherent to multiple applications, such as extended harmonic oscillators \cite{Oded}.

\paragraph{Non-Markovian noise.}

Fractional Brownian motion (fBm) is a famous example of stochastic process used in 
order to model memory effects or long-range dependencies. An fBm is a centered, stationary 
Gaussian process parameterized by a so-called Hurst index/parameter $H\in(0,1)$. For 
$H = 1/2$, one recovers the classical Brownian motion. However, for $H \in (1/2, 1)$ 
and $H \in (0, 1/2)$, fBm exhibits a different behaviour than Brownian motion. 
Its increments are no longer independent, but positively correlated for $H > 1/2$, 
and negatively correlated for $H < 1/2$. Fractional Brownian motion has been used to model 
a wide range of phenomena, extending from mathematical finance to fluid 
dynamics or fractional transport models \cite{Oana,Flandoli}. The dynamics of slow-fast systems subject to non-Markovian noise recently captured lots of attention, where several aspects have been investigated, such as sample path properties \cite{Nils,eichinger2020sample}, EWSs \cite{kuehn2022warning} or averaging dynamics \cite{HL2,HL1,HS}. The estimating of the Hurst index from observations was treated in \cite{Oana,Tudor}.
\medskip




Going beyond the setting of fractional Brownian motion, we also analyze the effects of several other non-Markovian fluctuations such as (fractional) Ornstein-Uhlenbeck processes or red noise investigated in  \cite{Morr_Red_Noise} whose structure is motivated by the Mori-Zwanzig formalism.  As seen in~\cite{kuehn2022warning}, the variance fails to be an EWS for an SDE driven by an Ornstein-Uhlenbeck process. In this work, we show how statistics based on the spectral density of the underlying system can be used as EWSs, providing a clear explanation for the color blindness observed in \cite{kuehn2022warning}. 

Finally, we mention that our reults can be extended to  Volterra processes as considered in \cite{kuehn2022warning}. These can be represented as an integral of a (memory) kernel with respect to the Brownian motion and include e.g.~the Liouville fractional Brownian motion and the Rosenblatt which models non-Gausssian fluctuations. 

\paragraph{Main results and structure of the paper.}

This work provides an in-depth analysis of EWSs for stochastic differential equations driven by fractional Brownian motion (fBm) across the full range of Hurst parameters $H\in(0,1)$. We adopt the standard setting for the study of critical transitions based on fast-slow systems, in which a slowly varying variable acts as a control parameter that gradually approaches a bifurcation threshold. To characterize the local behavior near the impending transition, we consider the associated linearized fast subsystem along an attracting branch of steady states. Within this setting, EWSs arise from the scaling properties of statistical observables as the distance to criticality decreases. Compared to existing results \cite{kuehn2022warning,Morr_Red_Noise}, which are typically restricted to $H\in\left(1/2,1\right)$, we extend the scaling theory of EWSs to the regime of both persistent and anti-persistent noise. In particular, we establish explicit asymptotic scaling laws for the time-asymptotic autocovariance \cite{kuehn2022warning}, autocorrelation \cite{bernuzzi2024warning}, and spectral density \cite{ChenPSDforHopf} in the vicinity of critical transitions, highlighting how memory effects depend on both the Hurst index and the underlying bifurcation structure.~Moreover, we verify the applicability of such EWSs to multiple types of stochastic forcings, such as red non-Markovian noise \cite{Morr_Red_Noise} and fractional Ornestein-Uhlenbeck perturbations (fOU) \cite{kuehn2022warning}. 
\begin{table}[h]
    \centering
    \begin{tabular}{|c|c|c|c|c|}
    \hline
         & \multirow{2}{*}{$V$} & \multirow{2}{*}{$AC(\tau)$} & \multicolumn{2}{c|}{\parbox[c][.8cm][c]{1.8cm}{$S$ at peak}} \\
         \cline{4-5}
         & & & \parbox[c][.8cm][c]{1.8cm}{\centering $H\in\left(0,\frac{1}{2}\right]$} & \parbox[c][.8cm][c]{1.8cm}{$H\in\left(\frac{1}{2},1\right)$} \\
         \hline
         fBm & Div $(-2H)$ & Conv $(2H)$ & Div $(-1-2H)$ & Inf \\
         \hline
         Red noise & Div $(-2H)$ & Conv $(\text{min}\{1,2H\})$ & Div $(-1-2H)$ & Inf \\
         \hline
         fOU  & Conv $(2-2H)$  & Conv $(2-2H)$ & Conv $(1-2H)$ & Inf \\
    \hline
    \end{tabular}
    \caption{Scaling laws of statistical observables in the proximity to a codimension-$1$ one-dimensional bifurcation. The cells indicate the diverging (Div), convergent (Conv) or infinite (Inf) behavior of the indicators for different types of noise terms. We observe that the hyperbolic exponent associated to the rates, in parentheses, depend explicitly on the Hurst index $H$. Moreover, while the spectral density is not bounded for $H\in\left(\frac{1}{2},1\right)$ prior to the critical transition, we show that it can be used as an EWS as we study its growth on specific frequencies.}
    \label{tab:fold overview}
\end{table}

A second main contribution concerns the role of rotational dynamics in two-dimensional systems undergoing Hopf bifurcations \cite{kuehn2015multiple}. We show that the onset of oscillatory motion introduces a mixing mechanism that fundamentally alters the propagation of memory in the system.~In contrast to codimension-$1$ bifurcations, where long-range dependence is directly inherited from the driving fractional Brownian motion, the rotational component acts as a stabilizing mechanism as it can suppress the effective persistence of correlations, leading to a form of memory loss induced by phase mixing. Moreover, such a phenomenon may result in the unmasking of spectral components that are otherwise hidden in non-rotational regimes \cite{kuehn2022warning}. These effects are rigorously quantified through the scaling behavior of the associated EWSs, revealing a qualitative distinction between dissipative memory amplification in one-dimensional systems and rotation-driven mixing in Hopf normal forms. In particular, the resulting scaling laws depend on the interplay between the Hurst parameter and the rotational structure of the linearized dynamics, leading to different asymptotic regimes.

\begin{table}[h]
    \centering
    \begin{tabular}{|c|c|c|c|}
    \hline
         & $V$ & $AC(\tau)$ & $S$ at peaks \\
         \hline
         fBm &  &  &  \\
         \cline{1-1}
         Red noise  & Div $(-1)$  & Conv $(1)$ &  Div $(-2)$\\
         \cline{1-1}
         fOU &  &  & \\
    \hline
    \end{tabular}
    \caption{Scaling laws of statistical observables in the proximity to a Hopf bifurcation. The notation is equivalent to Table \ref{tab:fold overview}. The spinning drive in the proximity to the critical threshold induces an equivalent rate for all types of noise addressed, which is not qualitatively dependent on $H$. The loss of stability on more directions introduces however further complexity in the scaling laws of the time-asymptotic autocorrelation, which is affected by the change of the rotatory dynamics, such a shape and velocity. Furthermore, we note that the spectral density can still be unbounded for large $H$. Nonetheless, the structure of the drift dynamics induces the rise of two further peaks that diverge in the critical limit.}
    \label{tab:Hopf overview}
\end{table}

To clarify the distinction, we summarize the main results in two separate tables: Table \ref{tab:fold overview} collects the scaling laws for one-dimensional codimension-$1$ bifurcations (including fold, transcritical, and pitchfork types), where all memory effects are directly governed by the Hurst parameter and the proximity to criticality; Table \ref{tab:Hopf overview} focuses on Hopf bifurcations, where rotational dynamics introduce additional mixing terms that modify or suppress long-range dependence signatures and reshape the structure of the spectral density. Both tables report the scaling laws of the asymptotic behavior of the time-asymptotic autocovariance $V$, time-asymptotic autocorrelation $AC$, and spectral density $S$ as hyperbolic exponents in function of the distance to the bifurcation threshold and the Hurst index, allowing for a direct comparison between non-rotational and rotational regimes. For both cases, we construct and discuss EWSs based on the analytic rates adopted by the observables for any $H\in(0,1)$. Specifically, we observe the increase of the indicators along specific modes or frequencies in the critical limit. The results show that the spinning effect has a stabilizing influence across all noise types considered in this paper and for all EWSs.

\begin{center}
    \begin{tabular}{|c|c|c|}
        \hline
        & Memory effects & Masking effects \\
        \hline
        $\begin{matrix}
            \textnormal{Cod-1}\\
            \textnormal{bifurcations}
        \end{matrix}$ 
        & $\begin{matrix}
            \textnormal{Memory retention,} \\ \textnormal{Section \ref{sec:fbm 1d}}
        \end{matrix}$ 
        & $\begin{matrix}
            \textnormal{Risk of masking,} \\ \textnormal{Section \ref{sec:red noise}}
        \end{matrix}$ \\
        \hline
        $\begin{matrix}
            \textnormal{Hopf}\\
            \textnormal{bifurcation}
        \end{matrix}$ 
        & $\begin{matrix}
            \textnormal{Memory loss,} \\ \textnormal{Section \ref{sec:fbm 2d}}
        \end{matrix}$ 
        & $\begin{matrix}
            \textnormal{Unmasking,} \\ \textnormal{Section \ref{sec:red noise}}
        \end{matrix}$ \\
        \hline
    \end{tabular}
    \begin{tikzpicture}
        \draw[->, line width=2.5pt] (0,0) -- (.6,0);
    \end{tikzpicture}
    \fbox{$\begin{matrix}
        \textnormal{Practical implementation} \\ \textnormal{of the EWSs}  \\ \textnormal{in applications,} \\ \textnormal{Section \ref{sec:applications}}
    \end{matrix}$}
    \captionof{figure}{Structure of the paper and summary of the main results.}
    \label{fig: structure}
\end{center}

The paper is structured as follows.~Section \ref{sec:preliminaries} introduces the probabilistic foundation and dynamical setting, including the relevant statistical observables, fractional Brownian motion, and fast-slow systems with non-Markovian forcing. We also present the approximation methods and spectral notation used throughout the paper.

Section \ref{sec:fbm 1d} develops the analysis for one-dimensional codimension-1 bifurcations, establishing the scaling laws for EWSs and clarifying the role of the Hurst parameter within the rates. This dependence is, in fact, addressed as memory retention. Such results are stated in Theorem \ref{thm:autocov fast system}, Corollary \ref{cor:autocor fast system} and Lemma \ref{lem:fBm 1d PSD} and are proven extending methods from \cite{bernuzzi2024warning,ChenPSDforHopf,Cheridito_fOU,kubilius2017parameter}.

Section \ref{sec:fbm 2d} adapts these results to two-dimensional systems undergoing Hopf bifurcations. We characterize the impact of rotational dynamics on the propagation of memory and on the structure of the spectral density, highlighting qualitative differences with respect to the one-dimensional case. We notice in fact that the scaling laws of the observables are qualitatively stabilized. For instance, the rates of divergence of the time-asymptotic autocovariance, in Theorem \ref{thm:autocov fast system complex}, and the diverging Fourier modes within the spectral density, in Theorem \ref{thm:psd_on_elements}, are not explicitly dependent on $H$. Such an effect can be indicated as memory loss due to rotation of the trajectory. Moreover, the implementation of some EWSs requires also to address the ulterior complexity given by the change of stability in the critical threshold along multiple modes, such as in Corollary \ref{cor:autocor fast system complex}. The proofs within the section include approaches from \cite{bernuzzi2024warning,Pipiras_Taqqu_fBM}.

Section \ref{sec:red noise} considers fast-slow systems with alternative non-Markovian forcings \cite{kuehn2019scaling,Morr_Red_Noise}, emphasizing the interaction between noise coloring effects and bifurcation-induced scaling. We analyze the masking, or color blindness \cite{kuehn2019scaling}, associated to diminished rates of the observables. We observe how rotational dynamics may mitigate or bypass masking phenomena in EWSs. The methodology in this section is based on the previous approaches and the techniques in \cite{Basse_stationary_increments, bernuzzi2026critical, Priestley_Spectral_Analysis}.

In Section \ref{sec:applications}, we describe the implementation of the EWSs on collected data. We provide guidelines for interpreting the observables, together with the caveats arising from analytical approximations and the applicability highlighted by the study of the indicators in the previous sections. Furthermore, we use ergodic properties of the trajectories to choose statistical estimators for the numerical implementation of the results \cite{kubilius2017parameter}. Finally, we cross-validate our results on an applied ocean model \cite{CessiBoxModel} and on a theoretical system, both driven by the stochastic forcings addressed in the paper.

For a better visualization of our results, we provide an overview in Figure \ref{fig: structure}.

\paragraph{Acknowledgments.} The authors acknowledge support from DFG CRC 1432 " Fluctuations and Nonlinearities in Classical and Quantum Matter beyond Equilibrium" - Project ID 425217212. A. Blessing and D. Rudik have been supported by DFG Project ID 543163250.


\section{Preliminaries and notations} \label{sec:preliminaries}

To facilitate the exposition, we collect in this section the notation and preliminary results underlying our analysis. The discussion is organized into several subsections addressing the different tools and concepts employed throughout the paper.

\subsection{Statistical observables for stochastic processes}
For stochastic processes there are many different statistics that can be considered. One of the most important is the autocovariance function and directly related to it, the autocorrelation function.~In particular, it is well-known that Gaussian processes are completely characterized by their autocovariance.~In the following, we let by $(\Omega,\cF,\P)$ stand for a probability space and denote by $\E$ the expectation and by $|\cdot|$ the Euclidean norm. Moreover, we write the variables in bold $(\mathbf{x})$ if they are vectors and if they are scalar we write them as $(x)$.

\begin{definition} \label{def:var and autocor}
Let $(\mathbf{x}_t)_{t\geq 0}$ be a $n$-dimensional stochastic process. Then the autocovariance at time $t\ge0$ 
and lag time $\tau\ge0$ is defined as 
\begin{align}\label{eq:ACov def}
    V_t(\tau):=\E[\mathbf{x}_t\overline{\mathbf{x}}_{t+\tau}^{\text{T}}]-\E[\mathbf{x}_t]\E[\overline{\mathbf{x}}_{t+\tau}^{\text{T}}],
\end{align}
where $\overline{\mathbf{x}}$ is the complex conjugate and $\mathbf{x}^{\text{T}}$ is the transpose.
For $\tau=0$ we recover the variance of the process. For $n=1$ we can also define the autocorrelation function
\begin{align}\label{eq:ACor def}
    AC_t(\tau):=\frac{V_t(\tau)}{V_t(0)}.
\end{align}
\end{definition}

Note that the autocovariance $V_t(\tau)$ and autocorrelation $AC_t(\tau)$ generally depend on $t$ and $\tau$. If the process $(\mathbf{x}_t)_{t\geq 0}$ is stationary, as defined below, these statistics only depend on $\tau$. 
\begin{definition}
    A stochastic process $(\mathbf{x}_t)_{t\geq 0}$ is called strictly stationary or strongly stationary if for all $k\ge0$ and all sequences $(t_1,\dots, t_m)\in\R^m_+$ we have
    $$(\mathbf{x}_{t_1},\dots,\mathbf{x}_{t_m})\overset{d}{=}(\mathbf{x}_{t_1+k},\dots,\mathbf{x}_{t_m+k}).$$
   The process $(\mathbf{x}_t)_{t\geq 0}$ is called second order stationary or weakly stationary, if we have for all $t\in\R$ $\E[|\mathbf{x}_t|]<\infty,~\E[|\mathbf{x}_t|^2]<\infty$ and for all $s,t,\tau\ge0$
    $$V_t(\tau)=V_s(\tau).$$
    As the autocovariance only depends on $\tau$, we drop the subscript and write $V(\tau)$ whenever we consider a stationary process.
\end{definition}
Clearly, strong stationarity implies weak stationarity if the process has finite first and second moments, but not vice versa. In the case of a weak stationary process $(\mathbf{x}_t)_{t\geq 0}$ we can also consider the spectral density. We first give the standard definition. Afterwards, we give some remarks on the different definitions and also the terms energy spectrum density and power spectrum density that are often used in this context. 
\begin{definition}{\em (Spectral density)}\label{def:PSD}
    Let $(\mathbf{x}_t)_{t\geq 0}$ be a (weakly) stationary $n$-dimensional stochastic process with
    $$\int_\R V(\tau)~\txtd\tau<\infty.$$ Then the spectral density is defined as 
    \begin{align}\label{eq:def spectral density}
        S_\mathbf{x}(\omega):= \mathcal{F}(V(\tau))(\omega)=\frac{1}{2\pi}\int_\R e^{-i\omega \tau} V(\tau)~\txtd \tau,
    \end{align}
    where the Fourier transform $\cF$ is interpreted componentwise.
\end{definition}
There is an alternative definition for the spectral density using only the path itself and not the autocovariance.
For a stationary process $(\mathbf{x}_t)_{t\geq 0}$ we do not have a Fourier integral representation. This means that for 
$$G_T(\omega)=\frac{1}{\sqrt{2\pi}}\int_{-T}^Tx_t e^{-i\omega t}~\txtd t,$$
we do not have a well-defined limit $\lim\limits_{T\to\infty}G_T(\omega)$.
Intuitively this is clear, as the stationarity implies that the distribution is the same for all times and hence the tails of $\mathbf{x}_t$ for $t\to\pm\infty$ cannot converge to zero. However, we notice that
$$\frac{|G_T(\omega)|^2}{2T}$$ 
has a well-defined limit for $T\to\infty$. Finally, we get an alternative definition for the (power) spectral density function by taking the expected value.
\begin{definition}{\em((Power) spectral density)}\label{def:PSD2}
  Let $(\mathbf{x}_t)_{t\geq 0}$  be a stationary process. Then 
  \begin{align}\label{eq:PSD via Fourier}
    S_\mathbf{x}(\omega)=\lim_{T\to\infty} \E\left[\frac{|G_T(\omega)|^2}{2T}\right]
\end{align}
is called (power) spectral density. 
\end{definition}
The Wiener-Khintchine Theorem relates~\eqref{eq:def spectral density} and~\eqref{eq:PSD via Fourier} as follows.
\begin{theorem}{\em (Wiener-Khintchine)}\label{wk}
    Let $(\mathbf{x}_t)_{t\geq 0}$ be a (weakly) stationary process. Then,
    $AC(\tau)$ is a autocorrelation function of some (weakly) stationary process $\mathbf{x}_t$ if and only if there exists a function $F(\omega)$ having the properties of a distribution function on $(-\infty,\infty)$ such that for all $\tau\in\R$
\begin{align*}
    AC(\tau)=\int_\R e^{i\omega\tau}~\txtd F(\omega),
\end{align*}
where $F(\omega)$ is called the spectral distribution of $(\mathbf{x}_t)_{t\geq 0}$. Moreover, if 
$$\int_\R AC(\tau)~\txtd \tau<\infty,$$
we have
\begin{align*}
    F(\omega)=\int_{-\infty}^\omega \frac{S_\mathbf{x}(\theta)}{V(0)}~\txtd\theta.
\end{align*}

\end{theorem}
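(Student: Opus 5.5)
The proof splits into the two directions of the equivalence, followed by the identification of $F$ when $AC$ is integrable. Throughout, stationarity lets us write $AC(\tau)=V(\tau)/V(0)$. We reduce to the scalar (or componentwise) case and assume $V(0)>0$, since otherwise the process is almost surely constant and there is nothing to prove.

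\emph{Necessity.} First we show that $AC$ is positive semidefinite. For any $\tau_1,\dots,\tau_m\in\R$ and $c_1,\dots,c_m\in\C$,
\begin{align*}
\sum_{j,k} c_j\overline{c}_k\, V(\tau_k-\tau_j)=\E\Big[\Big|\sum_j \overline{c}_j(\mathbf{x}_{\tau_j}-\E[\mathbf{x}_{\tau_j}])\Big|^2\Big]\ge 0 .
\end{align*}
Here the index set is extended to $\R$ through the stationary extension, and the convention in Definition~\ref{def:var and autocor} fixes the sign and orientation of the lag. Moreover $AC(0)=1$. We also need continuity of $AC$ at $0$, which holds for mean-square continuous processes; this is an implicit hypothesis that we add. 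Bochner's theorem then yields a finite positive measure of total mass $AC(0)=1$ whose Fourier transform is $AC$. Its cumulative function is the required distribution function $F$.

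\emph{Sufficiency.} Given such an $F$, we construct a stationary process with correlation $AC$. One option is a stationary Gaussian process with covariance $AC$, whose existence follows from Kolmogorov's extension theorem because $AC$ is positive semidefinite. A more explicit option is the random phase process $x_t=\sqrt2\cos(\Theta t+U)$, where $\Theta\sim F$ and $U\sim\mathrm{Unif}[0,2\pi]$ are independent, which gives $\E[x_tx_{t+\tau}]=\int\cos(\omega\tau)\,\txtd F(\omega)$. The random phase construction only covers the symmetric, real case, so we would use the Gaussian construction in general.

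\emph{Density part.} If $AC\in L^1$, the integral $\frac{1}{2\pi}\int e^{-i\omega\tau}AC(\tau)\,\txtd\tau$ is a bounded continuous function $f$. Comparing with Definition~\ref{def:PSD}, $f=S_\mathbf{x}/V(0)$. We then show that $\txtd F=f\,\txtd\omega$. To do so, multiply $AC$ by a Gaussian kernel $e^{-\eps\tau^2/2}$ and apply Fubini to write the Fej\'er/Gauss-smoothed density of $F$ as a convolution of $f$ with an approximate identity. Letting $\eps\to0$, dominated convergence applies since $AC\in L^1$, and uniqueness of Fourier–Stieltjes transforms gives $F(\omega)=\int_{-\infty}^\omega f$.

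The main obstacle is the necessity direction, namely Bochner's theorem itself. We would prove it via Herglotz/Helly: approximate by the Fej\'er-averaged densities $f_T(\omega)=\frac1{2\pi}\int_{-T}^{T}(1-|\tau|/T)e^{-i\omega\tau}AC(\tau)\,\txtd\tau\ge0$, where nonnegativity comes from positive semidefiniteness. These have total mass $1$ and are tight by continuity at $0$, so Helly's selection theorem produces $F$ as a weak limit. Alternatively, one can simply cite the classical theorem.
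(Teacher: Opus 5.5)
The paper gives no proof of Theorem~\ref{wk}. It quotes the classical Wiener--Khintchine theorem, essentially in Priestley's textbook form, and uses it only to identify Definition~\ref{def:PSD} with Definition~\ref{def:PSD2}. There is therefore nothing to match. Your argument is the standard textbook proof and it is correct:
\begin{itemize}
\item necessity from positive semidefiniteness of $V$ plus Bochner's theorem, with your Fej\'er-density, tightness and Helly sketch of Bochner being the usual one;
\item sufficiency by an explicit construction;
\item Gaussian smoothing with dominated convergence to get $\txtd F=(S_\mathbf{x}/V(0))\,\txtd\omega$.
\end{itemize}

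Compared with the bare citation, your route exposes two hypotheses the printed statement drops, and both are genuinely needed. The first is mean-square continuity. Without it the ``only if'' direction is false: i.i.d.\ values $x_t$ give $AC=\mathbbm{1}_{\{0\}}$, which is discontinuous and so cannot be a Fourier--Stieltjes transform of a finite measure. The second is absolute integrability $AC\in L^1(\R)$ in place of $\int_\R AC\,\txtd\tau<\infty$, which is what makes the density argument work.

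Two small points to tidy.
\begin{itemize}
\item With $\overline{c}_j$ inside the expectation, your identity actually yields $\sum_{j,k}\overline{c}_j c_k V(\tau_k-\tau_j)$ rather than $\sum_{j,k}c_j\overline{c}_k V(\tau_k-\tau_j)$. This is harmless, since both are nonnegative and they coincide for real processes.
\item In the sufficiency direction, positive semidefiniteness of $AC$ should be derived from the representation: $\sum_{j,k}c_j\overline{c}_k AC(\tau_j-\tau_k)=\int_\R\bigl|\sum_j c_j e^{i\omega\tau_j}\bigr|^2\,\txtd F(\omega)\ge 0$. When $F$ is not symmetric, $AC$ is complex, so the Gaussian process must be complex (circular). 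A simpler alternative is the complex random-phase process $x_t=e^{-i(\Theta t+U)}$ with $\Theta\sim F$ and $U$ uniform. It has mean zero and $\E[x_t\overline{x_{t+\tau}}]=\int_\R e^{i\omega\tau}\,\txtd F(\omega)$ for every $F$, so no Kolmogorov extension argument is needed.
\end{itemize}
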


\begin{remark}
    The definition~\eqref{eq:PSD via Fourier} is used e.g. in~\cite{ChenPSDforHopf} or~\cite{morr2024detection}, where the power spectral density was considered as an EWS for systems with red noise.~We work with~\eqref{eq:PSD via Fourier} referring to it as spectral density, in the light of Theorem~\ref{wk}.  
\end{remark}

\subsection{Fractional Brownian motion}

\begin{definition}
A scalar fractional Brownian motion $\left(W^{H}_t\right)_{t\geq 0}\subset \R$ with Hurst index $H\in(0,1)$ is a centered Gaussian process with covariance 
\begin{equation}
\label{eq:cov_fBm} 
\E \left[W^H_t W^H_s\right]=\frac{1}{2} (t^{2H} +s^{2H} -|t-s|^{2H}) \;.
 \end{equation}
\end{definition}

For $H=\frac12$ we recover the standard Brownian motion in $\R$, whereas for $H\neq \frac12$ we obtain a process which is neither Markov nor a semi-martingale.~We list some important properties of fBm.~For more details, we refer to~\cite{bookfbm}.

\begin{proposition} Let $(W^H_t)_{t\geq 0}$ be a scalar fractional Brownian motion with Hurst index in $H(0,1)$. Then:
\begin{description}
    \item[{\bf(Long-range dependence)}] If $H>1/2$ then 
    \[ \sum\limits_{n=1}^\infty \E [W^H_1(W^{H}_{n+1}-W^H_n)]=\infty. \]
    \item[{\bf(Short-range dependence)}] If $H<1/2$ then
    \[ \sum\limits_{n=1}^\infty \E [W^H_1(W^{H}_{n+1}-W^H_n)]<\infty. \]
 \item [(Self-similarity)] Let $a>0$. Then 
 \[ (a^H W^H_t)_{t\geq 0} \stackrel{\text{law}}{=}(W^{H}_{at})_{t\geq 0}. \]
 \item [(Time-inversion)] It holds
 \[ (t^{2H}W^H_{1/t})_{t>0}\stackrel{\text{law}}{=}(W^H_t)_{t> 0}.  \]
\item [(Stationarity of the increments)] For all $h>0$ we have 
\[ (W^{H}_{t+h} -W^H_t )_{t\geq 0} \stackrel{\text{law}}{=} W^H_h. \]
\item [(Regularity of the increments)] The scalar fractional Brownian motion has a version which is $\alpha$-H\"older continuous with $\alpha<H$. 
\end{description}    
\end{proposition}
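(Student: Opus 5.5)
All six properties follow from the covariance \eqref{eq:cov_fBm}. The plan is to verify them one at a time, using that a centered Gaussian process is determined in law by its covariance.

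\emph{Self-similarity, time-inversion, stationarity of the increments.} For self-similarity, both $(a^HW^H_t)$ and $(W^H_{at})$ are centered Gaussian. Their covariances are $a^{2H}\cdot\frac12(t^{2H}+s^{2H}-|t-s|^{2H})$ and $\frac12((at)^{2H}+(as)^{2H}-|at-as|^{2H})$, which coincide. For time-inversion, set $Y_t=t^{2H}W^H_{1/t}$ and compute
\[
\E[Y_tY_s]=\tfrac12 t^{2H}s^{2H}\bigl(t^{-2H}+s^{-2H}-|t^{-1}-s^{-1}|^{2H}\bigr).
\]
Since $|t^{-1}-s^{-1}|=|t-s|/(ts)$, this reduces to $\frac12(s^{2H}+t^{2H}-|t-s|^{2H})$. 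For the increments, set $Z_t=W^H_{t+h}-W^H_t$. Expanding $\E[Z_tZ_s]$ with \eqref{eq:cov_fBm}, the terms $t^{2H},s^{2H}$ cancel and only functions of $|t-s|$ and $h$ remain:
\[
\tfrac12\bigl(|t-s+h|^{2H}+|t-s-h|^{2H}-2|t-s|^{2H}\bigr).
\]
So $(Z_t)$ is stationary. Its variance is $h^{2H}=\E[(W^H_h)^2]$, which gives the marginal identity $Z_t\stackrel{\text{law}}{=}W^H_h$; I read the stated identity in this sense.

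\emph{Regularity of the increments.} From stationarity of the increments, $\E|W^H_t-W^H_s|^{2}=|t-s|^{2H}$. Gaussianity then gives, for every $p\ge1$,
\[
\E|W^H_t-W^H_s|^{2p}=C_p|t-s|^{2pH}.
\]
The Kolmogorov--Chentsov criterion yields a version that is Hölder continuous of every order $\alpha<(2pH-1)/(2p)$. Letting $p\to\infty$ covers every $\alpha<H$.

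\emph{Long- and short-range dependence.} Compute $\rho(n):=\E[W^H_1(W^H_{n+1}-W^H_n)]$ from \eqref{eq:cov_fBm}:
\[
\rho(n)=\tfrac12\bigl(n^{2H}-(n-1)^{2H}\bigr)+\tfrac12\bigl((n+1)^{2H}-n^{2H}\bigr)-\tfrac12\bigl(n^{2H}-(n-1)^{2H}\bigr)\cdot 0 .
\]
More carefully, for $n\ge1$,
\[
\rho(n)=\tfrac12\bigl[(n+1)^{2H}-2n^{2H}+(n-1)^{2H}\bigr].
\]
This is a second difference of $x\mapsto x^{2H}$. A Taylor expansion gives $\rho(n)\sim H(2H-1)n^{2H-2}$ as $n\to\infty$. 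For $H>1/2$ the terms are positive and $2H-2>-1$, so the series diverges.

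For $H<1/2$ the exponent satisfies $2H-2<-1$, and $|\rho(n)|\lesssim n^{2H-2}$ is summable, so the series converges absolutely. Alternatively, the partial sums telescope: $\sum_{n=1}^N\rho(n)=\frac12[(N+1)^{2H}-N^{2H}-1]$. This tends to $-1/2$ for $H<1/2$ and to $+\infty$ for $H>1/2$. The telescoping formula gives both cases at once, so I will use it.

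The main point requiring care is this second-difference asymptotic and the sign bookkeeping in the dependence statements. The Hölder claim needs the limiting argument in $p$. Everything else is a direct covariance identity.
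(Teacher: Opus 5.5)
Your proposal is correct. The paper does not prove this proposition at all; it lists these as standard properties and refers to \cite{bookfbm}. Your direct covariance verification is therefore a self-contained substitute, and it is the standard argument. Write $R(s,t)=\mathbb{E}[W^H_sW^H_t]$ in what follows. A few things should be cleaned up.

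\emph{The formula for $\rho(n)$.} Delete the display containing ``$\cdot 0$''. It simplifies to $\tfrac12\bigl((n+1)^{2H}-(n-1)^{2H}\bigr)$, which is wrong. The correct expression is
\[
\rho(n)=R(1,n+1)-R(1,n)=\tfrac12\bigl[(n+1)^{2H}-2n^{2H}+(n-1)^{2H}\bigr].
\]
Writing it as this difference is exactly why the partial sums telescope to $R(1,N+1)-R(1,1)$.

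\emph{The case $H<1/2$.} The inequality $\sum_n\rho(n)<\infty$ is vacuous for a series of negative terms. The real content is convergence (to $-\tfrac12$) and absolute convergence. Your bound $|\rho(n)|\lesssim n^{2H-2}$ gives this. Alternatively, strict concavity of $x\mapsto x^{2H}$ on $[0,\infty)$ gives $\rho(n)<0$ for every $n\ge1$, so convergence of the telescoped sums already implies absolute convergence. Likewise, for $H>1/2$ strict convexity gives $\rho(n)>0$, so the second-difference asymptotic is not needed.

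\emph{Stationarity of the increments.} Your marginal reading is acceptable. However, the same four-term expansion also gives the usual process-level statement $(W^H_{t+h}-W^H_h)_{t\ge0}\stackrel{\text{law}}{=}(W^H_t)_{t\ge0}$, since
\[
R(t+h,s+h)-R(t+h,h)-R(h,s+h)+R(h,h)=R(t,s).
\]
That is probably what the item intends.

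\emph{The Kolmogorov--Chentsov step.} Only $p>1/(2H)$ is usable, and the H\"older property is local, on compact time intervals. To get a single version that works for every $\alpha<H$ at once, note that the continuous versions obtained for different $p$ are indistinguishable.
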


Moreover, we extend the definition to the $n$-dimensional setting describing fractional Brownian motion processes within $\R^n$.

\begin{definition}
A fractional Brownian motion (fBm) $\left(\mathbf{W}^{H}_t\right)_{t\geq 0}\subset \R^n$ with Hurst index $H\in(0,1)$ is a centered Gaussian process with covariance 
\begin{equation}
\label{eq:cov_fBm n-dim} 
\left(\E \left[\mathbf{W}^H_t \left(\mathbf{W}^H_s\right)^\textnormal{T}\right]\right)_{j_1,j_2\in\{1,\dots,n\}}=\frac{1}{2} (t^{2H} +s^{2H} -|t-s|^{2H}) \delta_{j_1,j_2},
 \end{equation}
for $\delta$ that indicates the Kronecker delta. Specifically, the fBm in $\R^n$ is composed by i.i.d. scalar fBms with Hurst index $H$.
\end{definition}

Since we only consider stochastic differential equations driven by additive noise, we use the Wiener-integral of a deterministic time-dependent function with respect to fBm~\cite{Biagini,duncan2002fractional}.~We also refer to Appendix~\ref{app:A} for the properties of this integral, in particular for the well-posedness of SDEs with additive fractional noise.\\

Moreover, for the computation of spectral densities we use the following results derived in \cite{Pipiras_Taqqu_fBM}.~To this aim we introduce the linear space
$$\tilde\Lambda^H:=\left\{f: f\in L^2(\R), \int_\R |\mathcal{F}(f)(x)|^2|x|^{1-2H}~\txtd x<\infty \right\},$$
with the inner product
$$\langle f,g\rangle_{\tilde\Lambda^H}:=\frac{\Gamma(2H+1)\sin(\pi H)}{2\pi}\int_\R \mathcal{F}(f)(x)\overline{\mathcal{F}(g)(x)}|x|^{1-2H}~\txtd x,$$ where $\cF$ denotes the Fourier-transform and $\overline{\mathbf{w}}$ refers to the complex conjugate of the complex vector $\mathbf{w}$.~In particular, this allows one to compute the covariance of the integral process in terms of the Fourier-transform as
\begin{align}\label{eq:Cov Pipiras Taqqu}
    \text{Cov}\left(\int_\R f(t)~\txtd W_t^H ,\int_\R g(t)~\txtd W_t^H\right) = \frac{\Gamma(2H+1)\sin(\pi H)}{2\pi}\int_\R \mathcal{F}(f)(x)\overline{\mathcal{F}(g)(x)}|x|^{1-2H}~\txtd x.
\end{align}

\subsection{Fast-slow systems and tipping} \label{sec: FS}

We consider stochastic fast-slow systems of the form
\begin{align}\label{eq:fast system}
    \begin{cases}
        \txtd \mathbf{x}_t = f(\mathbf{x}_t,y_t,\eps)~\txtd t + \Sigma~\txtd \mathbf{W}_t^H, \\
        \txtd y_t = \eps g(\mathbf{x}_t,y_t,\eps)~\txtd t,
    \end{cases}
\end{align}
for $t\geq 0$. The fast component $(\mathbf{x}_t)_{t\geq0}\subset \R^n$ is perturbed by a noise term described by a fractional Brownian motion process $(\mathbf{W}_t^H)_{t\geq 0}\subset \R^n$ with Hurst index $H\in(0,1)$. Such a forcing is filtered by a noise intensity matrix $\Sigma\in\R^{n\times n}$ which has non-negative real spectrum. Along with the slow component $(y_t)_{t\geq0}\subset \R$, its trajectory is described also by the smooth maps $f,g:\R^n\times\R\times\R\to\R$. The discrepancy in the velocity associated with the components is attributed to the parameter $0<\eps\ll1$. The system \eqref{eq:fast system} is described in fast time, i.e. it is observed on the time scale of the motion of $\mathbf{x}$ in which $y$ is considered as slow due to the explicit term $\eps$ in its time derivative. Conversely, in the system translated to slow time, meaning that we consider the natural time of the slow component $s=\eps t$, the component $\mathbf{x}$ appears to be fast in comparison to $y$. In this form, the model is
\begin{align}\label{eq:slow system}
    \begin{cases}
        \eps\txtd \mathbf{x}_s = f(\mathbf{x}_s,y_s,\eps)~\txtd s + \eps^{1-H}\Sigma~\txtd \mathbf{W}_s^H,\\
        \txtd y_s = g(\mathbf{x}_s,y_s,\eps)~\txtd s,
    \end{cases}
\end{align}
for $s\geq0$. In both settings, we can consider the limit case $\eps\to0$ and study the corresponding subsystems. In the fast time perspective, \eqref{eq:fast system}, the limit provides the fast subsystem, in which the slow component is constant. Intuitively, the fast component is much faster compared to the others, making it so that $y$ does not change effectively in the corresponding time scale. In the slow regime, the slow subsystem is the differential-algebraic model
\begin{align*}
    \begin{cases}
        0 = f(\mathbf{x}_s,y_s,0) \\
        \txtd y_s = g(\mathbf{x}_s,y_s,0)~\txtd s.
    \end{cases}
\end{align*}
The set $C_0=\{(\mathbf{x},y):f(\mathbf{x},y,0)=0\}$ is called critical manifold. While such a manifold constrains the trajectories of the slow subsystem, it also indicates the steady states in the deterministic fast subsystem, i.e. for $\Sigma$ being the null matrix. A subset $S_0\subset C_0$ is called normally hyperbolic if $\partial_\mathbf{x} f(p_1,p_2,0)\neq0$ for all $(p_1,p_2)\in S_0$. The $S_0$ is attracting if for all $(p_1,p_2)\in S_0$ we have $\partial_\mathbf{x} f(p_1,p_2,0)<0$. The attracting part of the center manifold $C_0^a$ is given by a graph $\{\mathbf{x}=h_0^a(y)\}$. Given the structure of $C_0$, bifurcations in $y$ indicate thresholds in which the dynamics change drastically \cite{guckenheimer2013nonlinear}. Such an impactful change is called tipping phenomenon and is usually associated to negative irreversible transitions. Their nature justifies the importance of their prediction by highlighting and recognizing preceding events. The loss of local stability in $C_0^a$ as $y$ approaches the bifurcation threshold is referred to as critical slowing down. As such, the stochastic term is less dwindled by the deterministic attracting forcing by the drift. Observables capable of addressing similar changes of regime to predict tipping in \eqref{eq:fast system} are called early-warning signs (EWSs) \cite{bernuzzi2026early,ditlevsen2010tipping}. In this paper, we focus on two families of EWSs that capture critical slowing down through the observation of the indicators introduced in Definition \ref{def:var and autocor} and Definition \ref{def:PSD2} within close proximity to the bifurcation threshold.\\
Prior to the crossing of a bifurcation threshold $\lambda^*$ and under the assumption of minor stochastic forcing, trajectories of \eqref{eq:fast system} track $C_0^a$ for long times \cite{kuehn2015multiple}. In such a framework, we refer to the critical parameter $y=\lambda$ and linearize the system along a steady solution in $C_0^a$, obtaining
\begin{align}\label{eq:fast system simplified}
    \txtd \mathbf{x}_t = M(\lambda) \mathbf{x}_t~\txtd t + \Sigma~\txtd \mathbf{W}_t^H,
\end{align}
where $M(\lambda):=\partial_\mathbf{x} f(h_0^a(\lambda),\lambda,0)\in\R^{n\times n}$ has all eigenvalues with real negative part for $\lambda<\lambda^*$. By definition of the codimension-1 bifurcations \cite{guckenheimer2013nonlinear} at $\lambda=\lambda^*$, at least one eigenvalue of $M(\lambda)$ crosses the imaginary axis.\\
In the slow regime, \eqref{eq:slow system} we set $g\equiv1$ for simplicity. This choice does not indicate a loss of generality under the assumption of a linear approach to the threshold $\lambda^*$. Hence, we can identify $y$ with the slow time $s$. Next, we linearize along $(h_0^a(s),s)$ and use the self-similarity of the fBm to get the equation
\begin{align}\label{eq:slow system simplified}
    \eps \txtd \mathbf{x}_s = \tilde{M}(s) \mathbf{x}_s ~\txtd s + \eps^{1-H}\Sigma~\txtd \mathbf{W}_s^H,
\end{align}
where $\tilde{M}(s):=\partial_\mathbf{x} f(h_0^a(s),s,0)$. 

\subsection{Codimension-1 bifurcations and spectral perspective} \label{sec: S1 and S2}

Since $\lambda^*$ is a codimension-1 bifurcation threshold, we address systems \eqref{eq:fast system simplified} with drift matrix $M(\lambda)$ that satisfies spectral properties in accordance with the normal form of the corresponding bifurcation \cite{kuehn2015multiple}. As the eigenvectors of $M(\lambda)$ can have complex elements, we introduce the standard scalar product $\left\langle \mathbf{w}_1, \mathbf{w}_2 \right\rangle_{m}:= \mathbf{w}_1^{\text{T}} \overline{\mathbf{w}_2}$ and the norm $\left|\left| \mathbf{w}_1 \right|\right|_m^2:=\left\langle \mathbf{w}_1, \mathbf{w}_1 \right\rangle_m$ for any $\mathbf{w}_1, \mathbf{w}_2 \in \C^m$ and $m\in \N_{>0}$. Moreover, to simplify the notation, we write $\left\langle \mathbf{w}_1, \mathbf{w}_2 \right\rangle =\left\langle \mathbf{w}_1, \mathbf{w}_2 \right\rangle_{m}$ and $\left|\left| \mathbf{w}_1 \right|\right|^2:=\left\langle \mathbf{w}_1, \mathbf{w}_1 \right\rangle$ if $m=n$. We distinguish between two settings:
\begin{description}
    \assumptionitem{S1}{S1} In the case $\lambda$ approaches a fold, a pitchfork or a transcritical bifurcation \cite{kuehn2015multiple}, we consider $n=1$ and $M(\lambda)=A(\lambda)\in\R$. Moreover, we set $\Sigma=\sigma\in\R$. 
    Finally, we label the solution of \eqref{eq:fast system simplified} as $(x_t)_{t\geq0}$ to indicate this scalar framework.
    \assumptionitem{S2}{S2} As $\lambda^*$ indicates a Hopf bifurcation \cite{kuehn2015multiple}, we set $n=2$ and assume the drift matrix $M(\lambda):=\partial_\mathbf{x} f\left(\mathbf{h}_0^a(\lambda),\lambda,0\right)\in\R^{2\times 2}$ to have two complex conjugate eigenvalues $A(\lambda) \pm i B(\lambda)\in\C$.
    We associate to these eigenvalues the following eigenvectors:
\begin{align*}
    \left(A(\lambda)+ i B(\lambda)\right) \mathbf{e}_{1}(\lambda) = M(\lambda) \mathbf{e}_{1}(\lambda), &\quad \left(A(\lambda)- i B(\lambda)\right) \mathbf{e}_{2}(\lambda) = M(\lambda) \mathbf{e}_{2}(\lambda),\\
    \left(A(\lambda)- i B(\lambda)\right) \mathbf{e}_{1}^*(\lambda) = M(\lambda)^\text{T} \mathbf{e}_{1}^*(\lambda) \quad &\text{and} \quad \left(A(\lambda)+ i B(\lambda)\right) \mathbf{e}_{2}^*(\lambda) = M(\lambda)^\text{T} \mathbf{e}_{2}^*(\lambda),
\end{align*}
which are complex and assumed to be continuous in $\lambda\leq \lambda^*$. Consequently, we obtain that $\left\langle \mathbf{e}_{1}(\lambda), \mathbf{e}_{2}^*(\lambda) \right\rangle = \left\langle \mathbf{e}_{2}(\lambda), \mathbf{e}_{1}^*(\lambda) \right\rangle=0$ for any $\lambda\leq\lambda^*$.
\end{description}
The functions $A:(-\infty,\lambda^*]\to\R$ and $B:(-\infty,\lambda^*]\to\R$ are chosen to uphold the bifurcations described and the loss of stability in the followed branch. As such, they are continuous and satisfy $A(\lambda)<0$ for $\lambda<\lambda^*$, $A(\lambda^*)=0$ and $B(\lambda)\neq0$ for any $\lambda\leq\lambda^*$. \\

In the setting described above, we indicate the autocovariance of the solution of \eqref{eq:fast system simplified} along any pair of proxy functions $\mathbf{w}_1, \mathbf{w}_2 \in\C^n$ with
\begin{align*}
    V_t(\tau)[\mathbf{w}_1,\mathbf{w}_2] = \mathbb{E}\left[\left\langle \mathbf{x}_t, \mathbf{w}_1 \right\rangle \overline{\left\langle \mathbf{x}_{t+\tau}, \mathbf{w}_2 \right\rangle} \right] - \mathbb{E}\left[\left\langle \mathbf{x}_t, \mathbf{w}_1 \right\rangle \right]\mathbb{E}\left[ \overline{ \left\langle \mathbf{x}_{t+\tau}, \mathbf{w}_2 \right\rangle} \right]
\end{align*}
for $t\geq 0$ and lag time $\tau\geq 0$. Moreover, we indicate the time-asymptotic observable with
\begin{align} \label{eq:time-asymptotic autocov}
    V_\infty(\tau)[\mathbf{w}_1,\mathbf{w}_2]= \underset{t\to\infty}{\lim}V_t(\tau)[\mathbf{w}_1,\mathbf{w}_2]. 
\end{align}
For any $\mathbf{w}\in\C^2$ such that $\mathbf{w}\not\in\text{Ker}\left(\Sigma^\text{T}\right)$, we define the time-asymptotic autocorrelation function along $\mathbf{w}$ as
\begin{align} \label{eq:time-asymptotic autocorr}
    AC_\infty(\tau)[\mathbf{w}]:=\frac{V_\infty(\tau)[\mathbf{w},\mathbf{w}]}{V_\infty(0)[\mathbf{w},\mathbf{w}]},
\end{align}
for any $\tau\geq 0$. For $n>1$, these observables refer to similar indicators to those introduced in Definition \ref{def:var and autocor} upon projections of $(\mathbf{x}_t)_{t\geq 0}$ along fixed modes as a scalar stochastic process. We label the time-asymptotic modal autocovariance as the indicator in \eqref{eq:time-asymptotic autocov} for $\mathbf{w}_1$ and $\mathbf{w}_2$ to be taken as the eigenvectors of $M(\lambda)^\text{T}$. Specifically, in the setting \nameref{S2} we consider $\mathbf{w}_1, \mathbf{w}_2 \in\left\{ \mathbf{e}_{1}^*(\lambda), \mathbf{e}_{2}^*(\lambda)\right\}$ for any $\tau\geq 0$ and $\lambda\leq \lambda^*$ in \eqref{eq:fast system simplified}. \\

Among the EWSs studied in Section \ref{sec:fbm 1d} and Section \ref{sec:fbm 2d} are the scaling laws adopted by the observables defined above. We introduce then the asymptotic notation by considering $b_1:(-\infty,\lambda^*]\to \C$ and $b_2:(-\infty,\lambda^*]\to \C$. We say that
\begin{align*}
    b_1(\lambda)=\mathcal{O}\left( b_2(\lambda) \right), \quad \text{if} \quad \lim_{\lambda\to\lambda^*} \left|\frac{b_1(\lambda)}{b_2(\lambda)}\right| \in [0,\infty),
\end{align*}
and that
\begin{align} \label{eq:asymp}
    |b_1(\lambda)|\asymp |b_2(\lambda)|, \quad \text{if} \quad 0<\liminf_{\lambda\to\lambda^*} \left|\frac{b_1(\lambda)}{b_2(\lambda)}\right| \leq \limsup_{\lambda\to\lambda^*} \left|\frac{b_1(\lambda)}{b_2(\lambda)}\right|<\infty.
\end{align}
The $\cO$ notation indicates an upper bound to a scaling law, while $\asymp$ refers to the exact rate adopted by an observable in the critical limit, i.e. asymptotic comparability. However, we note that in the results below we refer to the actual limit within \eqref{eq:asymp}. As a consequence, throughout the paper we write $b_1(\lambda)=\mathcal{O}(1)$ when $|b_1(\lambda)|$ converges to a limit in $[0,\infty)$ as $\lambda\to\lambda^*$, and $|b_1(\lambda)|\asymp1$ when the limit belongs in $(0,\infty)$. Throughout the paper, we use also the property
\begin{align} \label{eq:I_ran_out_of_names}
    \frac{c_1+b_1(\lambda)}{c_2+b_2(\lambda)}
    = \frac{c_1}{c_2}+\cO(b_1(\lambda)) + \cO(b_2(\lambda))
\end{align}
for $c_1,c_2\in\C$ and continuous $b_1,b_2:(-\infty,\lambda^*]\to \C$ such that $b_1(\lambda^*)=b_2(\lambda^*)=0$.

\begin{figure}[h!]
    \centering
    \subfloat{\begin{overpic}[scale=0.7]{Fig//Complex_rays_polished.png}
    \put(1050,490){$\operatorname{Re}$}
    \put(480,1030){$\operatorname{Im}$}
    \put(920,630){$\theta_1$}
    \put(850,390){$\theta_2$}
    \end{overpic}}
    
    \caption{Illustration of the portions of rays in the complex plane along which the incomplete gamma functions are studied. While the upper incomplete gamma function is well-posed only on rays with angles in $\left(-\frac{\pi}{2}, \frac{\pi}{2}\right)$, such as the blue solid lines, the lower incomplete gamma function does not share such a restriction. In fact, its integral can be taken along the red line in the left side of the complex plane. Nonetheless, such a function requires the first argument to be positive to allow for integration on the origin.}
    \label{fig:Illustration_1}
\end{figure} 

Since the eigenpairs of $M(\lambda)$ can assume complex values, the study of these indicators requires the use of functions with complex arguments. First, for any $z\in\C\setminus\{0\}$ and $q\in\R$ we introduce the notation
\begin{align*}
    z^q:= \text{exp} \left( q \text{log}|z| + i\; q \arg(z)  \right) .
\end{align*}
In particular, we note that
\begin{align} \label{eq:complex_property_1}
    \arg(z^q) =  q \arg(z) .
\end{align}
We also introduce the gamma function $\Gamma:\R_{>0}\to\R$ such that
\begin{align*}
    \Gamma(s):= \int_0^\infty w^s e^{-w} \text{d}w ,
\end{align*}
the upper incomplete gamma function $\Gamma:\R\times\left\{ z\in\C | \operatorname{Re}(z)>0 \right\} \to \C$ defined by
\begin{align*}
    \Gamma(s,z):= \int_{z}^{e^{i \arg(z)} \infty} w^s e^{-w} \text{d}w
\end{align*}
and the lower incomplete gamma function $\gamma:\R_{>0}\times\C \to \C$ defined by
\begin{align*}
    \gamma(s,z):= \int_0^{z} w^s e^{-w} \text{d}w .
\end{align*}
We note that the integrals of the incomplete gamma functions are taken on the trajectories along the corresponding complex rays. These are defined on the complex plane as half-lines from the origin with angle $\arg(z)$. We finally underline some properties of the introduced functions that are employed in Appendix \ref{app:B}. Consider first $c\in\R_{>0}$ and $\theta_1, \theta_2 \in \left(-\frac{\pi}{2}, \frac{\pi}{2}\right)$ as in the illustration in Figure \ref{fig:Illustration_1}. We obtain by contour integration that $\Gamma(s,c e^{i\theta_1})$, whose integral is taken outwards along a blue solid line, corresponds to the sum of $\Gamma(s,c e^{i\theta_2})$, through integration on the other solid blue line, with an integral along the connection arc, in dotted linestyle. As such, we cannot assume equality among these incomplete gamma functions for $c>0$. In contrast, in the limit $c\to 0$, we obtain $\underset{c\to 0}{\lim} \Gamma(s,c e^{i\theta_1})= \underset{c\to 0}{\lim} \Gamma(s,c e^{i\theta_2})$. By generality of the angles, it follows that 
\begin{align} \label{eq:complex_property_2}
    \underset{c\to 0}{\lim} \Gamma\left(s,c e^{i\theta_1}\right) 
    = \int_{0}^{e^{i \theta_1} \infty} w^s e^{-w} \text{d}w
    = \Gamma(s)
\end{align}
for any $\theta_1 \in \left(-\frac{\pi}{2}, \frac{\pi}{2}\right)$ and $s>0$. We also note that the lower gamma function is well defined for any $z$ with argument in $\left[0, 2\pi\right)$ since $s>0$ and such an integral is not taken in the outer limit of the complex ray.

\section{Warning in one-dimensional models}\label{sec:fbm 1d}
In this section, we study fast subsystems \eqref{eq:fast system simplified} in the setting \nameref{S1}. As we focus on the construction of EWSs of linearized models, we extend the results in \cite{kuehn2022warning} to the assumption of stochastic forcing with Hurst index $H\in\left(0,1\right)$. We employ methods from \cite{bernuzzi2024warning,ChenPSDforHopf,kuehn2022warning} to construct and describe the observables.

\subsection{Autocovariance and autocorrelation} \label{sec:auto 1d}
The time-asymptotic variance and autocorrelation of trajectories in linearized regimes provide classical EWSs under the assumption of stochastic forcing by Brownian motion \cite{ditlevsen2010tipping}. While under memoryless Gaussian forcing, i.e.~$H=\frac{1}{2}$, the first is shown to diverge as $|A(\lambda)|^{-1}$, recent findings~\cite{kuehn2022warning} indicate that positive persistence, i.e.~$H\in\left(\frac{1}{2},1\right)$, enhances its increase in the proximity of the bifurcation. A hyperbolic scaling law of the time-asymptotic autocovariance and an exponential rate of convergence of the time-asymptotic autocorrelation have also been shown for $H=\frac{1}{2}$ for any finite lag time; nonetheless, such results have not been extended to the case of non-Markovian forcing in the literature. In this subsection, we investigate the laws adopted by the discussed observables for $H\in\left(0,1\right)$, providing thus novel findings in the cases of positive and negative persistence. Using the notation defined in Section \ref{sec: S1 and S2}, we introduce the function
\begin{align} \label{eq:function_P}
    \begin{split}
        P(\alpha,\beta,H,\tau):=&e^{\overline{\beta} \tau} (-\overline{\beta})^{-2H+1} \Gamma(2H) \\
        &+ e^{-\alpha \tau} (-\alpha)^{-2H+1} \Gamma(2H,-\alpha\tau)
        + e^{\overline{\beta} \tau} \overline{\beta}^{-2H+1} \gamma(2H,\overline{\beta}\tau),
    \end{split}
\end{align}
for any $\alpha,\beta\in\C$ with negative real part and $\tau\geq 0$. Such a function is employed in the lemma below to study the time-asymptotic autocovariance between two scalar Ornstein-Uhlenbeck process driven by the same non-Markovian stochastic forcing.

\begin{lemma}\label{lem:autocov fast system complex lemma}
Set $\bm{\sigma}_1,\bm{\sigma}_2\in\C^2$. Assume $\left(\mathbf{W}_t^{H}\right)_{t\geq 0}\subset \C^2$ to be an fBm with Hurst index $H\in\left(0,1\right)$. Let $\left(\psi_t^{(1)}\right)_{t\geq 0}$ and $\left(\psi_t^{(2)}\right)_{t\geq 0}$ be scalar solutions to the system
\begin{align}\label{eq:fast system simplified hopf 2}
    \begin{split}
        \txtd \psi_t^{(1)} = \alpha\psi_t^{(1)}~\txtd t + \left\langle \txtd \mathbf{W}_t^{H} , \bm{\sigma}_1 \right\rangle,\\
        \txtd \psi_t^{(2)} = \beta \psi_t^{(2)}~\txtd t + \left\langle \txtd \mathbf{W}_t^{H} , \bm{\sigma}_2 \right\rangle,
    \end{split}
\end{align}
each initial conditions at $t=0$ in $\C$. Lastly, consider $\alpha,\beta\in\C$ with negative real part. Then, for all $\tau\ge0$, we obtain that
\begin{align}\label{eq:autocov fast subsystem complex 2}
    &\underset{t\to\infty}{\lim}\text{Cov}\left(\psi_t^{(1)}, \psi_{t+\tau}^{(2)} \right)
    = H \underbrace{\left\langle \bm{\sigma}_2, \bm{\sigma}_1 \right\rangle}_{\text{masking}} \underbrace{P(\alpha,\beta,H,\tau)}_{\text{memory}} \underbrace{\left( -\alpha - \overline{\beta} \right)^{-1}}_{\text{mitigation}},
\end{align}
with $P$ introduced in \eqref{eq:function_P}.
\end{lemma}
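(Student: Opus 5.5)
The plan is to compute the covariance from the explicit mild solutions, reduce it to a double integral against the second mixed derivative of the fBm covariance kernel, and evaluate that integral in closed form with (incomplete) gamma functions.

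\textbf{Step 1 (mild solutions and the masking factor).} By the variation of constants formula for additive noise (Appendix~\ref{app:A}),
\[
\psi_t^{(1)}=e^{\alpha t}\psi_0^{(1)}+\int_0^t e^{\alpha(t-s)}\left\langle \txtd \mathbf{W}_s^{H},\bm{\sigma}_1\right\rangle,
\]
and analogously for $\psi^{(2)}$ with $\beta,\bm{\sigma}_2$. The initial conditions are deterministic, so they do not enter the covariance. Since $\operatorname{Re}\alpha,\operatorname{Re}\beta<0$, they also do not affect the limit $t\to\infty$.

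Expanding $\left\langle \txtd\mathbf{W}^H,\bm{\sigma}_j\right\rangle=\sum_k \overline{\sigma_{j,k}}\,\txtd W^{H,k}$ and using that the components of $\mathbf{W}^H$ are i.i.d.\ scalar fBms, the conjugated cross-covariance factorises as
\[
\sum_k\overline{\sigma_{1,k}}\,\sigma_{2,k}=\left\langle \bm{\sigma}_2,\bm{\sigma}_1\right\rangle
\]
times a scalar quantity
\[
C_t(\tau)=\E\left[\int_0^t e^{\alpha(t-u)}\txtd W_u^H\int_0^{t+\tau} e^{\overline{\beta}(t+\tau-v)}\txtd W_v^H\right],
\]
with $W^H$ a real scalar fBm.

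\textbf{Step 2 (a kernel formula valid for all $H\in(0,1)$).} For $H>1/2$ one would simply use the kernel $H(2H-1)|u-v|^{2H-2}$. That kernel is not integrable for $H<1/2$, so instead I would use integration by parts for Wiener integrals of smooth deterministic integrands. Writing $\int_0^T f\,\txtd W^H=f(T)W_T^H-\int_0^T f'(u)W^H_u\,\txtd u$ and inserting \eqref{eq:cov_fBm}, then integrating by parts once more in one variable, gives for all $H\in(0,1)$
\[
\E\left[\int_0^{T_1}f\,\txtd W^H\int_0^{T_2}g\,\txtd W^H\right]=H\int\!\!\int \partial_u\!\left(\operatorname{sgn}(u-v)|u-v|^{2H-1}\right)\text{-type terms},
\]
which I would rather keep in the concrete form
\[
H\int_0^{T_1}\! f(u)\,\partial_u\!\int_0^{T_2} g(v)\operatorname{sgn}(u-v)|u-v|^{2H-1}\txtd v\,\txtd u ,
\]
plus explicit boundary terms. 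The kernel $|u-v|^{2H-1}$ is locally integrable for every $H>0$, so this is legitimate across the whole range. This is the origin of the prefactor $H$ in \eqref{eq:autocov fast subsystem complex 2}. As a cross-check, the Fourier representation \eqref{eq:Cov Pipiras Taqqu} of \cite{Pipiras_Taqqu_fBM} applied to the stationary kernels $e^{-\alpha s}\mathbb{1}_{s\ge0}$ should give the same limit.

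\textbf{Step 3 (evaluation and the limit $t\to\infty$).} Substitute $f(u)=e^{\alpha(t-u)}$, $g(v)=e^{\overline\beta(t+\tau-v)}$ and change variables to the backward times $a=t-u$, $b=t+\tau-v$, both ranging over $[0,\infty)$ in the limit. The relative lag $r=b-a$ (up to the shift by $\tau$) enters through $|r-\tau|^{2H-1}$ and its sign. Integrating out the common shift along the diagonal produces
\[
\int_0^\infty e^{(\alpha+\overline\beta)s}\,\txtd s=(-\alpha-\overline\beta)^{-1},
\]
which is the mitigation factor. The remaining one-dimensional integral over the lag splits into three regions:
\begin{itemize}[label={}]
\item lags beyond $\tau$ on the $\psi^{(1)}$ side give $e^{-\alpha\tau}(-\alpha)^{1-2H}\Gamma(2H,-\alpha\tau)$;
\item lags in $(0,\tau)$ give $e^{\overline\beta\tau}\overline\beta^{\,1-2H}\gamma(2H,\overline\beta\tau)$;
\item lags on the $\psi^{(2)}$ side give $e^{\overline\beta\tau}(-\overline\beta)^{1-2H}\Gamma(2H)$.
\end{itemize}
Each is obtained by rotating the real integration variable onto the complex ray of angle $\arg(-\alpha)$, $\arg(\overline\beta)$ or $\arg(-\overline\beta)$, using \eqref{eq:complex_property_1}, and, for the full gamma function, \eqref{eq:complex_property_2}. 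Summing the three pieces reproduces $P(\alpha,\beta,H,\tau)$.

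\textbf{Main obstacle.} I expect the difficulty to lie in the bookkeeping of Step 2 and Step 3 for $H<1/2$. There one has to check that the boundary terms from integrating by parts vanish as $t\to\infty$; this holds because they carry factors $e^{\operatorname{Re}\alpha\, t}$ times polynomial growth $t^{2H}$. One also has to justify the contour rotations: the rays from the origin must stay in the right half-plane for $\Gamma(\cdot,\cdot)$, while the lower function $\gamma(2H,\overline\beta\tau)$ is taken along a ray in the left half-plane, which is allowed since $2H>0$.

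Should the direct computation become unwieldy, I would first prove the identity for $H\in(1/2,1)$ using the classical kernel. Both sides are analytic in $H$ on $(0,1)$, the left side being given by the locally integrable form of Step 2, so the identity extends to all $H\in(0,1)$ by analytic continuation.
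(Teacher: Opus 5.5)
Your Step 2 identity is correct, and it is in fact exact. Because $R(u,0)=0$ and $\partial_v R(0,v)=0$, the boundary terms from the two integrations by parts cancel, leaving
\[
\mathbb{E}\Big[\int_0^{T_1} f\,\mathrm{d}W^H\int_0^{T_2} g\,\mathrm{d}W^H\Big]=H\int_0^{T_1} f(u)\,G_0'(u)\,\mathrm{d}u,\qquad G_0(u)=\int_0^{T_2} g(v)\operatorname{sgn}(u-v)|u-v|^{2H-1}\,\mathrm{d}v .
\]

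\textbf{The gap.} It lies in Step 3 and in your claim that every boundary term carries a factor $e^{\operatorname{Re}\alpha\, t}$. That holds only at the initial endpoints $u=0$, $v=0$. At the terminal endpoints $f(t)=g(t+\tau)=1$. For $H<\tfrac12$ you cannot move $\partial_u$ onto the kernel, so you must differentiate $G_0$, and the endpoint $v=t+\tau$ produces a term that survives. With your $f,g$ one gets $G_0'(u)=-\overline{\beta}\,G_0(u)+(t+\tau-u)^{2H-1}+e^{\overline{\beta}(t+\tau)}u^{2H-1}$. Hence
\[
\lim_{t\to\infty}C_t(\tau)=H\int_0^\infty e^{\alpha a}(a+\tau)^{2H-1}\,\mathrm{d}a-\frac{H\overline{\beta}}{-\alpha-\overline{\beta}}\int_{\mathbb{R}} k(r)\operatorname{sgn}(r-\tau)|r-\tau|^{2H-1}\,\mathrm{d}r,
\]
with $k(r)=e^{\overline{\beta}r}$ for $r\ge 0$ and $k(r)=e^{-\alpha r}$ for $r<0$.

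The first term is not of diagonal form and does not vanish. The diagonal part alone reproduces the $\gamma(2H,\overline{\beta}\tau)$- and $\Gamma(2H)$-terms of $P$. However, it gives $e^{-\alpha\tau}\Gamma(2H,-\alpha\tau)$ with coefficient $\overline{\beta}(-\alpha)^{-2H}$ instead of $(-\alpha)^{1-2H}$, and the terminal term supplies exactly the difference. For instance, at $\tau=0$ with $\alpha=\beta<0$ real, the diagonal part vanishes by antisymmetry, and the whole variance $H\Gamma(2H)|\alpha|^{-2H}$ comes from the terminal term.

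So your three-region evaluation does not follow from the kernel $|r-\tau|^{2H-1}$. It is really the $H>\tfrac12$ computation with kernel $(2H-1)|r-\tau|^{2H-2}$. Even there, the $\psi^{(1)}$-region and the $(0,\tau)$-region carry extra terms $-\tau^{2H-1}$ and $+\tau^{2H-1}$, which cancel only in the sum.

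The paper avoids this by never differentiating the covariance. After one integration by parts and the identity $1=-\alpha\int_0^t e^{\alpha(t-r)}\mathrm{d}r+e^{\alpha t}$, it integrates only against $|\cdot|^{2H}$. The terms $|\tau-v_2|^{2H}$, $|\tau+v_1|^{2H}$, $\tau^{2H}$ in its term $\RomanIV$ are exactly the terminal-time contributions (from $W^H_t$, $W^H_{t+\tau}$) that you discarded. The paper evaluates them alongside the diagonal term $|v_2-v_1-\tau|^{2H}$.

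\textbf{The fallback works.} Your fallback is a valid proof by a different route. For $H\in(\tfrac12,1)$ the classical kernel produces no boundary terms, the diagonal factorisation is exact in the limit, and the three regions sum to $P$. One then continues analytically in $H$.

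What you still need is holomorphy of the left-hand side in $H$. This is easiest to obtain not from Step 2 but from \eqref{eq:Cov Pipiras Taqqu} applied to the stationary solutions. There the integrand is $|x|^{1-2H}$ times a bounded phase times a rational function decaying like $|x|^{-2}$, so the integral is holomorphic on $0<\operatorname{Re}H<1$. Add the standard fact that the $t\to\infty$ limit equals the stationary covariance. The right-hand side is holomorphic in $H$ for fixed $\tau$, since $\Gamma(2H)$, $\Gamma(2H,-\alpha\tau)$, $\gamma(2H,\overline{\beta}\tau)$ and the complex powers all are.

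This route replaces the paper's explicit bookkeeping, which is uniform in $H$, with a short continuation argument. The cost is that it gives no direct computation in the anti-persistent regime.
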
 
The lemma is proven in Proof \ref{proof:autocov fast system complex lemma}. The proof employs the Mandelbrot-van-Ness representation, following a similar approach to \cite{kubilius2017parameter} but extending the results to the study of the observable on a time-asymptotic regime and under complex drift. We note that other methods in the literature provide upper bounds to the indicator \cite[Lemma 3.1]{Nils}. As emphasized in \eqref{eq:autocov fast subsystem complex 2}, the time-asymptotic autocovariance of the systems that solve \eqref{eq:fast system simplified hopf 2} can be described by three terms in the limit $\lambda\to\lambda^*$:
\begin{itemize}
    \item The masking term $\left\langle \bm{\sigma}_2, \bm{\sigma}_1 \right\rangle$ describes the correlation between the noise components in \eqref{eq:fast system simplified hopf 2}. Noise with uncorrelated components or null noise can induce masking and qualitatively affect the rate of divergence of the observable.
    \item The memory item $P(\alpha,\beta,H,\tau)$ collects the effect of persistence and lag in the EWS.
    \item The mitigation term $\left( -\alpha - \overline{\beta} \right)^{-1}$ observes the loss of stability of $\psi^{(1)}$ and $\psi^{(2)}$. It therefore refers to the mitigation of the deterministic forcing at the base of the critical slowing down phenomenon.
\end{itemize}

In the theorem below, we prove the divergence of the time-asymptotic autocovariance $V_\infty(\tau)=\lim\limits_{t\to\infty}V_t(\tau)$ for all $\tau\ge0$. Moreover, we quantify the effect of memory and persistence in the EWS associated to \eqref{eq:fast system simplified} in the setting \nameref{S1}. As such, for the remainder of the section we consider real drift and noise intensity terms.

\begin{theorem}\label{thm:autocov fast system}
Let $H\in(0,1)$, $\tau\ge0$ and $(x_t)_{t\geq 0}$ be a solution to \eqref{eq:fast system simplified} in the setting \nameref{S1}. Then, we obtain that 
\begin{align}\label{eq:autocov fast subsystem}
    V_\infty(\tau)=& \frac{\sigma^2}{2}H|A(\lambda)|^{-2H}\\
    &\times \left( e^{A(\lambda)\tau}\Gamma(2H) + e^{-A(\lambda)\tau}\Gamma(2H,|A(\lambda)|\tau)-e^{A(\lambda)\tau} \int_0^{|A(\lambda)|\tau} e^w w^{2H-1}~\txtd w \right). \nonumber
\end{align}
\end{theorem}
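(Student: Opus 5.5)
The plan is to obtain Theorem~\ref{thm:autocov fast system} as a direct specialization of Lemma~\ref{lem:autocov fast system complex lemma}, so that no new stochastic analysis is needed. The real work is evaluating the function $P$ from \eqref{eq:function_P} at a real negative argument while keeping careful track of the complex-power and branch conventions of Section~\ref{sec: S1 and S2}.

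First I embed the scalar setting \nameref{S1} into the lemma. Let $(W^H_t)_{t\ge0}$ be the scalar fBm driving $x_t$, and complete it to a two-dimensional fBm $\mathbf{W}^H_t=(W^H_t,\tilde W^H_t)^{\text{T}}$ by adding an independent copy $\tilde W^H$. Choose $\bm{\sigma}_1=\bm{\sigma}_2=(\sigma,0)^{\text{T}}$, so that $\langle \txtd\mathbf{W}^H_t,\bm{\sigma}_i\rangle=\sigma\,\txtd W^H_t$, and take $\alpha=\beta=A(\lambda)<0$ for $\lambda<\lambda^*$. With the same initial condition $x_0$ for both equations, $\psi^{(1)}=\psi^{(2)}=x$. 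Since the lemma holds for arbitrary initial conditions, and the covariance in Definition~\ref{def:var and autocor} with real $x$ coincides with $\text{Cov}(\psi^{(1)}_t,\psi^{(2)}_{t+\tau})$, we get $V_\infty(\tau)=\lim_{t\to\infty}\text{Cov}(x_t,x_{t+\tau})$, which is exactly the left-hand side of \eqref{eq:autocov fast subsystem complex 2}.

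Next I evaluate the three factors in \eqref{eq:autocov fast subsystem complex 2}. The masking factor is $\langle\bm{\sigma}_2,\bm{\sigma}_1\rangle=\sigma^2$, and the mitigation factor is $(-2A)^{-1}=(2|A|)^{-1}$. In $P$, the first two terms are
\begin{align*}
e^{A\tau}|A|^{1-2H}\Gamma(2H) \quad\text{and}\quad e^{-A\tau}|A|^{1-2H}\Gamma(2H,|A|\tau),
\end{align*}
because $-\overline{\beta}=-\alpha=|A|>0$ lies on the positive real ray, where the complex power and the upper incomplete gamma function reduce to their real counterparts. Collecting the prefactors gives
\begin{align*}
H\sigma^2|A|^{1-2H}(2|A|)^{-1}=\tfrac{\sigma^2}{2}H|A|^{-2H},
\end{align*}
which matches the prefactor in \eqref{eq:autocov fast subsystem}.

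The main obstacle is the third term, $e^{A\tau}A^{1-2H}\gamma(2H,A\tau)$, since here $\overline{\beta}=A$ is a negative real number with $\arg=\pi$. Using the convention $z^q=\exp(q\log|z|+iq\arg z)$, I write $A^{1-2H}=|A|^{1-2H}e^{i\pi(1-2H)}$. I then parametrize the ray of the lower incomplete gamma integral by $w=e^{i\pi}u$ with $u\in[0,|A|\tau]$. Reading the integrand as $w^{2H-1}e^{-w}$ (the standard kernel, consistent with $\Gamma(2H)$ appearing elsewhere), the substitution gives $w^{2H-1}=e^{i\pi(2H-1)}u^{2H-1}$, $\txtd w=e^{i\pi}\txtd u$ and $e^{-w}=e^{u}$. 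Hence
\begin{align*}
\gamma(2H,A\tau)=e^{2\pi iH}\int_0^{|A|\tau}e^{u}u^{2H-1}\,\txtd u .
\end{align*}
Multiplying by $A^{1-2H}$, the phases combine to $e^{i\pi}=-1$, which produces the term $-e^{A\tau}\int_0^{|A|\tau}e^w w^{2H-1}\,\txtd w$ in \eqref{eq:autocov fast subsystem}. I expect this step to need the most care. I would verify that the branch of $\arg$ used here ($\arg\in[0,2\pi)$, as stated for $\gamma$) is the same one used in the proof of the lemma, so that no spurious phase appears. As a sanity check I would confirm that the result is real and reduces to $\sigma^2 e^{A\tau}/(2|A|)$ at $H=\tfrac12$.
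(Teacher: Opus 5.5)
Your proposal is correct and follows the paper's route: the paper also obtains the theorem by specializing Lemma~\ref{lem:autocov fast system complex lemma} to $\psi^{(1)}=\psi^{(2)}=x$, $\alpha=\beta=A(\lambda)$. Your explicit evaluation of $P(A,A,H,\tau)$ fills in the step the paper leaves implicit; in particular you correctly obtain the phase cancellation $A^{1-2H}\gamma(2H,A\tau)=-|A|^{1-2H}\int_0^{|A|\tau}e^w w^{2H-1}\,\txtd w$. Your branch worry is moot: since $\arg(z^q)=q\arg z$, for any consistently fixed value of $\arg\overline{\beta}$ one has $\overline{\beta}^{\,1-2H}\gamma(2H,\overline{\beta}\tau)=\overline{\beta}\,\tau^{2H}\int_0^1 s^{2H-1}e^{-\overline{\beta}\tau s}\,\txtd s$, and this expression does not depend on the branch.
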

The proof follows directly from Lemma \ref{lem:autocov fast system complex lemma} upon taking $\psi_t^{(1)}=\psi_t^{(2)}=x_t$ for any $t\geq0$, with $(x_t)_{t\geq0}$ the solution of \eqref{eq:fast system simplified} in \nameref{S1}. As a direct consequence, the variance is given by
\begin{align}\label{eq:Var}
    V_\infty(0) = \sigma^2H\Gamma(2H)|A(\lambda)|^{-2H}
\end{align}
for $H\in(0,1)$. This coincides with the results in \cite{kuehn2022warning,kuehn2019scaling} where \eqref{eq:Var} was proven to hold for $H\in\left[\frac{1}{2},1\right)$. Since the term in the second row of \eqref{eq:autocov fast subsystem} is nonzero almost everywhere, it follows that
\begin{align}\label{eq:Autovar}
    |V_\infty(\tau)| \asymp |A(\lambda)|^{-2H}
\end{align}
for $H\in(0,1)$, $\sigma>0$ and almost every $\tau\geq 0$. Moreover, due to its continuity in $\tau$, \eqref{eq:Autovar} holds in a neighborhood of $\tau=0$. Thus, it provides a precise scaling law of the observable in the linearized regime. Moreover, the rate of divergence does not depend on the lag time $\tau$. However, we see in Section \ref{sec:applications} that the lag leads to a delay in the divergence of $V_\infty(\tau)$. In the following corollary, we employ these results to describe the rate of convergence of the time-asymptotic autocorrelation for the system \eqref{eq:fast system simplified} with $H\in(0,1)$ and in setting \nameref{S1}.
\begin{corollary}\label{cor:autocor fast system}
    Let $H\in(0,1)$, $\sigma>0$, $\tau\ge0$ and $(x_t)_{t\geq 0}$ be a solution to \eqref{eq:fast system simplified} that satisfies \nameref{S1}. Then, we get that
\begin{align}\label{eq:autocor fast subsystem}
    AC_\infty(\tau)= 1-|A(\lambda)|^{2H}\frac{\tau^{2H}}{\Gamma(2H+1)}+\cO(|A(\lambda)|^2).
\end{align}
\end{corollary}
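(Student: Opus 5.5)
The plan is to divide the closed form of Theorem~\ref{thm:autocov fast system} by the variance \eqref{eq:Var} and Taylor-expand the result in the single small quantity $u:=|A(\lambda)|\tau$, holding $\tau\ge0$ fixed while $\lambda\to\lambda^*$.

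\textbf{Reducing to one variable.} Write $a:=|A(\lambda)|$, so that $A(\lambda)=-a$. By \eqref{eq:autocov fast subsystem} and \eqref{eq:Var}, the prefactor $\frac{\sigma^2}{2}Ha^{-2H}$ cancels up to a factor $\tfrac12\Gamma(2H)^{-1}$. Hence
\begin{align*}
AC_\infty(\tau)=\frac{1}{2\Gamma(2H)}\Big(e^{-u}\Gamma(2H)+e^{u}\Gamma(2H,u)-e^{-u}\int_0^u e^{w}w^{2H-1}\,\txtd w\Big),
\end{align*}
which is a function of $u$ alone. It therefore suffices to show that this function equals $1-u^{2H}/\Gamma(2H+1)+\cO(u^2)$ as $u\to0$, with $\tau$ entering only through the constant in the $\cO$-term.

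\textbf{Expanding each term.} I would use $\Gamma(2H,u)=\Gamma(2H)-\gamma(2H,u)$ for real $u>0$, and expand both integrands near the origin:
\begin{align*}
\gamma(2H,u)&=\int_0^u w^{2H-1}(1-w+\dots)\,\txtd w=\frac{u^{2H}}{2H}-\frac{u^{2H+1}}{2H+1}+\cO(u^{2H+2}),\\
\int_0^u e^{w}w^{2H-1}\,\txtd w&=\frac{u^{2H}}{2H}+\frac{u^{2H+1}}{2H+1}+\cO(u^{2H+2}).
\end{align*}
Substituting, the numerator becomes
\begin{align*}
\Gamma(2H)\big(e^{u}+e^{-u}\big)-e^{u}\gamma(2H,u)-e^{-u}\int_0^u e^{w}w^{2H-1}\,\txtd w .
\end{align*}
The first term is $2\Gamma(2H)+\cO(u^2)$. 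Expanding $e^{\pm u}=1\pm u+\cO(u^2)$ in the two remaining terms, their $u^{2H}$ parts add up to $-u^{2H}/H$.

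\textbf{The key step: the $u^{2H+1}$ terms cancel.} These terms could dominate $u^2$ when $H<1/2$, so I must check that they vanish. From $-e^{u}\gamma(2H,u)$ the contributions are $-u^{2H+1}/(2H)+u^{2H+1}/(2H+1)$. From $-e^{-u}\int_0^u e^{w}w^{2H-1}\,\txtd w$ they are $+u^{2H+1}/(2H)-u^{2H+1}/(2H+1)$. These sum to zero. Everything left over is then $\cO(u^{2H+2})$, and since $u\to0$ this is contained in $\cO(u^2)$.

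\textbf{Conclusion.} Dividing by $2\Gamma(2H)$ and using $2H\Gamma(2H)=\Gamma(2H+1)$ gives
\begin{align*}
AC_\infty(\tau)=1-\frac{u^{2H}}{\Gamma(2H+1)}+\cO(u^2).
\end{align*}
Substituting $u=|A(\lambda)|\tau$ yields \eqref{eq:autocor fast subsystem}, since for fixed $\tau$ we have $\cO(u^2)=\cO(|A(\lambda)|^2)$.

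The main obstacle is exactly this bookkeeping of the $u^{2H+1}$ terms: without their cancellation the error term would only be $\cO(|A(\lambda)|^{\min\{2,2H+1\}})$. A secondary point is to justify uniformity of the remainders, which follows because both integrands are analytic in $w$ apart from the factor $w^{2H-1}$, and $w^{2H-1}$ is integrable at $0$ for every $H\in(0,1)$.
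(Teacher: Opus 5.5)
Your proposal is correct and follows essentially the same route as the paper's proof. Both divide the closed form of Theorem~\ref{thm:autocov fast system} by \eqref{eq:Var}, expand $e^{\pm u}$, $\gamma(2H,u)$ and $\int_0^u e^{w}w^{2H-1}\,\txtd w$ in $u=|A(\lambda)|\tau$, verify that the linear and $u^{2H+1}$ contributions cancel, and absorb the remainder into $\cO(|A(\lambda)|^2)$. (That cancellation can also be seen at once from $e^{u}\gamma(2H,u)+e^{-u}\int_0^u e^{w}w^{2H-1}\,\txtd w=2\int_0^u\cosh(u-w)\,w^{2H-1}\,\txtd w$, which contains only the powers $u^{2H+2k}$.)
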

The proof is given in the Proof \ref{proof:autocor fast system}. First, we observe that $AC_\I(\tau)$ converges to $1$ as $\lambda\to\lambda^*$ for all $\tau\ge0$. The dependence on $H$ and on $\tau$ of the time-asymptotic autocorrelation in the critical limit further describes the characteristic of the tipping phenomenon. It indicates a delay of information given by the lag time and negative persistence, associated with a slower rate of convergence as shown in \eqref{eq:autocor fast subsystem}. In contrast, the case $H\in\left(\frac{1}{2},1\right)$ presents a faster convergent behavior in the limit $\lambda\to\lambda^*$. Finally, we note that for $H\neq\frac{1}{2}$ and any $\tau>0$, the time-asymptotic autocorrelation $AC_\infty(\tau)$ is not analytic in $\lambda^*$.

\subsection{Spectral density}\label{sec:SD S1 fBm}
We now consider the spectral density (SD) as an EWS. This has been used in~\cite{ChenPSDforHopf} as an EWS to detect a Hopf bifurcation in a system driven by white noise, whereas in~\cite{morr2024detection} the SD was used to analyze different formulations of red noise. ~We analyze the SD for systems driven by fBm with Hurst index $H\in(0,1)$. \\
As the SD is only defined for stationary processes, we consider the stationary solution to \eqref{eq:fast system simplified} in the setting \nameref{S1}, instead of taking a solution and letting $t\to\infty$. As we consider a stationary solution we write $V(\tau)$ and drop the time argument. First we recall the SD of a fOU process.
\begin{lemma}\label{lem:fBm 1d PSD}
    Let $H\in(0,1)$ and let $(x_t)_{t\geq 0}$ be a stationary solution to \eqref{eq:fast system simplified} in the setting \nameref{S1}. Then, we get for all $\omega\in\R$
    \begin{align}\label{eq:PSD fBM}
        S_x(\omega) &= \sigma^2\Gamma(2H+1)\sin(\pi H) \frac{|\omega|^{1-2H}}{A(\lambda)^2 + \omega^2}\nonumber \\
        &=\sigma^2C_H \frac{|\omega|^{1-2H}}{A(\lambda)^2 + \omega^2}.
    \end{align}
\end{lemma}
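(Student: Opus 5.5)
The plan is to write the stationary solution as a Wiener integral against the fBm over the whole real line and then apply the Fourier covariance formula \eqref{eq:Cov Pipiras Taqqu}. In setting \nameref{S1} with $A=A(\lambda)<0$, we take a two-sided fBm $(W^H_t)_{t\in\R}$ and set
\begin{align*}
    x_t=\sigma\int_{-\infty}^t e^{A(t-s)}~\txtd W^H_s=\sigma\int_\R f_t(s)~\txtd W^H_s,\qquad f_t(s):=e^{A(t-s)}\mathbbm{1}_{\{s\le t\}} .
\end{align*}
This is the stationary solution: it solves \eqref{eq:fast system simplified}, and its law is shift invariant because the increments of $W^H$ are stationary. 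Since $f_t\in L^2(\R)$ and $|\cF(f_t)(x)|^2\sim x^{-2}$ at infinity, the weight $|x|^{1-2H}$ makes the integral converge for every $H\in(0,1)$. Hence $f_t\in\tilde\Lambda^H$ and the integral is well defined.

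\textbf{Step 1: the Fourier transform of the kernel.} With the convention $\cF(f)(x)=\frac{1}{\sqrt{2\pi}}\int_\R e^{-ixs}f(s)~\txtd s$, a direct computation gives
\begin{align*}
    \cF(f_t)(x)=\frac{1}{\sqrt{2\pi}}\,\frac{e^{-ixt}}{-A-ix}, \qquad \text{so}\qquad |\cF(f_t)(x)|^2=\frac{1}{2\pi}\,\frac{1}{A^2+x^2}.
\end{align*}

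\textbf{Step 2: the autocovariance.} Applying \eqref{eq:Cov Pipiras Taqqu} to $f_t$ and $f_{t+\tau}$ gives
\begin{align*}
    V(\tau)=\sigma^2\frac{\Gamma(2H+1)\sin(\pi H)}{2\pi}\int_\R \frac{e^{ix\tau}}{2\pi}\,\frac{|x|^{1-2H}}{A^2+x^2}~\txtd x .
\end{align*}
This expresses $V$ as an inverse Fourier transform of the function $g(x):=\frac{|x|^{1-2H}}{A^2+x^2}$ (times constants). Note that $g\in L^1(\R)$ for every $H\in(0,1)$: near $0$ it behaves like $|x|^{1-2H}$, which is integrable since $1-2H>-1$, and at infinity it decays like $|x|^{-1-2H}$.

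\textbf{Step 3: read off the density.} Step 2 shows that the spectral measure of $x$ is absolutely continuous with density proportional to $g$. By Theorem \ref{wk} together with Definition \ref{def:PSD2}, $S_x$ is this density, obtained by Fourier inversion. Inverting the $\frac{1}{2\pi}\int e^{-i\omega\tau}$ convention of \eqref{eq:def spectral density} yields $S_x(\omega)=\sigma^2 C_H\, g(\omega)$ with $C_H=\Gamma(2H+1)\sin(\pi H)$. As a consistency check, integrating this density should reproduce the variance \eqref{eq:Var}, $\sigma^2H\Gamma(2H)|A|^{-2H}$, and the integral $\int_\R \frac{|x|^{1-2H}}{A^2+x^2}\,\txtd x=\pi|A|^{-2H}/\sin(\pi H)$ does exactly that.

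\textbf{Main obstacle.} The delicate point is the bookkeeping of the $2\pi$ factors across three places: the Fourier convention in $\tilde\Lambda^H$, the one in \eqref{eq:def spectral density}, and the one in \eqref{eq:PSD via Fourier}. I would fix the constant through the variance check above. A second subtlety arises for $H>\frac12$: there $V\notin L^1$ and Definition \ref{def:PSD} does not apply. In that case I would work directly with \eqref{eq:PSD via Fourier}. Compute $\E|G_T(\omega)|^2$ via \eqref{eq:Cov Pipiras Taqqu}, which produces a Fejér kernel $\frac{\sin^2((x-\omega)T)}{\pi T(x-\omega)^2}$ convolved with $g$. Since $g$ is continuous at every $\omega\neq0$, the Fejér kernel concentrates and the limit as $T\to\infty$ equals the stated formula. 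At $\omega=0$ the value is infinite for $H>\frac12$ and $0$ for $H<\frac12$, both matching the stated formula.
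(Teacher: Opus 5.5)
Your route is the same as the paper's. You write the stationary solution as a Wiener integral, use \eqref{eq:Cov Pipiras Taqqu} to express $V$ as an inverse Fourier transform of a multiple of $g(x)=|x|^{1-2H}/(A^2+x^2)$, and read off the density. Two of your additions improve on the paper's short argument. First, you check $f_t\in\tilde\Lambda^H$ and $g\in L^1(\R)$. Second, you notice that for $H>\frac12$ the autocovariance decays like $\tau^{2H-2}$ and is not integrable, so Definition~\ref{def:PSD} does not literally apply. Handling that case through \eqref{eq:PSD via Fourier} and the Fej\'er kernel is the right fix, and the paper's proof passes over it.

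The gap is in the constant, which is exactly where you located the difficulty, and the check you propose does not say what you claim.
(i) The prefactor $\frac{C_H}{2\pi}$ in \eqref{eq:Cov Pipiras Taqqu} belongs to the unnormalised transform $\int_\R e^{ixu}f(u)\,\txtd u$; that is what makes $f=\mathbbm{1}_{[0,t]}$ return $t^{2H}$. Combining it with your unitary transform produces the extra $\frac{1}{2\pi}$ in Step~2. At $\tau=0$ your Step~2 gives $\frac{1}{2\pi}\sigma^2H\Gamma(2H)|A|^{-2H}$, which contradicts \eqref{eq:Var}.
(ii) Step~3 does not follow from Step~2: applying \eqref{eq:def spectral density} to your Step~2 expression gives $\frac{\sigma^2C_H}{4\pi^2}\,g$.
(iii) The variance check fails. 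Under \eqref{eq:def spectral density}, as in Theorem~\ref{wk}, one has $V(\tau)=\int_\R e^{i\omega\tau}S_x(\omega)\,\txtd\omega$, hence $V(0)=\int_\R S_x$. But $\int_\R\sigma^2C_H\,g=\pi\sigma^2\Gamma(2H+1)|A|^{-2H}=2\pi\,\sigma^2H\Gamma(2H)|A|^{-2H}$, which is $2\pi V(0)$.

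Done consistently, one gets $V(\tau)=\frac{\sigma^2C_H}{2\pi}\int_\R e^{ix\tau}g(x)\,\txtd x$. This is the formula quoted in the paper's proof, and it does reproduce \eqref{eq:Var}. Therefore $S_x=\frac{\sigma^2C_H}{2\pi}\,g$. Your Fej\'er route gives the same result, since $\frac{1}{2\pi}\cdot\frac{1}{2T}\bigl|\int_{-T}^{T}e^{iyt}\,\txtd t\bigr|^2=\frac{\sin^2(yT)}{\pi Ty^2}$ already has unit mass. For $H=\frac12$, the elementary computation with $V(\tau)=\frac{\sigma^2}{2|A|}e^{-|A||\tau|}$ also gives $\frac{\sigma^2}{2\pi(A^2+\omega^2)}$.

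So the displayed identity holds only under the normalisation $S_x=\int_\R e^{-i\omega\tau}V(\tau)\,\txtd\tau$. The paper's proof makes the same slip in its final ``applying the Fourier transform'' step. Only a constant factor is involved, so every exponent in $\omega$, $A(\lambda)$ and $H$, and hence every scaling law built on the lemma, is unaffected. You should still either carry the factor $\frac{1}{2\pi}$ or state the alternative normalisation, rather than assert that the variance check confirms $\sigma^2C_H$.
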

\begin{proof}
    Using \eqref{eq:Cov Pipiras Taqqu}, it is shown in \cite[Remark 2.4]{Cheridito_fOU} that for $\tau>0$ we obtain
    \begin{align*}
        V(\tau) = \sigma^2\frac{\Gamma(2H+1)\sin(\pi H)}{2\pi}\int_{-\I}^\I e^{i\tau z}\frac{|z|^{1-2H}}{A(\lambda)^2+z^2}~\txtd z,
    \end{align*}
    which includes an inverse Fourier transform. Hence, using the definition \eqref{eq:def spectral density} and applying the Fourier transform we get
    \begin{align*}
        S_x(\omega) = \mathcal{F}(V(\tau)) = \sigma^2\Gamma(2H+1)\sin(\pi H)\frac{|\omega|^{1-2H}}{A(\lambda)^2+\omega^2}.
    \end{align*}
    \qed \\
\end{proof}
As was already seen for the white noise case ($H=\frac{1}{2}$), the SD undergoes 'reddening' as the bifurcation is approached, meaning that the power in the low frequencies increases as $\lambda\to\lambda^*$. But this on its own is not easy to track as an EWS. 
Instead, we introduce the following statistics to quantify the increase of the SD.
\begin{definition}
    Let $(x_t)_{t\geq 0}$ be a stationary process and $S_x(\omega)$ be its SD. If $S_x$ is bounded and satisfies $S_x(\omega)\xrightarrow{\omega\to\pm\I}0$, we define 
    \begin{align}\label{eq:PSD max}
        S_x^{\max}:= \max\limits_{\omega\in\R}S_x(\omega).
    \end{align}
\end{definition}
In \cite{ChenPSDforHopf}, $S_x^{\max}$ was used for numerical simulations of systems driven by white noise, but was not computed analytically. Since the maximum of the SD does not always exist, we only compute the divergence rate of $S^{\max}_x$ for the regime in which the maximum is attained. 
\begin{lemma}\label{lem:PSD_max for fBM}
    Let $H\in\left(0,\frac{1}{2}\right]$ and $(x_t)_{t\geq 0}$ be a solution to \eqref{eq:fast system simplified} in the setting \nameref{S1}. Then, $S_x^{\max}$ is well-defined and we have
    $$|S^{\max}_x|\asymp |A(\lambda)|^{-1-2H},$$
    as $\lambda\to\lambda^*$. 
\end{lemma}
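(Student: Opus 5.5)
The plan is to work directly with the explicit formula from Lemma \ref{lem:fBm 1d PSD}, $S_x(\omega)=\sigma^2C_H\,|\omega|^{1-2H}/(A(\lambda)^2+\omega^2)$. Since $S_x$ is even in $\omega$, it suffices to study $g(\omega):=\omega^{1-2H}/(a^2+\omega^2)$ on $\omega\ge 0$, where $a:=|A(\lambda)|>0$ for $\lambda<\lambda^*$.

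First I check well-definedness, that is, boundedness and decay at infinity. For $H\in(0,\frac12]$ the exponent satisfies $1-2H\in[0,1)$. Hence $g$ is continuous on $[0,\infty)$ and finite at $0$: it equals $0$ if $H<\frac12$ and $a^{-2}$ if $H=\frac12$. Moreover $g(\omega)\sim \omega^{-1-2H}\to 0$ as $\omega\to\infty$. A continuous function on $\R$ vanishing at $\pm\infty$ attains its maximum, so $S_x^{\max}$ exists. For $H>\frac12$ this argument fails because of the singularity at $0$, which explains the restriction in the statement.

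Next I locate the maximum. The key step is scaling: substituting $\omega=a u$ gives $g(au)=a^{-1-2H}\,h(u)$, where $h(u):=u^{1-2H}/(1+u^2)$ does not depend on $\lambda$. Therefore $S_x^{\max}=\sigma^2C_H\,a^{-1-2H}\max_{u\ge0}h(u)$. The constant $\max_{u\ge0} h(u)$ is positive and finite. For $H=\frac12$ it equals $1$, attained at $u=0$. For $H<\frac12$ it is attained at $u^*$ solving $(1-2H)(1+u^2)=2u^2$, namely $u^*=\sqrt{(1-2H)/(1+2H)}$, which gives the value $(u^*)^{1-2H}(1+2H)/2>0$.

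Finally, $\sigma>0$ and $C_H=\Gamma(2H+1)\sin(\pi H)>0$, so $S_x^{\max}/|A(\lambda)|^{-1-2H}$ is a positive constant independent of $\lambda$. This gives $|S_x^{\max}|\asymp|A(\lambda)|^{-1-2H}$ as $\lambda\to\lambda^*$; in fact the ratio has an exact limit, matching the convention after \eqref{eq:asymp}. I expect no real obstacle here. The only point needing care is the boundary case $H=\frac12$, where the maximum sits at $\omega=0$ rather than at an interior critical point, so I would treat it separately.
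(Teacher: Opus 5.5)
Your proposal is correct and is essentially the paper's proof. The paper also maximizes $|\omega|^{1-2H}/(A(\lambda)^2+\omega^2)$ explicitly, finding the maximizer $\omega^{\max}=\sqrt{\frac{1-2H}{1+2H}}\,|A(\lambda)|$ and the value $\sigma^2 C_H\left(\frac{1-2H}{1+2H}\right)^{\frac{1-2H}{2}}\frac{1+2H}{2}\,|A(\lambda)|^{-1-2H}$, which is exactly your constant $\max_u h(u)$. Your scaling substitution $\omega=|A(\lambda)|u$ and your explicit check of boundedness and decay repackage the same computation slightly more cleanly; the paper covers $H=\frac{1}{2}$ implicitly through the convention $0^0=1$ rather than as a separate case.
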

\begin{proof}
    By definition, we have
    \begin{align*}
        S^{\max}_x
        &=\sigma^2\Gamma(2H+1)\sin(\pi H) \max\limits_{\omega\in\R} \frac{|\omega|^{1-2H}}{A(\lambda)^2+\omega^2}.
    \end{align*}
    As $1-2H\ge0$ we find that the maximum is taken at $\omega^{\max}(\lambda)=\sqrt{\frac{1-2H}{1+2H}}A(\lambda)$ and that 
    $$S^{\max}_x = \sigma^2\Gamma(2H+1)\sin(\pi H)\left(\frac{1-2H}{1+2H}\right)^{\frac{1-2H}{2}} \frac{1+2H}{2} A(\lambda)^{-1-2H}.$$
    \qed
\end{proof}\\

As for $H\in\left(\frac{1}{2},1\right)$ the SD has a singularity at $\omega=0$, we cannot compute the maximum in that regime. But from \eqref{eq:PSD fBM} we can still expect the SD to undergo reddening. Hence, we now show how the SD behaves for a fixed frequency $\delta>0$ close to zero.
\begin{lemma}\label{lem:PSD0 for fBm}
    Let $H\in(0,1)$ and $(x_t)_{t\geq 0}$ be a solution to \eqref{eq:fast system simplified} in the setting \nameref{S1}. Then, for any $\delta>0$, there is a local regime $|A(\lambda)|>\delta$ in which $S_x(\delta)$ has power-law behavior with exponent $-2$ meaning  
    $$\frac{\txtd \log(S_x(\delta))}{\txtd \log(|A(\lambda)|)} = -2.$$
    Furthermore, for $\lambda\to\lambda^*$ we have
    $$S_x(\delta)\xrightarrow{\lambda\to\lambda^*}\sigma^2\Gamma(2H+1)\sin(\pi H)|\delta|^{-1-2H}.$$
\end{lemma}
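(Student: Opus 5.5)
The plan is to read everything off the closed form in Lemma \ref{lem:fBm 1d PSD}. At the fixed frequency $\omega=\delta>0$ this gives
\begin{align*}
    S_x(\delta)=\sigma^2 C_H\,\frac{\delta^{1-2H}}{A(\lambda)^2+\delta^2},\qquad C_H=\Gamma(2H+1)\sin(\pi H).
\end{align*}
In this expression only $A(\lambda)$ depends on $\lambda$. So the whole statement reduces to a one-variable computation in $a:=|A(\lambda)|>0$.

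\textbf{Step 1: local power law.} Take logarithms:
\begin{align*}
    \log S_x(\delta)=\log\left(\sigma^2C_H\delta^{1-2H}\right)-\log\left(a^2+\delta^2\right).
\end{align*}
Differentiating with respect to $\log a$ gives
\begin{align*}
    \frac{\txtd \log S_x(\delta)}{\txtd \log a}=-\frac{2a^2}{a^2+\delta^2}=-2+\frac{2\delta^2}{a^2+\delta^2}.
\end{align*}
This log-log slope tends to $-2$ as $a/\delta\to\infty$. The correction $2\delta^2/(a^2+\delta^2)$ is at most $2(\delta/a)^2$. So in the regime $|A(\lambda)|>\delta$, and more precisely $|A(\lambda)|\gg\delta$, $S_x(\delta)$ behaves like $\sigma^2C_H\delta^{1-2H}|A(\lambda)|^{-2}$, a power law with exponent $-2$.

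The identity ``$=-2$'' in the statement has to be read in this asymptotic sense, for $a\gg\delta$. The slope is exactly $-2$ only in the limit $\delta/a\to0$. Making this interpretation explicit is the main obstacle. I would state it as the exponent of the dominant term together with the quantified error bound above.

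\textbf{Step 2: critical limit.} As $\lambda\to\lambda^*$, setting \nameref{S1} gives $A(\lambda)\to A(\lambda^*)=0$ by continuity. The denominator therefore tends to $\delta^2$, and
\begin{align*}
    S_x(\delta)\to\sigma^2C_H\delta^{1-2H}\delta^{-2}=\sigma^2\Gamma(2H+1)\sin(\pi H)|\delta|^{-1-2H}.
\end{align*}
This is the claimed limit. It is finite for every $H\in(0,1)$, including the range $H>\tfrac12$, because $\delta$ is bounded away from the singularity at $\omega=0$.

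Finally, I would note that $a\mapsto S_x(\delta)$ is strictly decreasing in $a$. Hence $S_x(\delta)$ increases monotonically as the bifurcation is approached: first at rate $|A(\lambda)|^{-2}$, then saturating at the limit above once $|A(\lambda)|$ drops below $\delta$. This monotonicity is what justifies using $S_x(\delta)$ as an EWS.
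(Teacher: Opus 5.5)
Your proposal is correct and takes the same route as the paper. You substitute $\omega=\delta$ into the closed form of Lemma \ref{lem:fBm 1d PSD}, differentiate $\log S_x(\delta)$ with respect to $\log|A(\lambda)|$ to obtain $-2A(\lambda)^2/(A(\lambda)^2+\delta^2)$, and get the limit by setting $A(\lambda^*)=0$. Your explicit reading of the exponent $-2$ as holding only asymptotically for $|A(\lambda)|\gg\delta$, with the correction bounded by $2(\delta/|A(\lambda)|)^2$, is more precise than the paper. The paper simply asserts slope $-2$ whenever $|A(\lambda)|>\delta$, whereas the slope actually equals $-1$ at $|A(\lambda)|=\delta$.
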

\begin{proof}
    Let $\delta>0$. 
    For $H=\frac{1}{2}$, we notice that $\log(S_x(0))$ is well-defined, but not for all other $H\neq \frac{1}{2}$.~This is a fundamental difference in comparison to the Brownian motion.~Therefore, it is necessary to consider the regime in which $|A(\lambda)|>\delta$. We plug in \eqref{eq:PSD fBM} and use the chain rule to get
    \begin{align*}
        \frac{\txtd \log(S_x(\delta))}{\txtd \log(|A(\lambda)|)}
        &=\frac{\txtd (\log(|\delta|^{1-2H})-\log(\delta^2+A(\lambda)^2))}{\txtd \log(|A(\lambda)|)} \\
        &=|A(\lambda)|\frac{\txtd}{\txtd |A(\lambda)|} (-\log(\delta^2+A(\lambda)^2)) \\
        &= -2\frac{A(\lambda)^2}{A(\lambda)^2+\delta^2}.            
    \end{align*}
    In conclusion, as long as $|A(\lambda)|$ is larger than $\delta$, we obtain a power-law behavior with exponent $-2$.~The claim about the convergence of $S_x(\delta)$ holds trivially by plugging in $A(\lambda)=0$ into $S_x(\delta)$ to obtain 
    $$S_x(\delta)|_{A(\lambda)=0}=\sigma^2\Gamma(2H+1)\sin(\pi H) |\delta|^{-1-2H}.$$
    \qed\\
\end{proof}

\begin{remark}\label{remark:sd}
\begin{enumerate}
    \item [1)] For $H=\frac{1}{2}$ we can set $\delta=0$ in 
    $$\frac{\txtd \log(S_x(\delta))}{\txtd \log(|A(\lambda)|)}
        = -2\frac{A(\lambda)^2}{A(\lambda)^2+\delta^2},$$
    and therefore even have $|S_x(0)|\asymp A(\lambda)^{-2}$.
    \item [2)] Note that $\delta$ should be chosen close to zero, to allow for a large regime with a power-law behavior.
    \item [3)] Depending on whether we consider $S_x^{\max}$ or $S_x(\delta)$ as an EWS, the memory of the noise influences them in different ways but it is present in both cases. For $S_x^{\max}$ we can only compute it for the range $H\in\left(0,\frac{1}{2}\right]$ and $H$ appears in the exponent. For $S_x(\delta)$ the memory of the noise effects it more subtly. As for $H>\frac{1}{2}$ the SD has a singularity at zero and for $H<\frac{1}{2}$ the SD is zero in zero, the memory makes us unable to consider $S_x(0)$ directly as an EWS. Instead, we need to consider a frequency close to zero to capture an EWS.
    \item [4)] Similarly to \eqref{eq:autocov fast subsystem complex 2}, we can find a mitigation term, memory term and masking term in the SD. Here $\frac{1}{A(\lambda)^2+\omega^2}$ is the mitigation term as this is clearly the part which causes the increase as we approach the bifurcation. Furthermore, $C_H|\omega|^{1-2H}$ is the memory term, as this captures the dependence on $H$. Last, $\sigma^2$ can be interpreted as a the masking term. In Section \ref{sec:red noise} we see how the SD of the noise can be a part of the masking term.
\end{enumerate}
\end{remark}

A different EWS based on the SD was introduced in \cite{Clarke_2023}. 
\begin{definition}
    Consider a differential equation 
    $$\txtd x_t = M(\lambda) x_t + \xi_t~\txtd t,$$
    where $\xi$ is a stationary process with SD $S_\xi(\omega)$.
    Then, the ratio of spectra (ROSA) is defined as
    \begin{align*}
        R(\omega) = \frac{S_x(\omega)}{S_{\xi}(\omega)}.
    \end{align*}
\end{definition}
\begin{corollary}
    In the setting \nameref{S1}, we immediately obtain that the ROSA of \eqref{eq:fast system simplified} is
$$R(\omega)=\frac{1}{A(\lambda)^2+\omega^2}.$$
\end{corollary}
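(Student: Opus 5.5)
The corollary follows by dividing the spectral density of Lemma \ref{lem:fBm 1d PSD} by the spectral density of the driving noise. In the setting \nameref{S1}, the forcing in \eqref{eq:fast system simplified} is $\xi_t\,\txtd t=\sigma\,\txtd W^H_t$, i.e.\ formally the fractional Gaussian noise $\xi=\sigma\dot W^H$. Its spectral density is not defined through Definition \ref{def:PSD}, since its autocovariance is not integrable for $H>\frac12$. I would therefore identify it through \eqref{eq:Cov Pipiras Taqqu}.

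The first step is to compute $S_\xi$. Take $f=\mathbbm{1}_{[t,t+h]}$ and $g=\mathbbm{1}_{[s,s+h]}$ in \eqref{eq:Cov Pipiras Taqqu}. The covariance of the increments of $\sigma W^H$ is then
\[
\sigma^2\frac{\Gamma(2H+1)\sin(\pi H)}{2\pi}\int_\R \mathcal{F}(f)(x)\overline{\mathcal{F}(g)(x)}|x|^{1-2H}\,\txtd x .
\]
This shows that the increment process has spectral measure with density $\sigma^2 C_H|\omega|^{1-2H}$, after matching the normalisation $\frac{1}{2\pi}$ used in \eqref{eq:def spectral density} exactly as in the proof of Lemma \ref{lem:fBm 1d PSD}. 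Letting $h\to0$, the spectral density of the noise is
\[
S_\xi(\omega)=\sigma^2\Gamma(2H+1)\sin(\pi H)|\omega|^{1-2H}=\sigma^2C_H|\omega|^{1-2H}.
\]
An equivalent route applies the same Cheridito-type argument used in Lemma \ref{lem:fBm 1d PSD} with drift $A=0$. The check $H=\frac12$ gives $C_{1/2}=1$, so $S_\xi=\sigma^2$ and the noise is white, as expected.

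The second step is to divide \eqref{eq:PSD fBM} by $S_\xi$ for $\omega\neq0$. The factors $\sigma^2C_H|\omega|^{1-2H}$ cancel, giving
\[
R(\omega)=\frac{1}{A(\lambda)^2+\omega^2}.
\]
At $\omega=0$ I would define $R$ by continuous extension, since both spectra vanish or blow up there when $H\neq\frac12$.

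The linear-filter viewpoint serves as a cross-check. The stationary solution is $x=h*\xi$ with $h(t)=e^{A(\lambda)t}\mathbbm{1}_{t\ge0}$, and $|\mathcal{F}(h)(\omega)|^2=(A(\lambda)^2+\omega^2)^{-1}$.

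The main obstacle is making the normalisation of $S_\xi$ consistent with that of $S_x$, since the $2\pi$ factors must agree for the constant $C_H$ to cancel exactly. I would handle this by deriving both spectra from the same formula \eqref{eq:Cov Pipiras Taqqu}, so that the convention is shared automatically.
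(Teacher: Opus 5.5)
Your argument is correct and matches the paper's implicit reasoning: divide $S_x$ from Lemma \ref{lem:fBm 1d PSD} by the generalized spectral density $S_\xi(\omega)=\sigma^2C_H|\omega|^{1-2H}$ of the fractional Gaussian noise, computed in the same normalisation so the constants cancel; your continuous extension at $\omega=0$ handles a point the paper glosses over when it later writes $R(0)=A(\lambda)^{-2}$. One correction to a side remark: setting $A=0$ in the Cheridito-type formula gives $\sigma^2C_H|\omega|^{-1-2H}$, the spectrum of $\sigma W^H$ itself rather than of its derivative, so that ``equivalent route'' either needs an extra factor $\omega^2$ or should be dropped.
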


In order to compute the ROSA we need to assume that $S_{\xi}(\omega)$ is known. In \cite{Clarke_2023},  ROSA is computed from the data and then $A(\lambda)$ is estimated from ROSA. The EWS is then given by $A(\lambda)$ approaching zero.
On the other hand, ROSA can directly be used as an EWS. Obviously, $R(0)=A(\lambda)^{-2}$, so plotting ROSA in a loglog plot gives a slope of $-2$ whenever a bifurcation is approached.

As can clearly be seen, the power-law behavior of ROSA is similar to the SD at some fix frequency $S_x(\delta)$. However, they also have crucial differences. In order to compute ROSA we need explicit knowledge of $S_\xi(\omega)$. According to \cite{Clarke_2023}, ROSA is resistant to false positives, meant as an increase in variance even though there is no approaching bifurcation, due to changes in the noise $\xi$ over time.
An example for this can be seen in \cite[Figure 2]{Clarke_2023}. However, under the assumption of explicit knowledge of the noise $\xi$ or the SD $S_\xi(\omega)$, one can adjust the variance and the SD in such a way that it resistant to false positives due to change in noise. Both EWSs are resistant to false negatives due to specific type of noise such as a fractional Ornstein-Uhlenbeck process considered in Section \ref{sec:red noise}.

\section{Warning of Hopf bifurcations} \label{sec:fbm 2d}
In this section, we construct EWSs for the approach to a Hopf bifurcation \cite{kuehn2015multiple} in a stochastic fast system \eqref{eq:fast system simplified} in the setting \nameref{S2}. We assume the stochastic forcing to be a fractional Brownian motion in $\R^2$ with Hurst index $H\in\left(0,1\right)$. As discussed further in this work, the rotatory phenomenon preceding the rise of a stable cycle strongly affects the behavior of the observables prior to the critical transition. The case $H=\frac{1}{2}$ in \cite{kuehn2019scaling} provides a comparison of regimes in the study of the time-asymptotic variance. It follows that the rise of a limit cycle hinders memory persistence and the rates in $H\in\left(0,1\right)$ are qualitatively equivalent. More specifically, the spinning frequency in the critical threshold, $B(\lambda^*)\neq 0$, provides different scalings than the ones described in Section \ref{sec:fbm 1d} as a consequence of the convergence of the memory terms \eqref{eq:function_P} and the onset of multiple peaks in the spectral density.

\subsection{Autocovariance and autocorrelation} \label{sec:auto 2d}

In the theorem below, we implement Lemma \ref{lem:autocov fast system complex lemma} to study the solution of \eqref{eq:fast system simplified} in the setting \nameref{S2} along spectral and general eigenmodes. In its statement, we employ the notation introduced in Section \ref{sec: S1 and S2} and \eqref{eq:function_P}. We then define the symmetric matrix $Q:=\Sigma \Sigma^{\text{T}}$.

\begin{theorem}\label{thm:autocov fast system complex}
Let $H\in\left(0,1\right)$ and $(\mathbf{x}_t)_{t\geq 0}$ be a solution to \eqref{eq:fast system simplified} in the setting \nameref{S2}. For any $\tau\ge0$, it holds that
\begin{align}\label{eq:autocov fast subsystem complex spectral}
    \begin{split}
    &V_\infty(\tau)\left[ \mathbf{e}_{j_1}^*(\lambda), \mathbf{e}_{j_2}^*(\lambda) \right]\\
    =& \left(-\zeta_{j_1}(\lambda)-\overline{\zeta_{j_2}(\lambda)}\right)^{-1} H \left\langle  \mathbf{e}_{j_2}^*(\lambda), Q \mathbf{e}_{j_1}^*(\lambda)\right\rangle P(\zeta_{j_1}(\lambda),\zeta_{j_2}(\lambda),H,\tau) ,
    \end{split}
\end{align}
with $\zeta_1(\lambda)= A(\lambda) + i B(\lambda)= \overline{\zeta_2(\lambda)}$ and for any $j_1,j_2\in\{1,2\}$. Under the assumption that $\Sigma$ is not null, it follows that
\begin{align}\label{eq:autocov fast subsystem complex 3}
    |V_\infty(\tau)\left[ \mathbf{v}_1, \mathbf{v}_2 \right]|\asymp& |A(\lambda)|^{-1},
\end{align}
for almost every $\tau\ge0$ and any $\mathbf{v}_1, \mathbf{v}_2 \in \R^2$ such that
\begin{equation} \label{eq: civil war}
    \begin{split}
        &e^{-i B(\lambda^*)\tau} \left\langle  \mathbf{e}_{1}^*(\lambda^*), Q \mathbf{e}_{1}^*(\lambda^*)\right\rangle \overline{\left\langle \mathbf{v}_1, \mathbf{e}_{1}(\lambda^*) \right\rangle} \left\langle \mathbf{v}_2, \mathbf{e}_{1}(\lambda^*) \right\rangle\\
        &+
        e^{i B(\lambda^*)\tau} \left\langle  \mathbf{e}_{2}^*(\lambda^*), Q \mathbf{e}_{2}^*(\lambda^*)\right\rangle \overline{\left\langle \mathbf{v}_1, \mathbf{e}_{2}(\lambda^*) \right\rangle} \left\langle \mathbf{v}_2, \mathbf{e}_{2}(\lambda^*) \right\rangle \neq 0.
    \end{split}
\end{equation}
\end{theorem}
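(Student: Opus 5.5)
Diagonalise the drift along the eigenbasis of $M(\lambda)$ and reduce to Lemma \ref{lem:autocov fast system complex lemma}. For $j\in\{1,2\}$ set $\psi^{(j)}_t:=\langle \mathbf{x}_t,\mathbf{e}_j^*(\lambda)\rangle$. Since $M(\lambda)^{\text{T}}\mathbf{e}_j^*=\overline{\zeta_j}\,\mathbf{e}_j^*$ (with $\zeta_1=A+iB$, $\zeta_2=A-iB$), the scalar product with the complex-conjugated second argument gives
\[
\txtd\psi^{(j)}_t=\langle M\mathbf{x}_t,\mathbf{e}_j^*\rangle\,\txtd t+\langle\Sigma\,\txtd\mathbf{W}^H_t,\mathbf{e}_j^*\rangle=\zeta_j\psi^{(j)}_t\,\txtd t+\langle \txtd\mathbf{W}^H_t,\Sigma^{\text{T}}\mathbf{e}_j^*\rangle .
\]
This is exactly system \eqref{eq:fast system simplified hopf 2} with $\alpha=\zeta_{j_1}$, $\beta=\zeta_{j_2}$, $\bm\sigma_1=\Sigma^{\text{T}}\mathbf{e}_{j_1}^*$ and $\bm\sigma_2=\Sigma^{\text{T}}\mathbf{e}_{j_2}^*$, and both drifts have real part $A(\lambda)<0$. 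Lemma \ref{lem:autocov fast system complex lemma} then gives \eqref{eq:autocov fast subsystem complex spectral} directly, since
\[
\langle\bm\sigma_2,\bm\sigma_1\rangle=\langle\Sigma^{\text{T}}\mathbf{e}_{j_2}^*,\Sigma^{\text{T}}\mathbf{e}_{j_1}^*\rangle=\langle\mathbf{e}_{j_2}^*,Q\mathbf{e}_{j_1}^*\rangle .
\]
One only has to check that the complex conjugations match the definition of $V_t(\tau)[\cdot,\cdot]$, which they do because $V_t$ uses $\overline{\langle\mathbf{x}_{t+\tau},\mathbf{w}_2\rangle}$.

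For \eqref{eq:autocov fast subsystem complex 3}, expand real vectors in the dual basis. Biorthogonality ($\langle \mathbf{e}_1,\mathbf{e}_2^*\rangle=\langle \mathbf{e}_2,\mathbf{e}_1^*\rangle=0$) gives $\mathbf{x}_t=\sum_j \psi^{(j)}_t\,\mathbf{e}_j/\langle\mathbf{e}_j,\mathbf{e}_j^*\rangle$ after normalising $\langle\mathbf{e}_j,\mathbf{e}_j^*\rangle=1$. Then $\langle\mathbf{x}_t,\mathbf{v}\rangle=\sum_j\psi^{(j)}_t\langle\mathbf{e}_j,\mathbf{v}\rangle$, so $V_\infty(\tau)[\mathbf{v}_1,\mathbf{v}_2]$ is a sum over $j_1,j_2$ of the four modal terms weighted by $\langle\mathbf{e}_{j_1},\mathbf{v}_1\rangle\overline{\langle\mathbf{e}_{j_2},\mathbf{v}_2\rangle}$. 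Next compare the mitigation factors as $\lambda\to\lambda^*$:
\[
-\zeta_{j_1}-\overline{\zeta_{j_2}}=
\begin{cases}
-2A(\lambda), & j_1=j_2,\\
-2A(\lambda)\mp 2iB(\lambda), & j_1\neq j_2 .
\end{cases}
\]
For $j_1\neq j_2$ this tends to $\mp2iB(\lambda^*)\neq0$, so the cross terms are $\cO(1)$ provided $P$ stays bounded. The diagonal terms carry the factor $(2|A(\lambda)|)^{-1}$ and dominate. Their coefficients are $H\langle\mathbf{e}_j^*,Q\mathbf{e}_j^*\rangle\,P(\zeta_j,\zeta_j,H,\tau)$ evaluated at $\lambda^*$. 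The sum of the diagonal limits multiplied by the projection weights should reproduce \eqref{eq: civil war}, up to the nonzero factor $H\,\Gamma(2H)\,|B(\lambda^*)|^{1-2H}$ times unimodular constants; the inner products there differ from mine only by conjugation conventions, which I will align.

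The main obstacle is computing the limit of $P(\zeta,\zeta,H,\tau)$ as $A\to0$ with $\zeta\to\pm iB^*$ on the imaginary axis. The first term $e^{\overline\zeta\tau}(-\overline\zeta)^{1-2H}\Gamma(2H)$ converges trivially. The term $e^{-\zeta\tau}(-\zeta)^{1-2H}\Gamma(2H,-\zeta\tau)$ needs the upper incomplete gamma function on a ray approaching angle $\pm\pi/2$. I will rotate the contour and use $\Gamma(s,z)=\Gamma(s)-\gamma(s,z)$, together with the fact that the integral $\int^{\pm i\infty} w^{2H-1}e^{-w}\,\txtd w$ converges conditionally for $2H-1<0$, and via Abel-type summation also for $2H<1$ generally. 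For $H\ge\frac12$ this is delicate, so I will instead combine the last two terms of $P$ before taking limits: $\gamma(2H,\overline\zeta\tau)$ and $\Gamma(2H,-\zeta\tau)$ together with their exponential prefactors reassemble into finite-range integrals in $w$, which are continuous in $\zeta$ on $\{\operatorname{Re}\zeta\le0\}$ for fixed $\tau$.

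Once $P(\zeta_j,\zeta_j,H,\tau)\to P^*_j(\tau)$ is established, it remains to check that the limiting diagonal sum equals \eqref{eq: civil war} up to a factor that vanishes only on a null set of $\tau$. The limits are real-analytic in $\tau$ and not identically zero, so they vanish only on a discrete set. This gives the asymptotic comparability $\asymp|A(\lambda)|^{-1}$ for almost every $\tau$, by \eqref{eq:I_ran_out_of_names}.
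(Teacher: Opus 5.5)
Your derivation of \eqref{eq:autocov fast subsystem complex spectral} matches the paper's. You project onto $\mathbf{e}_j^*(\lambda)$, use $M(\lambda)^{\text{T}}\mathbf{e}_j^*=\overline{\zeta_j}\,\mathbf{e}_j^*$, and apply Lemma \ref{lem:autocov fast system complex lemma} with $\bm\sigma_j=\Sigma^{\text{T}}\mathbf{e}_j^*$. Your modal expansion of $V_\infty(\tau)[\mathbf{v}_1,\mathbf{v}_2]$ also matches, as does the observation that only the diagonal mitigation terms diverge. Your rewriting $\Gamma(2H,-\zeta\tau)=\Gamma(2H)-\gamma(2H,-\zeta\tau)$, which gives continuity of $P$ up to $\operatorname{Re}\zeta=0$, is more careful than the paper's bare claim that $|P|\asymp 1$.

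\textbf{The gap.} It is the step you defer with ``should reproduce \eqref{eq: civil war} up to unimodular constants''.

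Write $P_j^*(\tau)$ for the limit of $P(\zeta_j,\zeta_j,H,\tau)$. The coefficient of $|A(\lambda)|^{-1}$ is then $\tfrac{H}{2}\sum_{j}\overline{\langle\mathbf{v}_1,\mathbf{e}_j\rangle}\langle\mathbf{v}_2,\mathbf{e}_j\rangle\langle\mathbf{e}_j^*,Q\mathbf{e}_j^*\rangle P_j^*(\tau)$, with everything evaluated at $\lambda^*$. This is a nonzero multiple of the left-hand side of \eqref{eq: civil war} only if $e^{iB(\lambda^*)\tau}P_1^*(\tau)=e^{-iB(\lambda^*)\tau}P_2^*(\tau)$. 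That is exactly the identity \eqref{eq:call_4} on which the paper's proof rests, and your proposal neither states nor proves it.

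Conjugation conventions are not the issue. Your weights $\langle\mathbf{e}_{j_1},\mathbf{v}_1\rangle\overline{\langle\mathbf{e}_{j_2},\mathbf{v}_2\rangle}$ already coincide with the paper's. The issue is the relative phase of the two memory limits. A ``real-analytic, not identically zero'' argument cannot replace this, because without the identity the zero set of the leading coefficient is not the set described by \eqref{eq: civil war}.

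\textbf{How to close it.} Your own rewriting does the job.
The substitutions $w=-\zeta s$ and $w=\overline{\zeta}s$ turn the two incomplete-gamma pieces of $P(\zeta,\zeta,H,\tau)$ into $\zeta e^{-\zeta\tau}\int_0^\tau s^{2H-1}e^{\zeta s}\,\txtd s$ and $\overline{\zeta}e^{\overline{\zeta}\tau}\int_0^\tau s^{2H-1}e^{-\overline{\zeta}s}\,\txtd s$.
These cancel exactly at $\zeta=\pm iB(\lambda^*)$, since there $\overline{\zeta}=-\zeta$.
The remaining $\Gamma(2H)$-terms, with the principal branch forced by continuity from $\operatorname{Re}\zeta<0$, give $P_1^*(\tau)=e^{-iB(\lambda^*)\tau}p_0$ and $P_2^*(\tau)=e^{iB(\lambda^*)\tau}p_0$, where $p_0=2\Gamma(2H)\sin(\pi H)|B(\lambda^*)|^{1-2H}>0$.

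So the leading coefficient is $\tfrac{H}{2}p_0$ times the left-hand side of \eqref{eq: civil war}; your prefactor was missing $\sin(\pi H)$. Hence \eqref{eq:autocov fast subsystem complex 3} holds for every $\tau\ge 0$ satisfying \eqref{eq: civil war}, not only almost every such $\tau$.
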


The proof of the theorem can be found in Proof \ref{proof:autocov fast system complex}. Its results are multiple. First, we know from \eqref{eq:autocov fast subsystem complex 3} that the scaling law of the time-asymptotic autocovariance is not qualitatively dependent on $H$.~This indicates that the rotatory forcing induced by the rising limit cycle dampens the memory property from the stochastic forcing. Such an effect is observed by the time-asymptotic modal autovariances in \eqref{eq:autocov fast subsystem complex spectral} in $j_1=j_2$. Moreover, $\left|V_\infty(\tau)\left[\mathbf{e}_j^*(\lambda),\mathbf{e}_j^*(\lambda)\right]\right| \asymp \left|V_\infty(\tau)\left[\mathbf{v}_1,\mathbf{v}_2\right]\right|$ for fixed $\tau \geq 0$ and non-zero vectors $\mathbf{v}_1, \mathbf{v}_2 \in \R^2$ satisfying the structural condition that, for any fixed $\mathbf{v}_1$ and strictly positive $Q$, the admissible choices of $\mathbf{v}_2$ form a dense subset of $\R^2$ (and vice versa). This follows from \eqref{eq: civil war}, which shows that both $\mathbf{v}_1$ and $\mathbf{v}_2$ receive contributions from stochastic perturbations across $\mathbf{e}_1^*(\lambda^*)$ or $\mathbf{e}_2^*(\lambda^*)$, and that the sum of the corresponding time-asymptotic modal autovariances does not vanish. Each term highlighted in \eqref{eq:autocov fast subsystem complex 2} affects the rate of divergence in \eqref{eq:autocov fast subsystem complex spectral}:
\begin{itemize}
    \item The masking item $\left\langle  \mathbf{e}_{j_2}^*(\lambda), Q \mathbf{e}_{j_1}^*(\lambda)\right\rangle$ quantifies the effect of stochastic perturbations along the spectral modes. In the case the modes enter in the kernel of $Q$, vanishing noise can hinder the scaling law;
    \item The memory item $P(\zeta_{j_1}(\lambda),\zeta_{j_2}(\lambda),H,\tau)$ does not diverge for any $\tau\geq0$ and spectral mode in the limit $\lambda\to\lambda^*$ due to the fact that $B(\lambda^*)\neq 0$. This is in contrast to the assumption of Theorem \ref{thm:autocov fast system}, where the memory affects the scaling law. Its continuity and analiticity imply the fact that it is not zero on $\lambda=\lambda^*$ for almost every $\tau\geq0$ and for all $\tau$ in a neighborhood of zero;
    \item The mitigation term $\left(-\zeta_{j_1}(\lambda)-\overline{\zeta_{j_2}(\lambda)}\right)^{-1}$ clearly diverges only for $j_1=j_2$ and is the only diverging term in the setting \nameref{S2}.
\end{itemize}
The next lemma describes the limit and the adopted scaling law of the time-asymptotic autocorrelation of the solution in \eqref{eq:fast system simplified hopf 2} in the limit $\lambda\to\lambda^*$. The lemma is proven in Proof \ref{proof:autocor fast system complex} and relies on a Taylor expansion approach of the memory term. In fact, we note that the rate of the time-asymptotic autocorrelation is not affected by the limit of the masking and mitigation terms.

\begin{lemma}\label{lem:autocor fast system complex}
    Let $H\in\left(0,1\right)$ and $\left(\psi_t^{(1)}\right)_{t\geq 0}$ be a solution to the first equation in \eqref{eq:fast system simplified hopf 2} with $||\bm{\sigma}_1||^2\neq 0$. Consider $\alpha=\alpha(\lambda)\in\C$ such that $\underset{\lambda\to\lambda^*}{\lim} \alpha(\lambda)=\alpha(\lambda^*)\in i \R\setminus\{0\}$. Then, it holds that
\begin{align}\label{eq:autocor fast subsystem complex}
    \left| \frac{\underset{t\to\infty}{\lim}\text{Cov}\left(\psi_t^{(1)}, \psi_{t+\tau}^{(1)} \right)}{\underset{t\to\infty}{\lim}\text{Cov}\left(\psi_t^{(1)}, \psi_{t}^{(1)} \right)} - e^{-i \operatorname{Im}(\alpha(\lambda))\tau} \right| = -\operatorname{Re}(\alpha(\lambda)) \tau \Delta(\tau) + \mathcal{O}\left( \operatorname{Re}(\alpha(\lambda))^2 \right) ,
\end{align}
with $\Delta(\tau)\neq 0$ for almost every $\tau \geq 0$. In particular, for almost every $\tau\ge0$, we have that
\begin{align*}
    &\frac{\underset{t\to\infty}{\lim}\text{Cov}\left(\psi_t^{(1)}, \psi_{t+\tau}^{(1)} \right)}{\underset{t\to\infty}{\lim}\text{Cov}\left(\psi_t^{(1)}, \psi_{t}^{(1)} \right)}
    = e^{-i \operatorname{Im}(\alpha(\lambda^*))\tau} 
    + \cO \left(\operatorname{Re}(\alpha(\lambda))\right) + \cO\left( \operatorname{Im}(\alpha(\lambda)-\alpha(\lambda^*))\right) .
\end{align*}
\end{lemma}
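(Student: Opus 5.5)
We start from Lemma \ref{lem:autocov fast system complex lemma} with $\psi^{(2)}=\psi^{(1)}$, $\beta=\alpha$, $\bm{\sigma}_2=\bm{\sigma}_1$. The masking factor $\|\bm{\sigma}_1\|^2\neq0$ and the mitigation factor $(-\alpha-\overline{\alpha})^{-1}=(-2\operatorname{Re}(\alpha))^{-1}$ do not depend on $\tau$, so they cancel in the ratio. Hence
\begin{align*}
    \frac{\lim_{t\to\infty}\text{Cov}(\psi_t^{(1)},\psi_{t+\tau}^{(1)})}{\lim_{t\to\infty}\text{Cov}(\psi_t^{(1)},\psi_{t}^{(1)})}=\frac{P(\alpha,\alpha,H,\tau)}{P(\alpha,\alpha,H,0)} .
\end{align*}
At $\tau=0$ we have $\Gamma(2H,0)=\Gamma(2H)$ and $\gamma(2H,0)=0$, so
\begin{align*}
    P(\alpha,\alpha,H,0)=(-\overline{\alpha})^{1-2H}\Gamma(2H)+(-\alpha)^{1-2H}\Gamma(2H)=2\Gamma(2H)\operatorname{Re}\big((-\alpha)^{1-2H}\big).
\end{align*}
This tends to $2\Gamma(2H)|\alpha(\lambda^*)|^{1-2H}\cos\big((1-2H)\tfrac{\pi}{2}\big)$ as $\lambda\to\lambda^*$, which is nonzero for $H\in(0,1)$. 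So the denominator is a smooth, nonvanishing function of $\alpha$ near $\alpha(\lambda^*)\in i\R\setminus\{0\}$.

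Next we factor out the expected phase. Since $e^{\overline{\alpha}\tau}=e^{\operatorname{Re}(\alpha)\tau}e^{-i\operatorname{Im}(\alpha)\tau}$, and $e^{-\alpha\tau}$ contributes $e^{-i\operatorname{Im}(\alpha)\tau}$ as well, write
\begin{align*}
    P(\alpha,\alpha,H,\tau)=e^{-i\operatorname{Im}(\alpha)\tau}\,\tilde P(a,b,\tau),\qquad a=\operatorname{Re}(\alpha),\ b=\operatorname{Im}(\alpha).
\end{align*}
We expand $\tilde P$ in $a$ at $a=0$ with $b$ fixed near $b^*=\operatorname{Im}(\alpha(\lambda^*))$, and likewise the denominator.

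The zeroth-order terms must match, i.e.\ $\tilde P(0,b,\tau)=P(ib,ib,H,0)$. This comes from the contour argument \eqref{eq:complex_property_2} and its analogue for $\gamma$ on the imaginary axis. At $a=0$ we have $\Gamma(2H,-ib\tau)+\gamma(2H,-ib\tau)=\Gamma(2H)$, with both integrals along the ray $\arg=\mp\pi/2$. After factoring the phase, the $\Gamma$ and $\gamma$ terms therefore recombine into the $\tau=0$ expression. This step needs care: at $a=0$ the upper incomplete gamma lies on the boundary of its domain, so we interpret it as the limit $a\to0^-$, justified by the Jordan-type arc estimate for $2H<1$ and by integration by parts otherwise.

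With the zeroth order matched, differentiate in $a$. Derivatives of $e^{a\tau}$ and of the gamma endpoints $-\alpha\tau,\ \overline{\alpha}\tau$ produce terms proportional to $\tau$ (e.g.\ $\partial_a\Gamma(2H,-\alpha\tau)=\tau(-\alpha\tau)^{2H-1}e^{\alpha\tau}$). The derivatives of the prefactors $(-\alpha)^{1-2H}$ appear in both numerator and denominator and cancel at first order through the quotient expansion \eqref{eq:I_ran_out_of_names}. This gives the ratio $e^{-ib\tau}\big(1+a\tau\tilde\Delta(b,\tau)+\cO(a^2)\big)$. Taking modulus of the difference yields $-a\tau\Delta(\tau)+\cO(a^2)$ with $\Delta=|\tilde\Delta(b^*,\tau)|$, where continuity in $b$ lets us freeze $b=b^*$ up to higher order.

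For $\Delta(\tau)\neq0$ almost everywhere, $\tilde\Delta(b^*,\cdot)$ is real-analytic in $\tau>0$, being built from $e^{\pm ib^*\tau}$, powers $\tau^{2H-1}$ and incomplete gammas of $\pm ib^*\tau$. It is not identically zero, since its small-$\tau$ behaviour contains $\tau^{2H-1}$-type terms with nonzero coefficient. An analytic function that is not identically zero has isolated zeros, so $\Delta(\tau)\neq0$ for almost every $\tau$. The second claim then follows by replacing $e^{-i\operatorname{Im}(\alpha(\lambda))\tau}$ with $e^{-i\operatorname{Im}(\alpha(\lambda^*))\tau}$ at cost $\cO(\operatorname{Im}(\alpha(\lambda)-\alpha(\lambda^*)))$, by the Lipschitz continuity of the exponential.

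The main obstacle is the zeroth-order identity together with the uniformity of the Taylor expansion: the incomplete gamma functions sit at the edge of their domain as $a\to0^-$. I would handle this by rewriting $\Gamma(2H,-\alpha\tau)=\Gamma(2H)-\gamma(2H,-\alpha\tau)$, which is valid for $a<0$, so that only lower gamma functions appear. These are entire in their second argument and hence smooth up to $a=0$.
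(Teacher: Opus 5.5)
Your outline follows the paper's proof. You reduce to $P(\alpha,\alpha,H,\tau)/P(\alpha,\alpha,H,0)$, check that the denominator has a nonzero limit, factor out $e^{-i\operatorname{Im}(\alpha)\tau}$, expand to first order in $a=\operatorname{Re}(\alpha)$, and get $\Delta\neq0$ a.e.\ from analyticity in $\tau$. Your zeroth-order identity is also correct. The step that fails is the claim that the derivatives of the prefactors cancel through the quotient expansion. Write $b=\operatorname{Im}(\alpha)$.

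\begin{itemize}
\item The derivative of $(-\overline{\alpha})^{1-2H}$ does cancel.
\item $(-\alpha)^{1-2H}$ multiplies $\Gamma(2H,-\alpha\tau)$ in the numerator but $\Gamma(2H)$ in the denominator.
\item The complementary piece $\gamma(2H,\overline{\alpha}\tau)$ sits in the numerator behind a \emph{different} prefactor, $\overline{\alpha}^{\,1-2H}$.
\end{itemize}

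The two prefactors agree at $a=0$, which is why the zeroth order matches. However, $\partial_a(-\alpha)^{1-2H}=-(1-2H)(-\alpha)^{-2H}$ while $\partial_a\overline{\alpha}^{\,1-2H}=+(1-2H)\overline{\alpha}^{\,-2H}$. So the first-order numerator retains
\[
2(1-2H)(-ib)^{-2H}\gamma(2H,-ib\tau)=2(1-2H)\int_0^\tau e^{ibw}w^{2H-1}\,\txtd w ,
\]
which is nonzero for $H\neq\frac12$ and $\tau>0$.

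This matters for the substantive claim $\Delta\not\equiv0$. The dropped term is of order $\tau^{2H}$, the same order as the endpoint contribution $2e^{ib\tau}\tau^{2H}$ that you keep. So you cannot assert a nonzero $\tau^{2H-1}$ coefficient in $\tilde\Delta$ without the full computation, and your recipe would give the wrong value $2$.

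The cleanest way to compute correctly is the paper's integral form. There, $\overline{\alpha}^{\,1-2H}\gamma(2H,\overline{\alpha}\tau)=\overline{\alpha}\int_0^\tau e^{-\overline{\alpha}w}w^{2H-1}\txtd w$, and $P(\alpha,\alpha,H,\tau)-e^{-ib\tau}P(\alpha,\alpha,H,0)$ splits into four terms, each vanishing at $a=0$. The last one, $\int_0^\tau(\alpha e^{\alpha w}+\overline{\alpha}e^{-\overline{\alpha}w})w^{2H-1}\txtd w$, is where the endpoint and prefactor contributions live together. Fixing $b$, the $a$-derivative at $a=0$ of $e^{ib\tau}\big(P(\alpha,\alpha,H,\tau)-e^{-ib\tau}P(\alpha,\alpha,H,0)\big)$ is
\[
2ib\,\Gamma(2H)|b|^{-2H}\cos(\pi H)\,\tau-2ib\tau K(\tau)+2(1-2H)K(\tau)+2e^{ib\tau}\tau^{2H},\qquad K(\tau)=\int_0^\tau e^{ibw}w^{2H-1}\,\txtd w .
\]
This equals $2\tau$ for $H=\frac12$, giving $\Delta=1$. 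Its $\tau^{2H}$ part has coefficient $2+\frac{1-2H}{H}=\frac1H\neq0$. With that established, your analyticity argument goes through.

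Two smaller points:
\begin{itemize}
\item $\gamma(2H,\cdot)$ is not entire, since it branches at $0$. Smoothness up to $a=0$ still holds, because $-\alpha\tau$ stays away from $0$ when $\tau>0$.
\item Freezing $b$ at $b^*$ by continuity leaves an $\cO(|a|\,|b-b^*|)$ error. This is not $\cO(a^2)$ unless $\Delta$ is allowed to depend on $\operatorname{Im}(\alpha(\lambda))$ or $|b-b^*|=\cO(|a|)$. The paper's proof makes the same substitution.
\end{itemize}
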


Finally, we employ Theorem \ref{thm:autocov fast system complex} and Lemma \ref{lem:autocor fast system complex} to further study the behavior of the solution to \eqref{eq:fast system simplified} in the setting \nameref{S2}. Specifically, we obtain the limit and the rate of convergence of its time-asymptotic autocorrelation along general functions $\mathbf{v}\in\R^2$.

\begin{corollary} \label{cor:autocor fast system complex}
Let $H\in\left(0,1\right)$ and $(\mathbf{x}_t)_{t\geq 0}$ be a solution to \eqref{eq:fast system simplified} in the setting \nameref{S2} with non-null $\Sigma$. Then, for almost every $\tau\ge0$, it holds that
\begin{align*}
    &\left| AC_\infty(\tau)\left[\mathbf{v}\right]
    - \left( c_1(\lambda^*) e^{-i B(\lambda)\tau}
    + c_2(\lambda^*) e^{i B(\lambda)\tau} \right)
    \right|\\
    =& \cO\left(-A(\lambda)\right) + \cO\left(\left|\left| \mathbf{e}_1^*(\lambda) - \mathbf{e}_1^*(\lambda^*) \right|\right| \right) + \cO\left(\left|\left| \mathbf{e}_2^*(\lambda) - \mathbf{e}_2^*(\lambda^*) \right|\right| \right)
\end{align*}
for any $\mathbf{v}$ in a dense subset of $\R^2$ and in the limit $\lambda\to\lambda^*$. The complex parameters $c_j(\lambda)$ are defined for any $j\in\{1,2\}$ and $\lambda\leq \lambda^*$ by
\begin{align*}
    c_j(\lambda):= \frac{ \left|\left\langle \mathbf{v}, \mathbf{e}_j(\lambda) \right\rangle\right|^2 \left|\left| \Sigma^\text{T} \mathbf{e}_j^*(\lambda) \right|\right|^2}{ \left|\left\langle \mathbf{v}, \mathbf{e}_1(\lambda) \right\rangle\right|^2 \left|\left| \Sigma^\text{T} \mathbf{e}_1^*(\lambda) \right|\right|^2 + \left|\left\langle \mathbf{v}, \mathbf{e}_2(\lambda) \right\rangle\right|^2 \left|\left| \Sigma^\text{T} \mathbf{e}_2^*(\lambda) \right|\right|^2},
\end{align*}
for $\mathbf{v}$ in the dense subset mentioned above. Moreover, we have that
\begin{align*}
    AC_\infty(\tau)\left[\mathbf{v}\right] =& c_1(\lambda^*) e^{-i B(\lambda^*)\tau}
    + c_2(\lambda^*) e^{i B(\lambda^*)\tau} + \cO\left(-A(\lambda)\right) \\
    &+ \cO\left(\left|\left| \mathbf{e}_1^*(\lambda) - \mathbf{e}_1^*(\lambda^*) \right|\right| \right) + \cO\left(\left|\left| \mathbf{e}_2^*(\lambda) - \mathbf{e}_2^*(\lambda^*) \right|\right| \right) + \cO\left( |B(\lambda)-B(\lambda^*)| \right) .
\end{align*}
\end{corollary}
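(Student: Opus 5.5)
The plan is to reduce the autocorrelation along $\mathbf{v}$ to a combination of the modal autocovariances of Theorem \ref{thm:autocov fast system complex}, and then to apply the Taylor expansion of Lemma \ref{lem:autocor fast system complex} to each diverging diagonal term.

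\textbf{Step 1: modal decomposition.} Since $\langle \mathbf{e}_1,\mathbf{e}_2^*\rangle=\langle \mathbf{e}_2,\mathbf{e}_1^*\rangle=0$, we may normalize $\langle \mathbf{e}_j,\mathbf{e}_j^*\rangle=1$ and write
\begin{align*}
\mathbf{x}_t=\sum_{j=1}^2 \langle \mathbf{x}_t,\mathbf{e}_j^*(\lambda)\rangle\,\mathbf{e}_j(\lambda).
\end{align*}
Hence $\langle \mathbf{x}_t,\mathbf{v}\rangle=\sum_j \psi^{(j)}_t\langle \mathbf{e}_j,\mathbf{v}\rangle$, where $\psi^{(j)}_t:=\langle \mathbf{x}_t,\mathbf{e}_j^*\rangle$ solves \eqref{eq:fast system simplified hopf 2} with drift $\zeta_j(\lambda)$ and noise vector $\bm{\sigma}_j=\Sigma^{\text{T}}\overline{\mathbf{e}_j^*}$ (up to the conjugation convention of $\langle\cdot,\cdot\rangle$). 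Bilinearity then gives
\begin{align*}
V_\infty(\tau)[\mathbf{v},\mathbf{v}]=\sum_{j_1,j_2}\overline{\langle \mathbf{v},\mathbf{e}_{j_1}\rangle}\,\langle \mathbf{v},\mathbf{e}_{j_2}\rangle\,V_\infty(\tau)[\mathbf{e}_{j_1}^*,\mathbf{e}_{j_2}^*],
\end{align*}
with the conjugation pattern matching \eqref{eq: civil war}.

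\textbf{Step 2: isolate the diverging terms.} By \eqref{eq:autocov fast subsystem complex spectral}, the cross terms $j_1\neq j_2$ carry the mitigation factor $(-\zeta_1-\overline{\zeta_2})^{-1}=(-2A-2iB)^{-1}\cdot(\ldots)$, which stays bounded because $B(\lambda^*)\neq0$; likewise $P$ stays bounded. The diagonal terms equal $(-2A(\lambda))^{-1}H\,\|\Sigma^{\text{T}}\mathbf{e}_j^*\|^2P(\zeta_j,\zeta_j,H,\tau)$. Multiplying numerator and denominator of $AC_\infty(\tau)[\mathbf{v}]$ by $-2A(\lambda)$ therefore kills the cross terms at order $\cO(-A(\lambda))$.

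\textbf{Step 3: the ratio of diagonal terms.} Lemma \ref{lem:autocor fast system complex}, applied with $\alpha=\zeta_j$, gives
\begin{align*}
P(\zeta_j,\zeta_j,H,\tau)=P(\zeta_j,\zeta_j,H,0)\left(e^{\mp iB(\lambda)\tau}+\cO(A(\lambda))\right).
\end{align*}
Moreover $P(\zeta_1,\zeta_1,H,0)$ and $P(\zeta_2,\zeta_2,H,0)$ are complex conjugates, and at $\lambda^*$ both equal a common real positive value: this should follow from \eqref{eq:complex_property_2} and the symmetry of $P$ at $\tau=0$, giving the diagonal variance $\asymp |A|^{-1}$ with a positive constant. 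The common factor then cancels between numerator and denominator, leaving the weights $c_j(\lambda)$. Replacing $c_j(\lambda)$ by $c_j(\lambda^*)$ through continuity of the eigenvectors, and using \eqref{eq:I_ran_out_of_names}, produces the $\cO(\|\mathbf{e}_j^*(\lambda)-\mathbf{e}_j^*(\lambda^*)\|)$ errors (with $\mathbf{e}_j$ controlled by $\mathbf{e}_j^*$ through duality). Expanding $e^{\mp iB(\lambda)\tau}$ around $B(\lambda^*)$ adds the final $\cO(|B(\lambda)-B(\lambda^*)|)$ term.

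\textbf{Main obstacle.} The dense subset of $\mathbf{v}$ is where the denominator limit
\begin{align*}
\sum_j|\langle \mathbf{v},\mathbf{e}_j(\lambda^*)\rangle|^2\,\|\Sigma^{\text{T}}\mathbf{e}_j^*(\lambda^*)\|^2
\end{align*}
is nonzero, i.e.\ \eqref{eq: civil war} at $\tau=0$. This set is the complement of a proper algebraic (real-analytic) set and hence dense. It is nonempty because $\Sigma$ is non-null and $\mathbf{e}_2^*=\overline{\mathbf{e}_1^*}$, which gives $\|\Sigma^{\text{T}}\mathbf{e}_1^*\|=\|\Sigma^{\text{T}}\mathbf{e}_2^*\|>0$. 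The hardest step is to check carefully that $P(\zeta_j,\zeta_j,H,0)$ has the same nonzero limit for $j=1,2$, so that it truly factors out; if it does not, the $c_j$ have to absorb the ratio of the $P$ values. The ``almost every $\tau$'' restriction comes directly from Lemma \ref{lem:autocor fast system complex}.
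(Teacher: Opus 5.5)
Your proposal is correct and follows essentially the paper's route: modal decomposition of $V_\infty(\tau)[\mathbf{v},\mathbf{v}]$, cross terms that stay bounded against diagonal terms of order $|A(\lambda)|^{-1}$, rewriting the diagonal part as $c_1(\lambda)AC_\infty(\tau)[\mathbf{e}_1^*(\lambda)]+c_2(\lambda)AC_\infty(\tau)[\mathbf{e}_2^*(\lambda)]$, applying Lemma \ref{lem:autocor fast system complex} to each mode, and then using continuity of $c_j$ and of $B$. The step you flag as hardest is in fact immediate, and the paper uses it implicitly in that rewriting: since $\gamma(2H,0)=0$ and $\Gamma(2H,0)=\Gamma(2H)$, one has $P(\alpha,\alpha,H,0)=2\Gamma(2H)\operatorname{Re}\left((-\alpha)^{1-2H}\right)$ as in \eqref{eq: autocov complex second step}, which is real, invariant under $\alpha\mapsto\overline{\alpha}$ (hence equal for $\zeta_1$ and $\zeta_2$ at every $\lambda$), and strictly positive at $\lambda^*$ because $|1-2H|<1$.
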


The proof of the corollary can be found in Proof \ref{proof: autocor fast system complex cor}. With its statement, we have proven that the damping of memory due to arising spinning occurs also in the observation of the time-asymptotic autocorrelation. Its rate of convergence is unaffected by $H\in\left(0,1\right)$. Moreover, its scaling law is equivalent to that of case $H=\frac{1}{2}$, as shown in \cite{bernuzzi2024warning} along spectral modes. On more general directions in $\R^2$, the rate depends also on the behavior of $\mathbf{e}_1^*(\lambda)$ and $\mathbf{e}_2^*(\lambda)$ in $\lambda\to\lambda^*$. This is given by the fact that two distinct eigenvalues cross the imaginary axis in the limit.

\subsection{Spectral density}
Next we show how the SD can be used as an EWS for stationary solutions of \eqref{eq:fast system} in the setting \nameref{S2}, extending the results in Section \ref{sec:SD S1 fBm} to the two-dimensional case. 
\begin{lemma}\label{lem:PSD S2 fBm}
    Let $(\mathbf{x}_t)_{t\geq 0}$ be the stationary solution of \eqref{eq:fast system} in the setting \nameref{S2}. Then, the SD of $\mathbf{x}$ denoted by $ S_{\mathbf{x}\mathbf{x}}$ is given for all $H\in(0,1)$ by
    \begin{align}\label{eq:PSD for n-dim}
    S_{\mathbf{x}\mathbf{x}}(\omega) = C_H\left(M(\lambda)-i\omega I\right)^{-1}Q\left(M(\lambda)^*+i\omega I\right)^{-1}|\omega|^{1-2H}.
    \end{align}
\end{lemma}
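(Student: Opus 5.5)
The plan is to mirror the proof of Lemma \ref{lem:fBm 1d PSD} and lift it to the matrix setting via \eqref{eq:Cov Pipiras Taqqu}. Since all eigenvalues $A(\lambda)\pm iB(\lambda)$ of $M(\lambda)$ have negative real part for $\lambda<\lambda^*$, the stationary solution of \eqref{eq:fast system simplified} in \nameref{S2} can be written as the Wiener integral
\begin{align*}
    \mathbf{x}_t=\int_\R \mathbf{1}_{\{s\le t\}}e^{M(\lambda)(t-s)}\Sigma~\txtd\mathbf{W}_s^H .
\end{align*}
Each component is a finite sum of scalar Wiener integrals against the i.i.d.\ components of $\mathbf{W}^H$. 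The kernel $g_t(s):=\mathbf{1}_{\{s\le t\}}e^{M(\lambda)(t-s)}\Sigma$ has entries that are exponentially decaying, so they lie in $\tilde\Lambda^H$. Its Fourier transform is computed entrywise; up to the $2\pi$ normalisation convention of $\cF$, it is
\begin{align*}
    \cF(g_t)(x)=e^{-ixt}\left(i x I - M(\lambda)\right)^{-1}\Sigma .
\end{align*}
This follows from $\int_0^\infty e^{(M(\lambda)-ixI)u}\,\txtd u=(ixI-M(\lambda))^{-1}$, which converges because $M(\lambda)-ixI$ is Hurwitz.

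Next, apply \eqref{eq:Cov Pipiras Taqqu} componentwise to the entries of $\E[\mathbf{x}_t\overline{\mathbf{x}}_{t+\tau}^{\text{T}}]$. Independence of the components of $\mathbf{W}^H$ kills the cross terms, so the $(k,l)$ entry is a sum over noise components. This gives
\begin{align*}
    V(\tau)=\frac{\Gamma(2H+1)\sin(\pi H)}{2\pi}\int_\R e^{i\tau x}\left(ixI-M(\lambda)\right)^{-1}Q\left(\left(ixI-M(\lambda)\right)^{-1}\right)^{*}|x|^{1-2H}~\txtd x ,
\end{align*}
with the sign of the phase $e^{\pm i\tau x}$ fixed by the conjugation convention in \eqref{eq:ACov def}. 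Here $Q=\Sigma\Sigma^{\text{T}}$ appears because $\Sigma$ is real. This is exactly the two-dimensional analogue of \cite[Remark 2.4]{Cheridito_fOU}. The integrand is $O(|x|^{-1-2H})$ at infinity and $O(|x|^{1-2H})$ at zero, so it is integrable for all $H\in(0,1)$.

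Finally, recognise the display as an inverse Fourier transform and invert it using \eqref{eq:def spectral density}, as in the scalar lemma. This yields $S_{\mathbf{x}\mathbf{x}}(\omega)=C_H(ixI-M)^{-1}Q((ixI-M)^{-1})^*|\omega|^{1-2H}$ evaluated at $x=\omega$. Pulling out the signs, $(i\omega I-M)^{-1}=-(M-i\omega I)^{-1}$ and $((i\omega I-M)^{-1})^*=-(M^*+i\omega I)^{-1}$; the two minus signs cancel, which gives \eqref{eq:PSD for n-dim}. (If the convention produces $\omega\mapsto-\omega$, the factor $|\omega|^{1-2H}$ is unaffected and the orientation matches the paper's.)

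The main obstacle is justifying the Fourier inversion. For $H>\frac12$ the integrand is unbounded at $\omega=0$, and $V$ need not be integrable, since it decays only like $\tau^{2H-2}$. So the hypothesis of Definition \ref{def:PSD} may fail. I would handle this as the scalar case implicitly does: interpret $S_{\mathbf{x}\mathbf{x}}$ through Theorem \ref{wk}, showing that the spectral measure is absolutely continuous with the above density. Alternatively, compute \eqref{eq:PSD via Fourier} directly, noting that the $L^1$ density gives $V$ as its Fourier transform and that uniqueness of the spectral measure identifies the density. Keeping track of the $2\pi$ and conjugation conventions is the remaining bookkeeping.
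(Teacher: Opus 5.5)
Your proposal is correct and is essentially the paper's own argument (Wiener-integral representation of the stationary solution, independence of the components of $\mathbf{W}^H$ producing $Q=\Sigma\Sigma^{\text{T}}$, the isometry \eqref{eq:Cov Pipiras Taqqu} with the resolvent as Fourier transform of the truncated matrix exponential, then Fourier inversion), and your remark that $V$ is not integrable for $H>\frac12$, so that $S_{\mathbf{x}\mathbf{x}}$ must be read as the density of the spectral measure, covers a point the paper passes over. The only caveat is sign bookkeeping, which the paper's proof shares: the consistent pairing is $e^{ixt}(ixI-M(\lambda))^{-1}\Sigma$ rather than the $e^{-ixt}$ you wrote, and the flip $\omega\mapsto-\omega$ is not harmless, because it transposes the matrix and conjugates $S^{12}_{\mathbf{x}}$ --- transforming $\mathbb{E}[\mathbf{x}_{t+\tau}\mathbf{x}_t^{\text{T}}]$ with $e^{-i\omega\tau}$ gives the stated orientation, whereas \eqref{eq:ACov def} gives its transpose, and \eqref{eq:def spectral density} taken literally adds a factor $\frac{1}{2\pi}$ --- none of which affects the scaling laws in Theorem \ref{thm:psd_on_elements}.
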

\begin{proof}
The stationary solution is given by
$$\mathbf{x}_t = \int_{-\infty}^t e^{-M(\lambda) (t-u)} \Sigma ~\txtd W_u^H=\sum_{j=1}^2\int_{-\infty}^t e^{-M(\lambda) (t-u)} \bm{\sigma}_j ~\txtd W_u^{H,j},$$
where $\bm{\sigma}_j$ are the columns of $\Sigma$.
Then, the autocovariance can be computed to be
\begin{align*}
    V(\tau) 
    &=\E\left[\left(\sum_{j=1}^2 \int_{-\I}^t e^{-M(\lambda)(t-u)} \sigma_{j} ~\txtd W_u^{H,j} \right)\left(\sum_{k=1}^2 \int_{-\I}^{t+\tau} e^{-M(\lambda)(t+\tau-u)} \sigma_{k} ~\txtd W_u^{H,k} \right)^* \right]\\
    &=\sum_{j=1}^2\E\left[\left( \int_\R \mathbbm{1}_{\{u\le0\}}e^{M(\lambda)u} \sigma_{j} ~\txtd W_u^{H,j} \right)\left( \int_\R e^{-M(\lambda)\tau}\mathbbm{1}_{\{u\le\tau\}} e^{M(\lambda)u} \sigma_{j} ~\txtd W_u^{H,j} \right)^* \right]\\
    &=\sum_{j=1}^2 \left\langle \mathbbm{1}_{\{u\le0\}} e^{M(\lambda)u}\bm{\sigma}_j, e^{-M(\lambda)\tau}\mathbbm{1}_{\{u\le\tau\}} e^{M(\lambda)u}\bm{\sigma}_j  \right\rangle_{\tilde\Lambda_H} ,
\end{align*}
where we used \eqref{eq:Cov Pipiras Taqqu}. We note that the Fourier transform of $\mathbbm{1}_{\{u\le0\}} e^{M(\lambda)u}$ is given by
\begin{align}\label{eq:Fourier transform of exponential}
    \mathcal{F}\left(\mathbbm{1}_{\{u\le\tau\}} e^{M(\lambda)u}\right)(z)=e^{iz\tau}\left(M(\lambda)-izI\right)^{-1}e^{M(\lambda)\tau }=e^{iz\tau}e^{M(\lambda)\tau }\left(M(\lambda)-izI\right)^{-1}.
\end{align}
Now we use the definition of the inner product on the space $\tilde\Lambda_H$ to get
\begin{align*}
    &\sum_{j=1}^2 \left\langle \mathbbm{1}_{\{u\le0\}} e^{M(\lambda)u}\bm{\sigma}_j, e^{M(\lambda)\tau}\mathbbm{1}_{\{u\le\tau\}} e^{M(\lambda)u}\bm{\sigma}_j  \right\rangle_{\tilde\Lambda_H}\\
    =& \sum_{j=1}^2 \frac{C_H}{2\pi}\int_\R \left(M(\lambda)-izI\right)^{-1}\bm{\sigma}_j\bm{\sigma}_j^*\left(M(\lambda)^*+izI\right)^{-1}e^{iz\tau}e^{M(\lambda)^*\tau}e^{-M(\lambda)^*\tau}|z|^{1-2H}~\txtd z \\
    =& \frac{C_H}{2\pi}\int_\R \left(M(\lambda)-izI\right)^{-1}Q\left(M(\lambda)^*+izI\right)^{-1}e^{iz\tau}|z|^{1-2H}~\txtd z,
\end{align*}
where $C_H=\Gamma(2H+1)\sin(\pi H)$.
Hence, using the definition of the spectral density \eqref{eq:def spectral density} it is given by 
\begin{align*}
    S_{\mathbf{x}\mathbf{x}}(\omega) = C_H\left(M(\lambda)-i\omega I\right)^{-1}Q\left(M(\lambda)^*+i\omega I\right)^{-1}|\omega|^{1-2H}.
\end{align*}
\qed \\
\end{proof}
Since the SD is a matrix, we introduce the notation $S_\mathbf{x}^{j_1j_2}(\omega):=\left(S_\mathbf{x}(\omega)\right)_{j_1 j_2}$ to refer to its entries. Our goal is to use the SD as an EWS. We note that as in the setting \nameref{S1}, the SD undergoes reddening. However, in setting \nameref{S2} we can consider the SD at the frequency $\pm B(\lambda)$, which corresponds to the frequencies at which the SD has a peak. For these frequencies, we can observe the divergence due to the approaching bifurcation without any interference of the noise. 
\begin{theorem} \label{thm:psd_on_elements}
    Let $(\mathbf{x}_t)_{t\geq 0}$ be the stationary solution of \eqref{eq:fast system} in the setting \nameref{S2}. Then, we have for all $H\in(0,1)$
    $$S_{\mathbf{x}}^{11}(\pm B(\lambda))\asymp A(\lambda)^{-2},~~~S_{\mathbf{x}}^{22}(\pm B(\lambda))\asymp A(\lambda)^{-2},~~~ |S_{\mathbf{x}}^{12}(\pm B(\lambda))|\asymp A(\lambda)^{-2},$$
    where $A(\lambda)=\Re(M(\lambda)),B(\lambda)=\Im(M(\lambda))$.
    Moreover, the behavior of the SD at $\omega=0$ is dictated by $H$. 
\end{theorem}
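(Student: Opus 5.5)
Starting from the explicit matrix formula in Lemma \ref{lem:PSD S2 fBm}, write $R(\omega):=(M(\lambda)-i\omega I)^{-1}$, so that
$$S_{\mathbf{x}\mathbf{x}}(\omega)=C_H|\omega|^{1-2H}R(\omega)QR(\omega)^*.$$
The memory factor $C_H|\omega|^{1-2H}$ is evaluated at $\omega=\pm B(\lambda)$. Since $B(\lambda)\to B(\lambda^*)\neq0$, this factor converges to the positive constant $C_H|B(\lambda^*)|^{1-2H}$ for every $H\in(0,1)$. It therefore plays no role in the scaling. This is exactly why $H$ disappears from the rates at the peaks, while it still governs the behavior at $\omega=0$: there $|\omega|^{1-2H}$ vanishes for $H<\frac12$ and blows up for $H>\frac12$, whereas $R(0)=M(\lambda)^{-1}$ stays bounded because $\det M(\lambda^*)=B(\lambda^*)^2\neq0$. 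This settles the last sentence of the statement.

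For the resolvent, I would use the spectral decomposition from \nameref{S2}. Normalise the eigenvectors so that $\langle \mathbf{e}_j,\mathbf{e}_j^*\rangle=1$, which is possible because of the biorthogonality already noted in \nameref{S2}. Then
$$R(\omega)=\sum_{j=1}^2\frac{\mathbf{e}_j(\mathbf{e}_j^*)^{\text{T}}}{\zeta_j(\lambda)-i\omega}.$$
At $\omega=B(\lambda)$ the denominators are $\zeta_1-iB=A(\lambda)$ and $\zeta_2-iB=A(\lambda)-2iB(\lambda)$, so
$$R(B)=A^{-1}P_1+\cO(1),\qquad P_1:=\mathbf{e}_1(\mathbf{e}_1^*)^{\text{T}}.$$
The $\cO(1)$ term is uniformly bounded because $B(\lambda^*)\neq0$ and the eigenvectors are continuous. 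Hence
$$R(B)QR(B)^*=A(\lambda)^{-2}P_1QP_1^*+\cO(|A|^{-1}).$$
Entrywise,
$$(P_1QP_1^*)_{j_1j_2}=(\mathbf{e}_1)_{j_1}\overline{(\mathbf{e}_1)_{j_2}}\;(\mathbf{e}_1^*)^{\text{T}}Q\overline{\mathbf{e}_1^*}.$$
The frequency $-B(\lambda)$ is handled symmetrically with $P_2=\overline{P_1}$; this uses that $M$ is real, so $\mathbf{e}_2=\overline{\mathbf{e}_1}$ and $\mathbf{e}_2^*=\overline{\mathbf{e}_1^*}$ up to scaling.

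The main obstacle is showing that the leading coefficient is nonzero in the limit $\lambda\to\lambda^*$. There are two factors to check.

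\textbf{The noise factor.} We have $(\mathbf{e}_1^*)^{\text{T}}Q\overline{\mathbf{e}_1^*}=\|\Sigma^{\text{T}}\mathbf{e}_1^*\|^2$, which is positive provided $\mathbf{e}_1^*(\lambda^*)\notin\operatorname{Ker}(\Sigma^{\text{T}})$. A genuinely complex vector cannot lie in the kernel of a real rank-one $\Sigma^{\text{T}}$ unless $\Sigma=0$: if it did, both its real and imaginary parts would lie in the one-dimensional kernel, which would force $\mathbf{e}_1^*$ to be a complex multiple of a real vector and hence have a real eigenvalue, contradicting $B\neq0$. So for nonnull $\Sigma$ the factor is positive. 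I would add this non-degeneracy explicitly (nonnull $\Sigma$) as the masking condition, in analogy with Theorem \ref{thm:autocov fast system complex}.

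\textbf{The eigenvector factor.} Both components of $\mathbf{e}_1(\lambda^*)$ are nonzero. If one vanished, $\mathbf{e}_1$ would be a multiple of a standard basis vector, hence real up to scaling, again contradicting the non-real eigenvalue.

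Together these give $|S^{j_1j_2}_{\mathbf{x}}(\pm B(\lambda))|\asymp A(\lambda)^{-2}$ for all entries. For the diagonal entries the leading coefficient is real and positive, which yields the stated form without absolute values. The remainder $\cO(|A|^{-1})$ is negligible against the leading term, and the limits of the ratios exist by continuity of the eigenvectors and of $B$.
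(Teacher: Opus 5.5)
Your proof is correct, but it takes a different route from the paper.

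\textbf{The paper's route.} It takes $M(\lambda)$ to be literally the normal form $\begin{pmatrix} A & -B\\ B & A\end{pmatrix}$ and inverts $M(\lambda)-i\omega I$ and $M(\lambda)^*+i\omega I$ by hand. Each entry of $S_{\mathbf{x}}(\omega)$ is then the common factor $|\omega|^{1-2H}\bigl|(A-i\omega)^2+B^2\bigr|^{-2}$ times an explicit expression in $A,B,\omega,Q_{jk}$. At $\omega=\pm B(\lambda)$ the common factor is $\asymp A(\lambda)^{-2}$. The explicit expression has a limit that is a nonzero multiple of $Q_{11}+Q_{22}>0$, because $B(\lambda^*)\neq 0$.

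\textbf{Your route.} You isolate the leading term as the rank-one matrix $A(\lambda)^{-2}P_1QP_1^*$ via the spectral decomposition of the resolvent. This reduces non-degeneracy to $\|\Sigma^{\mathrm T}\mathbf{e}_1^*(\lambda^*)\|^2>0$ and $(\mathbf{e}_1(\lambda^*))_j\neq 0$. In the normal form, both $\mathbf{e}_1$ and $\mathbf{e}_1^*$ are multiples of $(1,-i)^{\mathrm T}$. So your noise factor is exactly the paper's $Q_{11}+Q_{22}$, and your eigenvector condition holds automatically.

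\textbf{What each buys.} The paper's route gives explicit formulas at every frequency, showing directly the peaks near $\pm B(\lambda)$ and the $|\omega|^{1-2H}$ factor at $\omega=0$. Yours has three advantages.
\begin{enumerate}
\item It covers any real $M(\lambda)$ with eigenvalues $A\pm iB$, as (S2) actually allows. A general such $M$ is only similar to the normal form, and a real change of variables gives $S_{\mathbf{x}}=TS_{\mathbf{y}}T^{\mathrm T}$, which mixes entries. Entrywise non-vanishing is then precisely what your eigenvector argument supplies.
\item It makes explicit the hypothesis $\Sigma\neq 0$, which the statement omits but both proofs need.
\item It avoids the entrywise algebra. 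The paper's displayed partial-fraction identity drops a bounded factor $(A^2+B^2+\omega^2)^{-1}$, which is harmless for the scaling.
\end{enumerate}

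\textbf{One notational fix.} With the paper's conventions ($\mathbf{e}_1^*$ is the eigenvector of $M^{\mathrm T}$ for $A-iB$, and $\langle\mathbf{w}_1,\mathbf{w}_2\rangle=\mathbf{w}_1^{\mathrm T}\overline{\mathbf{w}_2}$), the spectral projector is $P_1=\mathbf{e}_1\overline{\mathbf{e}_1^*}^{\,\mathrm T}$. Your $\mathbf{e}_1(\mathbf{e}_1^*)^{\mathrm T}$ is instead nilpotent, because $(\mathbf{e}_1^*)^{\mathrm T}\mathbf{e}_1=0$. Since $\overline{\mathbf{e}_1^*}^{\,\mathrm T}Q\,\mathbf{e}_1^*=\|\Sigma^{\mathrm T}\mathbf{e}_1^*\|^2$ is real, your leading coefficient is unchanged.
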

\begin{proof}
We start by computing
\begin{align*}
    \left(M(\lambda)-i\omega I\right)^{-1}Q\left(M(\lambda)^*+i\omega I\right)^{-1}.
\end{align*}
First, since $M(\lambda)$ has complex conjugate eigenvalues, it has the structure
$$M(\lambda)=\left(\begin{array}{cc}
    A(\lambda) & -B(\lambda) \\
    B(\lambda) & A(\lambda)
\end{array}\right).$$
Hence,
\begin{align*}
    \left(M(\lambda)-i\omega I\right)^{-1} &= \frac{1}{(A(\lambda)-i\omega)^2+B(\lambda)^2}
    \left(\begin{array}{cc}
    A(\lambda)-i\omega & B(\lambda) \\
    -B(\lambda) & A(\lambda)-i\omega
\end{array}\right) \\
\left(M(\lambda)^*+i\omega I\right)^{-1} &= \frac{1}{(A(\lambda)+i\omega)^2+B(\lambda)^2}
    \left(\begin{array}{cc}
    A(\lambda)+i\omega & -B(\lambda) \\
    B(\lambda) & A(\lambda)+i\omega
\end{array}\right).
\end{align*}
Next, we note that
\begin{align*}
    &\frac{1}{(A(\lambda)-i\omega)^2+B(\lambda)^2}\frac{1}{(A(\lambda)+i\omega)^2+B(\lambda)^2}\\
    =& \frac{1}{2}\left(\frac{1}{(\omega+B(\lambda))^2+A(\lambda)^2}+\frac{1}{(\omega-B(\lambda))^2+A(\lambda)^2}\right).
\end{align*}
Through this, we get the following elements of the matrix:

\begin{align*}
    S_\mathbf{x}^{11}(\omega)
    &=\frac{|\omega|^{1-2H}}{2} \left(\frac{1}{(\omega+B(\lambda))^2+A(\lambda)^2}+\frac{1}{(\omega-B(\lambda))^2+A(\lambda)^2}\right)\\
    &~~~\times\Big((A(\lambda)^2+\omega^2)Q_{11} + B(\lambda)^2Q_{22} - 2A(\lambda)B(\lambda) Q_{12}\Big) ;\\
    S_\mathbf{x}^{12}(\omega)
    &= \frac{|\omega|^{1-2H}}{2} \left(\frac{1}{(\omega+B(\lambda))^2+A(\lambda)^2}+\frac{1}{(\omega-B(\lambda))^2+A(\lambda)^2}\right) \\
    &~~~\times\Big((A(\lambda)-i\omega)Q_{11}-(A(\lambda)+i\omega)Q_{22} + (A(\lambda)^2-B(\lambda)^2+\omega^2)Q_{12}\Big);  \\
    S_\mathbf{x}^{21}(\omega)&=\overline{S_\mathbf{x}^{12}(\omega)} ;\\
    S_\mathbf{x}^{22}(\omega)
    &=\frac{|\omega|^{1-2H}}{2} \left(\frac{1}{(\omega+B(\lambda))^2+A(\lambda)^2}+\frac{1}{(\omega-B(\lambda))^2+A(\lambda)^2}\right) \\
    &~~~\times \Big(B(\lambda)^2Q_{11} + (A(\lambda)^2+\omega^2)Q_{22} - 2A(\lambda)B(\lambda) Q_{12}\Big).
\end{align*}
All four terms show peaks near $\pm B(\lambda)$. We start by analyzing $S_\mathbf{x}^{11}(B(\lambda))$ which is given by 
\begin{align*}
    S_\mathbf{x}^{11}(B(\lambda))
    &=\frac{|B(\lambda)|^{1-2H}}{2} \left(\frac{1}{(2 B(\lambda))^2+A(\lambda)^2}+\frac{1}{A(\lambda)^2}\right) \\
    &~~~\times\Big(A(\lambda)^2Q_{11} - 2A(\lambda)B(\lambda) Q_{12} + B(\lambda)^2(Q_{11}+Q_{22}) \Big). 
\end{align*}
Because $Q_{11}+Q_{22}>0$ and $B(\lambda)\neq0$ for all $\lambda$, the claim follows for $S_\mathbf{x}^{11}(B(\lambda))$. By the same argument the claim follows for $-B(\lambda)$ and $S_\mathbf{x}^{22}(\pm B(\lambda))$. As $S_\mathbf{x}^{12}(\pm B(\lambda))$ is complex we consider its absolute value. It holds
\begin{align*}
    |S_\mathbf{x}^{12}(\pm B(\lambda))|
    &= \frac{|B(\lambda)|^{1-2H}}{2} \left(\frac{1}{4B(\lambda)^2+A(\lambda)^2}+\frac{1}{A(\lambda)^2}\right) \\
    &~~~\times\sqrt{
    A(\lambda)^2(Q_{11}-Q_{22}+Q_{12})^2+B(\lambda)^2(Q_{11}+Q_{22})^2}
\end{align*}
Due to $Q_{11}+Q_{22}>0$ and $B(\lambda)\neq0$ for all $\lambda$, the claim follows. \\
For the second claim, we first note the behavior is determined by the term $|\omega|^{1-2H}$. Hence, the behavior of the SD at zero depends on $H$.
\qed
\end{proof}

\begin{remark}
    In contrast to the one-dimensional case, we do not consider $S_x^{\max}$ as it is computationally more involved and less effective. Instead, we can now consider the SD at a fixed frequency for all $H\in(0,1)$, so that our analysis does not depend on memory. Memory still shapes the SD significantly at zero, just as in the one-dimensional case. As the spectral components of the drift are complex and the noise term is real, it is possible to distinguish the effects of both components. In particular, the SD still can be used to detect the memory structure of the noise. 
\end{remark}

\section{Further types of memory: red and Ornstein-Uhlenbeck noise} \label{sec:red noise}

In this section, we study fast-slow systems under a different non-Markovian forcing compared to the assumptions considered above. We focus specifically on red noise \cite{bernuzzi2026critical,Morr_Red_Noise} and Ornstein-Uhlenbeck perturbations \cite{kuehn2022warning} driven by an fBm process. In such a  setting, we extended the results from Section \ref{sec:fbm 1d} and Section \ref{sec:fbm 2d} by comparing equivalent EWSs upon the approach to a codimension-1 bifurcation. We address in particular the masking phenomenon \cite{kuehn2022warning} on a variety of observables and discuss the unmasking effect induced by spinning in a Hopf bifurcation.

\subsection{Setting}
We consider the following system
\begin{align}\label{eq:DE coloured noise}
    \begin{cases}
        \txtd \mathbf{x}_t = f(\mathbf{x},y,\eps) \txtd t + \xi_t~\txtd t  \\
        \txtd y_t = \eps g(\mathbf{x},y,\eps)~\txtd t,
    \end{cases}
\end{align}
for $t\geq0$ and equivalent assumptions as in Section \ref{sec: FS}. Similarly to Section \ref{sec:preliminaries}, we can derive the fast linearized subsystem
\begin{align}\label{eq:general noise fast subsystem}
    \txtd \mathbf{x}_t = [M(\lambda) \mathbf{x}_t + \xi_t]~\txtd t,
\end{align}
where $M(\lambda):=\partial_\mathbf{x} f(h_0^a(\lambda),\lambda,0)\in\R^{n\times n}$ that satisfies the conditions in Subsection \ref{sec: S1 and S2}, i.e. is negative for $\lambda<\lambda^*$ and at least one of its eigenvalues enters in the imaginary axis for $\lambda=\lambda^*$. In particular, we indicate its eigenvalues by $\left\{A_j(\lambda)+i B_j(\lambda)\right\}_{j\in\{1,\dots,n\}}$ and the corresponding eigenvectors as $\left\{\mathbf{e}_j(\lambda)\right\}_{j\in\{1,\dots,n\}}$. Moreover, the eigenvalues of its transpose are $\left\{A_j(\lambda)-i B_j(\lambda)\right\}_{j\in\{1,\dots,n\}}$ and the eigenvectors are $\left\{\mathbf{e}_j^*(\lambda)\right\}_{j\in\{1,\dots,n\}}$. Following the same notation as Section \ref{sec: S1 and S2}, the pair of families of vectors $\left\{\left\{\mathbf{e}_j(\lambda)\right\},\left\{\mathbf{e}_j^*(\lambda)\right\}\right\}_{j\in\{1,\dots,n\}}$ is a biorthogonal system for any $\lambda\leq\lambda^*$, i.e. $\left\langle \mathbf{e}_{j_1}(\lambda),\mathbf{e}_{j_2}^*(\lambda) \right\rangle = \delta_{j_1,j_2}$ with $j_1,j_2\in\{1,\dots,n\}$. Among various types of forcing $\left(\xi_t\right)_{t\geq 0}$, we distinguish between two types which are often labeled in the literature as red noise \cite{bernuzzi2026critical,kuehn2022warning,Morr_Red_Noise} and describe the associated linearized fast subsystems of the form \eqref{eq:general noise fast subsystem}. First, we define the fractional Ornstein-Uhlenbeck process
\begin{align} \label{eq: ou forcing}
    \text{d} Z_t = -\mu Z_t \text{d}t+ \text{d} W_t^H,
\end{align}
where the drift parameter $\mu>0$ is not in $\left\{A_j(\lambda)+i B_j(\lambda)\right\}_{j\in\{1,\dots,n\}}$ for any $\lambda$ close to $\lambda^*$ and $\left( W_t^H \right)_{t\geq 0}$ is a scalar fBm with Hurst index $H\in(0,1)$. Under the assumption $\xi_t= \bm{\sigma} Z_t$ for any $t\geq 0$ and non-null $\bm{\sigma}\in\R^n$, we indicate with $\left( \mathbf{X}_t \right)_{t\geq0} = \left( \mathbf{x}_t \right)_{t\geq0}$ the solution of
\begin{align} \label{eq: real red system}
    \text{d} \mathbf{X}_t = (M(\lambda) \mathbf{X}_t + \bm{\sigma} Z_t) \text{d}t
\end{align}
with initial condition in $\R^n$. For $\xi_t= \bm{\sigma} \frac{\text{d} Z_t}{\text{d}t}$, or more rigorously $\xi_t \text{d}t= \bm{\sigma} \text{d} Z_t$, we label as $\left( \mathbf{Y}_t \right)_{t\geq0} = \left( \mathbf{x}_t \right)_{t\geq0}$ the solution of
\begin{align} \label{eq: false red system}
    \begin{split}
        \text{d} \mathbf{Y}_t &= M(\lambda) \mathbf{Y}_t \text{d}t + \bm{\sigma} \text{d} Z_t \\
        &= M(\lambda) \mathbf{Y}_t \text{d}t - \mu \bm{\sigma} Z_t + \bm{\sigma} \text{d} W_t^H
    \end{split}
\end{align}
for an initial condition in $\R^n$. If $H=\frac{1}{2}$, $(Z_t)_{t\geq 0}$ is the standard Ornstein-Uhlenbeck process.

\subsection{Autocovariance and autocorrelation}
In this subsection, we discuss the scaling law of the time-asymptotic autocovariance of the solutions of \eqref{eq: real red system} and \eqref{eq: false red system}. This is achieved by studying extended systems that are solved by $\left( \mathbf{X}_t \right)_{t\geq0}$, or $\left( \mathbf{Y}_t \right)_{t\geq0}$ respectively, coupled to the noise trajectory $\left( Z_t \right)_{t\geq0}$. Then, we describe a heuristic approach for the study of such an observable following the steps in Lemma \ref{lem:autocov fast system complex lemma}, Theorem \ref{thm:autocov fast system} and Theorem \ref{thm:autocov fast system complex}. Specifically, we distinguish the time-asymptotic modal autocovariance indicators along with the masking, memory and mitigation terms within. We then observe their behavior in the proximity to the critical limit. The extended perspective introduces an additional spectral mode that can influence the dynamics depending on the bifurcation approached.

The extended systems are
\begin{align} \label{eq: bold_x}
    \text{d} \begin{pmatrix}
        \mathbf{X}_t \\ Z_t
    \end{pmatrix} &= \tilde{M}_\mathbf{X}(\lambda) \text{d}t + \begin{pmatrix}
        \mathbf{0}_{n\times 1} \\ 1 
    \end{pmatrix} \text{d} W_t^H
\end{align}
and
\begin{align} \label{eq: bold_y}
    \text{d} \begin{pmatrix}
        \mathbf{Y}_t \\ Z_t
    \end{pmatrix} &= \tilde{M}_\mathbf{Y}(\lambda) \text{d}t + \begin{pmatrix}
        \bm{\sigma} \\ 1
    \end{pmatrix} \text{d} W_t^H,
\end{align}
respectively, for $\mathbf{0}_{m_1\times m_2}$ the null matrix in $\R^{m_1\times m_2}$ with $m_1, m_2 \in \N_{>0}$. The drift matrices take the form
\begin{align*}
    \tilde{M}_\mathbf{X}(\lambda)=\begin{pmatrix}
         M(\lambda) & \bm{\sigma} \\ \mathbf{0}_{1\times n} & -\mu
    \end{pmatrix}
    \quad \text{and} \quad
    \tilde{M}_\mathbf{Y}(\lambda)=\begin{pmatrix}
         M(\lambda) & -\mu \bm{\sigma} \\ \mathbf{0}_{1 \times n} & -\mu
    \end{pmatrix},
\end{align*}
for $\lambda< \lambda^*$. We note that these matrices share eigenvalues, which are $\left\{A_j(\lambda)+i B_j(\lambda)\right\}_{j\in\{1,\dots,n\}}$ for $j\in\{1,\dots,n\}$ and $-\mu$. Their first $n$ eigenvectors are
\begin{align*}
    \left\{ \mathbf{e}_{j,\mathbf{X}}(\lambda) \right\}_{j\in\{1,\dots,n\}}
    =\left\{ \mathbf{e}_{j,\mathbf{Y}}(\lambda) \right\}_{j\in\{1,\dots,n\}}
    =\left\{\begin{pmatrix}
        \mathbf{e}_j(\lambda) \\ 0
    \end{pmatrix}\right\}_{j\in\{1,\dots,n\}}
\end{align*}
and the eigenvector corresponding to the eigenvalue $-\mu$ is
\begin{align*}
    \mathbf{e}_{n+1,\mathbf{X}}(\lambda)
    =\begin{pmatrix}
        -(\mu+M(\lambda))^{-1} \bm{\sigma}  \\ 1
    \end{pmatrix}
    \quad \text{and} \quad
    \mathbf{e}_{n+1,\mathbf{Y}}(\lambda)
    =\begin{pmatrix}
        \mu (\mu+M(\lambda))^{-1} \bm{\sigma}  \\ 1
    \end{pmatrix},
\end{align*}
respectively. The eigenvectors of the corresponding transpose matrices are
\begin{align*}
    \left\{ \mathbf{e}_{j,\mathbf{X}}^*(\lambda) \right\}_{j\in\{1,\dots,n\}}=\begin{pmatrix}
        \mathbf{e}_j^*(\lambda) \\ \\ \frac{\bm{\sigma}^\text{T} \mathbf{e}_j^*(\lambda)}{A_j(\lambda)-i B_j(\lambda) + \mu}
    \end{pmatrix}
    \quad \text{and} \quad
    \left\{ \mathbf{e}_{j,\mathbf{Y}}^*(\lambda) \right\}_{j\in\{1,\dots,n\}}=\begin{pmatrix}
        \mathbf{e}_j^*(\lambda) \\ \\ -\mu \frac{\bm{\sigma}^\text{T} \mathbf{e}_j^*(\lambda)}{A_j(\lambda)-i B_j(\lambda) + \mu}
    \end{pmatrix},
\end{align*}
which are associated to the eigenvalues $\left\{A_j(\lambda)-i B_j(\lambda)\right\}_{j\in\{1,\dots,n\}}$, and
\begin{align*}
    \mathbf{e}_{n+1,\mathbf{X}}^*(\lambda)
    = \mathbf{e}_{n+1,\mathbf{Y}}^*(\lambda)
    = \begin{pmatrix}
        \mathbf{0}_{n\times 1} \\ 1
    \end{pmatrix},
\end{align*}
matching the eigenvalue $-\mu$. Moreover, from \eqref{eq:DE coloured noise} and \eqref{eq:general noise fast subsystem}, we obtain the corresponding covariance matrices:
\begin{align*}
    \tilde{Q}_\mathbf{X}= \begin{pmatrix}
        \mathbf{0}_{n\times 1} \\ 1
    \end{pmatrix} \begin{pmatrix}
        \mathbf{0}_{1\times n} & 1
    \end{pmatrix}
    = \begin{pmatrix}
        \mathbf{0}_{n\times n} & \mathbf{0}_{n\times 1} \\ \mathbf{0}_{1\times n} & 1
    \end{pmatrix}
    \quad \text{and} \quad
    \tilde{Q}_\mathbf{Y}= \begin{pmatrix}
        \bm{\sigma} \\ 1
    \end{pmatrix} \begin{pmatrix}
        \bm{\sigma}^\text{T} & 1
    \end{pmatrix}
    = \begin{pmatrix}
        \bm{\sigma} \bm{\sigma}^\text{T} & \bm{\sigma} \\ \bm{\sigma}^\text{T} & 1
    \end{pmatrix} .
\end{align*}
From the collection of these objects, we can observe the rate of divergence of the time-asymptotic autocovariance of the solution of \eqref{eq: real red system} and \eqref{eq: false red system} in the settings \nameref{S1} and \nameref{S2} along general modes $\mathbf{v}_1,\mathbf{v}_2\in\R^n$. We note that this indicator corresponds to the time-asymptotic autocovariance of the solution of  \eqref{eq:general noise fast subsystem} in the corresponding settings along $\begin{pmatrix}\mathbf{v}_1 \\ 0\end{pmatrix}, \begin{pmatrix}\mathbf{v}_2 \\ 0\end{pmatrix}\in\R^{n+1}$. Depending on the system \eqref{eq: real red system} and \eqref{eq: false red system}, we label it as
\begin{align*}
    V_{\infty,\mathbf{X}}(\tau)\left[ \mathbf{w}_1, \mathbf{w}_2 \right]:= \lim_{t\to\infty}
    \mathbb{E}\left[\left\langle \begin{pmatrix}\mathbf{X}_t \\ Z_t\end{pmatrix}, \mathbf{w}_1 \right\rangle_{n+1} \overline{\left\langle \begin{pmatrix}\mathbf{X}_{t+\tau} \\ Z_{t+\tau}\end{pmatrix}, \mathbf{w}_2 \right\rangle_{n+1}} \right]
\end{align*}
and
\begin{align*}
    V_{\infty,\mathbf{Y}}(\tau)\left[ \mathbf{w}_1, \mathbf{w}_2 \right]:= \lim_{t\to\infty}
    \mathbb{E}\left[\left\langle \begin{pmatrix}\mathbf{Y}_t \\ Z_t\end{pmatrix}, \mathbf{w}_1 \right\rangle_{n+1} \overline{\left\langle \begin{pmatrix}\mathbf{Y}_{t+\tau} \\ Z_{t+\tau}\end{pmatrix}, \mathbf{w}_2 \right\rangle_{n+1}} \right]
\end{align*}
for any $\mathbf{w}_1,\mathbf{w}_2\in\C^{n+1}$ and $\tau\geq 0$, respectively. Setting then $\mathbf{w}_1 = \begin{pmatrix}\mathbf{v}_1 \\ 0\end{pmatrix}$ and $\mathbf{w}_2 = \begin{pmatrix}\mathbf{v}_2 \\ 0\end{pmatrix}$, we obtain in particular that
\begin{align*}
    V_{\infty,\mathbf{X}}(\tau)\left[ \begin{pmatrix}\mathbf{v}_1 \\ 0\end{pmatrix}, \begin{pmatrix}\mathbf{v}_2 \\ 0\end{pmatrix} \right]= \lim_{t\to\infty}
    \mathbb{E}\left[\left\langle \mathbf{X}_t, \mathbf{v}_1 \right\rangle \overline{\left\langle \mathbf{X}_{t+\tau}, \mathbf{v}_2 \right\rangle} \right]
\end{align*}
and
\begin{align*}
    V_{\infty,\mathbf{Y}}(\tau)\left[ \begin{pmatrix}\mathbf{v}_1 \\ 0\end{pmatrix}, \begin{pmatrix}\mathbf{v}_2 \\ 0\end{pmatrix} \right]= \lim_{t\to\infty}
    \mathbb{E}\left[\left\langle \mathbf{Y}_t, \mathbf{v}_1 \right\rangle \overline{\left\langle \mathbf{Y}_{t+\tau}, \mathbf{v}_2 \right\rangle} \right] .
\end{align*}
In the definitions above we use the dissipativity property of the systems for $\lambda<\lambda^*$ which implies that 
    $$\lim_{t\to\infty}
    \mathbb{E}\left[\left\langle \mathbf{X}_t, \mathbf{v}_1 \right\rangle\right]
    =\lim_{t\to\infty}
    \mathbb{E}\left[\left\langle \mathbf{Y}_t, \mathbf{v}_1 \right\rangle\right]
    = 0$$ 
    and simplifies the formulas. For $\mathbf{w}\not\in\text{Ker}\left(\tilde{Q}_\mathbf{X}\right)\subset \C^{n+1}$, we define the time-asymptotic autocorrelation function for \eqref{eq: bold_x} along $\mathbf{w}$ as
\begin{align*}
    AC_{\infty,\mathbf{X}}(\tau)[\mathbf{w}]:=\frac{V_{\infty,\mathbf{X}}(\tau)[\mathbf{w},\mathbf{w}]}{V_{\infty,\mathbf{X}}(0)[\mathbf{w},\mathbf{w}]},
\end{align*}
for any $\tau\geq 0$. Similarly, for $\mathbf{w}\not\in\text{Ker}\left(\tilde{Q}_\mathbf{Y}\right)\subset \C^{n+1}$ we define the time-asymptotic autocorrelation function associated with \eqref{eq: bold_y} along $\mathbf{w}$ as
\begin{align*}
    AC_{\infty,\mathbf{Y}}(\tau)[\mathbf{w}]:=\frac{V_{\infty,\mathbf{Y}}(\tau)[\mathbf{w},\mathbf{w}]}{V_{\infty,\mathbf{Y}}(0)[\mathbf{w},\mathbf{w}]},
\end{align*}
for $\tau\geq 0$. We note then that
\begin{align} \label{eq: breaking_x}
    \begin{pmatrix}\mathbf{v}_m \\ 0\end{pmatrix} =& \sum_{j\in\{1,\dots,n+1\}} \left\langle \begin{pmatrix}\mathbf{v}_m \\ 0\end{pmatrix}, \mathbf{e}_{j,\mathbf{X}}(\lambda) \right\rangle_{n+1} \mathbf{e}_{j,\mathbf{X}}^*(\lambda)\\
     =& \sum_{j\in\{1,\dots,n\}} \left\langle \mathbf{v}_m , \mathbf{e}_j(\lambda) \right\rangle \mathbf{e}_{j,\mathbf{X}}^*(\lambda) - \left\langle \mathbf{v}_m , (\mu+M(\lambda))^{-1} \bm{\sigma} \right\rangle \mathbf{e}_{j,\mathbf{X}}^*(\lambda) \nonumber
\end{align}
and that
\begin{align} \label{eq: breaking_y}
    \begin{pmatrix}\mathbf{v}_m \\ 0\end{pmatrix} =& \sum_{j\in\{1,\dots,n+1\}} \left\langle \begin{pmatrix}\mathbf{v}_m \\ 0\end{pmatrix}, \mathbf{e}_{j,\mathbf{Y}}(\lambda) \right\rangle_{n+1} \mathbf{e}_{j,\mathbf{Y}}^*(\lambda)\\
     =& \sum_{j\in\{1,\dots,n\}} \left\langle \mathbf{v}_m , \mathbf{e}_j(\lambda) \right\rangle \mathbf{e}_{j,\mathbf{Y}}^*(\lambda) + \mu \left\langle \mathbf{v}_m , (\mu+M(\lambda))^{-1} \bm{\sigma} \right\rangle \mathbf{e}_{j,\mathbf{Y}}^*(\lambda) \nonumber
\end{align}
for $m\in\{1,2\}$ and any $\lambda\leq \lambda^*$. Considering such modes, with a null proxy direction in the forcing $Z$, corresponds to the lack of data of a minor component or forcing, which is common in applications \cite{bernuzzi2024warning,liu2025influence}. However, vectors of this form may not be orthogonal to $\mathbf{e}_{n+1,\mathbf{X}}(\lambda^*)$ or $\mathbf{e}_{n+1,\mathbf{Y}}(\lambda^*)$. It follows that perturbations driven by fBm on $Z$ do not only affect the indicator through the forcing of the $\mathbf{X}$, or $\mathbf{Y}$, but are rather explicit components of the time-asymptotic autocovariance itself. In fact, we can follow similar methods to the proof of Theorem \ref{thm:autocov fast system} and Theorem \ref{thm:autocov fast system complex} to obtain
\begin{align*}
    &V_{\infty,\mathbf{X}}(\tau)\left[ \begin{pmatrix}\mathbf{v}_1 \\ 0\end{pmatrix}, \begin{pmatrix}\mathbf{v}_2 \\ 0\end{pmatrix} \right] \\
    =& \sum_{j_1,j_2\in\{1,\dots,n+1\}} \overline{\left\langle \begin{pmatrix}\mathbf{v}_1 \\ 0\end{pmatrix}, \mathbf{e}_{j_1,\mathbf{X}}(\lambda) \right\rangle_{n+1}} \left\langle \begin{pmatrix}\mathbf{v}_2 \\ 0\end{pmatrix}, \mathbf{e}_{j_2,\mathbf{X}}(\lambda) \right\rangle_{n+1} V_\infty(\tau)\left[ \mathbf{e}_{j_1,\mathbf{X}}^*(\lambda), \mathbf{e}_{j_2,\mathbf{X}}^*(\lambda) \right].
\end{align*}
As such, the rate of divergence is dictated by the scaling law along spectral modes. Specifically, it is driven by the corresponding masking, memory and mitigation terms, similarly to those introduced in \eqref{eq:autocov fast subsystem complex 2}. A similar approach can be followed for system \eqref{eq: bold_y}. We then define
\begin{align*}
    \rho_{j,\mathbf{X}}(\lambda) = \frac{ \bm{\sigma}^\text{T} \mathbf{e}_j^*(\lambda) }{A_j(\lambda)-i B_j(\lambda) + \mu}, \quad \text{for} \quad j\in\{1,\dots,n\}, \quad\text{and} \quad \rho_{n+1,\mathbf{X}}(\lambda) = 1
\end{align*}
along with
\begin{align*}
    \rho_{j,\mathbf{Y}}(\lambda) = \bm{\sigma}^\text{T} \mathbf{e}_j^*(\lambda) \left( 1 - \frac{\mu}{A_j(\lambda)-i B_j(\lambda)+\mu} \right), \quad \text{for} \quad j\in\{1,\dots,n\}, \quad\text{and} \quad \rho_{n+1,\mathbf{Y}}(\lambda) = 1,
\end{align*}
for $\lambda\leq\lambda^*$. For any $j_1,j_2\in\{1,\dots,n\}$, we find that the components of the time-asymptotic modal variance are as follows:
\begin{itemize}
    \item The masking terms for \eqref{eq: bold_x} and \eqref{eq: bold_y} are
    \begin{align*}
        \left\langle \mathbf{e}_{j_1,\mathbf{X}}^*(\lambda) , \tilde{Q}_\mathbf{X} \mathbf{e}_{j_2,\mathbf{X}}^*(\lambda) \right\rangle_{n+1} = \rho_{j_1,\mathbf{X}}(\lambda) \overline{\rho_{j_2,\mathbf{X}}(\lambda)}
    \end{align*}
    and
    \begin{align*}
        \left\langle \mathbf{e}_{j_1,\mathbf{Y}}^*(\lambda) , \tilde{Q}_\mathbf{Y} \mathbf{e}_{j_2,\mathbf{Y}}^*(\lambda) \right\rangle_{n+1} = \rho_{j_1,\mathbf{Y}}(\lambda) \overline{\rho_{j_2,\mathbf{Y}}(\lambda)}
    \end{align*}
    for any $j_1,j_2\in\{1,\dots,n\}$ and $\lambda\leq \lambda^*$. We find that their scaling laws depend on the system and setting addressed.
    \item The memory items are $P(A_{j_1}(\lambda)+i B_{j_1}(\lambda),A_{j_2}(\lambda)+i B_{j_2}(\lambda),H,\tau)$ and depend solely on the setting considered.
    \item The mitigation terms $\left(-(A_{j_1}(\lambda)+A_{j_2}(\lambda))+i (B_{j_2}(\lambda)-B_{j_1}(\lambda))\right)^{-1}$ diverge only on the sensible modes, i.e. $j_1=j_2=1$ in the setting \nameref{S1} and $j_1=j_2\in\{1,2\}$ in the setting \nameref{S2}.
\end{itemize}
We can then assemble our results to describe the rate of divergence of $V_{\infty,\mathbf{X}}(\tau)\left[ \begin{pmatrix}\mathbf{v}_1 \\ 0\end{pmatrix}, \begin{pmatrix}\mathbf{v}_2 \\ 0\end{pmatrix} \right]$ and $V_{\infty,\mathbf{Y}}(\tau)\left[ \begin{pmatrix}\mathbf{v}_1 \\ 0\end{pmatrix}, \begin{pmatrix}\mathbf{v}_2 \\ 0\end{pmatrix} \right]$ for almost every $\tau\geq0$. We first consider the solution of \eqref{eq: bold_x} in \nameref{S1}. The scaling laws of $V_{\infty,\mathbf{X}}(\tau)\left[ \mathbf{e}_{j_1,\mathbf{X}}^*(\lambda), \mathbf{e}_{j_2,\mathbf{X}}^*(\lambda) \right]$, for $j_1,j_2\in\{1,2\}$, and their components are shown in Table \ref{tab:X1}. Since $\bm{\sigma}$ and $\mathbf{e}_1^*(\lambda^*)$ are non-zero scalars, it follows that the orders in the first column of the table are exact for almost every $\tau$. As such, the time-asymptotic modal autocovariance in the first column is the only leading term in $V_{\infty,\mathbf{X}}(\tau)\left[ \begin{pmatrix}\mathbf{v}_1 \\ 0\end{pmatrix}, \begin{pmatrix}\mathbf{v}_2 \\ 0\end{pmatrix} \right]$. As shown in \eqref{eq: breaking_x}, the two indicators share a scaling law
\begin{align*}
    V_{\infty,\mathbf{X}}(\tau)\left[ \begin{pmatrix}\mathbf{v}_1 \\ 0\end{pmatrix}, \begin{pmatrix}\mathbf{v}_2 \\ 0\end{pmatrix} \right]
     \asymp
     V_{\infty,\mathbf{X}}(\tau)\left[ \mathbf{e}_{1,\mathbf{X}}^*(\lambda), \mathbf{e}_{1,\mathbf{X}}^*(\lambda) \right]
     \asymp
     |A_1(\lambda)|^{-2H}
\end{align*}
if
\begin{align} \label{eq: red_tool}
    \left\langle \begin{pmatrix}\mathbf{v}_m \\ 0\end{pmatrix}, \mathbf{e}_{1,\mathbf{X}}(\lambda) \right\rangle_{n+1} 
    = \left\langle \mathbf{v}_m, \mathbf{e}_1(\lambda) \right\rangle \neq 0
\end{align}
holds for every $m\in\{1,2\}$. Such a rate is equivalent to that reported in Theorem \ref{thm:autocov fast system} and in \eqref{eq:Autovar}. Since a unique time-asymptotic modal autocovariance assumes a leading scaling law, it follows from equivalent steps to Corollary \ref{cor:autocor fast system} and from \eqref{eq:I_ran_out_of_names} that
\begin{align*}
    AC_{\infty,\mathbf{X}}(\tau)\left[ \begin{pmatrix}\mathbf{v}_m \\ 0\end{pmatrix} \right]= 1+\mathcal{O}\left(|A_1(\lambda)|^{\text{min}\{1,2H\}}\right)
\end{align*}
for any $\tau>0$ if \eqref{eq: red_tool} holds.
\begin{center}
    \begin{tabular}{|c||c|c|c|}
        \hline
        \centering Terms in & \multicolumn{3}{c|}{$(j_1,j_2)$}  \\
        \cline{2-4}
        $V_{\infty,\mathbf{X}}(\tau)\left[ \mathbf{e}_{j_1,\mathbf{X}}^*(\lambda), \mathbf{e}_{j_2,\mathbf{X}}^*(\lambda) \right]$ & \multirow{1}{*}{$(1,1)$} & \multirow{1}{*}{$(1,2)$} & \multirow{1}{*}{$(2,2)$} \\
        \hline
        Masking & $0$ & $0$ & $0$ \\
        \hline
        Memory & $1-2H$ & $1-2H$ & $0$ \\
        \hline
        Mitigation & $-1$ & $0$ & $0$ \\
        \hline
        Modal & $-2H$ & $1-2H$ & $0$ \\
        \hline
    \end{tabular}
    \captionof{table}{Exponents $\nu$ in $\mathcal{O}\left(|A(\lambda)|^\nu\right)$ corresponding to the terms introduced in \eqref{eq:autocov fast subsystem complex 2} within the time-asymptotic modal autocovariance for \eqref{eq: bold_x} and setting \nameref{S1}. Since $\mathbf{e}_1^*(\lambda^*)$ is scalar, the scaling laws described in the first column are exact for almost every $\tau\geq0$. Consequently, no masking phenomenon occurs along $\mathbf{e}_1(\lambda)$ and the rate of divergence of the time-asymptotic autocovariance along general modes in asymptotically comparable to $|A(\lambda)|^{-2H}$ for almost every $\tau\geq 0$.}
    \label{tab:X1}
\end{center}

We then address the solution of \eqref{eq: bold_y} in the \nameref{S1} setting. The rates of the terms within $V_{\infty,\mathbf{Y}}(\tau)\left[ \mathbf{e}_{j_1,\mathbf{Y}}^*(\lambda), \mathbf{e}_{j_2,\mathbf{Y}}^*(\lambda) \right]$, for $j_1,j_2\in\{1,2\}$, are collected in Table \ref{tab:Y1}. Regardless of the value assumed by $\left\langle \bm{\sigma}, \mathbf{e}_1^*(\lambda^*)\right\rangle$ we note that $\rho_{1,\mathbf{Y}}(\lambda^*)=0$, since $B_1(\lambda)=0$ for any $\lambda\leq \lambda^*$. In contrast, $\rho_{2,\mathbf{Y}}(\lambda^*)\neq 0$ if and only if $\left\langle \bm{\sigma}, \mathbf{e}_2^*(\lambda^*)\right\rangle\neq 0$. It follows that $\left\langle \mathbf{e}_{j_1,\mathbf{Y}}^*(\lambda^*) , \tilde{Q}_\mathbf{Y} \mathbf{e}_{j_2,\mathbf{Y}}^*(\lambda^*) \right\rangle_{n+1}=0$ if $(j_1,j_2)\neq(2,2)$. Equivalently $\mathbf{e}_{1,\mathbf{Y}}^*(\lambda)$ enters in $\text{Ker}(\tilde{Q}_\mathbf{Y})$ in the limit $\lambda\to \lambda^*$, which induces a masking effect in the EWS. Along with the rates induced by the memory and mitigation terms, this results in the fact that 
\begin{align*}
    V_{\infty,\mathbf{Y}}(\tau)\left[ \mathbf{e}_{j_1,\mathbf{Y}}^*(\lambda), \mathbf{e}_{j_2,\mathbf{Y}}^*(\lambda) \right] \asymp \left| A_1(\lambda) \right|^{2-2H} \to 0
\end{align*}
in the limit $\lambda\to \lambda^*$ for $(j_1,j_2)\neq(2,2)$. Moreover, $V_{\infty,\mathbf{Y}}(\tau)\left[ \begin{pmatrix}\mathbf{v}_1 \\ 0\end{pmatrix}, \begin{pmatrix}\mathbf{v}_2 \\ 0\end{pmatrix} \right]$ is not null in the critical limit only if $\left\langle \bm{\sigma}, \mathbf{e}_2^*(\lambda^*)\right\rangle\neq 0$ and, by \eqref{eq: breaking_y}, if $\left\langle \mathbf{v}_m, \mathbf{e}_2(\lambda) \right\rangle \neq 0$ for every $m\in\{1,2\}$. Consequently, the masking effect is associated with the convergence of the indicator across general modes in the critical limit, which in turn leads to a reduced rate of increase as the system approaches the critical threshold. In such a setting, it follows directly from \eqref{eq:I_ran_out_of_names} and
\begin{align*}
    \frac{V_{\infty,\mathbf{Y}}(\tau)\left[ \mathbf{e}_{2,\mathbf{Y}}^*(\lambda), \mathbf{e}_{2,\mathbf{Y}}^*(\lambda) \right]}{V_{\infty,\mathbf{Y}}(0)\left[ \mathbf{e}_{2,\mathbf{Y}}^*(\lambda), \mathbf{e}_{2,\mathbf{Y}}^*(\lambda) \right]}
    = \frac{P(-\mu,-\mu,H,\tau)}{P(-\mu,-\mu,H,0)} ,
\end{align*}
for all $\tau\geq 0$ and $\lambda \leq \lambda^*$, that
\begin{align} \label{eq: plot twist}
    AC_{\infty,\mathbf{Y}}(\tau)\left[ \begin{pmatrix}\mathbf{v}_m \\ 0\end{pmatrix} \right]= \frac{P(-\mu,-\mu,H,\tau)}{P(-\mu,-\mu,H,0)} +\mathcal{O}\left(|A_1(\lambda)|^{2-2H}\right)
\end{align}
for any $\tau>0$. The rate is given by the fact that all time-asymptotic modal autocovariances converge and only one does not approach zero.

\begin{center}
    \begin{tabular}{|c||c|c|c|}
        \hline
        \centering Terms in & \multicolumn{3}{c|}{$(j_1,j_2)$}  \\
        \cline{2-4}
        $V_{\infty,\mathbf{Y}}(\tau)\left[ \mathbf{e}_{j_1,\mathbf{Y}}^*(\lambda), \mathbf{e}_{j_2,\mathbf{Y}}^*(\lambda) \right]$ & \multirow{1}{*}{$(1,1)$} & \multirow{1}{*}{$(1,2)$} & \multirow{1}{*}{$(2,2)$} \\
        \hline
        Masking & $2$ & $1$ & $0$ \\
        \hline
        Memory & $1-2H$ & $1-2H$ & $0$ \\
        \hline
        Mitigation & $-1$ & $0$ & $0$ \\
        \hline
        Modal & $2-2H$ & $2-2H$ & $0$ \\
        \hline
    \end{tabular}
    \captionof{table}{Exponents $\nu$ in $\mathcal{O}\left(|A(\lambda)|^\nu\right)$ associated with the items in \eqref{eq:autocov fast subsystem complex 2} within the time-asymptotic modal autocovariance for \eqref{eq: bold_y} in setting \nameref{S1}. The time-asymptotic modal autocovariance appears to be masked along critical and mixed modes since the corresponding exponents are positive. It follows that such terms vanish in the critical limit and $V_{\infty,\mathbf{Y}}(\tau)\left[ \mathbf{v}_1, \mathbf{v}_2 \right]$ converges in $\lambda\to\lambda^*$ for any $\tau\geq 0$ and $\mathbf{v}_1, \mathbf{v}_2 \in \R^2$ as a result.}
    \label{tab:Y1}
\end{center}

As we study the solution of \eqref{eq: bold_x} in the \nameref{S2} setting and $V_{\infty,\mathbf{X}}(\tau)\left[ \begin{pmatrix}\mathbf{v}_1 \\ 0\end{pmatrix}, \begin{pmatrix}\mathbf{v}_2 \\ 0\end{pmatrix} \right]$, we note that the masking terms $\left\langle \mathbf{e}_{j_1,\mathbf{X}}^*(\lambda) , \tilde{Q}_\mathbf{X} \mathbf{e}_{j_2,\mathbf{X}}^*(\lambda) \right\rangle_{n+1}$ are well defined and not null in $\lambda=\lambda^*$ if $\left\langle \bm{\sigma}, \mathbf{e}_{j_1}^*(\lambda^*)\right\rangle\neq 0$ and $\left\langle \bm{\sigma}, \mathbf{e}_{j_2}^*(\lambda^*)\right\rangle\neq 0$. Furthermore, as described in Lemma \ref{lem:autocov fast system complex lemma} and Theorem \ref{thm:autocov fast system complex}, the memory term is convergent in the critical limit for almost every $\tau\geq 0$ and any $j_1,j_2\in\{1,2,3\}$. In conclusion, the only diverging term is the mitigation term for $j_1=j_2\in\{1,2\}$. The scaling laws of these items are shown in Table \ref{tab:X2}. Upon fixing $H\in(0,1)$ and assuming $\left\langle \bm{\sigma}, \mathbf{e}_1^*(\lambda^*)\right\rangle\neq 0$, the scaling law of $\left|V_{\infty,\mathbf{X}}(\tau)\left[ \begin{pmatrix}\mathbf{v}_1 \\ 0\end{pmatrix}, \begin{pmatrix}\mathbf{v}_2 \\ 0\end{pmatrix} \right]\right|$ is then asymptotically comparable to $|A_1(\lambda)|^{-1}$ for $\tau\geq 0$ and $\mathbf{v}_1$ and $\mathbf{v}_2$ in a dense subset of $\R^2$. Such a statement can be proven equivalently to Theorem \ref{thm:autocov fast system complex}. We finally notice that since only two time-asymptotic modal autocovariance indicators diverge in Table \ref{tab:X2}, we can follow a similar approach to Corollary \ref{cor:autocor fast system complex}. It results then that we can find $q\in\C$ such that
\begin{align*}
    AC_{\infty,\mathbf{X}}(\tau)\left[ \begin{pmatrix}\mathbf{v}_m \\ 0\end{pmatrix} \right]
    =& q + \mathcal{O}\left(-A_1(\lambda)\right) + \cO\left(\left|\left| \mathbf{e}_{1,\mathbf{X}}^*(\lambda) - \mathbf{e}_{1,\mathbf{X}}^*(\lambda^*) \right|\right|_{n+1} \right)\\
    &+ \cO\left(\left|\left| \mathbf{e}_{2,\mathbf{X}}^*(\lambda) - \mathbf{e}_{2,\mathbf{X}}^*(\lambda^*) \right|\right|_{n+1} \right) + \cO(|B_1(\lambda)-B_1(\lambda^*)|)
\end{align*}
for $m\in\{1,2\}$, almost all $\tau>0$ and $\mathbf{v}_1$ and $\mathbf{v}_2$ in dense subsets of $\R^2$.

\begin{center}
    \begin{tabular}{|c||c|c|c|c|c|c|}
        \hline
        \centering Terms in & \multicolumn{6}{c|}{$(j_1,j_2)$}  \\
        \cline{2-7}
        $V_{\infty,\mathbf{X}}(\tau)\left[ \mathbf{e}_{j_1,\mathbf{X}}^*(\lambda), \mathbf{e}_{j_2,\mathbf{X}}^*(\lambda) \right]$ & \multirow{1}{*}{$(1,1)$} & \multirow{1}{*}{$(1,2)$} & \multirow{1}{*}{$(1,3)$} & \multirow{1}{*}{$(2,2)$} & \multirow{1}{*}{$(2,3)$} & \multirow{1}{*}{$(3,3)$} \\
        \hline
        Masking & $0$ & $0$ & $0$ & $0$ & $0$ & $0$ \\
        \hline
        Memory & $0$ & $0$ & $0$ & $0$ & $0$ & $0$ \\
        \hline
        Mitigation & $-1$ & $0$ & $0$ & $-1$ & $0$ & $0$ \\
        \hline
        Modal & $-1$ & $0$ & $0$ & $-1$ & $0$ & $0$ \\
        \hline
    \end{tabular}
    \captionof{table}{Exponents $\nu$ in $\mathcal{O}\left(|A(\lambda)|^\nu\right)$ corresponding to the items in \eqref{eq:autocov fast subsystem complex 2} within the time-asymptotic modal autocovariance for \eqref{eq: bold_x}, or equivalently for \eqref{eq: bold_y}, and \nameref{S2}. The fact that the critical eigenvalues do not reach the imaginary axis at $0$ in $\lambda\to \lambda^*$ avoids the masking effect in \eqref{eq: bold_y}. In particular, for $\left\langle \bm{\sigma}, \mathbf{e}_1^*(\lambda^*)\right\rangle\neq 0$, it follows that $\rho_{1,\mathbf{X}}\neq 0$ and $\rho_{1,\mathbf{Y}}\neq 0$. The non-uniqueness of the critical modes implies that the leading time-asymptotic modal autocovariances could in principle assume a higher scaling law than equivalent indicators along general modes. Following the same steps as in the proof of Theorem \ref{thm:autocov fast system complex}, such an observable assumes however the rate $\mathcal{O}\left(|A(\lambda)|^{-1}\right)$ for almost every mode and $\tau\geq0$.}
    \label{tab:X2}
\end{center}

The scaling law of $\left|V_{\infty,\mathbf{Y}}(\tau)\left[ \begin{pmatrix}\mathbf{v}_1 \\ 0\end{pmatrix}, \begin{pmatrix}\mathbf{v}_2 \\ 0\end{pmatrix} \right]\right|$ associated to the solution of \eqref{eq: bold_y} in the setting \nameref{S2} is equivalent to the one shown above. In fact, the terms within its time-asymptotic modal autocovariance indicators assume rates described in Table \ref{tab:X2}. This is given by the fact that in the setting \nameref{S2} the masking terms $\left\langle \mathbf{e}_{j_1,\mathbf{Y}}^*(\lambda) , \tilde{Q}_\mathbf{Y} \mathbf{e}_{j_2,\mathbf{Y}}^*(\lambda) \right\rangle_{n+1}$ are well defined in $\lambda=\lambda^*$. Moreover, similarly to the previous case, they are not null if $\left\langle \bm{\sigma}, \mathbf{e}_{j_1}^*(\lambda^*)\right\rangle\neq 0$ and $\left\langle \bm{\sigma}, \mathbf{e}_{j_2}^*(\lambda^*)\right\rangle\neq 0$. The memory and mitigation terms depend solely on the eigenvalues, $H$ and $\tau$. As such, they assume identical values as the ones described in \eqref{eq: bold_x} in the setting \nameref{S2}. Finally, since the critical mode is not unique, the modes $\mathbf{v}_1,\mathbf{v}_2\in\R^2$ and the lag times $\tau\geq 0$ for which $\left|V_{\infty,\mathbf{Y}}(\tau)\left[ \begin{pmatrix}\mathbf{v}_1 \\ 0\end{pmatrix}, \begin{pmatrix}\mathbf{v}_2 \\ 0\end{pmatrix} \right]\right|$ assumes a rate of order $|A_1(\lambda)|^{-1}$, can be found following the steps of the proof of Theorem \ref{thm:autocov fast system complex} for any $H\in(0,1)$. Similarly to the previous case, we can find $q\in\C$ such that
\begin{align*}
    AC_{\infty,\mathbf{Y}}(\tau)\left[ \begin{pmatrix}\mathbf{v}_m \\ 0\end{pmatrix} \right]
    =& q + \mathcal{O}\left(-A_1(\lambda)\right) + \cO\left(\left|\left| \mathbf{e}_{1,\mathbf{Y}}^*(\lambda) - \mathbf{e}_{1,\mathbf{Y}}^*(\lambda^*) \right|\right|_{n+1} \right)\\
    &+ \cO\left(\left|\left| \mathbf{e}_{2,\mathbf{Y}}^*(\lambda) - \mathbf{e}_{2,\mathbf{Y}}^*(\lambda^*) \right|\right|_{n+1} \right) + \cO(|B_1(\lambda)-B_1(\lambda^*)|)
\end{align*}
holds for $m\in\{1,2\}$, almost all $\tau>0$ and $\mathbf{v}_1$ and $\mathbf{v}_2$ in a dense subset of $\R^2$.

\subsection{Spectral density}
We study the spectral density for the red noise system \eqref{eq: real red system} and the fOU noise system \eqref{eq: false red system}. We start by deriving the SD and afterwards discuss what happens when we approach a bifurcation.
\begin{lemma}\label{lem:PSD for real red system}
    Let $(Z_t)_{t\geq 0}$ be a stationary fOU process \eqref{eq: ou forcing}. Then, the stationary solution of \eqref{eq: real red system} has the spectral density
    $$S_\mathbf{X}=C_H\frac{|\omega|^{1-2H}}{\mu^2+\omega^2}(M(\lambda)-i\omega I)^{-1} \bm\sigma\bm\sigma^*(M(\lambda)^*+i\omega I)^{-1}.$$
\end{lemma}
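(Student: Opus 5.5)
The plan is to follow the same route as in the proof of Lemma~\ref{lem:PSD S2 fBm}: write the stationary solution as a Wiener integral against the scalar fBm, compute its autocovariance through the Pipiras--Taqqu isometry \eqref{eq:Cov Pipiras Taqqu}, and read off the spectral density as the Fourier transform of the autocovariance.

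\textbf{Step 1: integral representation.} Since $M(\lambda)$ is stable and $\mu>0$, the stationary solutions are
\begin{align*}
Z_t=\int_{-\infty}^t e^{-\mu(t-u)}~\txtd W_u^H, \qquad \mathbf{X}_t=\int_{-\infty}^t e^{M(\lambda)(t-s)}\bm\sigma Z_s~\txtd s .
\end{align*}
Exchanging the order of integration (a stochastic Fubini argument for Wiener integrals of deterministic kernels) gives $\mathbf{X}_t=\int_\R \mathbf{g}(t-u)~\txtd W_u^H$. The kernel is the convolution
\begin{align*}
\mathbf{g}(r)=\mathbbm{1}_{\{r\ge0\}}\int_0^r e^{M(\lambda)(r-v)}\bm\sigma e^{-\mu v}~\txtd v .
\end{align*}
This kernel is in $L^1\cap L^2$ and decays exponentially. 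Its Fourier transform therefore factorises, up to the sign convention of \eqref{eq:Fourier transform of exponential}, as
\begin{align*}
\mathcal{F}(\mathbf{g})(z)=(i z-M(\lambda))^{-1}\bm\sigma\,(\mu+iz)^{-1}.
\end{align*}

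\textbf{Step 2: autocovariance.} By \eqref{eq:Cov Pipiras Taqqu}, applied componentwise and using stationarity of the increments, we get
\begin{align*}
V(\tau)=\frac{C_H}{2\pi}\int_\R e^{iz\tau}\,\mathcal{F}(\mathbf{g})(z)\,\mathcal{F}(\mathbf{g})(z)^*\,|z|^{1-2H}~\txtd z .
\end{align*}
Using $|(\mu+iz)^{-1}|^2=(\mu^2+z^2)^{-1}$ and $(izI-M)^{-1}\bm\sigma\bm\sigma^*\bigl((izI-M)^{-1}\bigr)^*=(M-izI)^{-1}\bm\sigma\bm\sigma^*(M^*+izI)^{-1}$, the integrand becomes
\begin{align*}
C_H\frac{|z|^{1-2H}}{\mu^2+z^2}(M(\lambda)-izI)^{-1}\bm\sigma\bm\sigma^*(M(\lambda)^*+izI)^{-1},
\end{align*}
multiplied by $e^{iz\tau}/(2\pi)$.

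\textbf{Step 3: spectral density.} This expresses $V$ as an inverse Fourier transform. Applying \eqref{eq:def spectral density}, exactly as in Lemma~\ref{lem:fBm 1d PSD} and Lemma~\ref{lem:PSD S2 fBm}, yields the claimed $S_\mathbf{X}$. An alternative route reaches the same formula: since $\mathbf{X}$ is $\mathbf{X}=\mathbf{g}_1 * (\bm\sigma Z)$ with $\mathbf{g}_1(r)=\mathbbm{1}_{\{r\ge0\}}e^{M(\lambda) r}$, it is a linear filter of $Z$. By Lemma~\ref{lem:fBm 1d PSD} with $A=-\mu$ and $\sigma=1$, $S_Z(\omega)=C_H|\omega|^{1-2H}/(\mu^2+\omega^2)$. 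The standard filter rule $S_\mathbf{X}=\hat{\mathbf{g}}_1 S_Z\hat{\mathbf{g}}_1^*$ then gives the formula directly.

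\textbf{Main obstacle.} The delicate step is justifying the Fourier inversion when $H>\tfrac12$. In that case the integrand has the nonintegrable-looking singularity $|z|^{1-2H}$ at $0$, and $V$ is not integrable, so Definition~\ref{def:PSD} does not literally apply. The factor $|z|^{1-2H}$ is locally integrable for all $H\in(0,1)$, and the remaining factors decay like $|z|^{-4}$. The integrand is therefore in $L^1(\R)$, and I would interpret $S_\mathbf{X}$ as the density of the spectral measure in the sense of Theorem~\ref{wk}, which uniquely identifies it. A secondary issue is the stochastic Fubini exchange, which is handled by the $L^2$-isometry on $\tilde\Lambda^H$ because the kernels decay exponentially.
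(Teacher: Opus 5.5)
Your proposal is correct, but your main route differs from the paper's. The paper proves this lemma by the linear-filter argument you list only as an alternative. It Fourier-transforms the path equation $\dot{\mathbf{X}}=M(\lambda)\mathbf{X}+\bm\sigma Z_t$ to get $i\omega\mathcal{F}(\mathbf{X})=M(\lambda)\mathcal{F}(\mathbf{X})+\bm\sigma\mathcal{F}(Z)$, solves for $\mathcal{F}(\mathbf{X})$, and forms the product with its conjugate. It then passes to the spectral density through the periodogram representation \eqref{eq:PSD via Fourier}, inserting $S_Z$ from Lemma~\ref{lem:fBm 1d PSD}. Your main route instead collapses the two-stage system into a single Wiener integral with convolution kernel $\mathbf{g}$ and computes $V(\tau)$ through the Pipiras--Taqqu isometry, as in Lemma~\ref{lem:PSD S2 fBm}.

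The paper's argument is shorter and modular. It needs only $S_Z$, so it applies verbatim to any stationary forcing with a known spectral density. It is, however, formal: as the paper itself remarks before Definition~\ref{def:PSD2}, stationary paths have no Fourier transform, so a rigorous version would have to work with the truncated transforms $G_T$ and control the boundary terms.

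Your route is self-contained and rigorous. Its price is the stochastic Fubini step, which you justify adequately through continuity of the Wiener integral on $\tilde\Lambda^H$ and the exponential decay of the kernels. It also gives you the autocovariance explicitly. Your remark that $V\notin L^1$ for $H>\tfrac12$ is correct, since the density is singular at $\omega=0$. Identifying the density through the spectral measure of Theorem~\ref{wk}, using the $L^1$-integrability of the integrand, closes a gap that Definition~\ref{def:PSD} and the paper's use of it in Lemma~\ref{lem:fBm 1d PSD} pass over.

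The sign of $\omega$ (equivalently, conjugation of the off-diagonal entries) and the $2\pi$ normalisation depend on the Fourier and autocovariance conventions. The paper is equally loose on these points, and you rightly treat them as conventions rather than as mathematical issues.
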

\begin{proof}
    The SD for $(Z_t)_{t\geq 0}$ derived in Lemma \ref{lem:fBm 1d PSD} is given by
    $$S_Z(\omega)=\sigma^2C_H\frac{|\omega|^{1-2H}}{\mu^2+\omega^2}.$$
    We rewrite \eqref{eq: real red system} into
    $$\dot{\mathbf{X}} = M(\lambda) \mathbf{X} + \bm{\sigma} Z_t.$$
    Applying the Fourier transformation elementwise, we get
    $$i\omega \mathcal{F}(\mathbf{X})(\omega)=M(\lambda) \mathcal{F}(\mathbf{X})(\omega) + \bm{\sigma} \mathcal{F}(Z)(\omega).$$
    Rearranging and taking the absolute value squared, we derive
    $$|\mathcal{F}(\mathbf{X})(\omega)|^2 = (M(\lambda)-i\omega I)^{-1} \bm{\sigma} \mathcal{F}(Z)(\omega)\overline{\mathcal{F}(Z)(\omega)}\bm{\sigma}^*(M(\lambda)^*+i\omega I)^{-1}.$$
    Using the alternate representation for the SD \eqref{eq:PSD via Fourier} we get
    \begin{align*}
        S_{\mathbf{X}}(\omega) 
        &= (M(\lambda)-i\omega I)^{-1}\bm{\sigma} S_Z(\omega)I\bm{\sigma}^* (M(\lambda)^*+i\omega I)^{-1} \\
        &=(M(\lambda)-i\omega I)^{-1}\bm{\sigma}\bm{\sigma}^* (M(\lambda)^*+i\omega I)^{-1} S_Z(\omega).
    \end{align*}
    \qed\\
\end{proof}
\begin{lemma}
    Let $(Z_t)_{t\geq 0}$ be a stationary fOU process \eqref{eq: ou forcing}. Then the stationary solution $(\mathbf{Y}_t)_{t\geq 0}$ of \eqref{eq: false red system} has the spectral density
    $$S_\mathbf{Y}=C_H\frac{|\omega|^{3-2H}}{\mu^2+\omega^2}(M(\lambda)-i\omega I)^{-1} \bm\sigma\bm\sigma^*(M(\lambda)^*+i\omega I)^{-1}.$$
\end{lemma}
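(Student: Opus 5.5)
I will mirror the proof of Lemma \ref{lem:PSD for real red system}, working in the frequency domain and reusing the spectral density of the fOU forcing. The key observation is that in \eqref{eq: false red system} the forcing enters through $\bm{\sigma}\,\txtd Z_t$, i.e.\ formally $\dot{\mathbf{Y}} = M(\lambda)\mathbf{Y} + \bm{\sigma}\dot Z_t$. The input spectrum is therefore that of the derivative $\dot Z$, not of $Z$ itself.

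First, I recall from Lemma \ref{lem:fBm 1d PSD} (applied with $A(\lambda)=-\mu$ and unit intensity) that the stationary fOU process has
\[
S_Z(\omega)=C_H\frac{|\omega|^{1-2H}}{\mu^2+\omega^2}.
\]
Since differentiation corresponds to multiplication by $i\omega$ under the Fourier transform, the generalized process $\dot Z$ has spectral density $\omega^2 S_Z(\omega)=C_H|\omega|^{3-2H}/(\mu^2+\omega^2)$.

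Next, I apply the Fourier transform elementwise to the equation for $\mathbf{Y}$, which gives
\[
i\omega\,\mathcal{F}(\mathbf{Y})(\omega)=M(\lambda)\mathcal{F}(\mathbf{Y})(\omega)+i\omega\,\bm{\sigma}\,\mathcal{F}(Z)(\omega).
\]
This yields $\mathcal{F}(\mathbf{Y})(\omega)=-i\omega\,(M(\lambda)-i\omega I)^{-1}\bm{\sigma}\,\mathcal{F}(Z)(\omega)$. Note that $M(\lambda)-i\omega I$ is invertible for every real $\omega$ when $\lambda<\lambda^*$, since $M(\lambda)$ has no eigenvalues on the imaginary axis. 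Forming the outer product with the conjugate transpose, the factor $|{-i\omega}|^2=\omega^2$ appears. Taking expectations, dividing by $2T$ and passing to the limit as in \eqref{eq:PSD via Fourier} then gives
\[
S_{\mathbf{Y}}(\omega)=\omega^2 S_Z(\omega)\,(M(\lambda)-i\omega I)^{-1}\bm\sigma\bm\sigma^*(M(\lambda)^*+i\omega I)^{-1},
\]
which is the claimed formula.

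The main obstacle is rigor, since neither $Z$ nor $\mathbf{Y}$ has a Fourier transform in the classical sense. As in Lemma \ref{lem:PSD for real red system}, I would handle this either at the level of the truncated transforms $G_T$ or, more cleanly, via the covariance route of Lemma \ref{lem:PSD S2 fBm}. In the second approach, I write the stationary solution of the extended system \eqref{eq: bold_y} as a Wiener integral against $W^H$ and compute the $\mathbf{Y}$-block of its transfer function. This block is $(M(\lambda)-i\omega I)^{-1}\bm{\sigma}\,\frac{-i\omega}{-\mu-i\omega}$, as one checks by block inversion of $\tilde M_{\mathbf{Y}}(\lambda)-i\omega I$; it is also consistent with the second line of \eqref{eq: false red system}. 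I then apply \eqref{eq:Cov Pipiras Taqqu} together with \eqref{eq:Fourier transform of exponential} and read off the spectral density from the resulting inverse-Fourier representation. That computation reproduces the factor $\omega^2/(\mu^2+\omega^2)$ multiplying $C_H|\omega|^{1-2H}$, which confirms the result.
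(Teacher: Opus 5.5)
Your proposal is correct, and its heuristic core is exactly the paper's starting point: the input spectrum is $\omega^2 S_Z$, and it is then passed through the filter $(M(\lambda)-i\omega I)^{-1}\bm\sigma$. You differ from the paper in how you make this rigorous.

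\textbf{The paper's route.} The paper keeps $Z$ as the driving process. It invokes the spectral representation of stationary-increment processes (Basse-O'Connor) to define $\int\varphi\,\txtd Z$ through an orthogonal-increment measure. The spectral distribution of that measure is assumed to have the generalized density $S_{\txtd Z}=\omega^2 S_Z$. The paper then checks that $\mathbbm{1}_{\{u\le\tau\}}e^{M(\lambda)u}\bm\sigma$ lies in the space $\Lambda_{\textnormal{func}}$, computes the covariance, and applies the Fourier transform.

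\textbf{Your route.} You lift to the extended system \eqref{eq: bold_y}, which is driven directly by the scalar fBm. The Pipiras--Taqqu formula \eqref{eq:Cov Pipiras Taqqu} and the computation of Lemma \ref{lem:PSD S2 fBm} then carry over without change, with drift $\tilde M_{\mathbf{Y}}(\lambda)$ and noise vector $(\bm\sigma^{\textnormal{T}},1)^{\textnormal{T}}$. The drift is Hurwitz for $\lambda<\lambda^*$, since its spectrum is that of $M(\lambda)$ together with $-\mu$. Your block-triangular inversion is right: the $\mathbf{Y}$-component of $(\tilde M_{\mathbf{Y}}(\lambda)-i\omega I)^{-1}(\bm\sigma^{\textnormal{T}},1)^{\textnormal{T}}$ is $(M(\lambda)-i\omega I)^{-1}\bm\sigma\,\frac{-i\omega}{-\mu-i\omega}$. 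Its squared modulus supplies the factor $\omega^2/(\mu^2+\omega^2)$ multiplying $C_H|\omega|^{1-2H}$.

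\textbf{What each buys.} Your route is more self-contained.
\begin{itemize}
\item It needs no theory of integration against stationary-increment processes and no a priori assumption that the increments of $Z$ have a spectral density.
\item Membership of the integrand in $\tilde\Lambda^H$ is immediate, since its Fourier transform decays like $|z|^{-1}$.
\item It yields the full joint spectrum of $(\mathbf{Y},Z)$.
\item It shows that the zero of $S_{\mathbf{Y}}$ at $\omega=0$ is a transfer-function zero created by the $-\mu\bm\sigma$ coupling, which is the masking mechanism discussed in Section \ref{sec:red noise}.
\end{itemize}
The paper's route is more general. It applies to any driving process with stationary increments admitting a spectral density, not only to noises that, like the fOU, admit a finite-dimensional linear embedding driven by fBm.
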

\begin{proof}
    First, we derive the SD of $dZ$. Recall from \cite[Chapter 4.12.5]{Priestley_Spectral_Analysis} that the differentiation is a linear transformation and we therefore informally obtain  
    $$S_{\txtd Z}(\omega) = \omega^2 S_Z(\omega).$$
    The rigorous derivation of the previous formula uses the spectral distribution function and defines $S_{\txtd Z}(\omega)$ only as a generalized spectral density. Applying the linear transformation from the SDE entails the SD. We omit the details for brevity since the methodology is similar to the fBm case in Lemma \ref{lem:PSD S2 fBm}. We compute the autocovariance $V(\tau)$ using the spectral representation and then apply the Fourier transform to get the SD.
    We remind the reader that the spectral density of the increments exists if the autocovariance $V(\tau)$ is integrable as stated in Definition~\ref{def:PSD}. Because $Z$ is a zero-mean weakly stationary increment process, there exists a spectral representation \cite[Theorem 2.7]{Basse_stationary_increments} for the increments of $Z$. Therefore for all $s,t\in\R$ with $s<t$ we have 
    $$Z_t-Z_s = \int_{\R} \mathcal{F}(\mathbbm{1}_{(s,t]})(z)~\txtd \mathcal{Z}(z),$$
    and for $s_1\leq t_1 \leq s_2 \leq t_2$ 
    \begin{align*}
        \textnormal{Cov}((Z_{t_1}-Z_{s_1}),(Z_{t_2}-Z_{s_2}))
        &=\E[(Z_{t_1}-Z_{s_1})\overline{(Z_{t_2}-Z_{s_2})}]\\
        &=\int_\R\mathcal{F}(\mathbbm{1}_{(s_1,t_1]})(z)\overline{\mathcal{F}(\mathbbm{1}_{(s_2,t_2]})(z)}~\txtd F(z),
    \end{align*}
    where the measures $F$ and $\mathcal{Z}$ are uniquely determined by $Z$. We call $\mathcal{Z}$ an orthogonal-increment process and $F$ the spectral distribution. As we assume the increments of $Z$ to have a spectral density, we know that $F$ is absolutely continuous with respect to the Lebesgue measure. Hence, we have 
    $$\textnormal{Cov}((Z_{t_1}-Z_{s_1}),(Z_{t_2}-Z_{s_2}))=\int_\R\mathcal{F}(\mathbbm{1}_{(s_1,t_1]})(z)\overline{\mathcal{F}(\mathbbm{1}_{(s_2,t_2]})(z)} S_{\txtd Z}(z)~\txtd z.$$
    Moreover, $\mathcal{Z}$ and $F$ are employed in the construction of integrals against $Z$. By \cite[Theorem 3.5]{Basse_stationary_increments} we have 
    $$\int_\R \varphi(t)~\txtd Z_t = \int_\R \mathcal{F}(\varphi)(z)~\txtd \mathcal{Z}(z),$$
    for $\varphi\in\Lambda_{\textnormal{func}}$ where
    $$\Lambda_{\textnormal{func}}=\left\{\varphi\in L^2(\R): \int_\R |\mathcal{F}(\varphi)(z)|^2 ~\txtd F(z)<\infty \right\}.$$
    With this representation, the covariance between the integrals of $\varphi,\psi\in\Lambda_{\textnormal{func}}$ is given by
    \begin{align*}
        \text{Cov}\left(\int_\R \varphi(u)~\txtd Z_u, \int_\R \psi(u)~\txtd Z_u\right)
        &=\E\left[\int_\R \varphi(u)~\txtd Z_u \overline{\int_\R \psi(u)~\txtd Z_u} \right]\\
        &=\E\left[\int_\R \mathcal{F}(\varphi)(z)~\txtd \mathcal{Z}(z) \overline{\int_\R \mathcal{F}(\psi)(z')~\txtd \mathcal{Z}(z')} \right] \\
        &= \int_\R \mathcal{F}(\varphi)(z)\overline{\mathcal{F}(\psi)(z)}~\txtd F(z)\\
        &= \int_\R \mathcal{F}(\varphi)(z)\overline{\mathcal{F}(\psi)(z)} S_{\txtd Z}(z)~\txtd z.
    \end{align*}
    We consider now the stationary solution of \eqref{eq: false red system} given by 
    $$\mathbf{Y}_t = \int_{-\I}^t e^{-M(\lambda)(t-u)}\bm{\sigma}~\txtd Z_u.$$
   In order to compute the covariance, we need to check that $\mathbbm{1}_{\{u\le\tau\}}e^{M(\lambda)u}\bm{\sigma}\in \Lambda_{\textnormal{func}}$ for $\tau\in\R$. We first drop $\bm{\sigma}$ as it is not dependent on $u$ and use again the Fourier transform computed in \eqref{eq:Fourier transform of exponential}
    \begin{align*}
        \int_\R \left|e^{iz\tau}e^{M(\lambda)\tau}\left(M(\lambda)-izI \right)^{-1} \right|^2S_{\txtd Z}(z)~\txtd z
        \le\left|e^{M(\lambda)\tau}\right|^2 \int_\R \left|\left(M(\lambda)-izI \right)^{-1} \right|^2\frac{|z|^{3-2H}}{\mu^2+z^2}~\txtd z<\infty.
    \end{align*}
    Now we have the necessary tools to compute the autocovariance for $\mathbf{Y}_t$ as
    \begin{align*}
        V(\tau) 
        &= \text{Cov}\left(\int_\R \mathbbm{1}_{\{u\le 0\}} e^{M(\lambda)u} \bm{\sigma} ~\txtd Z_u, \int_\R e^{-M(\lambda)\tau}\mathbbm{1}_{\{u\le \tau\}} e^{M(\lambda)u} \bm{\sigma}~\txtd Z_u\right) \\
        &= \E\left[\int_\R \mathbbm{1}_{\{u\le 0\}} e^{M(\lambda)u} \bm{\sigma} ~\txtd Z_u \left(\int_\R e^{-M(\lambda)\tau}\mathbbm{1}_{\{u\le \tau\}} e^{M(\lambda)u} \bm{\sigma}~\txtd Z_u\right)^*\right] \\
        &= \int_\R (M(\lambda)-izI)^{-1} \bm{\sigma} \bm{\sigma}^* (M(\lambda)^*+izI)^{-1} e^{iz\tau}e^{M(\lambda)^*\tau}e^{-M(\lambda)^*\tau}S_{\txtd Z}(z) ~\txtd z\\
        &= \int_\R (M(\lambda)-izI)^{-1} \bm{\sigma} \bm{\sigma}^* (M(\lambda)^*+izI)^{-1} e^{iz\tau}S_{\txtd Z}(z) ~\txtd z.
    \end{align*} 
    Applying the Fourier transform we get by the definition that \eqref{eq:def spectral density}
    \begin{align*}
        S_\mathbf{Y}(\omega) = (M(\lambda)-i\omega I)^{-1} \bm{\sigma} \bm{\sigma}^* (M(\lambda)^*+i\omega I)^{-1} S_{\txtd Z}(\omega),
    \end{align*}
    which concludes the proof.
    \qed \\
\end{proof}

We can clearly distinguish the two main influences on the SD. First, we have the term from the linear part
$$(M(\lambda)-i\omega I)^{-1} \bm{\sigma} \bm{\sigma}^* (M(\lambda)^*+i\omega I)^{-1}.$$
This term is the equivalent to the mitigation term appearing in the autocovariance formula. It causes peaks at $\pm B(\lambda)$, which increase with power-law $-2$ as $A(\lambda)\to0$. Second, we have the SD of the noise. The SD of the noise typically introduces either a maximum peak at some frequency (fBm with $H=\frac{1}{2}$), a singularity at some frequency (fBm with $H>\frac{1}{2}$) or is 0 at some frequency (fBm with $H<\frac{1}{2}$). For all the noise terms considered in this paper, the SD is mainly affected at $\omega=0$. There are examples where this is not the case, the most prominent one being the stationary solution of \eqref{eq:fast system simplified hopf 2}. 
The structure of the SD of the stationary solution depends on the interplay between the linear part and the noise. If both act on the same frequency, as in the setting \nameref{S1}, the noise can obstruct the divergence of the peak which warns us of the approaching bifurcation. For this case, we used in Section \ref{sec:SD S1 fBm} the SD at a frequency close to zero $S_x(\delta)$ for $\delta>0$ to observe the divergence. If both components act at different frequencies as in the setting \nameref{S2}, we can observe each separately. The divergence due to the approaching bifurcation can be observed from the frequencies $\omega=\pm B(\lambda)$ and the noise effects can be inferred considering the frequency $\omega=0$.\\
In the one-dimensional case the SDs simplify to
\begin{align*}
    S_X(\omega) = C_H  \frac{|\omega|^{1-2H}}{\mu^2+\omega^2}\frac{\sigma^2 }{M(\lambda)^2+\omega^2},
\end{align*}
and
\begin{align*}
    S_Y(\omega) = C_H \frac{|\omega|^{3-2H}}{\mu^2+\omega^2}\frac{\sigma^2 }{M(\lambda)^2+\omega^2}.
\end{align*}
From these expressions, the slopes of $S^{\max}$ listed in Table~\ref{tab:fold overview} follow by direct computation. Furthermore, $S_X(\delta)$ and $S_Y(\delta)$ have a power-law behavior with exponent $-2$ in the regime $|M(\lambda)|>\delta$. This can be shown by an analogous proof to Lemma \ref{lem:PSD0 for fBm}.

\section{Applications} \label{sec:applications} 

In this section, we provide insights on the practical implementation of the EWSs introduced in the sections above.~First, we discuss the approximations adopted in \eqref{eq:fast system simplified} and the time-asymptotic limit, such as in \eqref{eq:PSD via Fourier}, \eqref{eq:time-asymptotic autocov} and \eqref{eq:time-asymptotic autocorr}. Moreover, we discuss their role on the use and construction of statistical estimators that allow for the study of real-life data. Finally, we implement the estimators to trajectories obtained from an ocean model and from a theoretical system. Consequently, we cross-validate our analytic results through the warning of impending fold and Hopf bifurcations.

\subsection{Practical use of early-warning signs}

We discuss the implementation of the EWSs studied in the previous sections and assess the observability of the corresponding indicators and their estimators in the collected data. The insights presented are therefore not restricted to a specific application. Rather, they are broadly relevant to a wide range of tipping points and their associated critical transitions. First, we set $T\gg0$ and label $\left(\mathbf{x}_t^{(1)}\right)_{t\in[0,T]}$ that solves \eqref{eq:fast system} for a given initial condition at $t=0$ and $0<\eps\ll1$, which we interpret as the model associated with a real-life occurrence. Then, we discretize the time interval $[0,T]$ into an equidistant partition
$$\cP=\left\{ t_j| \; t_j = j T/N \right\}_{j\in\{0,\dots, N\}}$$ 
and consider $\left\{\mathbf{x}_t^{(1)}\right\}_{t\in\cP}$ as a data sample collected over a large time interval. As we aim to predict critical transition events, we analyze portions of data in windows $\left\{\cW_j\right\}_j\subset[0,T]$ prior to the critical threshold. However, the application of the EWSs presented in the paper requires considerations on the approximations in Section \ref{sec:preliminaries}:
\begin{description}
    \assumptionitem{A1}{A1} The model \eqref{eq:fast system} refers to an initial mathematical reconstruction of a phenomenon. While function $f$ indicates more abrupt dynamics of the trajectory, function $g$ defines its most common motions in time. The role of this functions is defined by $\eps$, which dictates the time regimes of the components within the solution. While in our analysis we consider $\eps\ll1$ and then $\eps=0$ in the fast time regime \eqref{eq:fast system simplified}, the actual order of $\eps$ deeply affects the critical transitions. A sufficiently large $\eps$ can increase the difficulty in finding a reliable estimate of the bifurcation threshold. In fact, a fast approach in the critical limit can force the trajectory out of proximity to the stable branch and also deprive the time required to effectively register critical slowing down. Nonetheless, such a setting can be in contrast with the definition of a tipping phenomenon such as intended. In this view, a parameter $\eps\ll1$ that is sufficiently small to register critical slowing down is a required property for a tipping event.
    
    \assumptionitem{A2}{A2} The linearization approximation in \eqref{eq:fast system simplified} is justified for dynamics in a neighborhood of the stable branch $h_0^\text{a}(\lambda)$. It provides a simplified setting that allows to explicitate the behavior of the indicator in the approach to the the critical regime. Nonetheless, it overrides the dynamics outside of the basin of attraction. In particular, it does not consider the possibility of escape from a saddle and the existence of further stable states. Such an approximation therefore weakens in a regime where the nonlinear components within $f$ dominate during the registered time interval. A prime example is in the proximity to the critical transition. In such a setting, either metastable jumps are more likely to occur or nonlinear dynamics can shut down the divergence of the indicators \cite{bernuzzi2026early}. In this vision, the correct implementation of observables as EWSs becomes the registration of an increase in the rates as presented in Table \ref{tab:fold overview} and Table \ref{tab:Hopf overview}. While such a growth is eventually halted by forcings induced by nonlinear terms, it hints at a loss of resilience, or diminished stability of the branch. In fact, the EWSs do not only precede a critical transition, but warn of the increasing likelihood of an escape from the basin of attraction.

    \assumptionitem{A3}{A3} All analytic observables studied in the previous sections require time-asymptotic knowledge of the trajectory. This clearly becomes an idealistic assumption in applications and in the fast-slow regime for $\eps>0$. From this perspective, the time-asymptotic limit serves as an approximation of indicators estimated from observations collected over long time intervals. Our results are therefore particularly relevant in settings where data are available over sufficiently large observation windows. Although this approximation influences the quantitative behavior of the indicators, it does not alter their qualitative properties when the system operates within the appropriate regime. In particular, under sufficient resilience, or equivalently enough distance from the critical threshold, a proper approximation of the time-asymptotic indicators for the linearized fast model requires only a finite time interval of data observation \cite[Remark 3.2]{bernuzzi2025early}.
\end{description}

\begin{figure}[h!]
    \centering
    \subfloat{\begin{overpic}[scale=0.63]{Fig//Illustration2.jpg}
    \put(158,313){\small{$-20$}}
    \put(333,313){\small{$-15$}}
    \put(508,313){\small{$-10$}}
    \put(683,313){\small{$-5$}}
    \put(858,313){\small{$0=\lambda^*$}}
    \put(-7,225){\shortstack{Collected\\data}}
    \put(-15,50){\shortstack{Analytic\\trajectories}}
    \end{overpic}}
    
    \caption{The upper part of the figure shows a stochastic process approaching a subcritical pitchfork bifurcation, solving \eqref{eq: sub_pitch} with $\eps = 10^{-3}$, $\sigma = 10^{-1}$, and $H = 0.75$. The slow variable increases linearly toward $\lambda^* = 0$ with initial condition $y_0 = -20$, and its value is encoded by the color of the arrow. The fast component (black) evolves in the direction of the arrow; its noise is generated via the Davies-Harte method and integrated using an implicit Euler scheme \cite{dieker2004simulation}. Blue windows indicate intervals $[s_0, s_0 + 1500]$ for $s_0 \in \{2500, 12500, 17500\}$, chosen to compare different magnitudes of $y_{s_0}^{-1}$ and stochastic oscillation amplitudes.
    The lower part shows solutions of \eqref{eq:fast system simplified} with corresponding parameters, starting from $\mathbf{x}_{s_0}$ at $t = s_0$ and $M(\lambda) = \lambda = y_{s_0}$. The color of each shape encodes this value. Trajectories in vertically aligned blue regions are similar away from criticality but diverge near the critical regime, where the upper trajectory departs from the stable branch (gray).}
    \label{fig:Illustration_2}
\end{figure} 

Having discussed the role of the addressed approximations on system \eqref{eq:fast system}, we study their use within the implementation of the EWSs. An example of a considered data sample, is shown in the first row of Figure \ref{fig:Illustration_2} for
\begin{align} \label{eq: sub_pitch}
    \begin{cases}
        \txtd \mathbf{x}_t^{(1)} = \left(y_t\mathbf{x}_t^{(1)} + \left(\mathbf{x}_t^{(1)}\right)^3\right)~\txtd t + \sigma~\txtd W_t^H, \\
        \txtd y_t = \eps ~\txtd t,
    \end{cases}
\end{align}
with $n=1$, $H=0.75$ and $y_0<0$. Consequently, $\left(\mathbf{x}_t^{(1)}\right)_{t\in[0,T]}$ solves the normal form corresponding to the subcritical pitchfork bifurcation for a variable that slowly approaches the critical threshold in $[0,T]$. The values of $(y_t)_{t\in[0,T]}$ are indicated on the arrow in the figure and associated to a color. The time series $\left(\mathbf{x}_t^{(1)}\right)_{t\in\mathcal{P}}$ is divided into portions within closed windows, indicated as blue solid squares, for a sufficiently large fixed time interval. We then set a window $\mathcal{W}\subset [0,T]$ and label the discrete set $\mathcal{P}'=\mathcal{P}\cap \mathcal{W}$ of size $0\ll N_1\ll N$. Since $\eps\ll1$, the variable $(y_t)_{t\in\mathcal{P}'}$ and the eigenvalues of $(M(y_t)^{-1})_{t\in\mathcal{P}'}$, for $M(y_t)=\partial_\mathbf{x}f(h_0^a(y_t),y_t,0)$, remain almost constant if $\max(y_t)_{t\in\mathcal{P}'}$ is distant from $\lambda^*$. Consequently, we define $\left(\mathbf{x}_t^{(2)}\right)_{t\in\mathcal{W}}$ that solves \eqref{eq:fast system simplified} for initial condition $\mathbf{x}_{s_0}^{(2)}= \mathbf{x}_{s_0}^{(1)}- h_0^a(y_{s_0})$ on $s_0=\min(\mathcal{W})$ and with equivalent noise realization as in \eqref{eq:fast system}. As indicated by \nameref{A1} and \nameref{A2}, the difference between the trajectories $\left(\mathbf{x}_t^{(1)}\right)_{t\in\mathcal{W}}$ and $\left(\mathbf{x}_t^{(2)}\right)_{t\in\mathcal{W}}$ is small and provides a reliable approximation under the mentioned conditions. For our example, the trajectories $\left(\mathbf{x}_t^{(2)}\right)_{t\in\mathcal{P}'}$ are shown in Figure \ref{fig:Illustration_2} in blue dashed squares and within rectangles whose color indicates the value of $\lambda=M(\lambda)$ in \eqref{eq:fast system simplified}.

The application of statistical estimators associated to the observables studied in the paper on $\left(\mathbf{x}_t^{(1)}\right)_{t\in\mathcal{W}}$ is then justified by its resemblance to the solution of the linearized system in $\mathcal{W}$. However, the finiteness of the collected data and the boundedness of $[0,T]$ require the finite-time approximation \nameref{A3} within their construction. The statistical estimators are listed below along with a description of their implementation in the elements of the trajectories addressed.
\begin{itemize}
    \item The time-asymptotic autocovariance \eqref{eq:time-asymptotic autocov} for lag time $\tau= d/N$, with $d\in \{0,\dots, N_1-1\}$, is approximated as
    \begin{align*}
        \mathtt{V}^{(k)}(\tau)[\mathbf{w}_1,\mathbf{w}_2]
        :=& \left( N_1-d \right)^{-1} \sum_{j\in\cP_{d}'} \left\langle \mathbf{x}_j^{(k)} , \mathbf{w}_1 \right\rangle \overline{\left\langle \mathbf{x}_{j+d}^{(k)} , \mathbf{w}_2 \right\rangle}\\
        &- \left( N_1-d \right)^{-2} \left(\sum_{j\in\cP_{d}'} \left\langle \mathbf{x}_j^{(k)} , \mathbf{w}_1 \right\rangle \right)
        \left( \sum_{j\in\cP_{d}'} \overline{\left\langle \mathbf{x}_{j+d}^{(k)} , \mathbf{w}_2 \right\rangle} \right) ,
    \end{align*}
    for $k\in\{1,2\}$, any $\mathbf{w}_1,\mathbf{w}_2\in\C^n$ and
    \begin{align*}
        \cP_{d}'=\{j\in \cP'| \; j+d \in \cP'\},
    \end{align*}
    i.e. by the discrete covariance with lag index $d$ of $\left\{\left\langle \mathbf{x}_j^{(k)} , \mathbf{w}_1 \right\rangle\right\}_{j\in\cP'}$ and $\left\{\left\langle \mathbf{x}_j^{(k)} , \mathbf{w}_2 \right\rangle\right\}_{j\in\cP'}$. The choice of $\mathbf{w}_1$ and $\mathbf{w}_2$ is associated to the components of the observed trajectory. For instance, for proxy vectors in the canonical basis $\{\mathbf{b}_j\}_{j\in\{1,\dots,n\}}$ of $\R^n$, then $\left(\mathtt{V}^{(k)}(\tau)[\mathbf{b}_{j_1},\mathbf{b}_{j_2}]\right)_{j_1,j_2\in\{1,\dots,n\}}$ corresponds to the covariance matrix of $\left\{ \mathbf{x}_j^{(k)}\right\}_{j\in\cP'}$. By definition, it collects the time covariance with lag time $\tau$ of each of the elements. Consequently, we indicate $\mathtt{V}^{(k)}(\tau)_{j_1j_2}:= \mathtt{V}^{(k)}(\tau)[\mathbf{b}_{j_1},\mathbf{b}_{j_2}]$ for any $j_1,j_2 \in\{1, \dots, n\}$ to simplify the notation;

    \item the time-asymptotic autocorrelation \eqref{eq:time-asymptotic autocorr} for lag time $\tau= d/N$ is estimated with
    \begin{align*}
        \mathtt{AC}^{(k)}(\tau)[\mathbf{w}]:=\frac{\mathtt{V}^{(k)}(\tau)[\mathbf{w},\mathbf{w}]}{\mathtt{V}^{(k)}(0)[\mathbf{w},\mathbf{w}]} ,
    \end{align*}
    for $k\in\{1,2\}$ and any $\mathbf{w}\in\C^n$, i.e., the discrete autocorrelation with lag index $d$ of $\left\{\left\langle \mathbf{x}_j^{(k)} , \mathbf{w} \right\rangle\right\}_{j\in\cP'}$. Similar to the estimator above, the choice of $\mathbf{w}\in\C^n$ is related to the elements of the process $\left\{ \mathbf{x}_j^{(k)}\right\}_{j\in\cP'}$. While for $\mathbf{w}\in\{\mathbf{b}_j\}_{j\in\{1,\dots,n\}}$ the estimator corresponds to the time autocorrelation of a single coordinate of the studied trajectory, other vectors refer to projections along further modes;

    \item we approximate the spectral density \eqref{eq:PSD via Fourier} using Welch's method \cite{ChenPSDforHopf}. For fixed $k\in\{1,2\}$, this consists of dividing the time series $\left\{\mathbf{x}_t^{(k)}\right\}_{t\in\cP'}$ into $K\in\N_{>0}$, possibly overlapping, segments of length $N_2\ll N_1$, applying a Hamming window to each segment, computing the periodogram of each windowed segment, and then averaging. More precisely, for a segment starting at index $s$, we define the windowed data $\left\{ h_j f_{s+j}\right\}_{j\in\{0,\dots,N_2-1\}}$, where the $\left\{ h_j \right\}_{j\in\{0,\dots,N_2-1\}}$ is a Hamming window \cite{ChenPSDforHopf}.
    The discrete Fourier transform on the windowed segment is indicated then by $\hat{\mathbf{x}}_s^{(k)}(\omega)$ for any $\omega \in \{0,\dots, N_2-1\}$. The corresponding periodogram is then the matrix $\frac{1}{N_2}\hat{\mathbf{x}}_s^{(k)}(\omega) \left( \hat{\mathbf{x}}_s^{(k)}(\omega) \right)^\text{T}$. Finally, Welch's estimator of the spectral density, labeled $\mathtt{S}^{(k)}(\omega)$, is obtained by averaging over all segments. In contrast to the previous estimators, the study of the elements of the estimator corresponds directly to the observation of single coordinates of the trajectory, as opposed to using a spectral perspective;
\end{itemize}

The estimators listed above are consistent \cite{ChenPSDforHopf,kubilius2017parameter} from the ergodic property \cite{kubilius2017parameter} of the solutions of \eqref{eq:fast system simplified}, i.e., for $k=2$. In fact, let us set $N=\left \lfloor c_0 T \right \rfloor $ and $N_1=\left \lfloor c_1 T \right \rfloor$ with $0<c_1 \ll c_2$. For $N_1\to \infty$, the estimator $\mathtt{V}^{(2)}(\tau)[\mathbf{w}_1,\mathbf{w}_2]$ displays equivalent scaling laws as indicated in Theorem \ref{thm:autocov fast system} and Theorem \ref{thm:autocov fast system complex} under general $\tau\geq 0$ and $\mathbf{w}_1,\mathbf{w}_2\in\C^n$. It follows that $\mathtt{AC}^{(2)}(\tau)[\mathbf{w}_1,\mathbf{w}_2]$ shows the rate of convergence stated in Corollary \ref{cor:autocor fast system} and Corollary \ref{cor:autocor fast system complex} for almost every $\tau\geq 0$ and $\mathbf{w}\in\C^n$. Finally, the rate of divergence of $\mathtt{S}^{(2)}(\omega)$, for appropriate $\omega$, is shown in Lemma \ref{lem:fBm 1d PSD} and Theorem \ref{thm:psd_on_elements} in the limit $N_1\to \infty$, $N_2=N_2(N_1)\to \infty$ and $K=K(N_1)\to \infty$. \\

We note that, in practice, the estimators applied to $\left(\mathbf{x}_t^{(1)}\right)_{t\in\mathcal{P}'}$ are affected by the assumptions taken in our analysis. We consider first $\mathtt{V}^{(1)}(\tau)[\mathbf{w}_1,\mathbf{w}_2]$ and $\mathtt{S}^{(1)}(\omega)$ for $\tau\geq0$, $\mathbf{w}_1,\mathbf{w}_2\in\C^n$ and $\omega\in\R$ such that $\mathtt{V}^{(2)}(\tau)[\mathbf{w}_1,\mathbf{w}_2]$ and $\mathtt{S}^{(2)}(\omega)$ are divergent in the limit $\lambda\to\lambda^*$ with an equivalent rate as discussed above. The estimators behave as follows: they can be influenced by the change of the slow variable, \nameref{A1}; they could be restrained or incur into a metastable jump by nonlinear forcings, \nameref{A2}; they are damped by finite-time samples, \nameref{A3}. Such effects are more present in the proximity to the bifurcation threshold, as displayed in Figure \ref{fig:Illustration_2}. For a collection of non-overlapping windows $\left\{\cW_j\right\}_j$ of increasing extremes, the approximations may result then in three stages in the collection of subtrajectories $\left\{\left(\mathbf{x}_t^{(1)}\right)_{t\in\cW_j\cap\mathcal{P}}\right\}_j$. First, a similar rate as shown in Section \ref{sec:fbm 1d} is adopted by the corresponding estimator, regardless of the bifurcation approached. For Hopf bifurcations, the real part of the sensible eigenvalue becomes eventually smaller in magnitude than the imaginary part. This induces a second stage where the increase stated in Section \ref{sec:fbm 2d} is recorded. Finally, the nonlinear terms are dominant in the proximity to the critical threshold and the growth of the estimators is affected or hindered. We note that the existence of these stages depends on parameters such as the noise intensity, which can force the trajectory far from the stable branch prior to the approach of the critical threshold, and $N_1$, that indicates the amount of data considered in the construction of the statistics. The EWSs are therefore effective if the first stage, or second stage if the bifurcation is Hopf, are detected prior to the eventual damping by nonlinearities.

\subsection{AMOC}
We consider a stochastic Stommel-Cessi \cite{CessiBoxModel} box model for the Atlantic Meridional Overturning Circulation (AMOC) driven by fractional Brownian motion with Hurst index $H\in(0,1)$, which is given by 
\begin{align}\label{eq:AMOC fBm}
    \txtd x_t &= y_t - x_t(1+\eta^2(1-x_t)^2)~\txtd t + \sigma \txtd W_t^H, \nonumber\\
    \txtd y_t &= -\eps~\txtd t,
\end{align}
where $x$ represents the salinity difference in two ocean regions, $y$ is proportional to the temperature difference, and $\eta^2=7.5$ is the ratio between the diffusive and advective time scale as set in \cite{CessiBoxModel}. In our simulation, we consider a small noise regime $\sigma=0.001$. For this model we compute the autocovariance, autocorrelation and SD as EWSs in the fast subsystem regime, i.e. $\eps=0$. Consequently, we plot them on logarithmic scales against the difference of the bifurcation parameter $y=\lambda$ to the critical value $\lambda^*$. This threshold corresponds to the AMOC transition from an on state to an off state, or AMOC collapse \cite{bernuzzi2024warning}.
\begin{figure}[h!]
    \centering
    \begin{subfigure}{0.45\textwidth}
        \centering
        \begin{overpic}[width=\linewidth]{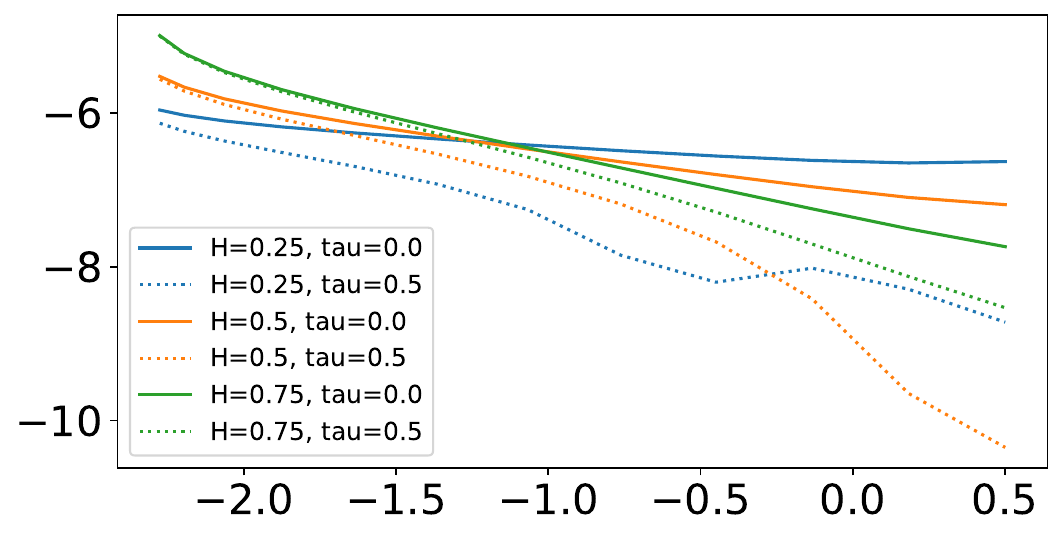}
            \put(-50,520){\large (a)}
            \put(430,-35){\scriptsize $\log_{10}(\lambda-\lambda^*)$}
            \put(-40,250){\rotatebox[]{90}{\scriptsize $\log_{10}(\mathtt{V}(\tau))$}}
        \end{overpic}
    \end{subfigure}
    \hfill
    \begin{subfigure}{0.45\textwidth}
        \centering
        \begin{overpic}[width=\linewidth]{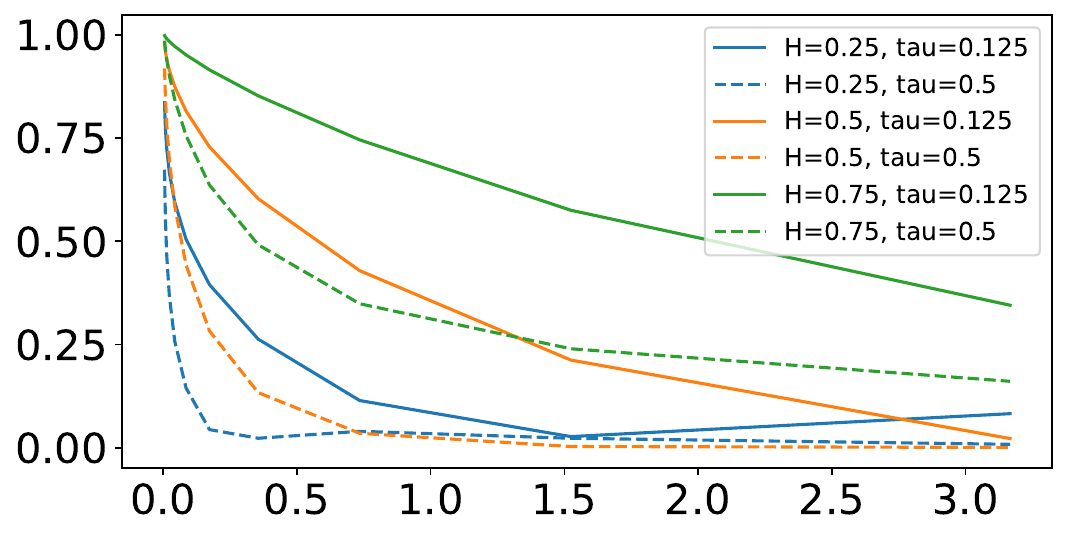}
            \put(-55,520){\large (b)}
            \put(470,-35){\scriptsize $\lambda-\lambda^*$}
            \put(-40,280){\rotatebox[]{90}{\scriptsize $\mathtt{AC}(\tau)$}}
        \end{overpic}
    \end{subfigure}
    \par \vspace{0.5cm}
    \begin{subfigure}{0.45\textwidth}
        \centering
        \begin{overpic}[width=\linewidth]{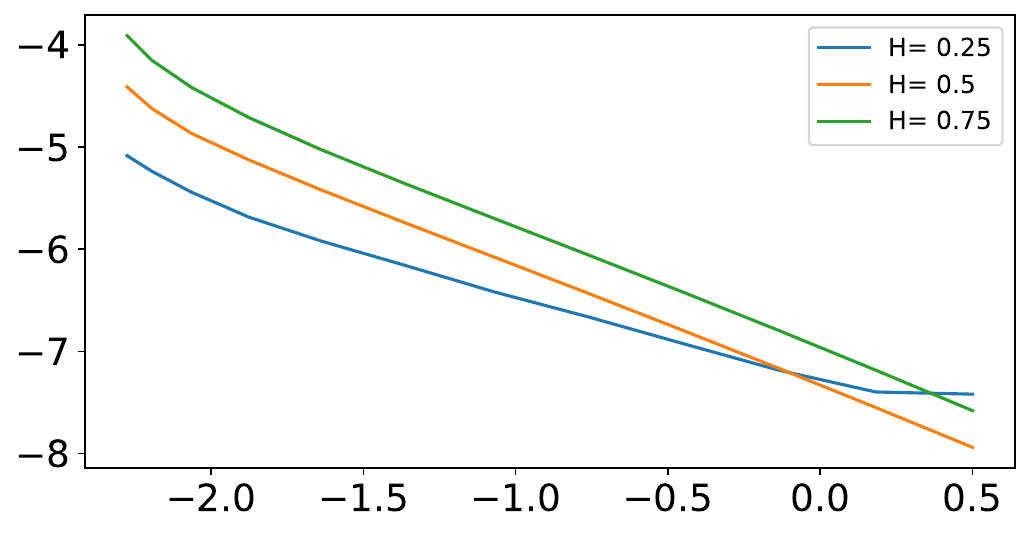}
            \put(-50,520){\large (c)}
            \put(430,-35){\scriptsize $\log_{10}(\lambda-\lambda^*)$}
            \put(-40,250){\rotatebox[]{90}{\scriptsize $\log_{10}(\mathtt{S}^{\max})$}}
        \end{overpic}
    \end{subfigure}
    \hfill
    \begin{subfigure}{0.45\textwidth}
        \centering
        \begin{overpic}[width=\linewidth]{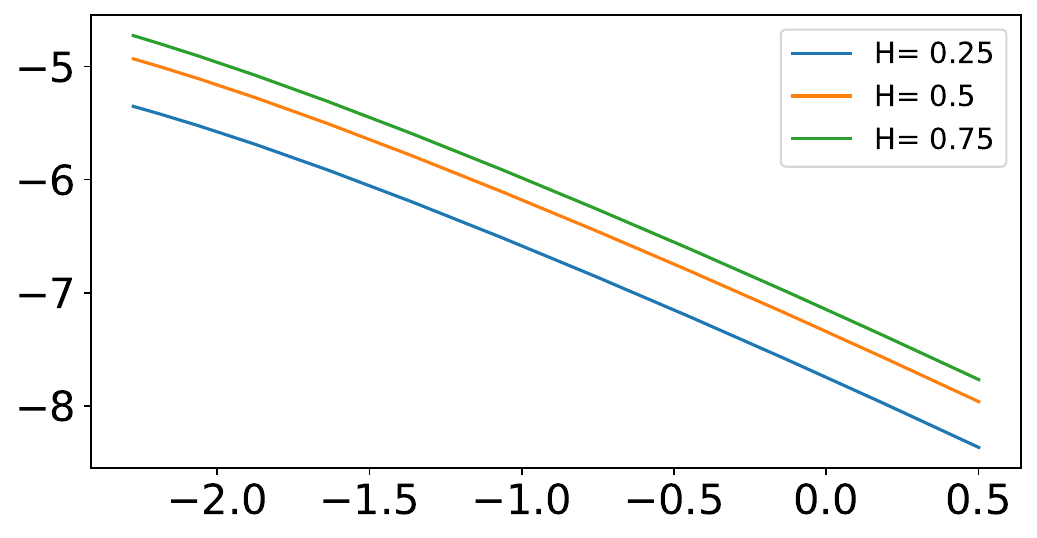}
            \put(-55,520){\large (d)}
            \put(430,-35){\scriptsize $\log_{10}(\lambda-\lambda^*)$}
            \put(-40,250){\rotatebox[]{90}{\scriptsize $\log_{10}(\mathtt{S}(\delta))$}}
        \end{overpic}
    \end{subfigure}
    \caption{(Simulation parameters $T=2^{11},N=2^{15},\text{samples}=20$) Comparing different EWSs for the fBM noise AMOC system \eqref{eq:AMOC fBm}. In each plot we have $H=0.25$ in blue, $H=0.5$ in orange and $H=0.75$ in green. In (a), we see the log-variance, i.e. $\log_{10} \left(\mathtt{V}(0) \right)$, as the solid line and the log-autocovariance with lag time $\tau = 0.5$, i.e. $\log_{10} \left( \mathtt{V}(0.5) \right)$, as the dotted line against the log distance of the bifurcation parameter $\lambda$ from the critical value $\lambda^*$. In (b), we have the autocorrelation for lag $\tau=0.125$, i.e. $\mathtt{AC}(0.125)$, as the solid line and the autocorrelation for lag time $\tau=0.5$, i.e. $\mathtt{AC}(0.5)$, as the dotted line against the distance of the bifurcation parameter $\lambda$ from the critical value $\lambda^*$. In (c), we plot the log of the maximum of the SD, i.e. $\log_{10} \left( \mathtt{S}^{\max}\right)$, against the log distance from the bifurcation point. In (d), we see the log of the SD at a frequency close to $\omega = 0 \approx \delta$, i.e. $\log_{10} \left(\mathtt{S}(\delta)\right)$, against the log distance from the bifurcation point.}
    \label{fig:AMOC fBm EWS}
\end{figure}

For the variance and autocovariance we see in Figure \ref{fig:AMOC fBm EWS} (a) that they all diverge as we approach the bifurcation. As in \cite{kuehn2022warning} the rate of divergence of the variance depends on the Hurst index $H$. For the autocorrelation in (b) we see a convergence to 1 at different rates for different $H$, corroborating the statement of Corollary \ref{cor:autocor fast system}. In both (a) and (b) an increase in the lag time $\tau$ leads to a later increase of the EWS. For the autocovariance, the dotted line in (a), this leads to different slopes compared to those indicated in \eqref{eq:autocov fast subsystem}. Nevertheless, the autocovariance increases significantly and therefore can be used as an EWS. In (c), we see the divergence of $\mathtt{S}^{\max}$ for all $H$. As shown in Lemma \ref{lem:PSD_max for fBM} the rate of divergence decreases for smaller $H$. As mentioned in the Remark~\ref{remark:sd}, the maximum can be computed numerically for $H>1/2$ as well, even though it is not attained. We get a divergence rate of slope $-2$ just as for $H=1/2$. For the SD at a fixed frequency close to zero ($\delta=0.4$), which is shown in (d), we have a power-law of slope $-2$ for all $H\in(0,1)$. We emphasize again that $\mathtt{S}(\delta)$ does not diverge, but increases corresponding to a power-law of slope $-2$ before starting to converge to a fixed value at around $\lambda-\lambda^*=\delta$. By choosing $\delta$ small enough, the convergence part starts very late and we have a large region that displays a slope $-2$. The choice of a frequency closer to zero, while legitimate, might incur into the risk of estimation errors from Welch's method interfering with the right slope. The impact of the estimation method on the SD is shown below. \\
Besides $\mathtt{S}^{\max}$ and $\mathtt{S}(\delta)$ we also plot the structure of the SD and observe the reddening of the noise in Figure \ref{fig:SD AMOC fBm}. 
\begin{figure}[h!]
    \subfloat{\begin{overpic}[scale=0.29]{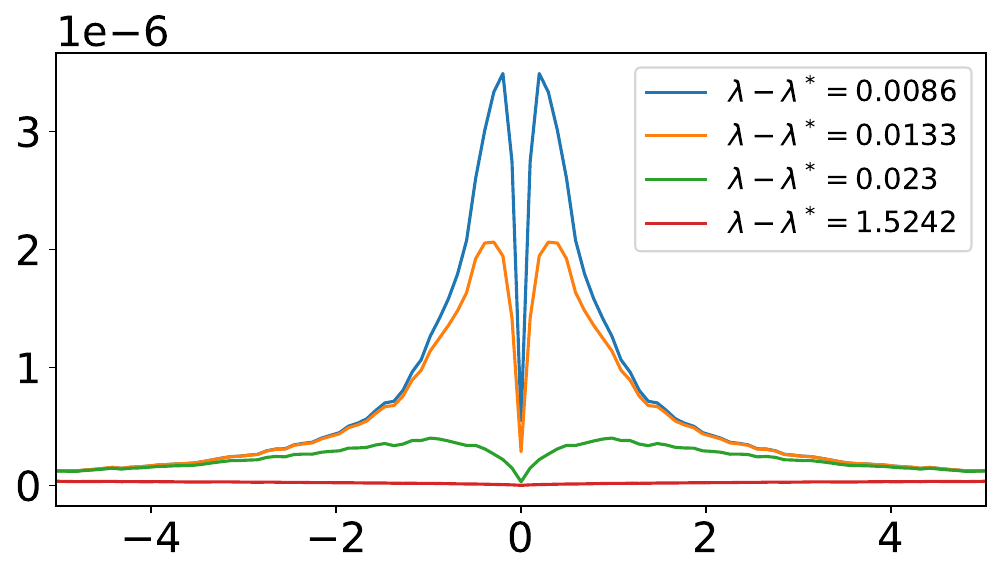}
    \put(-40,-30){(a)}
    \put(370,-35){\scriptsize frequency $\omega$}
    \put(-50,250){\rotatebox[]{90}{\scriptsize $\mathtt{S}(\omega)$}}
    \end{overpic}}
    \hspace{1mm}
    \subfloat{\begin{overpic}[scale=0.29]{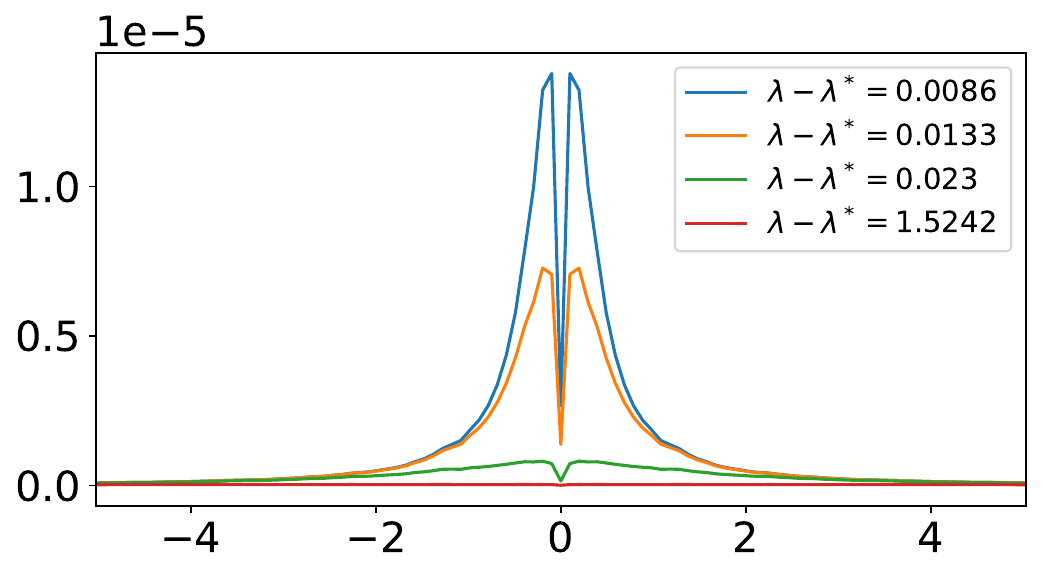}
    \put(-40,-30){(b)}
    \put(380,-35){\scriptsize frequency $\omega$}
    \end{overpic}}
    \hspace{1mm}
    \subfloat{\begin{overpic}[scale=0.29]{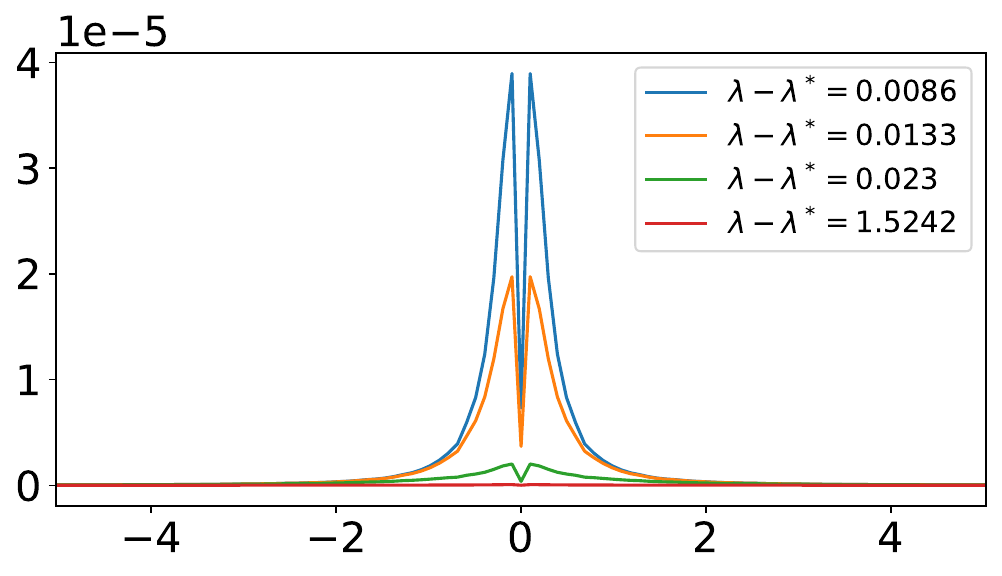}
    \put(-40,-30){(c)}
    \put(370,-35){\scriptsize frequency $\omega$}
    \end{overpic}}
    \caption{Spectral densities for $H=0.25$ (a), $H=0.5$ (b) and $H=0.75$ (c) for \eqref{eq:AMOC fBm}.}
    \label{fig:SD AMOC fBm}
\end{figure}
As expected, the SD has more mass at the lower frequencies as we approach the bifurcation. Before discussing the structure of the SD, we note that due to the numerical computation with Welch's method~\cite{ChenPSDforHopf} we have a dip at the center frequency. This is caused by the averaging over the Hamming windows. Such an effect can also be observed in \cite{ChenPSDforHopf}. As we study further the structure of the SD, we see for $H<1/2$ the two humps close to zero that come closer to zero and increase as we approach the bifurcation. These are the maxima computed in Lemma \ref{lem:PSD_max for fBM}. For $H=1/2$ we have the single peak in the middle that increases. This maximum is also obtained in Lemma \ref{lem:PSD_max for fBM}. For $H>1/2$ we have a singularity at 0 and the neighboring frequencies increase as we approach the bifurcation. As the value at each frequency is numerically finite, this provides a valid EWS in $\mathtt{S}^{\max}$ for $H>1/2$. Furthermore, these graphs demonstrate why $\mathtt{S}(\delta)$ is a suitable EWS for all $H\in(0,1)$. We see in all three graphs a clear increase of the SD as we approach the bifurcation at frequencies close to zero.
\\\\
Next, we consider a similar AMOC model driven by red noise. We consider the following fast subsystem
\begin{align}\label{eq:AMOC red noise}
    \txtd x_t &= \left( \lambda - x_t(1+\eta^2(1-x_t)^2) + \sigma Z_t \right)~\txtd t
\end{align}
where $Z$ is the stationary fOU process \eqref{eq: ou forcing}. 
\begin{figure}[h!]
    \centering
    \begin{subfigure}{0.45\textwidth}
        \centering
        \begin{overpic}[width=\linewidth]{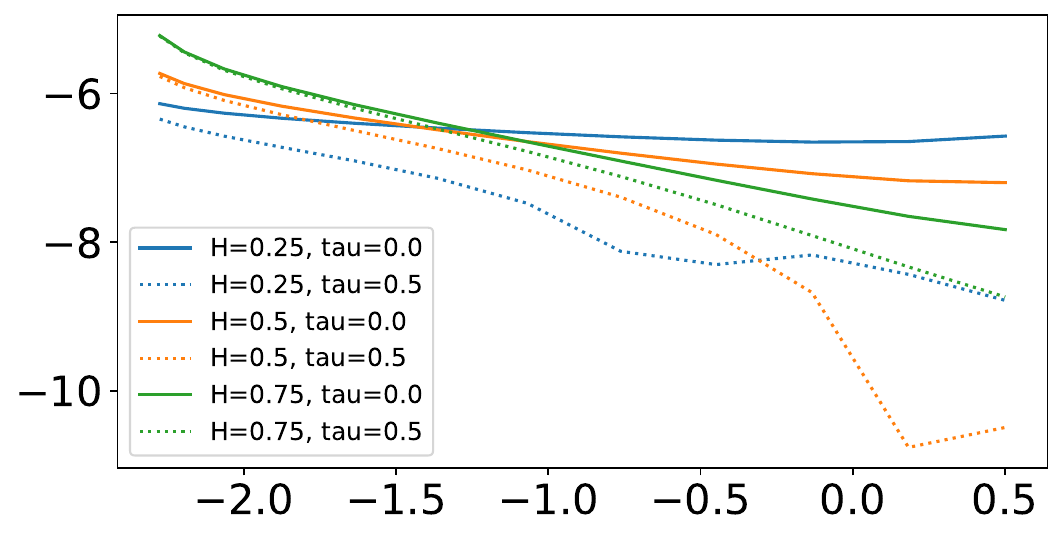}
            \put(-50,520){\large (a)}
            \put(430,-35){\scriptsize $\log_{10}(\lambda-\lambda^*)$}
            \put(-40,250){\rotatebox[]{90}{\scriptsize $\log_{10}(\mathtt{V}(\tau))$}}
        \end{overpic}
    \end{subfigure}
    \hfill
    \begin{subfigure}{0.45\textwidth}
        \centering
        \begin{overpic}[width=\linewidth]{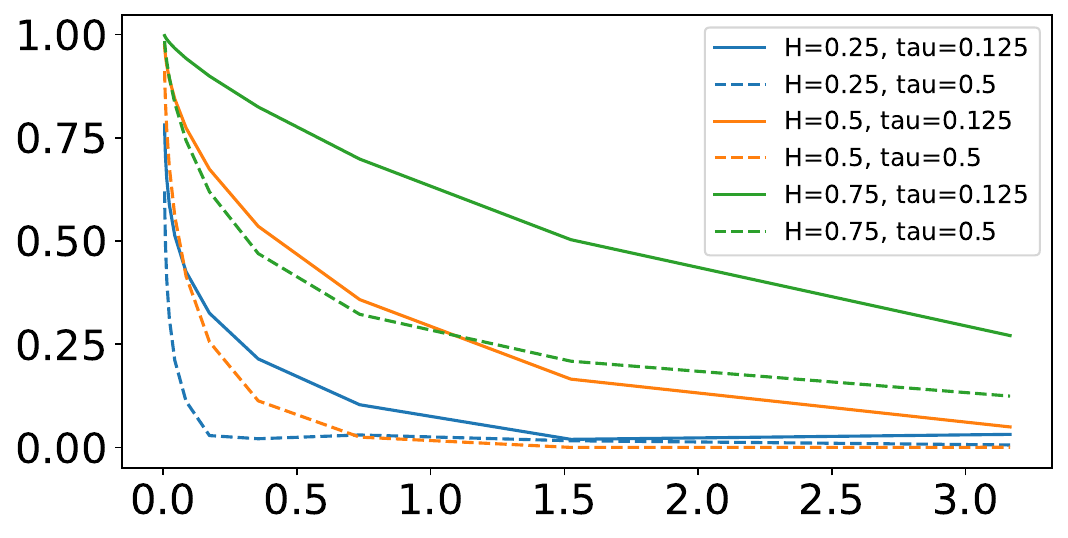}
            \put(-55,520){\large (b)}
            \put(470,-35){\scriptsize $\lambda-\lambda^*$}
            \put(-40,280){\rotatebox[]{90}{\scriptsize $\mathtt{AC}(\tau)$}}
        \end{overpic}
    \end{subfigure}
    \par \vspace{0.5cm}
    \begin{subfigure}{0.45\textwidth}
        \centering
        \begin{overpic}[width=\linewidth]{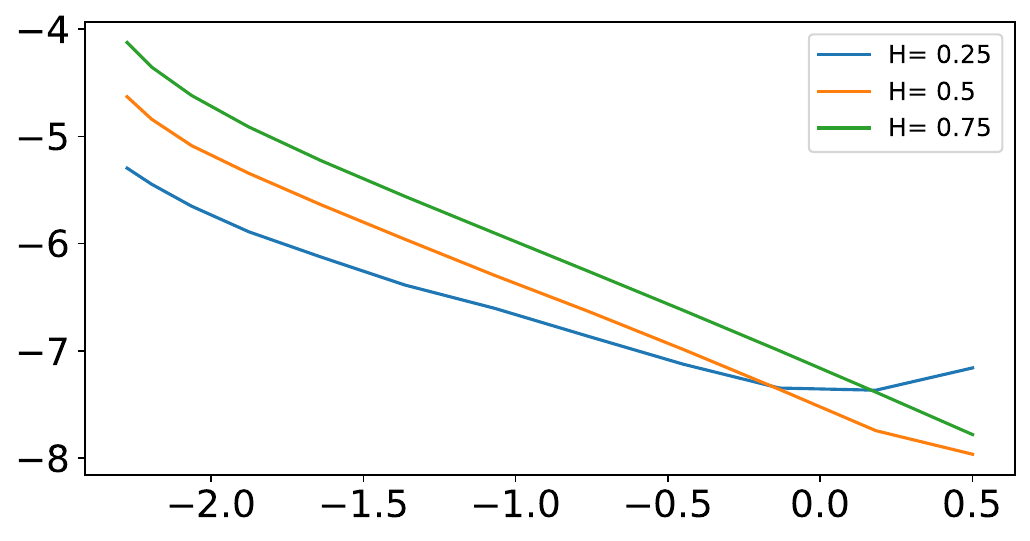}
            \put(-50,520){\large (c)}
            \put(430,-35){\scriptsize $\log_{10}(\lambda-\lambda^*)$}
            \put(-40,250){\rotatebox[]{90}{\scriptsize $\log_{10}(\mathtt{S}^{\max})$}}
        \end{overpic}
    \end{subfigure}
    \hfill
    \begin{subfigure}{0.45\textwidth}
        \centering
        \begin{overpic}[width=\linewidth]{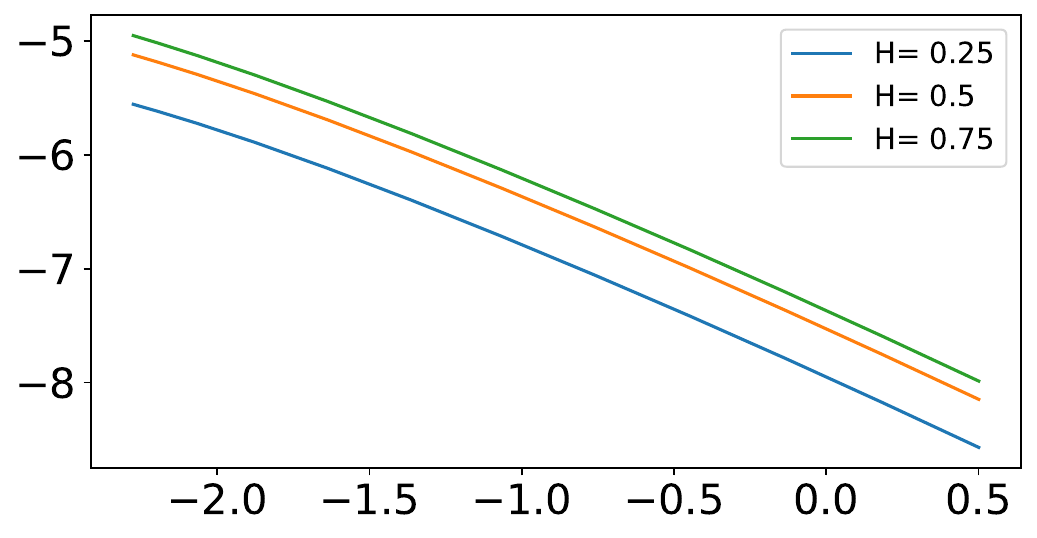}
            \put(-55,520){\large (d)}
            \put(430,-35){\scriptsize $\log_{10}(\lambda-\lambda^*)$}
            \put(-40,250){\rotatebox[]{90}{\scriptsize $\log_{10}(\mathtt{S}(\delta))$}}
        \end{overpic}
    \end{subfigure}
    \caption{(Simulation parameters $T=2^{11},N=2^{15},\text{samples}=20$) Comparing different EWSs for the red noise AMOC system \eqref{eq:AMOC red noise}. In each plot we have $H=0.25$ in blue, $H=0.5$ in orange and $H=0.75$ in green. In (a), we see the log-variance, i.e $\log_{10} \left( \mathtt{V}(0) \right)$ as the solid line and the log-autocovariance with lag $\tau=0.5$, i.e. $\log_{10} \left( \mathtt{V}(0.5) \right)$, as the dotted line against the log distance of the bifurcation parameter $\lambda$ from the critical value $\lambda^*$. In (b), we have the autocorrelation for lag $\tau=0.125$, i.e. $\mathtt{AC}(0.125)$, as the solid line and the autocorrelation for lag $\tau=0.5$, i.e. $\mathtt{AC}(0.5)$, as the dotted line against the distance of the bifurcation parameter $\lambda$ from the critical value $\lambda^*$. In (c), we plot the log of the maximum of the SD, i.e. $\log_{10} (\mathtt{S}^{\max})$, against the log distance from the bifurcation point. In (d), we see the log of the SD at a frequency close to $\omega=0\approx \delta$, i.e. $\log_{10} \left( \mathtt{S}(\delta) \right)$, against the log distance from the bifurcation point.}
    \label{fig:AMOC red noise EWS}
\end{figure}
In Figure \ref{fig:AMOC red noise EWS} we see a very similar behavior to Figure \ref{fig:AMOC fBm EWS}. Qualitatively, the EWSs behave equivalently for both systems. As the SD is also very similar to Figure \ref{fig:SD AMOC fBm}, we omit the plots.\\\\ 
Lastly, we consider the AMOC fast subsystem with fOU noise
\begin{align}\label{eq:AMOC fOU noise}
    \txtd x_t &= \lambda - x_t(1+\eta^2(1-x_t)^2) ~\txtd t + \sigma \txtd Z_t,
\end{align}
where $Z$ is again the stationary fOU process \eqref{eq: ou forcing}.
\begin{figure}[h!]
    \centering
    \begin{subfigure}{0.45\textwidth}
        \centering
        \begin{overpic}[width=\linewidth]{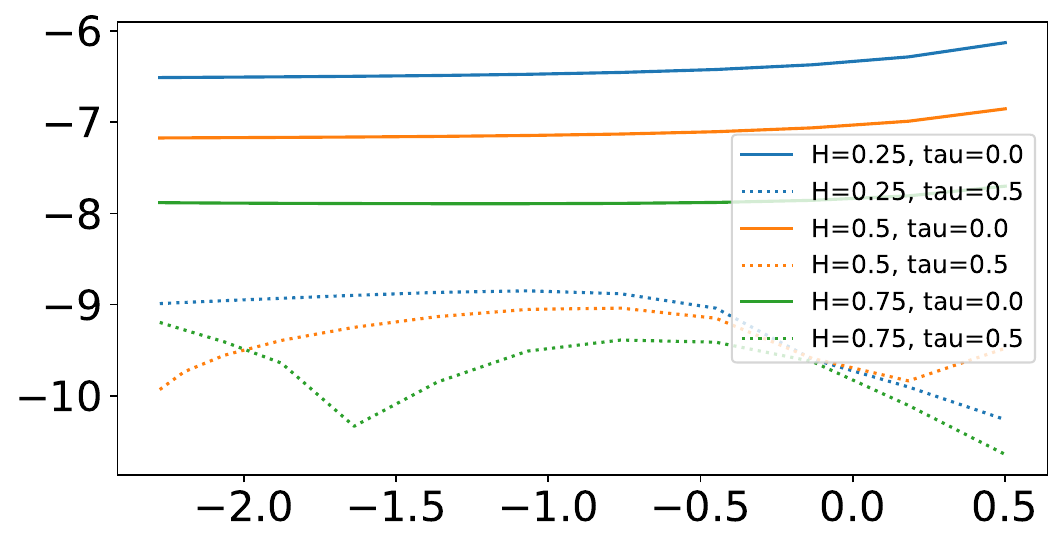}
            \put(-50,520){\large (a)}
            \put(430,-35){\scriptsize $\log_{10}(\lambda-\lambda^*)$}
            \put(-40,250){\rotatebox[]{90}{\scriptsize $\log_{10}(\mathtt{V}(\tau))$}}
        \end{overpic}
    \end{subfigure}
    \hfill
    \begin{subfigure}{0.45\textwidth}
        \centering
        \begin{overpic}[width=\linewidth]{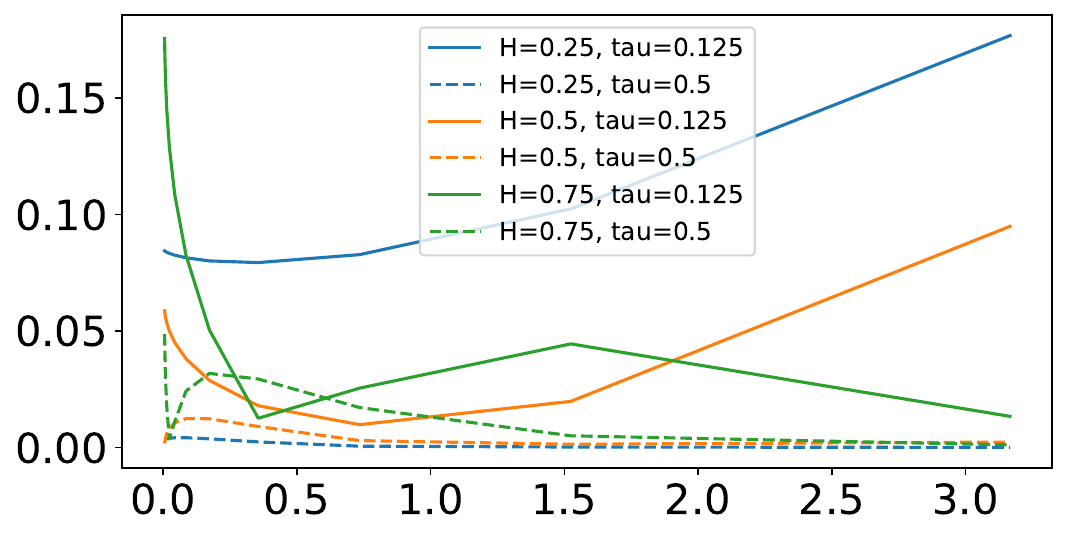}
            \put(-55,520){\large (b)}
            \put(470,-35){\scriptsize $\lambda-\lambda^*$}
            \put(-40,280){\rotatebox[]{90}{\scriptsize $AC(\tau)$}}
        \end{overpic}
    \end{subfigure}
    \par \vspace{0.5cm}
    \begin{subfigure}{0.45\textwidth}
        \centering
        \begin{overpic}[width=\linewidth]{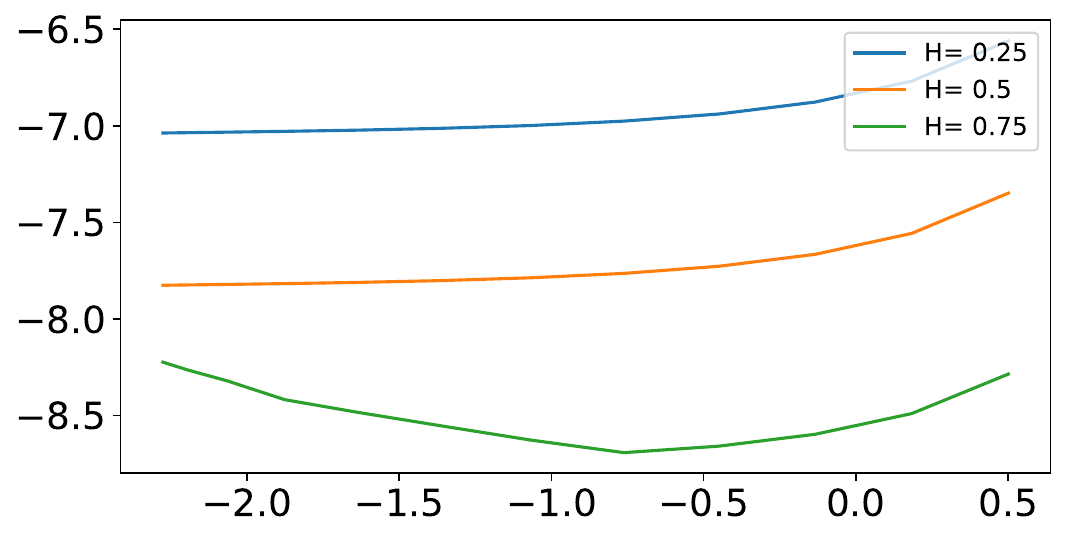}
            \put(-50,520){\large (c)}
            \put(430,-35){\scriptsize $\log_{10}(\lambda-\lambda^*)$}
            \put(-40,250){\rotatebox[]{90}{\scriptsize $\log_{10}(\mathtt{S}^{\max})$}}
        \end{overpic}
    \end{subfigure}
    \hfill
    \begin{subfigure}{0.45\textwidth}
        \centering
        \begin{overpic}[width=\linewidth]{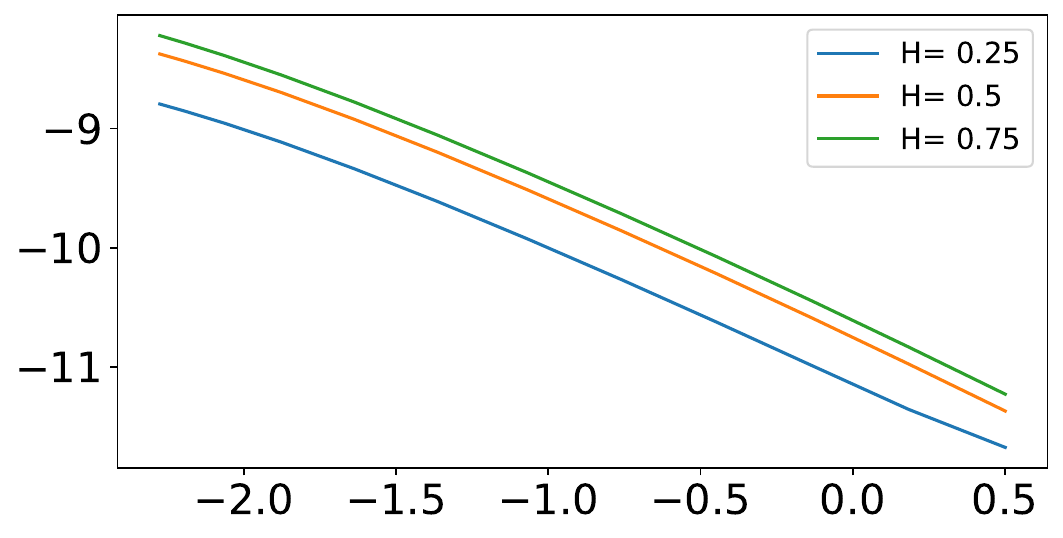}
            \put(-55,520){\large (d)}
            \put(430,-35){\scriptsize $\log_{10}(\lambda-\lambda^*)$}
            \put(-40,250){\rotatebox[]{90}{\scriptsize $\log_{10}(\mathtt{S}(\delta))$}}
        \end{overpic}
    \end{subfigure}
    \caption{(Simulation parameters $T=2^{11},N=2^{15},\text{samples}=20$) Comparing different EWSs for the fOU noise AMOC system \eqref{eq:AMOC fOU noise}. In each plot we have $H=0.25$ in blue, $H=0.5$ in orange and $H=0.75$ in green. In (a), we see the log-variance $\log_{10} (\mathtt{V}(0))$ as the solid line and the log-autocovariance with lag $\tau=0.5$, i.e. $\log_{10} (\mathtt{V}(0.5))$, as the dotted line against the log distance of the bifurcation parameter $\lambda$ from the critical value $\lambda^*$. In (b), we have the autocorrelation for lag $\tau=0.125$, i.e. $\mathtt{AC}(0.125)$, as the solid line and the autocorrelation for lag $\tau=0.5$, i.e $\mathtt{AC}(0.5)$, as the dotted line against the distance of the bifurcation parameter $\lambda$ from the critical value $\lambda^*$. In (c), we plot the log of the maximum of the SD $\log_{10} (\mathtt{S}^{\max})$ against the log distance from the bifurcation point. In (d), we see the log of the SD at a frequency close to 0, i.e. $\log_{10} (\mathtt{S}(\delta))$, against the log distance from the bifurcation point.}
    \label{fig:AMOC fOU noise EWS}
\end{figure}

The correlations introduced by the fOU noise in \eqref{eq:AMOC fOU noise} have a drastic impact on the EWSs. As discussed in \cite{kuehn2022warning}, this type of noise induces the color blindness effect in the sense that the variance does not diverge and therefore cannot be used as an EWS for the approaching bifurcation. This is consistent with Figure \ref{fig:AMOC fOU noise EWS} (a). In corroboration of \eqref{eq: plot twist}, the autocorrelation in (b) does not converge to 1. Likewise, the maximum of the SD $\mathtt{S}^{\max}$ in (c) does not diverge. All these statistics experience masking due to the fOU noise. In contrast, the SD at a fixed frequency $\delta$ close to zero in (d) shows a power-law behavior that can be used as an EWS. Since $S(0)=0$ for all $H$, choosing a frequency close to zero for e.g. $\delta=0.4$ entails the expected slope of $-2$. 
For the system \eqref{eq:AMOC fOU noise} the SD undergoes reddening as well, but we do not observe a single peak at zero but rather two peaks in its proximity (Figure \ref{fig:SD AMOC fOU noise}).
\begin{figure}[h!]
    \centering
    \subfloat{\begin{overpic}[scale=0.30]{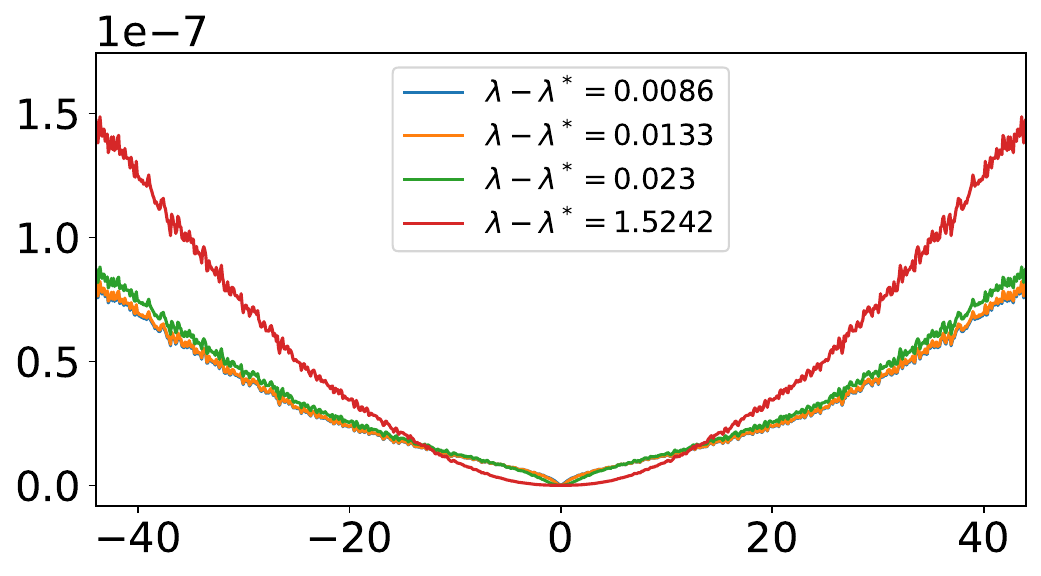}
    \put(-40,-30){(a)}
    \put(400,-35){\scriptsize frequency $\omega$}
    \put(-50,250){\rotatebox[]{90}{\scriptsize $\mathtt{S}(\omega)$}}
    \end{overpic}}
    \hspace{1mm}
    \subfloat{\begin{overpic}[scale=0.30]{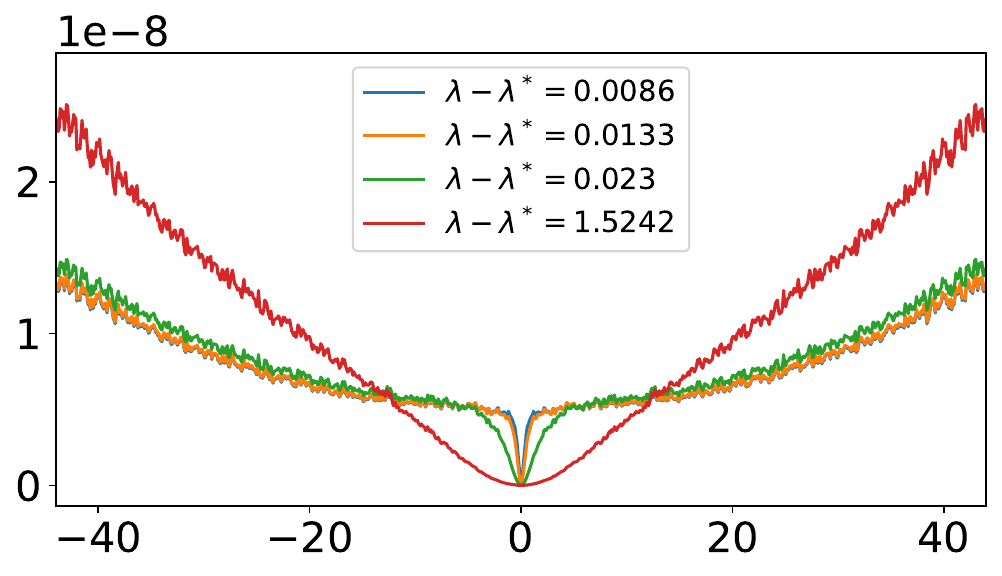}
    \put(-40,-30){(b)}
    \put(380,-35){\scriptsize frequency $\omega$}
    \end{overpic}}
    \hspace{1mm}
    \subfloat{\begin{overpic}[scale=0.30]{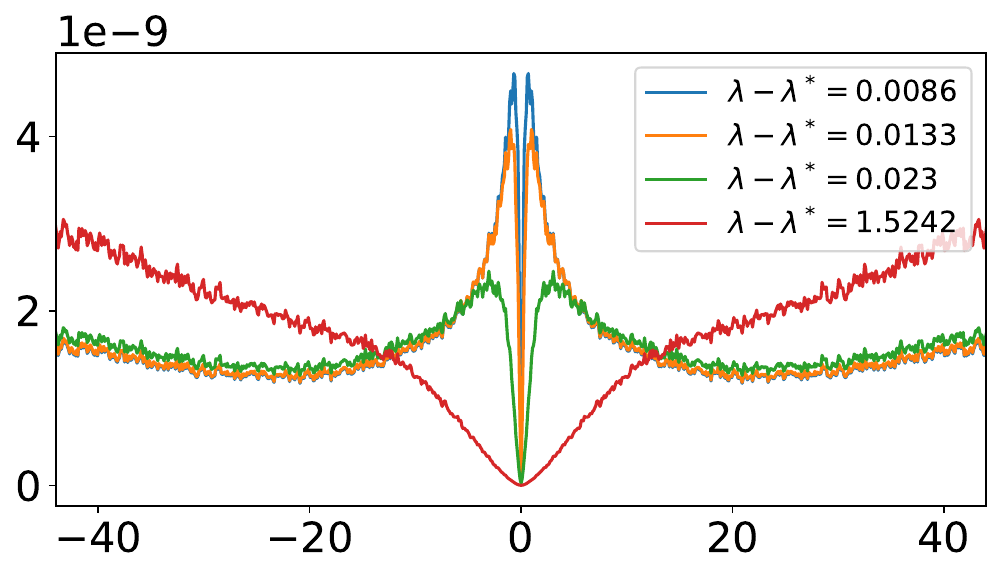}
    \put(-40,-30){(c)}
    \put(380,-35){\scriptsize frequency $\omega$}
    \end{overpic}}
    \caption{Spectral densities for $H=0.25$ (a), $H=0.5$ (b) and $H=0.75$ (c) for \eqref{eq:AMOC fOU noise}.}
    \label{fig:SD AMOC fOU noise}
\end{figure}

These plots are very instructive to understand why we observe the color blindness. The SD is given by
$$S(\omega)=C_H\sigma^2\frac{|\omega|^3}{\mu^2+\omega^2}|\omega|^{-2H}\frac{1}{|A(\lambda)|^2+\omega^2},$$
where $A(\lambda)$ is the linear part from the linearization.
The mitigation part of the SD $\frac{1}{|A(\lambda)|^2+\omega^2}$ leads to a peak at zero as we approach the bifurcation. However, due to the fOU noise the SD is zero in zero. Then, as the frequency at which the mitigation is acting the strongest becomes null, this leads to the color blindness effect. The maximum of the SD is also not directly applicable as an EWS, since the peaks are caused by noise and not the bifurcation itself. In order to capture the approaching bifurcation, a frequency close to zero, but not equal to zero, has to be observed.

\subsection{Hopf bifurcations \& limit cycles} \label{sec:applications_hopf}
 We study a Hopf bifurcation in the following system driven by fBm
\begin{align}\label{eq:Hopf fBm}
    \begin{split}
        \txtd \mathbf{x}_t &= \begin{pmatrix}
            y_t & -\omega_0 \\ \omega_0 & y_t
        \end{pmatrix} \mathbf{x}_t - \left| \mathbf{x}_t \right|^2 \mathbf{x}_t~\txtd t + \begin{pmatrix}
            \sigma_1 & \sigma_2
        \end{pmatrix} ~\txtd \textbf{W}_t^{H}, \\
        \txtd y_t &= \eps~\txtd t,~~
        \textbf{x}_0=\begin{pmatrix}
            0.01 \\ 0.01
        \end{pmatrix},~~ y_0<0,
    \end{split}
\end{align}
for $t\geq 0$, $\eps\ll1$ and $n=2$. In Figure \ref{fig:Hopf trajectory different sigma}, we show examples of trajectories for $\eps>0, \sigma_1=0.01,\sigma_2=0.05$ depending on the Hurst index. The trajectory remains in the vicinity of the null state until $y$ crosses the Hopf bifurcation threshold $\lambda^*=0$. Then, they depart from the origin following a limit cycle, whose radius increases with $y$. In fact, setting $\eps=0$, the system has the unique stable equilibrium point in the origin for negative $y=\lambda$. For $\lambda>0$ the system has a stable limit cycle with radius $\lambda$.
\begin{figure}[h!]
    \centering
    \subfloat{\begin{overpic}[scale=0.45]{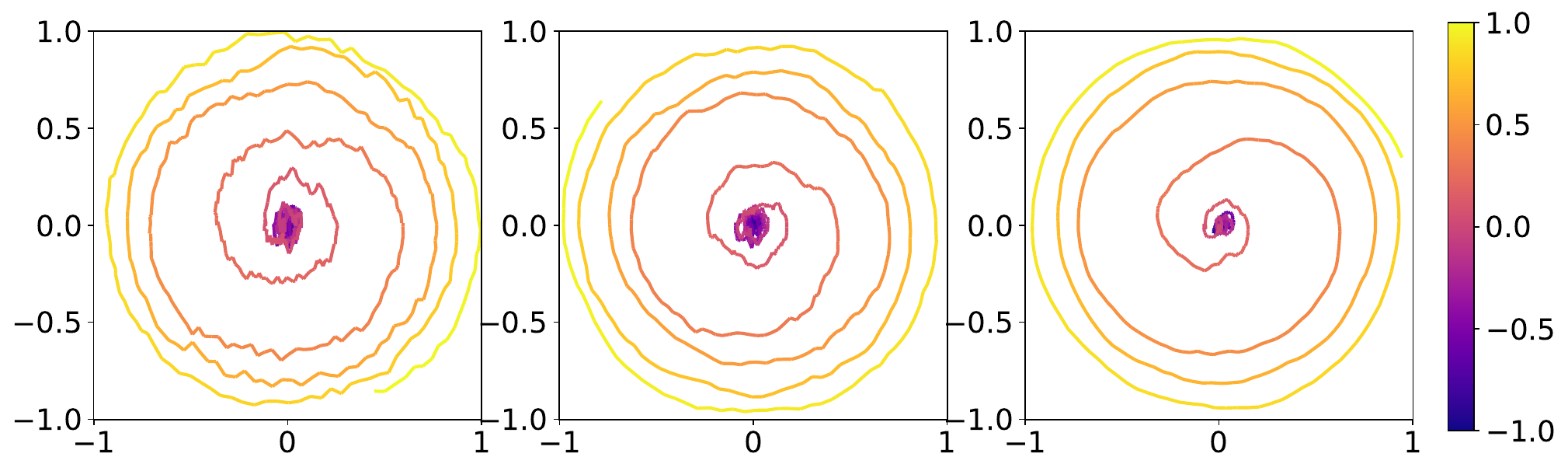}
    \put(0,-5){\large (a)}
    \put(122,-5){\scriptsize First coordinate}
    \put(-15,150){\rotatebox[]{90}{\scriptsize Second coordinate}}
    \put(310,-5){\large (b)}
    \put(420,-5){\scriptsize First coordinate}
    \put(610,-5){\large (c)}
    \put(718,-5){\scriptsize First coordinate}
    \put(990,148){\rotatebox[]{90}{\scriptsize $\lambda$}}
    \end{overpic}}
    \caption{Example trajectories of $\mathbf{x}$ that solves \eqref{eq:Hopf fBm} with $\eps=2^{-5},y_0=-1,$ $\sigma_1=0.01, \sigma_2=0.05$ for $H=0.25$ (a), $H=0.5$ (b) and $H=0.75$ (c). Each axis indicates a single coordinate.}
    \label{fig:Hopf trajectory different sigma}
\end{figure}
In the remainder of the subsection, we study the EWSs on the fast subsystem, i.e. \eqref{eq:Hopf fBm} with $\eps=0$.
\begin{figure}[h!]
\centering
\begin{subfigure}{0.31\textwidth}
    \centering
    \begin{overpic}[width=\linewidth]{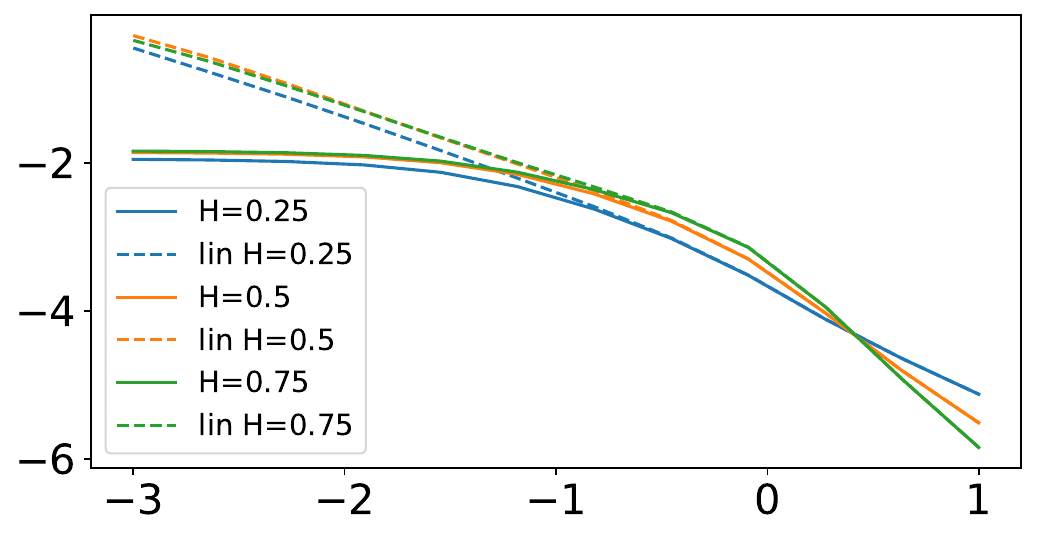}
        \put(-50,525){\large (a)}
        \put(380,-35){\scriptsize $\log_{10}(\lambda-\lambda^*)$}
        \put(-75,250){\rotatebox[]{90}{\scriptsize $\log_{10}(\mathtt{V}(0)_{11})$}}
    \end{overpic}
\end{subfigure}
\hfill
\begin{subfigure}{0.31\textwidth}
    \centering
    \begin{overpic}[width=\linewidth]{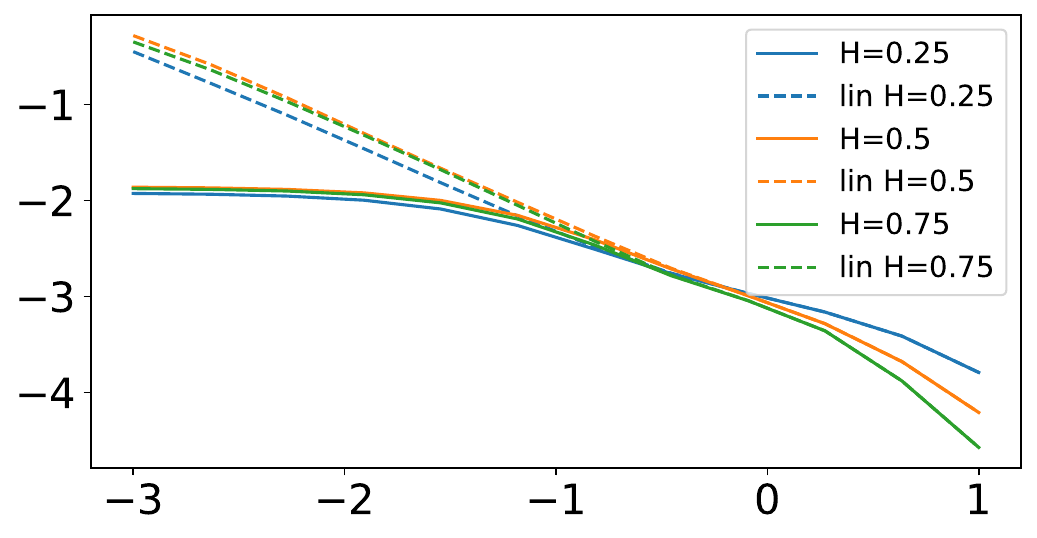}
        \put(-50,525){\large (b)}
        \put(380,-35){\scriptsize $\log_{10}(\lambda-\lambda^*)$}
        \put(-75,250){\rotatebox[]{90}{\scriptsize $\log_{10}(\mathtt{V}(0)_{22})$}}
    \end{overpic}
\end{subfigure}
\hfill
\begin{subfigure}{0.31\textwidth}
    \centering
    \begin{overpic}[width=\linewidth]{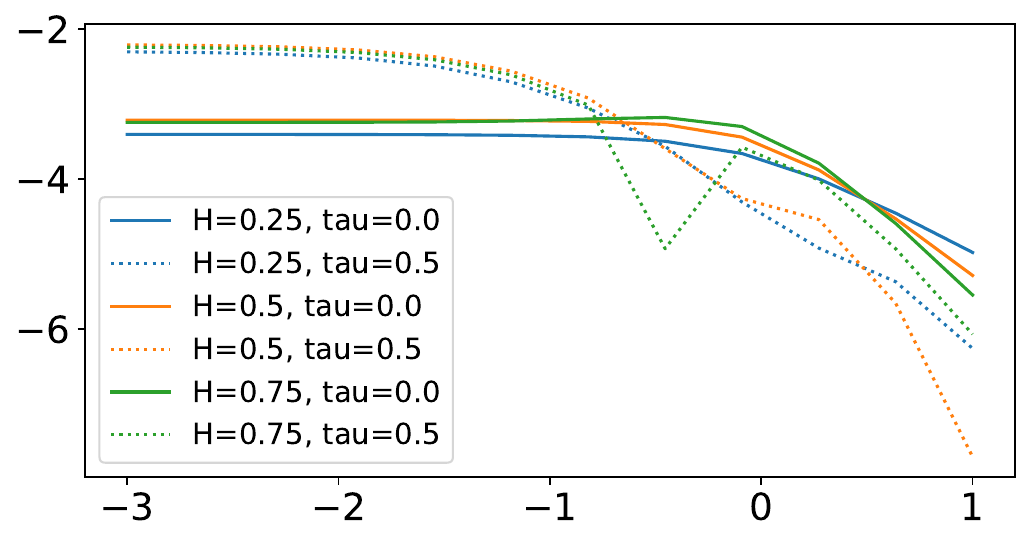}
        \put(-50,525){\large (c)}
        \put(380,-35){\scriptsize $\log_{10}(\lambda-\lambda^*)$}
        \put(-75,250){\rotatebox[]{90}{\scriptsize $\log_{10}(\mathtt{V}(\tau)_{12})$}}
    \end{overpic}
\end{subfigure}

\par \vspace{0.3cm}

\begin{subfigure}{0.31\textwidth}
    \centering
    \begin{overpic}[width=\linewidth]{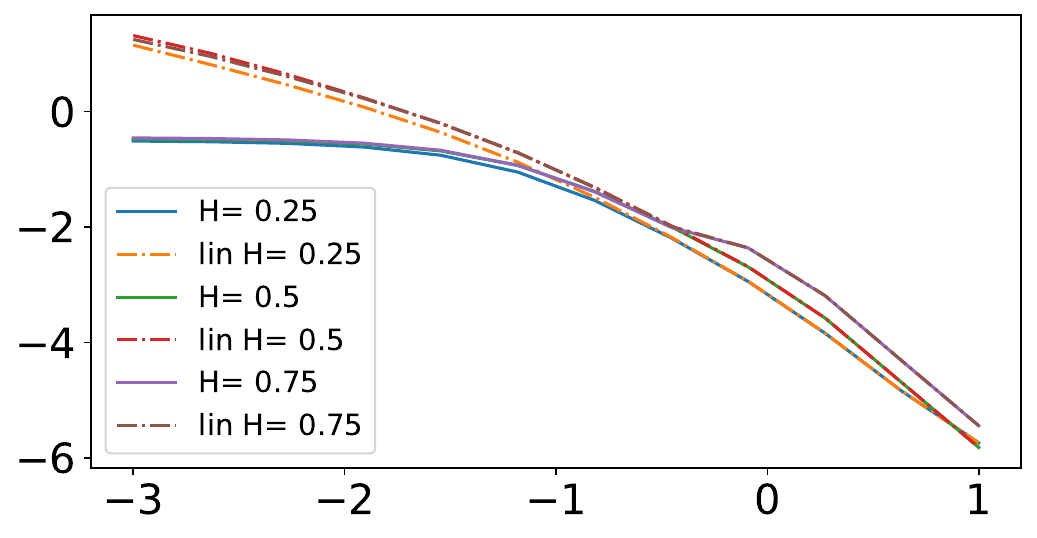}
        \put(-50,525){\large (d)}
        \put(380,-35){\scriptsize $\log_{10}(\lambda-\lambda^*)$}
        \put(-75,250){\rotatebox[]{90}{\scriptsize $\log_{10}(\max\limits_{\omega} \mathtt{S}^{11})$}}
    \end{overpic}
\end{subfigure}
\hfill
\begin{subfigure}{0.31\textwidth}
    \centering
    \begin{overpic}[width=\linewidth]{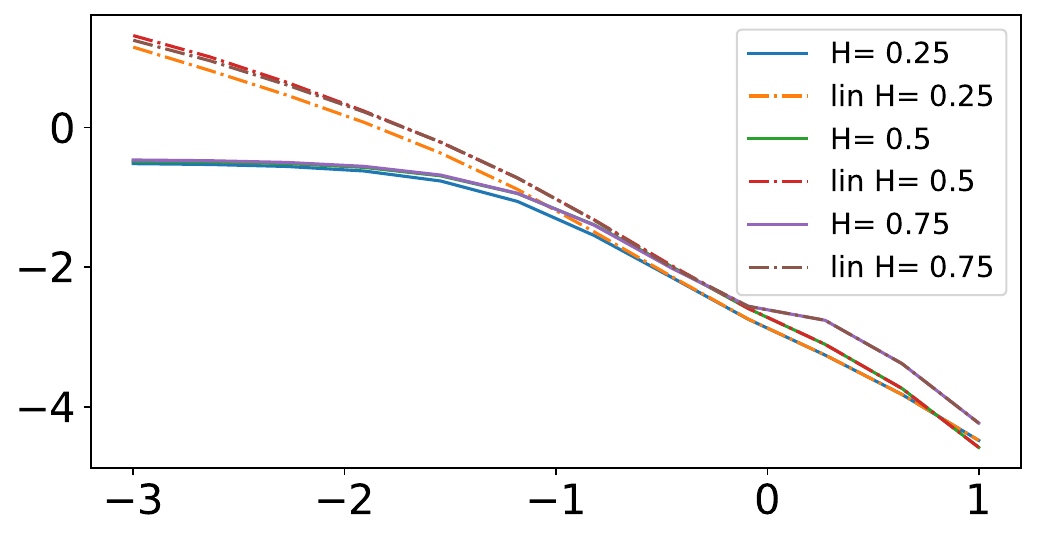}
        \put(-50,525){\large (e)}
        \put(380,-35){\scriptsize $\log_{10}(\lambda-\lambda^*)$}
        \put(-75,250){\rotatebox[]{90}{\scriptsize $\log_{10}(\max\limits_{\omega} \mathtt{S}^{22})$}}
    \end{overpic}
\end{subfigure}
\hfill
\begin{subfigure}{0.31\textwidth}
    \centering
    \begin{overpic}[width=\linewidth]{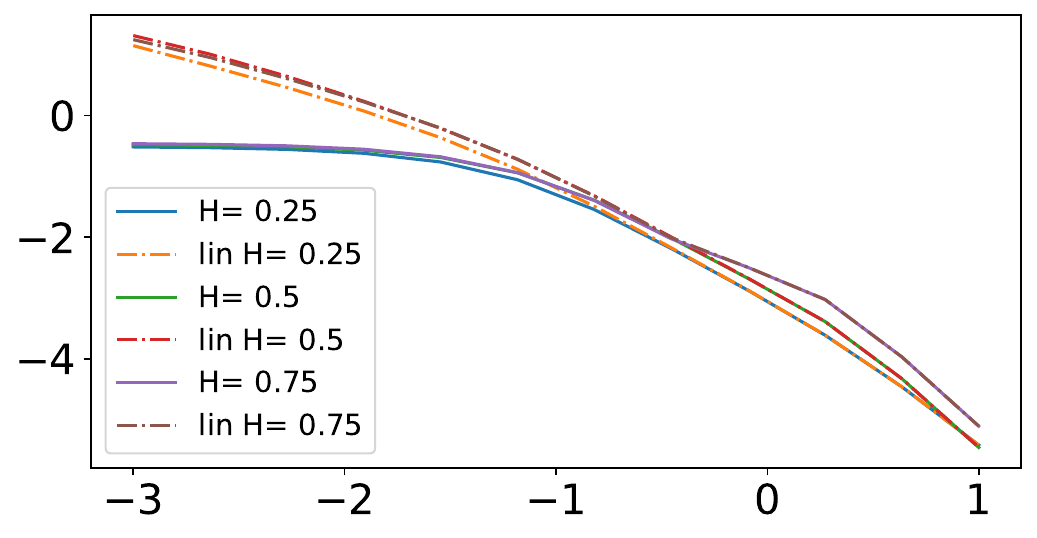}
        \put(-50,525){\large (f)}
        \put(380,-35){\scriptsize $\log_{10}(\lambda-\lambda^*)$}
        \put(-75,250){\rotatebox[]{90}{\scriptsize $\log_{10}(\max\limits_{\omega} \mathtt{S}^{12})$}}
    \end{overpic}
\end{subfigure}

\par \vspace{0.3cm}

\begin{subfigure}{0.31\textwidth}
    \centering
    \begin{overpic}[width=\linewidth]{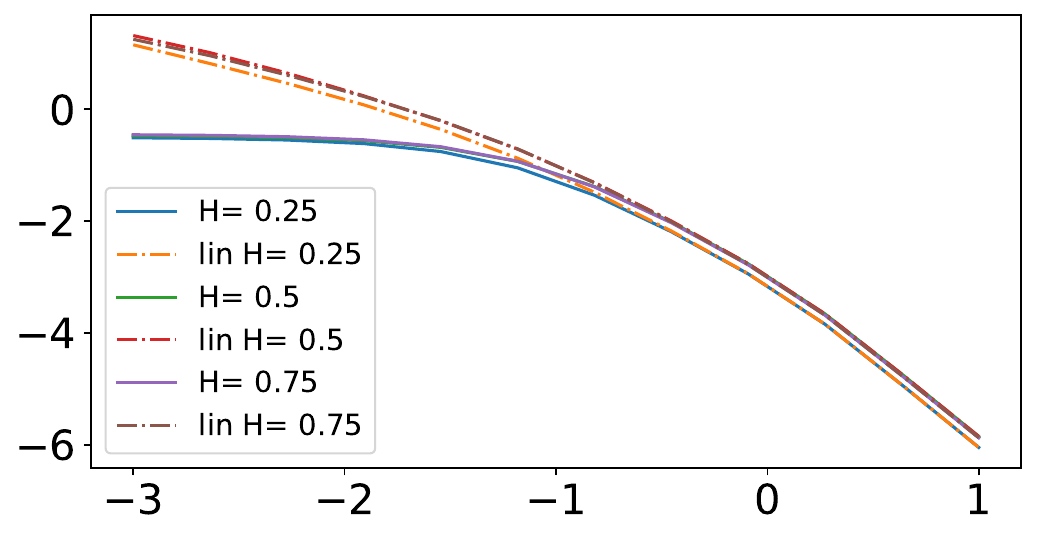}
        \put(-50,525){\large (g)}
        \put(380,-35){\scriptsize $\log_{10}(\lambda-\lambda^*)$}
        \put(-75,250){\rotatebox[]{90}{\scriptsize $\log_{10}(\mathtt{S}^{11}(\omega_0))$}}
    \end{overpic}
\end{subfigure}
\hfill
\begin{subfigure}{0.31\textwidth}
    \centering
    \begin{overpic}[width=\linewidth]{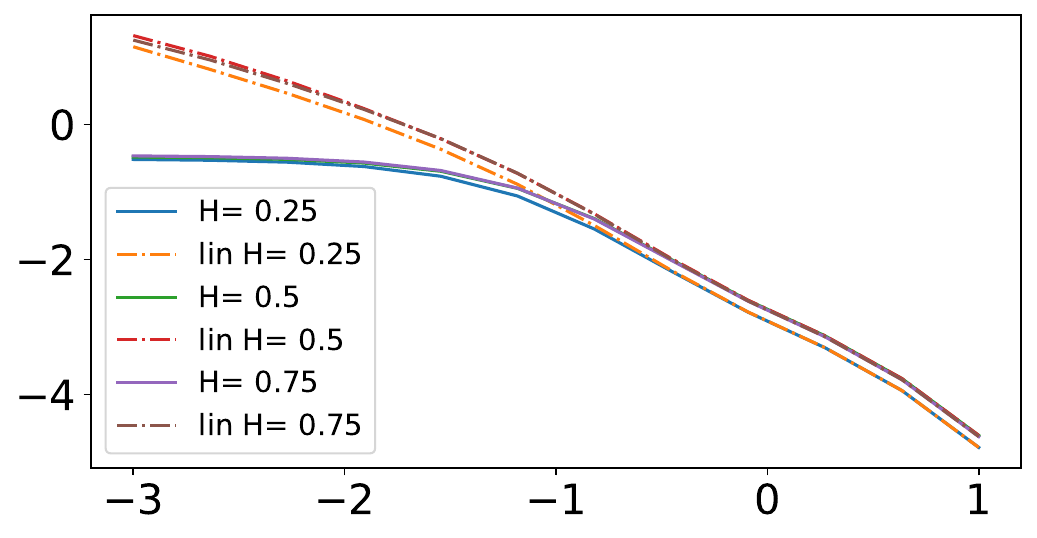}
        \put(-50,525){\large (h)}
        \put(380,-35){\scriptsize $\log_{10}(\lambda-\lambda^*)$}
        \put(-75,250){\rotatebox[]{90}{\scriptsize $\log_{10}(\mathtt{S}^{22}(\omega_0))$}}
    \end{overpic}
\end{subfigure}
\hfill
\begin{subfigure}{0.31\textwidth}
    \centering
    \begin{overpic}[width=\linewidth]{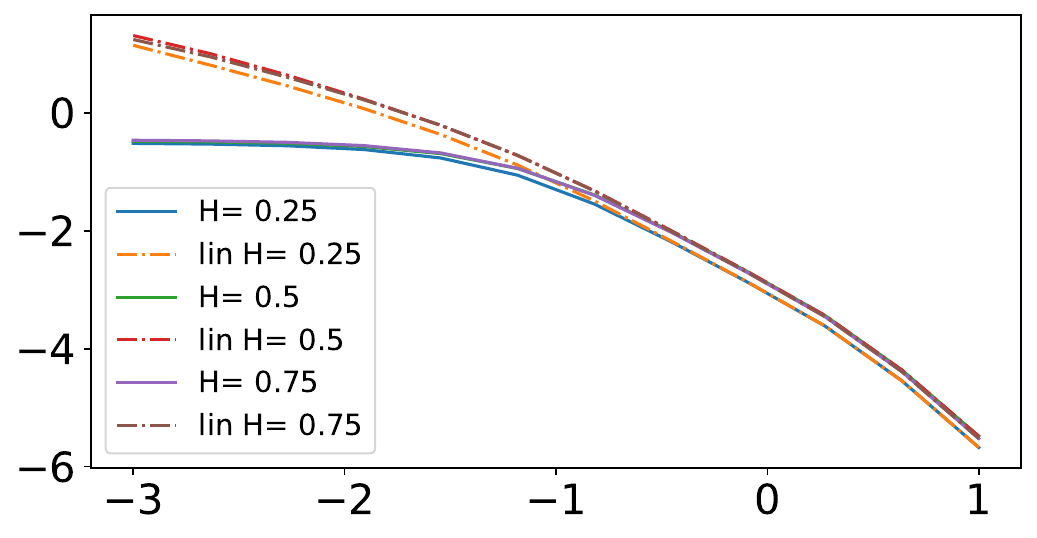}
        \put(-50,525){\large (i)}
        \put(380,-35){\scriptsize $\log_{10}(\lambda-\lambda^*)$}
        \put(-75,250){\rotatebox[]{90}{\scriptsize $\log_{10}(\mathtt{S}^{12}(\omega_0))$}}
    \end{overpic}
\end{subfigure}

\caption{
(Simulation parameters $T=2^{12},N=2^{16},\text{samples}=8$) Plots of the different EWSs for the Hopf system driven by fBm \eqref{eq:Hopf fBm} with $\sigma_1=0.01,\sigma_2=0.05$. Each column correspond respectively to the first component, second component and mixed component. (a)--(c): Autocovariance in a loglog plot against the bifurcation parameter. For (a) and (b) the solid lines are the variances for the nonlinear case and the dashed lines for the linearization. For (c) the solid lines refer to lag $\tau=0$ and the dotted lines correspond to lag $\tau=0.5$.
(d)--(f): The maximum of the SD in a loglog plot against the bifurcation parameter. Solid lines are from the nonlinear simulation and dashed lines from the linearized system.
(g)--(i): The SD at the peak frequency $\omega_0$, i.e. $\mathtt{S}(\omega_0)$, in a loglog plot against the bifurcation parameter. Solid lines are from the nonlinear simulation and dashed lines from the linearization.
}
\label{fig:Hopf fBm EWSs}
\end{figure}
We start with the autocovariance, which can be seen in the first row of Figure \ref{fig:Hopf fBm EWSs} as loglog plots. As discussed in Section~\ref{sec:fbm 2d} and summarized in Table \ref{tab:X2}, the components along all modes of the autocovariance diverge with rate of slope $-1$ for all $H\in(0,1)$ and almost all $\tau\ge0$. However, we can differentiate three regions. We start with plot (a). Furthest away from the bifurcation the slope behaves as in the one-dimensional case, meaning the slope is $-2H$. In this region, the system does not detect the rotation. As the real part of the eigenvalue becomes of order of magnitude similar to the imaginary part ($\omega_0=1$) the slopes become parallel and are equal to $-1$. Last, the nonlinear forcing affects and dampens the slope. This can be clearly seen as the linearized slope still increases and the difference between the nonlinear slope and the linear slope gets larger. \\
In plot (b) we find the same regions, but the region with slope $-1$ is smaller. Hence, the noise intensity influences the change between the three regimes. Lastly, we discuss the mixed mode in (c). Here, the middle regime with slope $-1$ is completely missing. The reason is that $\tau=0$ is one of the values where the two modes cancel the autocovariance \eqref{eq: civil war}. For the dashed line ($\tau=0.5$)
we can spot the intermediate regime with slope $-1$.\\
Even though we do not consider the maximum of the SD analytically, we can see in the second row that it is still a good EWS along all modes. This becomes much clearer after seeing that the peaks are also the local maxima for the fBm case (Figure \ref{fig:Hopf fBm SD}). Finally, in the third row the SD at the peak frequency has a clear divergence rate as computed in Theorem \ref{thm:psd_on_elements}. In the proximity to the bifurcation threshold, the curve flattens for both the maximum of SD and the SD at the peak frequency due to the nonlinearity of the system. \\
To help with the visualization, we plot the SD along the first component in Figure \ref{fig:Hopf fBm SD} to observe the two clear peaks at the frequencies $\pm\omega_0$ for all $H\in(0,1)$ and how the different Hurst indices $H$ change its behavior at zero.
\begin{figure}[h!]
    \centering
    \subfloat{\begin{overpic}[scale=0.28]{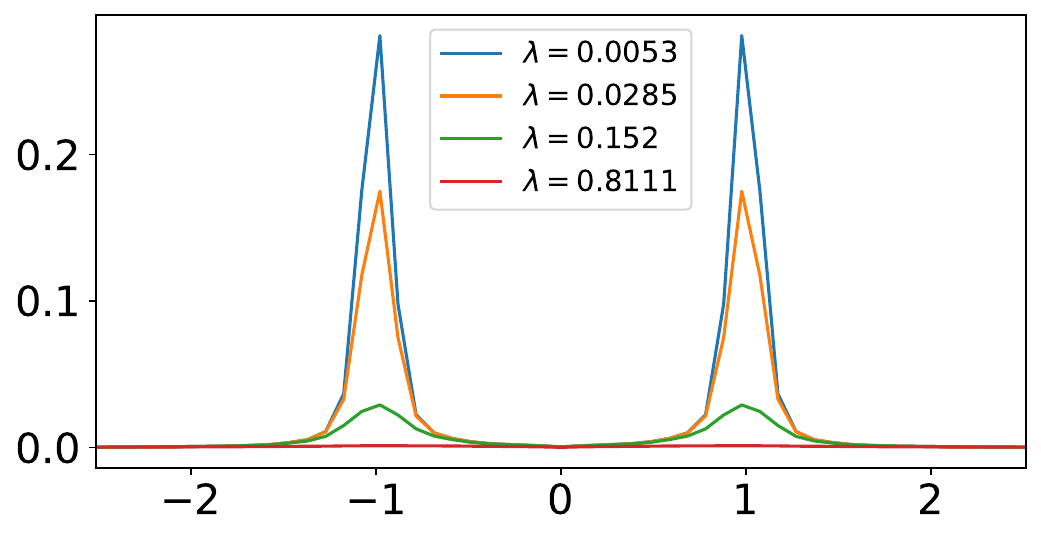}
    \put(-40,-40){(a)}
    \put(400,-35){\scriptsize frequency $\omega$}
    \put(-50,260){\rotatebox[]{90}{\scriptsize $\mathtt{S}^{11}(\omega)$}}
    \end{overpic}}
    \hspace{1mm}
    \subfloat{\begin{overpic}[scale=0.28]{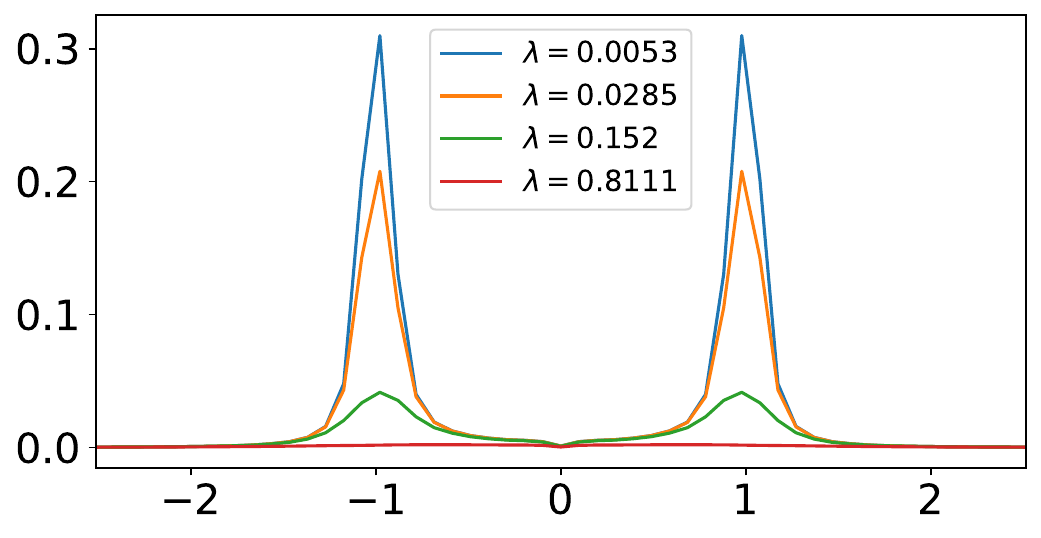}
    \put(-40,-40){(b)}
    \put(400,-35){\scriptsize frequency $\omega$}
    \end{overpic}}
    \subfloat{\begin{overpic}[scale=0.28]{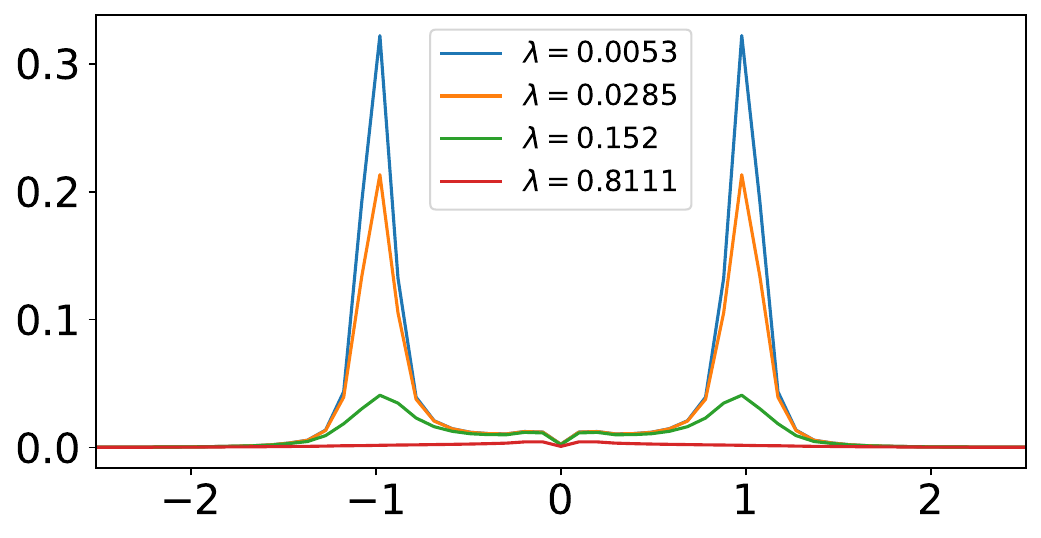}
    \put(-40,-40){(c)}
    \put(400,-35){\scriptsize frequency $\omega$}
    \end{overpic}}
    \caption{The first component of SD $\mathtt{S}(\omega)$ of the Hopf system driven by fBm \eqref{eq:Hopf fBm} for (a): $H=0.25$, (b): $H=0.5$ and (c): $H=0.75$.}
    \label{fig:Hopf fBm SD}
\end{figure}

After studying the Hopf system driven by a fBm, we consider the Hopf fast subsystem driven by fOU noise
\begin{align}\label{eq:Hopf fOU}
    \begin{split}
        \txtd \mathbf{x}_t &= \begin{pmatrix}
            \lambda & -\omega_0 \\ \omega_0 & \lambda
        \end{pmatrix} \mathbf{x}_t - \left| \mathbf{x}_t \right|^2 \mathbf{x}_t~\txtd t + \begin{pmatrix}
            \sigma_1 \\ \sigma_2
        \end{pmatrix} ~\txtd Z_t^{H}, \\
        \lambda&<0, \quad \textbf{x}_0=\begin{pmatrix}
            0.01 \\ 0.01
        \end{pmatrix},
    \end{split}
\end{align}
where $Z$ is a stationary solution of \eqref{eq: ou forcing}.
\begin{figure}[h!]
\centering
\begin{subfigure}{0.31\textwidth}
    \centering
    \begin{overpic}[width=\linewidth]{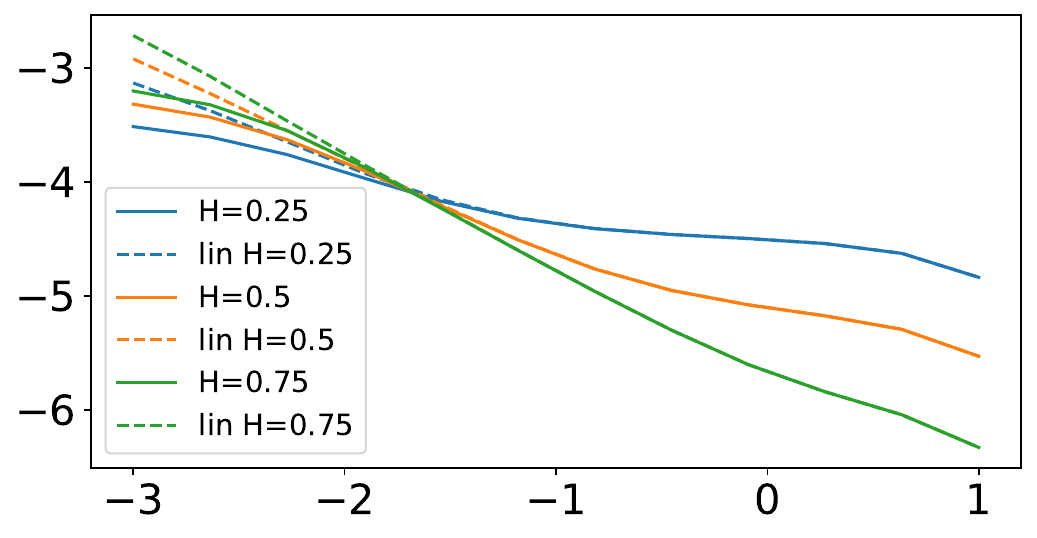}
        \put(-50,525){\large (a)}
        \put(380,-35){\scriptsize $\log_{10}(\lambda-\lambda^*)$}
        \put(-75,250){\rotatebox[]{90}{\scriptsize $\log_{10}(\mathtt{V}(0)_{11})$}}
    \end{overpic}
\end{subfigure}
\hfill
\begin{subfigure}{0.31\textwidth}
    \centering
    \begin{overpic}[width=\linewidth]{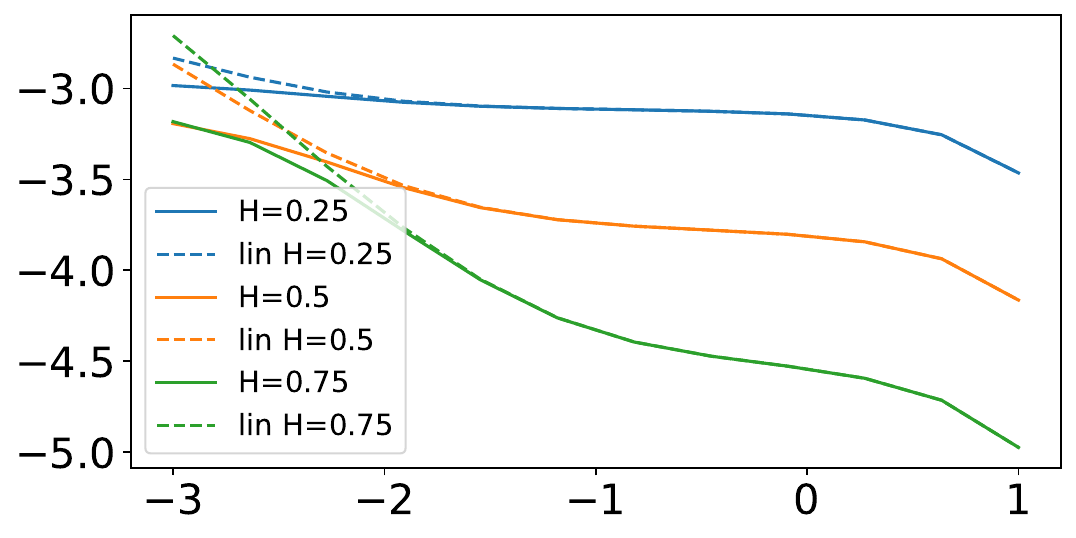}
        \put(-50,525){\large (b)}
        \put(380,-35){\scriptsize $\log_{10}(\lambda-\lambda^*)$}
        \put(-75,250){\rotatebox[]{90}{\scriptsize $\log_{10}(\mathtt{V}(0)_{22})$}}
    \end{overpic}
\end{subfigure}
\hfill
\begin{subfigure}{0.31\textwidth}
    \centering
    \begin{overpic}[width=\linewidth]{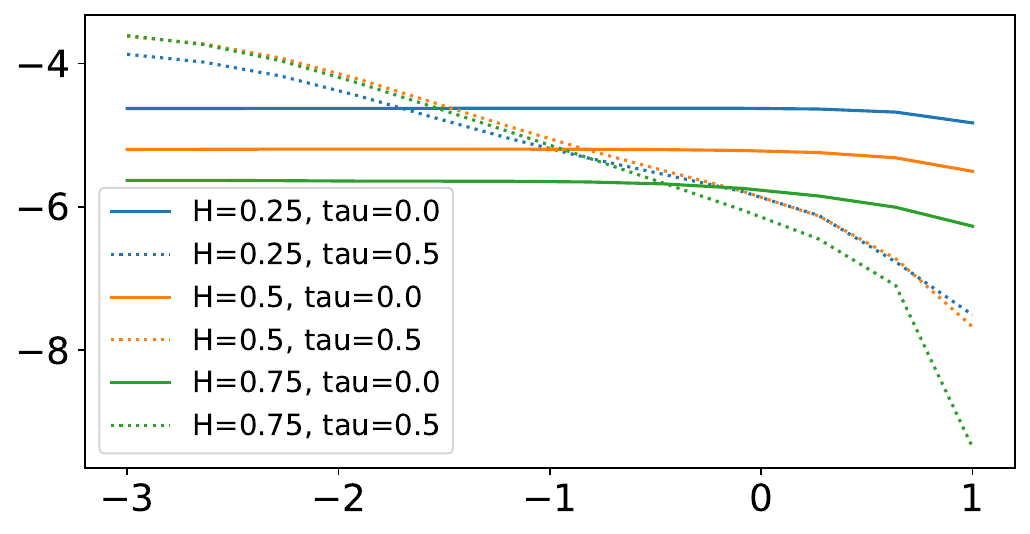}
        \put(-50,525){\large (c)}
        \put(380,-35){\scriptsize $\log_{10}(\lambda-\lambda^*)$}
        \put(-75,250){\rotatebox[]{90}{\scriptsize $\log_{10}(\mathtt{V}(\tau)_{12})$}}
    \end{overpic}
\end{subfigure}

\par \vspace{0.3cm}

\begin{subfigure}{0.31\textwidth}
    \centering
    \begin{overpic}[width=\linewidth]{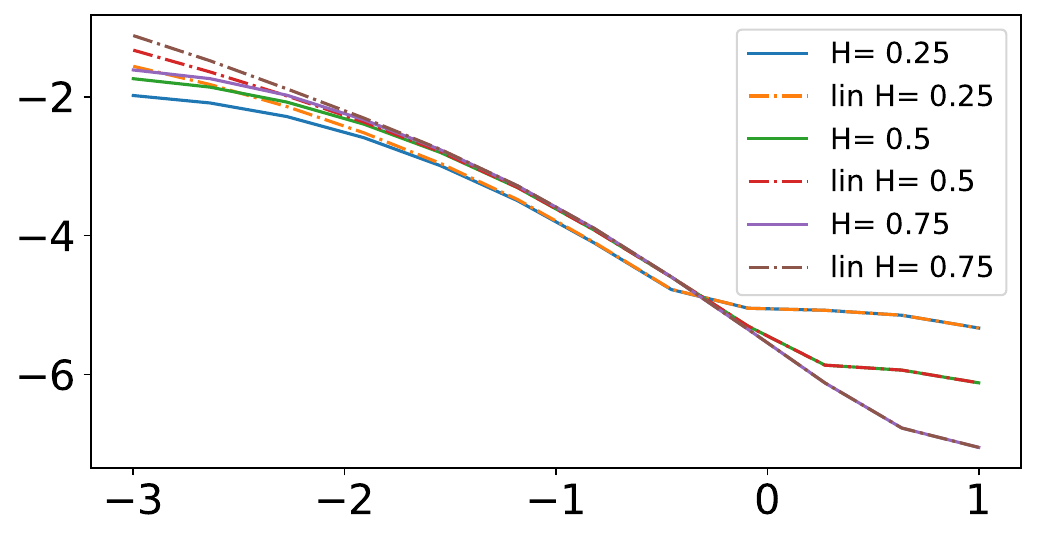}
        \put(-50,525){\large (d)}
        \put(380,-35){\scriptsize $\log_{10}(\lambda-\lambda^*)$}
        \put(-75,250){\rotatebox[]{90}{\scriptsize $\log_{10}(\max\limits_{\omega} \mathtt{S}^{11})$}}
    \end{overpic}
\end{subfigure}
\hfill
\begin{subfigure}{0.31\textwidth}
    \centering
    \begin{overpic}[width=\linewidth]{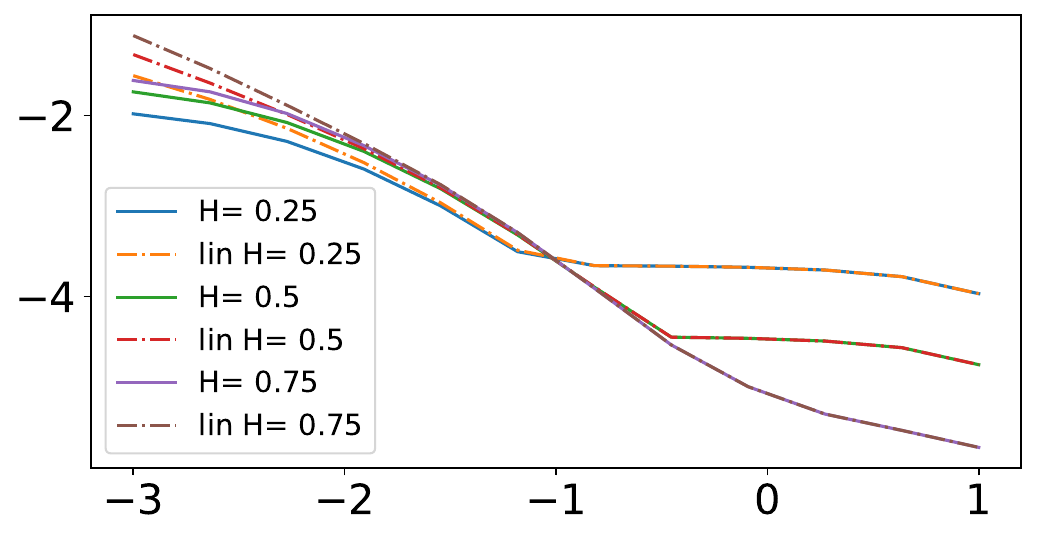}
        \put(-50,525){\large (e)}
        \put(380,-35){\scriptsize $\log_{10}(\lambda-\lambda^*)$}
        \put(-75,250){\rotatebox[]{90}{\scriptsize $\log_{10}(\max\limits_{\omega} \mathtt{S}^{22})$}}
    \end{overpic}
\end{subfigure}
\hfill
\begin{subfigure}{0.31\textwidth}
    \centering
    \begin{overpic}[width=\linewidth]{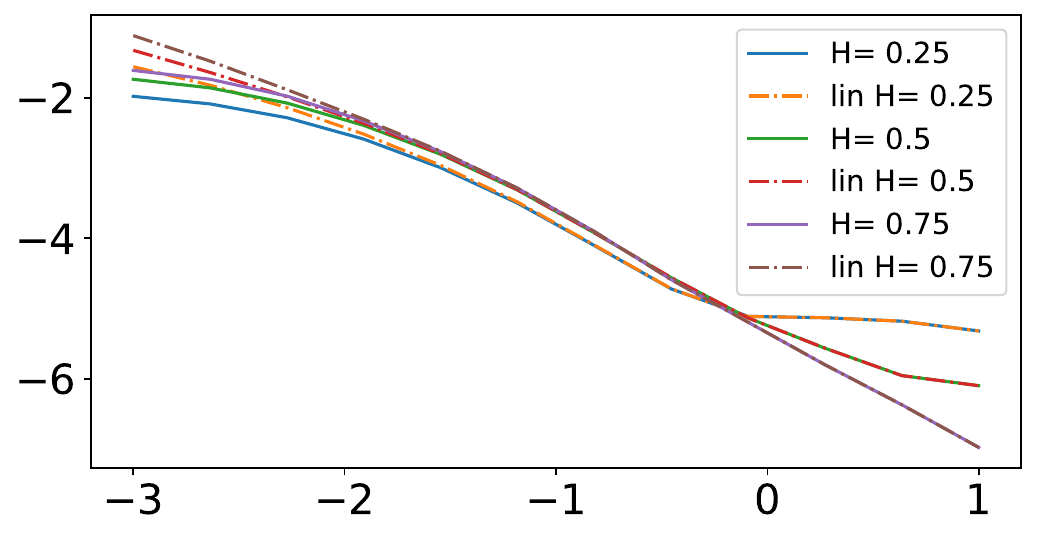}
        \put(-50,525){\large (f)}
        \put(380,-35){\scriptsize $\log_{10}(\lambda-\lambda^*)$}
        \put(-75,250){\rotatebox[]{90}{\scriptsize $\log_{10}(\max\limits_{\omega} \mathtt{S}^{12})$}}
    \end{overpic}
\end{subfigure}

\par \vspace{0.3cm}

\begin{subfigure}{0.31\textwidth}
    \centering
    \begin{overpic}[width=\linewidth]{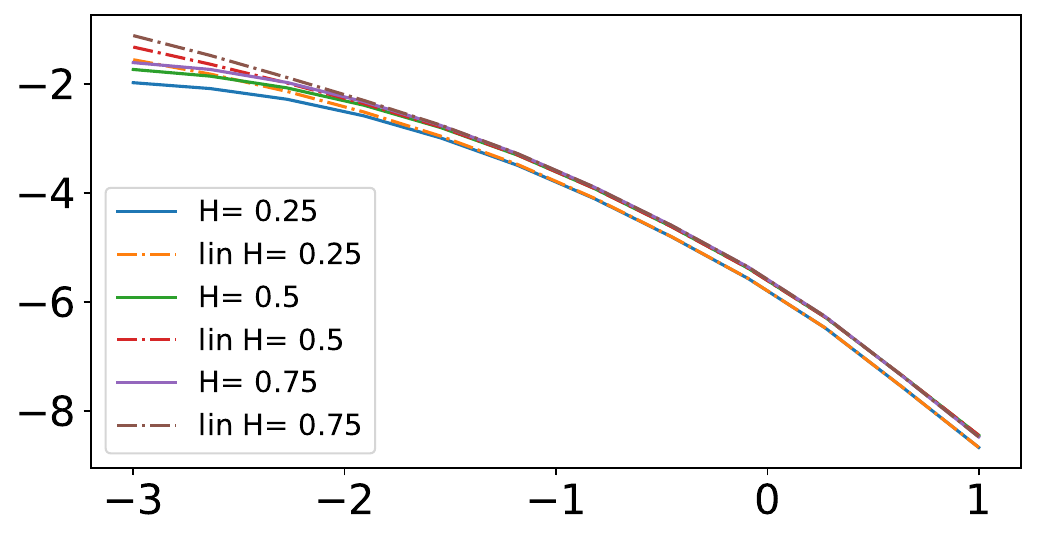}
        \put(-50,525){\large (g)}
        \put(380,-35){\scriptsize $\log_{10}(\lambda-\lambda^*)$}
        \put(-75,250){\rotatebox[]{90}{\scriptsize $\log_{10}(\mathtt{S}^{11}(\omega_0))$}}
    \end{overpic}
\end{subfigure}
\hfill
\begin{subfigure}{0.31\textwidth}
    \centering
    \begin{overpic}[width=\linewidth]{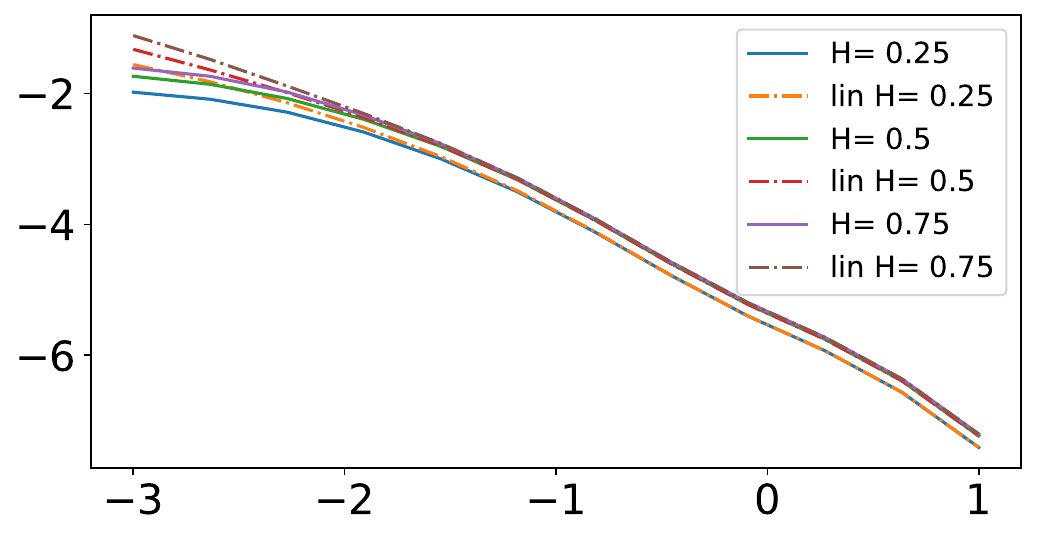}
        \put(-50,525){\large (h)}
        \put(380,-35){\scriptsize $\log_{10}(\lambda-\lambda^*)$}
        \put(-75,250){\rotatebox[]{90}{\scriptsize $\log_{10}(\mathtt{S}^{22}(\omega_0))$}}
    \end{overpic}
\end{subfigure}
\hfill
\begin{subfigure}{0.31\textwidth}
    \centering
    \begin{overpic}[width=\linewidth]{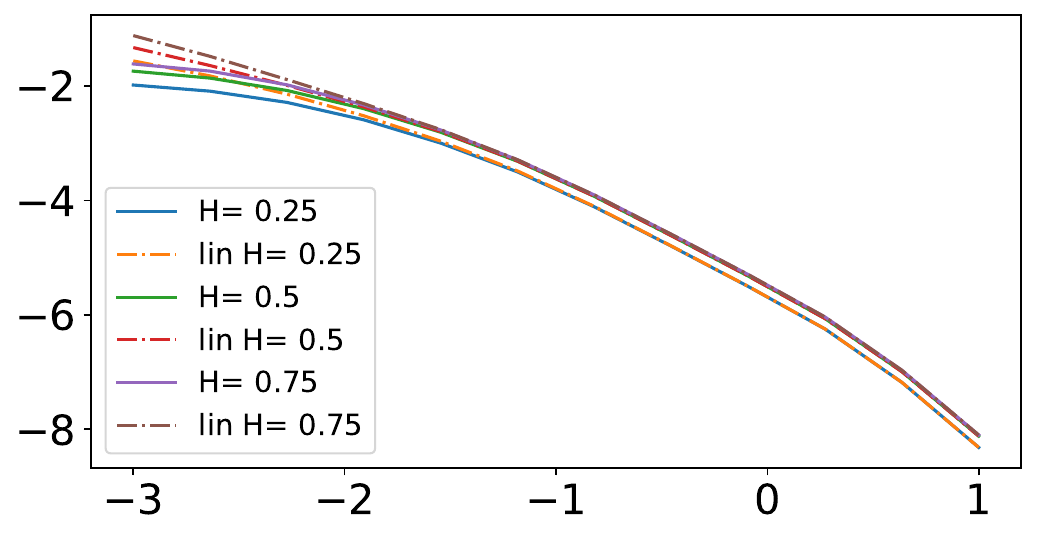}
        \put(-50,525){\large (i)}
        \put(380,-35){\scriptsize $\log_{10}(\lambda-\lambda^*)$}
        \put(-75,250){\rotatebox[]{90}{\scriptsize $\log_{10}(\mathtt{S}^{12}(\omega_0))$}}
    \end{overpic}
\end{subfigure}

\caption{
(Simulation parameters $T=2^{12},N=2^{16},\text{samples}=8$) Plots of the different EWSs for the Hopf system driven by fOU noise \eqref{eq:Hopf fOU} with $\sigma_1=0.01,\sigma_2=0.05$ and $\omega_0=1$. Each column corresponds respectively to the first component, second component and mixed component. (a)--(c): Autocovariance in a loglog plot against the bifurcation parameter. The solid lines are associated with lag $\tau=0$ and the dotted lines correspond to lag $\tau=0.5$.
(d)--(f): The maximum of the SD in a loglog plot against the bifurcation parameter.
(g)--(i): The SD at the peak frequency $\omega_0$, $\mathtt{S}(\omega_0)$ in a loglog plot against the bifurcation parameter.
}
\label{fig:Hopf fOU EWSs}
\end{figure}
In contrast to the AMOC example above (Figure \ref{fig:AMOC fOU noise EWS}) here we do not have complete masking due to the fOU noise, as shown in Figure \ref{fig:Hopf fOU EWSs}. In plot (a), we distinguish again three regimes. In the first regime, we see masking as in the AMOC example. After that, we can spot the $-1$ slope associated by memory loss and lastly the damping due to the nonlinearity. In (b), we register a similar behavior but the regime with the slope $-1$ is much smaller. In (c), we see again the flat variance (solid line) due to the canceling of the terms in \eqref{eq: civil war}, but for $\tau=0.5$ (dotted line) we see the slope $-1$ before the damping in the last regime.\\
The maximum of the SD in the second row can still be used as an EWS, but carries the risk of warning relatively late, as can be seen in (e). This is analogous to the masking in the autocovariance. Last, we consider the SD at the peak frequency $\omega_0$. As in the fBm case (Figure \ref{fig:Hopf fBm EWSs}), $\mathtt{S}(\omega_0)$ has again a very clear divergence before the nonlinearity starts to damp the slope. In conclusion, $\mathtt{S}(\omega_0)$ seems to be very resistant to any kind of noises in the Hopf setting.\\
Finally, we consider in Figure \ref{fig:Hopf fOU SD} the plots of the SD along the first component for a better visualization of the effects of the fOU noise Figure.
\begin{figure}[h!]
    \centering
    \subfloat{\begin{overpic}[scale=0.27]{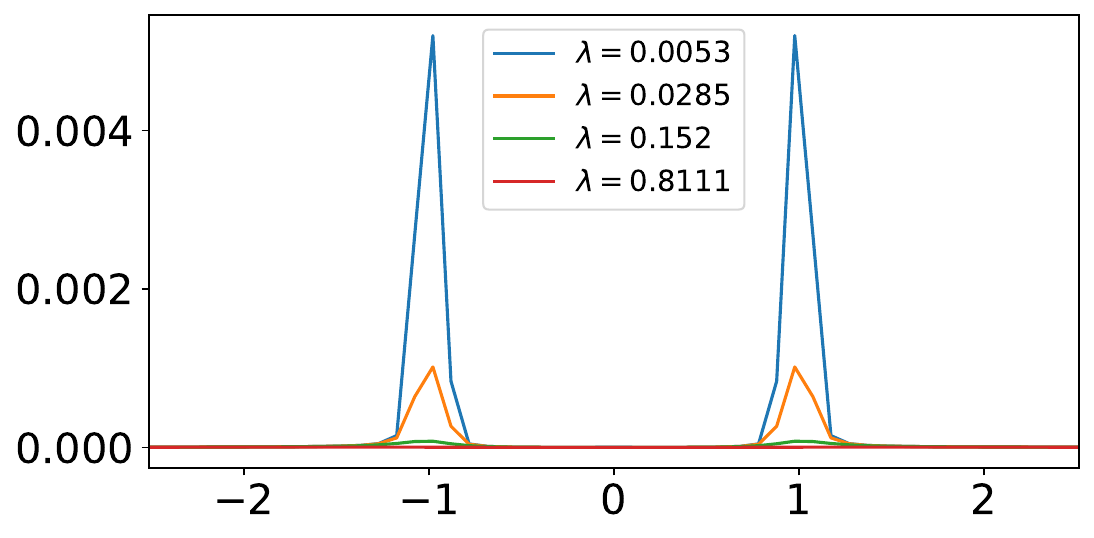}
    \put(-40,-40){(a)}
    \put(400,-35){\scriptsize frequency $\omega$}
    \put(-50,260){\rotatebox[]{90}{\scriptsize $\mathtt{S}^{11}(\omega)$}}
    \end{overpic}}
    \hspace{1mm}
    \subfloat{\begin{overpic}[scale=0.27]{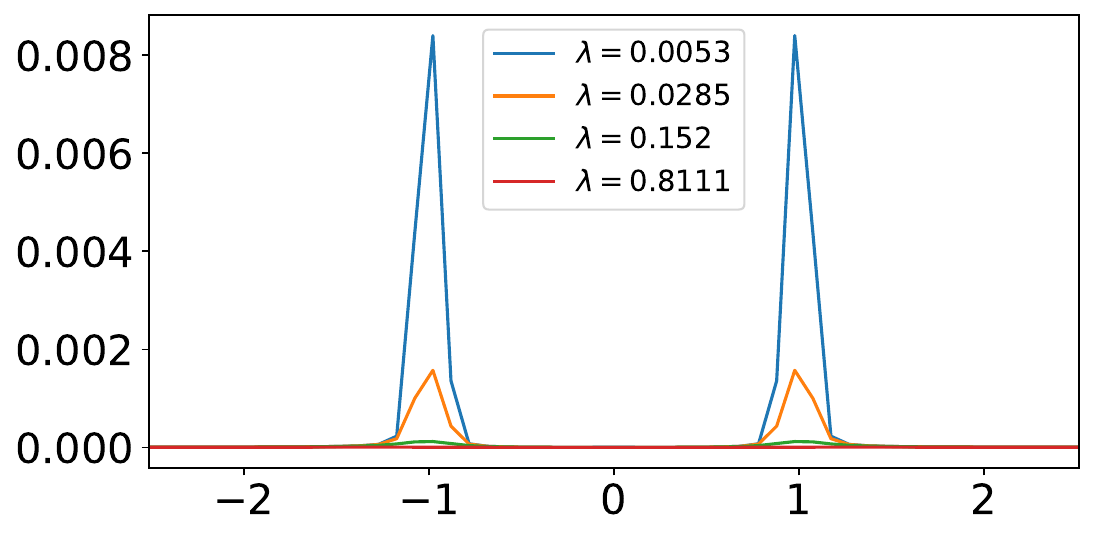}
    \put(-40,-40){(b)}
    \put(400,-35){\scriptsize frequency $\omega$}
    \end{overpic}}
    \subfloat{\begin{overpic}[scale=0.27]{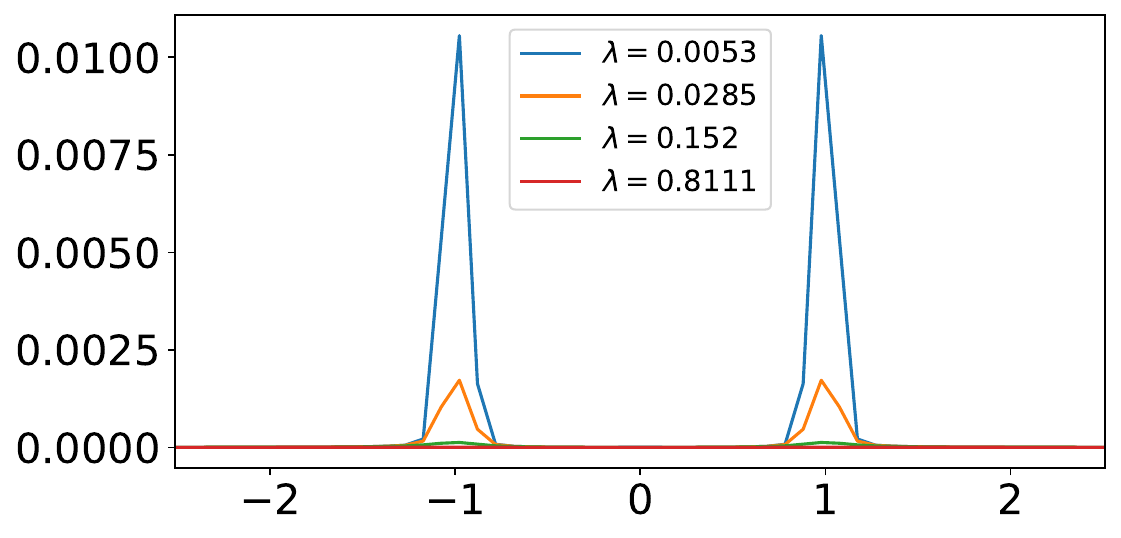}
    \put(-40,-40){(c)}
    \put(400,-35){\scriptsize frequency $\omega$}
    \end{overpic}}
    \caption{The first component of SD $\mathtt{S}(\omega)$ of the Hopf system driven by fOU noise \eqref{eq:Hopf fOU} for (a): $H=0.25$, (b): $H=0.5$ and (c): $H=0.75$.}
    \label{fig:Hopf fOU SD}
\end{figure}
Again, we can see the clear peaks due to the approaching bifurcation.\\
We omit the presentation of the red noise case, as it behaves very similarly to the fBm case.

\section{Conclusion}

In this work, we develop EWSs in fast-slow systems driven by non-Markovian stochastic forcing. By deriving scaling laws for the variance, autocorrelation, and power spectral density in the vicinity of critical transitions, we quantified how memory effects influence the detectability of approached bifurcations. For one-dimensional codimension-$1$ bifurcations, the scaling behavior was shown to depend explicitly on the Hurst parameter, reflecting a direct retention of the long-range dependence present in the driving noise. In contrast, for Hopf bifurcations, the rotational dynamics introduce a mixing mechanism that suppresses or reshapes memory effects, leading to scaling laws that are qualitatively less sensitive to the noise persistence. This distinction reveals a fundamental difference between dissipative and oscillatory critical transitions in non-Markovian environments. Moreover, by extending the analysis beyond fractional Brownian motion to red noise and fractional Ornstein-Uhlenbeck processes, we demonstrated that spectral observables remain effective EWSs even in situations where classical variance-based approaches suffer from masking or color-blindness effects.

The results provide a cohesive perspective on the practical use of EWSs in systems with temporally correlated fluctuations and suggest a broad range of potential applications. Since memory effects and colored noise are ubiquitous in a large variety of applications \cite{Oana,Flandoli,kuehn2022warning,Morr_Red_Noise}, the developed indicators offer a flexible methodology for detecting critical transitions under more realistic stochastic forcing than the classical white-noise setting. In particular, the applicability of the framework was illustrated through its implementation in ocean circulation models exhibiting tipping behavior as well as in oscillatory systems undergoing Hopf bifurcations.~More generally, the scaling laws derived here provide quantitative guidelines for interpreting observational data and selecting suitable indicators according to both the underlying bifurcation structure and the temporal characteristics of the noise.~This paves the way for more robust monitoring and prediction strategies in complex systems affected by memory and stochastic forcing terms.

\printbibliography

\appendix
\section{Time-asymptotic autocovariance and autocorrelation in the one-dimensional model}\label{app:A}
In this appendix, we prove the results in Section \ref{sec:auto 1d} and obtain the rate of the time-asymptotic autocovariance and autocorrelation in linearized models. Specifically, we consider the loss of stability in the proximity to a fold, pitchfork or transcritical bifurcation in \eqref{eq:fast system simplified} under \nameref{S1} conditions. We start by discussing the proof of Lemma \ref{lem:autocov fast system complex lemma}.
\begin{refproof}[Proof of Lemma~\ref{lem:autocov fast system complex lemma}]\label{proof:autocov fast system complex lemma}
First, we notice that equation \eqref{eq:autocov fast subsystem} is proven for $H=\frac{1}{2}$ in \cite{bernuzzi2024warning}. For the remainder of the proof, we consider then the assumption $H\in(0,1)\setminus\left\{\frac{1}{2}\right\}$. The next properties are described for $\left(\psi_t^{(1)}\right)_{t\geq 0}$, nonetheless they hold equivalently for $\left(\psi_t^{(2)}\right)_{t\geq 0}$. We write $\left(\psi_t^{(1)}\right)_{t\geq 0}$ using the mild solution formula
$$\psi_t^{(1)} = e^{\alpha t}\psi_0^{(1)} + \int_0^t e^{\alpha (t-r)}~\left\langle\txtd \mathbf{W}_r^H, \bm{\sigma}_1\right\rangle$$
with initial condition $\psi_0^{(1)}\in\C$. 
Moreover, we employ the Mandelbrot-van Ness representation of fBm. To this aim, we introduce the kernel $K_H:\R\times\R\to\R$, which is defined for $H\in(0,1)\setminus\left\{\frac{1}{2}\right\}$ as
$$K_H(t,s):= c_H s^{\frac{1}{2}-H}\int_s^t (u-s)^{H-\frac{3}{2}}u^{H-\frac{1}{2}}~\txtd u, \quad \text{for} \quad 0<s<t,$$
where
$$c_H = \left[\frac{2H\Gamma(\frac{3}{2}-H)}{\Gamma(H+\frac{1}{2})\Gamma(2-2H)} \right]^{\frac{1}{2}}.$$
We note that, since $K_H$ is not integrable for $H\in\left(0,\frac{1}{2}\right)$, we can't employ standard methods \cite{duncan2009semilinear} to simplify the covariance over $H\in(0,1)$.
Nonetheless, we can write fractional Brownian motion as an It\^o integral with respect to a Brownian motion  $(\mathbf{B}_t)_{t\geq 0}\subset \R^2$ given by 
\begin{align}\label{eq:fBM by BM}
    \mathbf{W}_t^H = \int_\R K_H(t,r)~\txtd \mathbf{B}_r.
\end{align}
Using \eqref{eq:fBM by BM} and \cite[Section 1.3]{kubilius2017parameter} we can represent the solution as
\begin{align*}
    \psi_t^{(1)} 
    &= e^{\alpha t}\psi_0^{(1)} + \left\langle \mathbf{W}_t^H, \bm{\sigma}_1\right\rangle + \alpha \int_0^t e^{\alpha(t-r)}\left\langle \mathbf{W}_r^H, \bm{\sigma}_1\right\rangle ~\txtd r \\
    &=e^{\alpha t}\psi_0^{(1)} + \int_\R\left(K_H(t,u) + \alpha \int_0^te^{\alpha (t-r)}K_H(r,u)~\txtd r \right) \left\langle\txtd \mathbf{B}_r, \bm{\sigma}_1\right\rangle.
\end{align*}
The expectation $\E\left[\psi_t^{(1)}\right]$ is equal to $e^{\alpha t}\psi_0^{(1)}$. Hence, the covariance at time $t$ of lag $\tau\ge0$ is given by $\text{Cov}\left(\psi_t^{(1)}, \psi_{t+\tau}^{(2)} \right)= \E\left[\psi_t^{(1)} \overline{\psi_{t+\tau}^{(2)}} \right] - e^{\alpha t + \overline{\beta} (t+\tau)}\psi_0^{(1)} \overline{\psi_0^{(2)}}$. Since the second term is zero in the time-asymptotic limit, we use It\^o's isometry \cite{da2014stochastic} within the stochastic integral and the identity $1=-\alpha\int_0^te^{\alpha(t-r)}~\txtd r + e^{\alpha t}$ to split $\E\left[\psi_t^{(1)} \overline{\psi_{t+\tau}^{(2)}} \right]$ into four terms.
\begin{align*}
    &\E\left[\psi_t^{(1)} \overline{\psi_{t+\tau}^{(2)}} \right] \\
    =&\left\langle \bm{\sigma}_2, \bm{\sigma}_1 \right\rangle \int_\R \left(K_H(t,u) + \alpha \int_0^te^{\alpha(t-r)}K_H(r,u)~\txtd r \right)\\
    &~~~~\times\left(K_H(t+\tau,u) + \overline{\beta}\int_0^{t+\tau}e^{\overline{\beta}(t+\tau-r)}K_H(r,u)~\txtd r \right)~\txtd u \\
    =&\left\langle \bm{\sigma}_2, \bm{\sigma}_1 \right\rangle \int_\R \left(e^{\alpha t}K_H(t,u) + \alpha \int_0^te^{\alpha (t-r)}(K_H(r,u)-K_H(t,u))~\txtd r \right)\\
    &~~~~\times\left(e^{\overline{\beta}(t+\tau)}K_H(t+\tau,u) + \overline{\beta}\int_0^{t+\tau}e^{\overline{\beta}(t+\tau-r)}(K_H(r,u)-K_H(t+\tau,u))~\txtd r \right)~\txtd u\\
    =& \left\langle \bm{\sigma}_2, \bm{\sigma}_1 \right\rangle \Bigg( \underbrace{ e^{\alpha t + \overline{\beta} (t + \tau)} \int_\R K_H(t,u) K_H(t+\tau,u) \txtd u}_{=\RomanI}\\
    &+ \underbrace{ e^{\alpha t} \overline{\beta} \int_\R K_H(t,u)  \int_0^{t+\tau} e^{\overline{\beta} (t+\tau-r)} \left( K_H(r,u) - K_H(t+\tau,u) \right) \txtd r \txtd u}_{=\RomanII}\\
    &+ \underbrace{ e^{\overline{\beta} (t+\tau)} \alpha \int_\R K_H(t+\tau,u) \int_0^t e^{\alpha (t-r)} \left( K_H(r,u) - K_H(t,u) \right) ~\txtd r\txtd u}_{=\RomanIII} \\
    & \begin{rcases} +\alpha \overline{\beta} \int_\R \left( \int_0^t e^{\alpha (t-r_1)} \left( K_H(r_1,u) - K_H(t,u) \right) \txtd r_1 \right) \\
    \times \left( \int_0^{t+\tau} e^{\overline{\beta} (t+\tau-r_2)} \left( K_H(r_2,u) - K_H(t+\tau,u) \right) \txtd r_2 \right)  \txtd u \Bigg).
    \end{rcases}=\RomanIV
\end{align*}
We consider each of the terms individually and use $$\int_\R K_H(t,u)K_H(r,u) \txtd u =\frac{1}{2}(|t|^{2H} + |r|^{2H}-|t-r|^{2H}).$$ We write $C(t)$ to indicate a parameter that behaves polynomially in $t$. We obtain then
\begin{align*}
    \RomanI = e^{\alpha t + \overline{\beta} (t + \tau)} \frac{1}{2}(t^{2H} + (t+\tau)^{2H}-\tau^{2H}) \le C(t) e^{\alpha t + \overline{\beta} (t + \tau)} \xrightarrow{t\to\infty}0,
\end{align*}
since $\text{Re}(\alpha),~\text{Re}(\beta)<0$. 
\begin{align*}
    \RomanII
    =& e^{\alpha t} \overline{\beta} \int_0^{t+\tau} e^{\overline{\beta} (t+\tau-r)} \int_\R K_H(t,u)  \left( K_H(r,u) - K_H(t+\tau,u) \right) \txtd u \txtd r \\
    =&\frac{1}{2} e^{\alpha t} \overline{\beta} \int_0^{t+\tau} e^{\overline{\beta} (t+\tau-r)} (r^{2H} - |t-r|^{2H} -(t+\tau)^{2H} + \tau^{2H} ) \txtd r \le C(t) e^{\alpha t}\xrightarrow{t\to\infty}0.
\end{align*}
With the same steps, we have
\begin{align*}
    \RomanIII
    =& e^{\overline{\beta} (t+\tau)} \alpha  \int_0^t e^{\alpha (t-r)} \int_\R K_H(t+\tau,u)\left( K_H(r,u) - K_H(t,u) \right) ~\txtd u\txtd r
    \le C(t) e^{\overline{\beta}t}\xrightarrow{t\to\infty}0.
\end{align*}
Lastly, we simplify $\RomanIV$ as
\begin{align*}
    \RomanIV 
    =& \frac{1}{2} \alpha \overline{\beta} \int_0^t \int_0^{t+\tau} e^{\alpha (t-r_1) +  \overline{\beta} (t+\tau-r_2)} \left( |t-r_2|^{2H} + |t+\tau-r_1|^{2H} - |r_1-r_2|^{2H} - \tau^{2H}\right) \txtd r_2 \txtd r_1 \\
    =& \frac{1}{2} \alpha \overline{\beta} \int_0^t \int_0^{t+\tau} e^{\alpha v_1 +  \overline{\beta} v_2} \left( |\tau-v_2|^{2H} + |\tau+v_1|^{2H} - |v_2-v_1-\tau|^{2H} - \tau^{2H}\right) \txtd v_2 \txtd v_1,
\end{align*}
where $v_1 = t-r_1,~v_2=t+\tau-r_2$. Combining the calculations above, we get that
\begin{align*}
    &\underset{t\to\infty}{\lim}\text{Cov}\left(\psi_t^{(1)}, \psi_{t+\tau}^{(2)} \right) \\
    =&\frac{\left\langle \bm{\sigma}_2, \bm{\sigma}_1 \right\rangle}{2} \alpha \overline{\beta} \int_0^\infty \int_0^\infty e^{\alpha v_1 + \overline{\beta} v_2} \left( |\tau-v_2|^{2H} + |\tau+v_1|^{2H} - \tau^{2H} - |v_2-v_1-\tau|^{2H} \right) \txtd v_1 \txtd v_2.
\end{align*}
Then, we split the integral into four terms and study each of them using a substitution. We use the substitution $w=A(\lambda)(v_2-\tau)$ and \eqref{eq:complex_property_2} to obtain that
\begin{align*}
    &\frac{1}{2} \alpha \overline{\beta} \int_0^\infty \int_0^\infty e^{\alpha v_1 + \overline{\beta} v_2}  |\tau-v_2|^{2H}  \txtd v_1 \txtd v_2 \\
    =& -\frac{1}{2} \overline{\beta}  \int_0^\infty e^{\overline{\beta} v_2}  |\tau-v_2|^{2H}   \txtd v_2 \\
    =& \frac{\overline{\beta}^{-2H}}{2}e^{\overline{\beta}\tau}\left((-1)^{-2H} \Gamma(2H+1) - \gamma(2H+1, \overline{\beta}\tau) \right) \\
    =& \frac{\overline{\beta}^{-2H}}{2}\left((-1)^{-2H}e^{\overline{\beta}\tau}2H\Gamma(2H) - e^{\overline{\beta}\tau}2H\gamma(2H, \overline{\beta}\tau) + (\overline{\beta}\tau)^{2H} \right),
\end{align*}
and that
\begin{align*}
    &\frac{1}{2} \alpha \overline{\beta} \int_0^\infty \int_0^\infty e^{\alpha v_1 + \overline{\beta} v_2}  |\tau+v_1|^{2H}  \txtd v_1 \txtd v_2 \\
    =& -\frac{1}{2} \alpha \int_{0}^\infty  e^{\alpha v_1}  (\tau+v_1)^{2H}  \txtd v_1 \\
    =& \frac{(-\alpha)^{-2H}}{2} e^{-\alpha\tau}  \Gamma(2H+1,-\alpha\tau) \\
    =& \frac{(-\alpha)^{-2H}}{2} \left(e^{-\alpha\tau}  2H\Gamma(2H,-\alpha\tau) + (-\alpha\tau)^{2H}\right).
\end{align*}
The third term is studied as
\begin{align*}
    &-\frac{1}{2} \alpha \overline{\beta} \int_0^\infty \int_0^\infty e^{\alpha v_1 + \overline{\beta} v_2}  \tau^{2H}  \txtd v_1 \txtd v_2 
    = -\frac{1}{2}\tau^{2H}.
\end{align*}
For the last integral, we use initially the substitutions $w_1 = v_1-v_2$ and $w_2=v_1+v_2$ in
\begin{align*}
    &-\frac{1}{2}\alpha \overline{\beta} \int_0^\I\int_0^\I e^{\alpha v_1 + \overline{\beta} v_2}|v_2-v_1-\tau|^{2H}~\txtd v_1\txtd v_2 \\
    =& -\frac{1}{4}\alpha \overline{\beta} \int_{-\I}^\I e^{\frac{\alpha - \overline{\beta}}{2}w_1} |w_1+\tau|^{2H} \int_{|w_1|}^\I e^{\frac{\alpha + \overline{\beta}}{2}w_2} ~\txtd w_2\txtd w_1 \\
    =& \frac{\alpha \overline{\beta}}{2(\alpha+ \overline{\beta})} \int_{-\I}^\I e^{\frac{\alpha - \overline{\beta}}{2}w_1} e^{\frac{\alpha + \overline{\beta}}{2}|w_1|} |w_1+\tau|^{2H} ~\txtd w_1\\
    =& \frac{\alpha \overline{\beta}}{2(\alpha+ \overline{\beta})} \Bigg( \int_0^\I e^{\alpha w_1} (w_1+\tau)^{2H} ~\txtd w_1 +
    \int_{-\tau}^0 e^{-\overline{\beta} w_1} (w_1+\tau)^{2H} ~\txtd w_1\\
    &~~~~+\int_{-\I}^{-\tau} e^{- \overline{\beta} w_1} (-w_1-\tau)^{2H} ~\txtd w_1\Bigg)
\end{align*}
and then we employ the substitutions $u_1=-\alpha(w_1+\tau)$, $u_2=\overline{\beta}(w_1+\tau)$ and $u_3=\overline{\beta}(w_1+\tau)$, respectively in each integral above, to get
\begin{align*}
    & -\frac{1}{2}\alpha \overline{\beta} \int_0^\I\int_0^\I e^{\alpha v_1 + \overline{\beta} v_2}|v_2-v_1-\tau|^{2H}~\txtd v_1\txtd v_2 \\
    =& \frac{\alpha \overline{\beta}}{2(\alpha+ \overline{\beta})} \Bigg( e^{-\alpha \tau} (-\alpha)^{-2H-1} \int_{-\alpha\tau}^{e^{i \arg(-\alpha)}\infty} e^{-u_1} u_1^{2H} ~\txtd u_1 +
    e^{\overline{\beta} \tau} \overline{\beta}^{-2H-1} \int_0^{\overline{\beta}\tau} e^{-u_2} u_2^{2H} ~\txtd u_2\\
    &~~~~+e^{\overline{\beta} \tau} (-\overline{\beta})^{-2H-1} \int_0^{e^{i \arg(-\overline{\beta})}\infty} e^{-u_3} u_3^{2H} ~\txtd u_3 \Bigg)\\
    =& \frac{\alpha \overline{\beta}}{2(\alpha+ \overline{\beta})} \Bigg( e^{-\alpha \tau} (-\alpha)^{-2H-1} \Gamma(2H+1,-\alpha\tau) +
    e^{\overline{\beta} \tau} \overline{\beta}^{-2H-1} \gamma(2H+1,\overline{\beta}\tau)\\
    &~~~~+e^{\overline{\beta} \tau} (-\overline{\beta})^{-2H-1} \Gamma(2H+1) \Bigg) \\
    =& \frac{\alpha \overline{\beta}}{2(\alpha+ \overline{\beta})} \Bigg(2H e^{-\alpha \tau} (-\alpha)^{-2H-1} \Gamma(2H,-\alpha\tau)
    -\alpha^{-1} \tau^{2H}\\
    &~~~~+2H e^{\overline{\beta} \tau} \overline{\beta}^{-2H-1} \gamma(2H,\overline{\beta}\tau)
    - \overline{\beta}^{-1} \tau^{2H} 
    +e^{\overline{\beta} \tau} (-\overline{\beta})^{-2H-1} 2H\Gamma(2H) \Bigg).
\end{align*}
In the last equality we use properties of the gamma functions and integration by parts. Finally, we assemble the computations to obtain that
\begin{align*}
    &\frac{\left\langle \bm{\sigma}_2, \bm{\sigma}_1 \right\rangle}{2} \alpha \overline{\beta} \int_0^\infty \int_0^\infty e^{\alpha v_1 +  \overline{\beta} v_2} \left( |\tau-v_2|^{2H} + |\tau+v_1|^{2H} - |v_2-v_1-\tau|^{2H} - \tau^{2H}\right) \txtd v_2 \txtd v_1\\
    =& -\left\langle \bm{\sigma}_2, \bm{\sigma}_1 \right\rangle \frac{H}{\alpha+\overline{\beta}} \Bigg( e^{\overline{\beta} \tau} (-\overline{\beta})^{-2H+1} \Gamma(2H)  \\
    &+ e^{-\alpha \tau} (-\alpha)^{-2H+1} \Gamma(2H,-\alpha\tau)
    + e^{\overline{\beta} \tau} \overline{\beta}^{-2H+1} \gamma(2H,\overline{\beta}\tau)\Bigg)
\end{align*}
from which the conclusion follows.
\qed
\end{refproof}
In the following, we apply a Taylor expansion approach to the result of Theorem \ref{thm:autocov fast system} in order to prove Corollary \ref{cor:autocor fast system}.
\begin{refproof}[Proof of Corollary \ref{cor:autocor fast system}]\label{proof:autocor fast system}
    Through the Taylor expansion of the exponential function, we rewrite the integral in the statement of Theorem \ref{thm:autocov fast system} as
\begin{align*}
    \int_0^{|A(\lambda)| \tau} e^w w^{2H-1} \txtd w = \int_0^{|A(\lambda)| \tau} w^{2H-1} \sum_{n=0}^\infty \frac{w^n}{n!} \txtd w = \sum_{n=0}^\infty \frac{(|A(\lambda)| \tau)^{2H+n}}{n! (2H+n)}
\end{align*}
and the upper incomplete gamma function in the form
\begin{align*}
    \Gamma(2H,|A(\lambda)| \tau)= \Gamma(2H) - \int_0^{|A(\lambda)| \tau} w^{2H-1} \sum_{n=0}^\infty \frac{(-w)^n}{n!} \txtd w = \Gamma(2H) - \sum_{n=0}^\infty \frac{(-1)^n (|A(\lambda)| \tau)^{2H+n}}{n!(2H+n)} .
\end{align*}
In the limit $\lambda\to\lambda^*$, we then obtain the following:
\begin{align*}
    \frac{V_\infty(\tau)}{V_\infty(0)}
    =&\frac{\frac{\sigma^2}{2} H |A(\lambda)|^{-2 H} \left( e^{A(\lambda) \tau} \Gamma(2H) + e^{-A(\lambda) \tau} \Gamma(2H,|A(\lambda)| \tau) - e^{A(\lambda) \tau} \int_0^{|A(\lambda)| \tau} e^w w^{2H-1} \txtd w \right)}{\frac{\sigma^2}{2} |A(\lambda)|^{-2 H} \Gamma(2H+1)} \\
    =&\frac{H \left( e^{A(\lambda) \tau} \Gamma(2H) + e^{-A(\lambda) \tau} \Gamma(2H,|A(\lambda)| \tau) - e^{A(\lambda) \tau} \int_0^{|A(\lambda)| \tau} e^w w^{2H-1} \txtd w \right)}{\Gamma(2H+1)}\\
    =&\frac{H \left(\underset{n=0}{\overset{\infty}{\sum}} \frac{(-1)^n (|A(\lambda)| \tau)^n}{n!} \right) \Gamma(2H)}{\Gamma(2H+1)} \\
    &+\frac{H \left( \underset{n=0}{\overset{\infty}{\sum}} \frac{(|A(\lambda)| \tau)^n}{n!} \right) \left( \Gamma(2H) - \sum_{n=0}^\infty \frac{(-1)^n (|A(\lambda)| \tau)^{2H+n}}{n!(2H+n)} \right)}{\Gamma(2H+1)} \\
    & - \frac{ H \left(\underset{n=0}{\overset{\infty}{\sum}} \frac{(-1)^n (|A(\lambda)| \tau)^n}{n!} \right) \left( \sum_{n=0}^\infty \frac{(|A(\lambda)| \tau)^{2H+n}}{n! (2H+n)} \right)}{\Gamma(2H+1)}.
\end{align*}
As in the studied regime $|A(\lambda)| $ is in the proximity of 0, we are mainly interested in the leading order terms. Therefore, we collect the higher order terms into $\cO(|A(\lambda)|^2)$. Consequently,
\begin{align*}
    \frac{V_\infty(\tau)}{V_\infty(0)}
    =& \frac{H \Gamma(2H) - H |A(\lambda)| \tau \Gamma(2H)}{\Gamma(2H+1)} \\
    &+ \frac{ H \Gamma(2H) + H |A(\lambda)| \tau \Gamma(2H) - \frac{(|A(\lambda)| \tau)^{2H}}{2} - \frac{(|A(\lambda)| \tau)^{2H+1}}{2} + H\frac{(|A(\lambda)| \tau)^{2H+1}}{2H+1}}{\Gamma(2H+1)} \\
    &+ \frac{- \frac{(|A(\lambda)| \tau)^{2H}}{2} + \frac{(|A(\lambda)| \tau)^{2H+1}}{2}  - H\frac{(|A(\lambda)| \tau)^{2H+1}}{2H+1}}{\Gamma(2H+1)} + \mathcal{O}\left( |A(\lambda)|^2 \right) \\
    =& \frac{2 H \Gamma(2H)}{\Gamma(2H+1)} - |A(\lambda)|^{2H} \frac{\tau^{2H}}{\Gamma(2H+1)} + \mathcal{O}\left( |A(\lambda)|^2 \right)\\
    =& 1 - |A(\lambda)|^{2H} \frac{\tau^{2H}}{\Gamma(2H+1)} + \mathcal{O}\left( |A(\lambda)|^2 \right) .
\end{align*}
    \qed
\end{refproof}

\section{Time-asymptotic autocovariance and autocorrelation in the two-dimensional model} \label{app:B}
This appendix collects the proofs of the results in Section \ref{sec:auto 2d}. We obtain the rate of the time-asymptotic autocovariance and autocorrelation in the limit to a Hopf bifurcation in \eqref{eq:fast system simplified} under \nameref{S2} conditions. In the proof below, we extend the results of Lemma \ref{lem:autocov fast system complex lemma} to the study of the time-asymptotic autocovariance of the solution of \eqref{eq:fast system simplified} in \nameref{S2}. As we consider a two-dimensional setting, the stochastic differential equations in \eqref{eq:fast system simplified hopf 2} refer to the trajectory in spectral modes. In the case $\tau\gg0$, the divergence along such modes could in principle diminish the scaling law along general modes. However, we find and distinguish the rare occasions in which this phenomenon occurs.

\begin{refproof}[Proof of Theorem \ref{thm:autocov fast system complex}]\label{proof:autocov fast system complex}
By construction, any $\mathbf{v}\in\R^2$ can be written as
\begin{align*}
    \mathbf{v}=\left\langle \mathbf{v}, \mathbf{e}_{1}(\lambda) \right\rangle \mathbf{e}_{1}^*(\lambda) + \left\langle \mathbf{v}, \mathbf{e}_{2}(\lambda) \right\rangle \mathbf{e}_{2}^*(\lambda) .
\end{align*}
Then, we can use the fact that $\underset{t\to\infty}{\lim}\mathbb{E}[x_t]$ is the null vector to imply that
\begin{align} \label{eq:call_1}
    V_\infty(\tau)\left[ \mathbf{v}_1, \mathbf{v}_2 \right] =& \underset{t\to\infty}{\lim}\text{Cov}\left( \left\langle \mathbf{x}_t, \mathbf{v}_1 \right\rangle, \left\langle \mathbf{x}_{t+\tau}, \mathbf{v}_2 \right\rangle \right) \nonumber \\
    =& \underset{t\to\infty}{\lim}\mathbb{E}\left[ \left\langle \mathbf{x}_t, \mathbf{v}_1 \right\rangle \overline{ \left\langle \mathbf{x}_{t+\tau}, \mathbf{v}_2 \right\rangle} \right] \\
    =& \underset{t\to\infty}{\lim} \sum_{j_1,j_2\in\{1,2\}} \overline{\left\langle \mathbf{v}_1, \mathbf{e}_{j_1}(\lambda) \right\rangle} \left\langle \mathbf{v}_2, \mathbf{e}_{j_2}(\lambda) \right\rangle \mathbb{E}\left[ \left\langle \mathbf{x}_t, \mathbf{e}_{j_1}^*(\lambda) \right\rangle \overline{ \left\langle \mathbf{x}_{t+\tau}, \mathbf{e}_{j_2}^*(\lambda) \right\rangle} \right] \nonumber \\
    =& \sum_{j_1,j_2\in\{1,2\}}\overline{\left\langle \mathbf{v}_1, \mathbf{e}_{j_1}(\lambda) \right\rangle} \left\langle \mathbf{v}_2, \mathbf{e}_{j_2}(\lambda) \right\rangle V_\infty(\tau)\left[ \mathbf{e}_{j_1}^*(\lambda), \mathbf{e}_{j_2}^*(\lambda) \right] \nonumber,
\end{align}
for any $\mathbf{v}_1$ and $\mathbf{v}_2$ in $\R^2$.
We consider $\varphi_t^{(1)}:=\left\langle \mathbf{x}_t, \mathbf{e}_1^*(\lambda) \right\rangle$ and $\varphi_t^{(2)}:=\left\langle \mathbf{x}_t, \mathbf{e}_2^*(\lambda) \right\rangle$ and notice that they solve
\begin{align*}
    \begin{split}
        \txtd \varphi_t^{(1)} =& (A(\lambda) + i B(\lambda)) \varphi_t^{(1)}~\txtd t + \left\langle \txtd \mathbf{W}_t^H, \Sigma^\text{T} \mathbf{e}_1^*(\lambda) \right\rangle,\\
        \txtd \varphi_t^{(2)} =& (A(\lambda) - i B(\lambda)) \varphi_t^{(2)}~\txtd t + \left\langle \txtd \mathbf{W}_t^H, \Sigma^\text{T} \mathbf{e}_2^*(\lambda) \right\rangle,
    \end{split}
\end{align*}
for $t\geq0$ and with initial conditions $\varphi_0^{(1)}:=\left\langle \mathbf{x}_0, \mathbf{e}_1^*(\lambda) \right\rangle$ and $\varphi_0^{(2)}:=\left\langle \mathbf{x}_0, \mathbf{e}_2^*(\lambda) \right\rangle$.
Consequently, equations \ref{eq:autocov fast subsystem complex spectral} follow directly from \eqref{eq:autocov fast subsystem complex 2}. Moreover, we obtain the scaling laws of the time-asymptotic autocovariance associated with $\left\{\varphi_t^{(1)}\right\}_{t\geq0}$ and $\left\{\varphi_t^{(2)}\right\}_{t\geq0}$ in the limit $\lambda\to \lambda^*$, i.e. $A(\lambda)\to 0^-$. This is achieved upon considering that the assumption $B(\lambda^*)\neq 0$ implies that the memory term in \eqref{eq:autocov fast subsystem complex 2} satisfies
\begin{align} \label{eq: detail tau}
    |P(A(\lambda)\pm iB(\lambda),A(\lambda)\pm iB(\lambda),H,\tau)|\asymp1 \quad \text{for almost every} \quad \tau\geq 0,
\end{align}
with function $P$ introduced in \eqref{eq:function_P} that is analytic in $\tau$. Moreover, the masking term assumes the form $\left\langle  \mathbf{e}_{j_2}^*(\lambda), Q \mathbf{e}_{j_1}^*(\lambda)\right\rangle$ the form for corresponding $j_1$ and $j_2$. Without loss of generality, we can assume that $\left\langle \mathbf{e}_{1}^*(\lambda^*), Q \mathbf{e}_{1}^*(\lambda^*)\right\rangle>0$. Finally, the mitigation terms display divergence (hyperbolically in $|A(\lambda)|$) only for $j_1=j_2$. The rates of these terms and the modal variance are collected then in Table \ref{tab: tiny}.
\begin{center}
    \begin{tabular}{|c||c|c|c|}
        \hline
        \textnormal{Terms in} & \multicolumn{3}{c|}{$(j_1,j_2)$}  \\
        \cline{2-4}
        $V_\infty(\tau)\left[ \mathbf{e}_{j_1}^*(\lambda), \mathbf{e}_{j_2}^*(\lambda) \right]$ & $(1,1)$ & $(1,2)$ & $(2,2)$ \\
        \hline
        \textnormal{Masking} & $0$ & $0$ & $0$ \\
        \hline
        \textnormal{Memory} & $0$ & $0$ & $0$ \\
        \hline
        \textnormal{Mitigation} & $-1$ & $0$ & $-1$ \\
        \hline
        \textnormal{Modal} & $-1$ & $0$ & $-1$ \\
        \hline
    \end{tabular}
    \captionof{table}{We display the exponents $\nu$ in the scaling law $\mathcal{O}\left(|A(\lambda)|^\nu\right)$ associated to the corresponding term introduced in \eqref{eq:autocov fast subsystem complex 2}. An important case is the column corresponding to $j_1=j_2=1$, in which the rates are asymptotically comparable to $|A(\lambda)|^\nu$ for almost every $\tau\geq 0$.}
    \label{tab: tiny}
\end{center}

We note also that
\begin{align} \label{eq:call_4}
        &P(A(\lambda^*)+iB(\lambda^*), A(\lambda^*)+iB(\lambda^*),H, \tau) \\
        =& e^{-2i B(\lambda^*)\tau} P(A(\lambda^*)-iB(\lambda^*), A(\lambda^*)-iB(\lambda^*),H, \tau) \nonumber. \nonumber
\end{align}
In conclusion, we know from \eqref{eq:call_1} and \eqref{eq:call_4} that \eqref{eq:autocov fast subsystem complex 3} holds for $\tau\geq0$ that satisfies \eqref{eq: detail tau} and for $\mathbf{v}_1,\mathbf{v}_2\in\R^2$ that satisfy \eqref{eq: civil war}.
\qed
\end{refproof}

Below, we apply the statement of Lemma \ref{lem:autocov fast system complex lemma} to describe the limit and rate of convergence of the time-asymptotic autocorrelation of the solution to \eqref{eq:fast system simplified} in the \nameref{S2} setting. The distinction between diverging components in \eqref{eq:autocov fast subsystem complex 2} allows to pinpoint the main components capable to define the behavior of the observable in the limit $\lambda\to\lambda^*$.

\begin{refproof}[Proof of Lemma \ref{lem:autocor fast system complex}]\label{proof:autocor fast system complex}
We notice first that \eqref{eq:autocov fast subsystem complex 2} implies that
\begin{align*}
    \underset{t\to\infty}{\lim}\frac{\text{Cov}\left(\psi_t^{(1)}, \psi_{t+\tau}^{(1)} \right)}{\text{Cov}\left(\psi_t^{(1)}, \psi_{t}^{(1)} \right)}
    =
    \frac{\underset{t\to\infty}{\lim}\text{Cov}\left(\psi_t^{(1)}, \psi_{t+\tau}^{(1)} \right)}{\underset{t\to\infty}{\lim}\text{Cov}\left(\psi_t^{(1)}, \psi_{t}^{(1)} \right)}
    = \frac{P(\alpha(\lambda),\alpha(\lambda),H,\tau)}{P(\alpha(\lambda),\alpha(\lambda),H,0)}
\end{align*}
for any $\lambda\leq \lambda^*$. Moreover, the limit 
\begin{align*}
    &\underset{\lambda\to\lambda^*}{\lim} \underset{t\to\infty}{\lim}\frac{\text{Cov}\left(\psi_t^{(1)}, \psi_{t+\tau}^{(1)} \right)}{\text{Cov}\left(\psi_t^{(1)}, \psi_{t}^{(1)} \right)}
    =\underset{\lambda\to\lambda^*}{\lim} \underset{t\to\infty}{\lim} \frac{P(\alpha(\lambda),\alpha(\lambda),H,\tau)}{P(\alpha(\lambda),\alpha(\lambda),H,0)}
    = e^{-i \operatorname{Im}(\alpha(\lambda^*))\tau}
\end{align*}
is a direct consequence of the fact that $\alpha(\lambda^*)=-\overline{\alpha(\lambda^*)}$. As such, we study
\begin{align} \label{eq: autocov complex first step}
    \begin{split}
        &\frac{P(\alpha(\lambda),\alpha(\lambda),H,\tau)}{P(\alpha(\lambda),\alpha(\lambda),H,0)} - e^{-i \operatorname{Im}(\alpha(\lambda))\tau}\\
        =& \frac{P(\alpha(\lambda),\alpha(\lambda),H,\tau) - e^{-i \operatorname{Im}(\alpha(\lambda))\tau} P(\alpha(\lambda),\alpha(\lambda),H,0)}{P(\alpha(\lambda),\alpha(\lambda),H,0)}
    \end{split}
\end{align}
and the terms within. First, we consider that
\begin{align} \label{eq: autocov complex second step}
    \begin{split}
        P(\alpha(\lambda),\alpha(\lambda),H,0) =& 2 \operatorname{Re}\left( (-\alpha(\lambda))^{-2H+1} \right) \Gamma(2H) \\
        =& 2 (2H-1) \operatorname{Re}\left( (-\alpha(\lambda))^{-2H+1} \right) \Gamma(2H-1)
    \end{split}
\end{align}
as illustrated in \eqref{eq:function_P}. Since $\alpha(\lambda^*)\neq 0$, we know that $|P(\alpha(\lambda),\alpha(\lambda),H,0)|\asymp 1$ in the limit $\lambda\to\lambda^*$. The addends in the numerator are addressed as follows:
\begin{align}  \label{eq: autocov complex third step}
    P(\alpha(\lambda),\alpha(\lambda),H,\tau)=& e^{-i \operatorname{Im}(\alpha(\lambda))\tau} \Bigg(-\overline{\alpha(\lambda)} e^{\operatorname{Re}(\alpha(\lambda))\tau} \int_0^\infty e^{\overline{\alpha(\lambda)}w} w^{2H-1} \text{d}w \nonumber\\
    &-\alpha(\lambda) e^{- \operatorname{Re}(\alpha(\lambda))\tau} \int_\tau^\infty e^{\alpha(\lambda) w} w^{2H-1} \text{d}w\\
    &+ \overline{\alpha(\lambda)} e^{\operatorname{Re}(\alpha(\lambda))\tau} \int_0^\tau e^{-\overline{\alpha}(\lambda) w} w^{2H-1} \text{d}w\Bigg) \nonumber
\end{align}
and
\begin{align}  \label{eq: autocov complex fourth step}
    &e^{-i \operatorname{Im}(\alpha(\lambda))\tau} P(\alpha(\lambda),\alpha(\lambda),H,0) \\
    =& e^{-i \operatorname{Im}(\alpha(\lambda))\tau} \Bigg(-\overline{\alpha(\lambda)} \int_0^\infty e^{\overline{\alpha(\lambda)}w} w^{2H-1} \text{d}w
    -\alpha(\lambda) \int_0^\infty e^{\alpha(\lambda) w} w^{2H-1} \text{d}w \Bigg). \nonumber
\end{align}
We study the numerator in \eqref{eq: autocov complex first step} by combining \eqref{eq: autocov complex third step} and \eqref{eq: autocov complex fourth step} in
\begin{align} \label{eq: autocov complex fifth step}
    &P(\alpha(\lambda),\alpha(\lambda),H,\tau) - e^{-i \operatorname{Im}(\alpha(\lambda))\tau} P(\alpha(\lambda),\alpha(\lambda),H,0) \nonumber\\
    =& e^{-i \operatorname{Im}(\alpha(\lambda))\tau} \nonumber
    \Bigg( -\overline{\alpha(\lambda)} \left( e^{\operatorname{Re}(\alpha(\lambda))\tau} - 1 \right) \int_0^\infty e^{\overline{\alpha(\lambda)}w} w^{2H-1} \text{d}w \nonumber\\
    &~~~~~~~~~~~~~~ -\alpha(\lambda) \left( e^{-\operatorname{Re}(\alpha(\lambda))\tau} - 1 \right) \int_\tau^\infty e^{\alpha(\lambda) w} w^{2H-1} \text{d}w\\
    &~~~~~~~~~~~~~~ + \overline{\alpha(\lambda)} \left( e^{\operatorname{Re}(\alpha(\lambda))\tau} - 1 \right) \int_0^\tau e^{-\overline{\alpha(\lambda)} w} w^{2H-1} \text{d}w \nonumber\\
    &~~~~~~~~~~~~~~ + \int_0^\tau \left( \alpha(\lambda) e^{\alpha(\lambda) w} + \overline{\alpha(\lambda)} e^{-\overline{\alpha(\lambda)} w} \right) w^{2H-1} \text{d}w \Bigg). \nonumber
\end{align}
For $H\in\left( 0, \frac{1}{2}\right)$, through first-order Taylor expansion of $\operatorname{Re}(\alpha)$ on $0$, we obtain
\begin{align*}
    \bullet \quad \qquad& -\overline{\alpha(\lambda)} \left( e^{\operatorname{Re}(\alpha(\lambda))\tau} - 1 \right) \int_0^\infty e^{\overline{\alpha(\lambda)}w} w^{2H-1} \text{d}w \\
    =& i \operatorname{Im}(\alpha(\lambda^*)) \operatorname{Re}(\alpha(\lambda))\tau \int_0^\infty e^{-i \operatorname{Im}(\alpha(\lambda^*)) w} w^{2H-1} \text{d}w + \mathcal{O}\left( \operatorname{Re}(\alpha(\lambda))^2 \right),\\
    \bullet \quad \qquad& -\alpha(\lambda) \left( e^{-\operatorname{Re}(\alpha(\lambda))\tau} - 1 \right) \int_\tau^\infty e^{\alpha(\lambda) w} w^{2H-1} \text{d}w \\
    =& i \operatorname{Im}(\alpha(\lambda^*)) \operatorname{Re}(\alpha(\lambda))\tau \int_\tau^\infty e^{i \operatorname{Im}(\alpha(\lambda^*)) w} w^{2H-1} \text{d}w + \mathcal{O}\left( \operatorname{Re}(\alpha(\lambda))^2 \right),\\
    \bullet \quad \qquad& \overline{\alpha(\lambda)} \left( e^{\operatorname{Re}(\alpha(\lambda))\tau} - 1 \right) \int_0^\tau e^{-\overline{\alpha(\lambda)} w} w^{2H-1} \text{d}w \\
    =& - i \operatorname{Im}(\alpha(\lambda^*)) \operatorname{Re}(\alpha(\lambda))\tau \int_0^\tau e^{i \operatorname{Im}(\alpha(\lambda^*)) w} w^{2H-1} \text{d}w + \mathcal{O}\left( \operatorname{Re}(\alpha(\lambda))^2 \right) \quad \text{and}\\
    \bullet \quad \qquad& \int_0^\tau \left( \alpha(\lambda) e^{\alpha(\lambda) w} + \overline{\alpha(\lambda)} e^{-\overline{\alpha(\lambda)} w} \right) w^{2H-1} \text{d}w \\
    =& 2 (i \operatorname{Im}(\alpha(\lambda^*)) + 1) \operatorname{Re}(\alpha(\lambda))\tau \int_0^\tau e^{i \operatorname{Im}(\alpha(\lambda^*)) w} w^{2H} \text{d}w + \mathcal{O}\left( \operatorname{Re}(\alpha(\lambda))^2 \right).
\end{align*}
As such,
\begin{align*}
    &P(\alpha(\lambda),\alpha(\lambda),H,\tau) - e^{-i \operatorname{Im}(\alpha(\lambda))\tau} P(\alpha(\lambda),\alpha(\lambda),H,0) \nonumber\\
    =& 2 e^{-i \operatorname{Im}(\alpha(\lambda))\tau} \operatorname{Re}(\alpha(\lambda))\tau \nonumber
    \Bigg( i \operatorname{Im}(\alpha(\lambda^*)) \operatorname{Re} \left( \int_0^\infty e^{i \operatorname{Im}(\alpha(\lambda^*)) w} w^{2H-1} \text{d}w \right)\\
    &~~~~~~~~~~~~~~ - i \operatorname{Im}(\alpha(\lambda^*)) \int_0^\tau e^{i \operatorname{Im}(\alpha(\lambda^*)) w} w^{2H-1} \text{d}w \nonumber\\
    &~~~~~~~~~~~~~~ + (i \operatorname{Im}(\alpha(\lambda^*)) + 1) \int_0^\tau e^{i \operatorname{Im}(\alpha(\lambda^*)) w} w^{2H} \text{d}w \Bigg)
    + \mathcal{O}\left( \operatorname{Re}(\alpha(\lambda))^2 \right) \nonumber
\end{align*}
and
\begin{align*}
    \Delta(\tau):=&\left| \operatorname{Im}(\alpha(\lambda^*)) \left( \operatorname{Re}\left((-\alpha(\lambda^*))^{-2H+1}\right) \Gamma(2H) \right)^{-1} \right|\\
    &\times \Bigg| \operatorname{Re} \left( \int_0^\infty e^{i \operatorname{Im}(\alpha(\lambda^*)) w} w^{2H-1} \text{d}w \right) \\
    &+ \int_0^\tau e^{i \operatorname{Im}(\alpha(\lambda^*)) w} w^{2H-1} \left(- 1 + w - i \operatorname{Im}(\alpha(\lambda^*))^{-1} w \right) \text{d}w\Bigg|.
\end{align*}
In the case $H=\frac{1}{2}$, it follows that
\begin{align*}
    \bullet \quad \qquad& -\overline{\alpha(\lambda)} \left( e^{\operatorname{Re}(\alpha(\lambda))\tau} - 1 \right) \int_0^\infty e^{\overline{\alpha(\lambda)}w} w^{2H-1} \text{d}w \\
    =& \operatorname{Re}(\alpha(\lambda))\tau + \mathcal{O}\left( \operatorname{Re}(\alpha(\lambda))^2 \right),\\
    \bullet \quad \qquad& -\alpha(\lambda) \left( e^{-\operatorname{Re}(\alpha(\lambda))\tau} - 1 \right) \int_\tau^\infty e^{\alpha(\lambda) w} w^{2H-1} \text{d}w \\
    =& - \operatorname{Re}(\alpha(\lambda))\tau e^{i \operatorname{Im}(\alpha(\lambda^*)) \tau} + \mathcal{O}\left( \operatorname{Re}(\alpha(\lambda))^2 \right),\\
    \bullet \quad \qquad& \overline{\alpha(\lambda)} \left( e^{\operatorname{Re}(\alpha(\lambda))\tau} - 1 \right) \int_0^\tau e^{-\overline{\alpha(\lambda)} w} w^{2H-1} \text{d}w \\
    =& \operatorname{Re}(\alpha(\lambda))\tau \left( 1 - e^{i \operatorname{Im}(\alpha(\lambda^*)) \tau} \right) + \mathcal{O}\left( \operatorname{Re}(\alpha(\lambda))^2 \right) \quad \text{and}\\
    \bullet \quad \qquad& \int_0^\tau \left( \alpha(\lambda) e^{\alpha(\lambda) w} + \overline{\alpha(\lambda)} e^{-\overline{\alpha(\lambda)} w} \right) w^{2H-1} \text{d}w \\
    =& 2 \operatorname{Re}(\alpha(\lambda))\tau e^{i \operatorname{Im}(\alpha(\lambda^*)) \tau} + \mathcal{O}\left( \operatorname{Re}(\alpha(\lambda))^2 \right).
\end{align*}
Consequently,
\begin{align*}
    &P(\alpha(\lambda),\alpha(\lambda),H,\tau) - e^{-i \operatorname{Im}(\alpha(\lambda))\tau} P(\alpha(\lambda),\alpha(\lambda),H,0) \nonumber\\
    =& 2 e^{-i \operatorname{Im}(\alpha(\lambda))\tau} \operatorname{Re}(\alpha(\lambda))\tau
    + \mathcal{O}\left( \operatorname{Re}(\alpha(\lambda))^2 \right) \nonumber
\end{align*}
and $\Delta(\tau):=1$. In contrast, for $H\in\left( \frac{1}{2}, 1\right)$, we get
\begin{align*}
    \bullet \quad \qquad& -\overline{\alpha(\lambda)} \left( e^{\operatorname{Re}(\alpha(\lambda))\tau} - 1 \right) \int_0^\infty e^{\overline{\alpha(\lambda)}w} w^{2H-1} \text{d}w \\
    =& \operatorname{Re}(\alpha(\lambda))\tau (2H-1) \int_0^\infty e^{-i \operatorname{Im}(\alpha(\lambda^*)) w} w^{2H-2} \text{d}w + \mathcal{O}\left( \operatorname{Re}(\alpha(\lambda))^2 \right),\\
    \bullet \quad \qquad& -\alpha(\lambda) \left( e^{-\operatorname{Re}(\alpha(\lambda))\tau} - 1 \right) \int_\tau^\infty e^{\alpha(\lambda) w} w^{2H-1} \text{d}w \\
    =& \operatorname{Re}(\alpha(\lambda)) \left(- e^{i \operatorname{Im}(\alpha(\lambda^*)) \tau} \tau^{2H} - \tau (2H-1) \int_\tau^\infty e^{i \operatorname{Im}(\alpha(\lambda^*)) w} w^{2H-2} \text{d}w\right) + \mathcal{O}\left( \operatorname{Re}(\alpha(\lambda))^2 \right),\\
    \bullet \quad \qquad& \overline{\alpha(\lambda)} \left( e^{\operatorname{Re}(\alpha(\lambda))\tau} - 1 \right) \int_0^\tau e^{-\overline{\alpha(\lambda)} w} w^{2H-1} \text{d}w \\
    =& \operatorname{Re}(\alpha(\lambda))\left( -e^{i \operatorname{Im}(\alpha(\lambda^*)) \tau} \tau^{2H} + \tau (2H-1) \int_0^\tau e^{i \operatorname{Im}(\alpha(\lambda^*)) w} w^{2H-2} \text{d}w \right) + \mathcal{O}\left( \operatorname{Re}(\alpha(\lambda))^2 \right),\\
    \bullet \quad \qquad& \int_0^\tau \left( \alpha(\lambda) e^{\alpha(\lambda) w} + \overline{\alpha(\lambda)} e^{-\overline{\alpha(\lambda)} w} \right) w^{2H-1} \text{d}w \\
    =& 2 \operatorname{Re}(\alpha(\lambda)) \left( e^{i \operatorname{Im}(\alpha(\lambda^*)) \tau} \tau^{2H} - \tau (2H-1) \int_0^\tau e^{i \operatorname{Im}(\alpha(\lambda^*)) w} w^{2H-1} \text{d}w \right) + \mathcal{O}\left( \operatorname{Re}(\alpha(\lambda))^2 \right).
\end{align*}
This implies that
\begin{align*}
    &P(\alpha(\lambda),\alpha(\lambda),H,\tau) - e^{-i \operatorname{Im}(\alpha(\lambda))\tau} P(\alpha(\lambda),\alpha(\lambda),H,0) \nonumber\\
    =& 2 e^{-i \operatorname{Im}(\alpha(\lambda))\tau} \operatorname{Re}(\alpha(\lambda))\tau (2H-1) \nonumber
    \Bigg( \operatorname{Re} \left( \int_0^\infty e^{i \operatorname{Im}(\alpha(\lambda^*)) w} w^{2H-2} \text{d}w \right) \nonumber\\
    &~~~~~~~~~~~~~~ - \int_\tau^\infty e^{i \operatorname{Im}(\alpha(\lambda^*)) w} w^{2H-2} \text{d}w \\
    &~~~~~~~~~~~~~~ - \int_0^\tau e^{i \operatorname{Im}(\alpha(\lambda^*)) w} w^{2H-1} \text{d}w \Bigg) + \mathcal{O}\left( \operatorname{Re}(\alpha(\lambda))^2 \right) \nonumber
\end{align*}
and that
\begin{align*}
    \Delta(\tau):=& \left| \operatorname{Re}\left((-\alpha(\lambda^*))^{-2H+1}\right) \Gamma(2H-1) \right|^{-1} \\
    &\times \Bigg| \int_\tau^\infty e^{i \operatorname{Im}(\alpha(\lambda^*)) w} w^{2H-2} \text{d}w + \int_0^\tau e^{i \operatorname{Im}(\alpha(\lambda^*)) w} w^{2H-1} \text{d}w \\
    &- \operatorname{Re}\left( \int_0^\infty e^{i \operatorname{Im}(\alpha(\lambda^*)) w} w^{2H-2} \text{d}w \right)\Bigg|.
\end{align*}
We note that \eqref{eq:complex_property_1} implies $\arg\left(-\alpha(\lambda^*))^{-2H+1}\right)\in\left(-\frac{\pi}{2},\frac{\pi}{2}\right)$ and the well-posedness of $\Delta(\tau)$. In conclusion, \eqref{eq: autocov complex first step}, \eqref{eq: autocov complex second step} and \eqref{eq: autocov complex fifth step} imply \eqref{eq:autocor fast subsystem complex}. In fact, we have that $\Delta(0)\neq0$ for any $H\in(0,1)$. As a result, due to the analiticity in $\tau$ of the integrands in its definition, it is not null for almost every $\tau>0$.
\qed
\end{refproof}

The proof below, associated to Corollary \ref{cor:autocor fast system complex}, employs the scaling law of the time-asymptotic modal autocovariance in \eqref{eq:autocov fast subsystem complex spectral} and the rate of convergence described in Lemma \ref{lem:autocor fast system complex}. The first enables to define a main component to the time-asymptotic autocorrelation by excluding the terms of the time-asymptotic autocovariance along general modes that do not affect its scaling law. The second allows us to describe the limit and rate of convergence in the corollary through the study of the remaining items.

\begin{refproof}[Proof of Corollary \ref{cor:autocor fast system complex}]\label{proof: autocor fast system complex cor}
    In the case an eigenfunction of $M(\lambda^*)^\text{T}$ is in $\text{Ker}\left(\Sigma^T\right)$, the proof follows directly from Lemma \ref{lem:autocor fast system complex}. Consequently, we omit this scenario in the analysis to follow. Under such assumptions, the observables $AC_\infty(\tau)\left[\mathbf{e}_1^*(\lambda)\right]$ and $AC_\infty(\tau)\left[\mathbf{e}_2^*(\lambda)\right]$ are well-posed for $\lambda< \lambda^*$ by Theorem \ref{thm:autocov fast system complex} and extended in $\lambda=\lambda^*$ by Lemma \ref{lem:autocor fast system complex}. \\
    Throughout the proof we indicate the main component of the time-asymptotic autocorrelation function with
    \begin{align*}
        MC(\tau)[\mathbf{v}]:=\frac{ \left|\left\langle \mathbf{v}, \mathbf{e}_1(\lambda) \right\rangle\right|^2 V_\infty(\tau)\left[\mathbf{e}_1^*(\lambda),\mathbf{e}_1^*(\lambda)\right]
        + \left|\left\langle \mathbf{v}, \mathbf{e}_2(\lambda) \right\rangle\right|^2 V_\infty(\tau)\left[\mathbf{e}_2^*(\lambda),\mathbf{e}_2^*(\lambda)\right]}{\left|\left\langle \mathbf{v}, \mathbf{e}_1(\lambda) \right\rangle\right|^2 V_\infty(0)\left[\mathbf{e}_1^*(\lambda),\mathbf{e}_1^*(\lambda)\right]
        + \left|\left\langle \mathbf{v}, \mathbf{e}_2(\lambda) \right\rangle\right|^2 V_\infty(0)\left[\mathbf{e}_2^*(\lambda),\mathbf{e}_2^*(\lambda)\right]}
    \end{align*}
    for any $\lambda\leq\lambda^*$. From Theorem \ref{thm:autocov fast system complex} and \eqref{eq:I_ran_out_of_names}, we obtain that
        \begin{align*}
        &\left|AC_\infty(\tau)\left[\mathbf{v}\right]
            - \left( c_1(\lambda^*) e^{-i B(\lambda)\tau}
            + c_2(\lambda^*) e^{i B(\lambda)\tau} \right)\right| \\
        \leq& \left| MC(\tau)[\mathbf{v}] -\left( c_1(\lambda^*) e^{-i B(\lambda)\tau}
            + c_2(\lambda^*) e^{i B(\lambda)\tau} \right) \right|\\
        &+ \cO\left(\underset{s\in\{0,\tau\}}{\max} \left\{ \left| \frac{ \overline{\left\langle \mathbf{v}, \mathbf{e}_1(\lambda) \right\rangle} \left\langle \mathbf{v}, \mathbf{e}_2(\lambda) \right\rangle V_\infty(s)\left[\mathbf{e}_1^*(\lambda),\mathbf{e}_2^*(\lambda)\right]}{\left|\left\langle \mathbf{v}, \mathbf{e}_1(\lambda) \right\rangle\right|^2 V_\infty(s)\left[\mathbf{e}_1^*(\lambda),\mathbf{e}_1^*(\lambda)\right]
        + \left|\left\langle \mathbf{v}, \mathbf{e}_2(\lambda) \right\rangle\right|^2 V_\infty(s)\left[\mathbf{e}_2^*(\lambda),\mathbf{e}_2^*(\lambda)\right]} \right| \right\} \right),
    \end{align*}
    in which we use the equality
    \begin{align*}
        V_\infty(s)\left[\mathbf{e}_1^*(\lambda),\mathbf{e}_2^*(\lambda)\right]
        = \overline{V_\infty(s)\left[\mathbf{e}_2^*(\lambda),\mathbf{e}_1^*(\lambda)\right]}
    \end{align*}
    for any $s\geq 0$ and $\lambda\leq\lambda^*$. Lemma \ref{lem:autocov fast system complex lemma} and the construction of $c_1$ and $c_2$ imply also that
    \begin{align*}
        MC(\tau)[\mathbf{v}]= c_1(\lambda) AC_\infty(\tau)\left[\mathbf{e}_1^*(\lambda)\right] + c_2(\lambda) AC_\infty(\tau)\left[\mathbf{e}_2^*(\lambda)\right].
    \end{align*}
    Then, it follows from Lemma \ref{lem:autocor fast system complex} that
    \begin{align*}
        &\left| MC(\tau)[\mathbf{v}] -\left( c_1(\lambda^*) e^{-i B(\lambda)\tau}
            + c_2(\lambda^*) e^{i B(\lambda)\tau} \right) \right|\\
        \leq& \left| c_1(\lambda) AC_\infty(\tau)\left[\mathbf{e}_1^*(\lambda)\right]
            - c_1(\lambda^*) e^{-i B(\lambda) \tau} \right| \\
        &+ \left| c_2(\lambda) AC_\infty(\tau)\left[\mathbf{e}_2^*(\lambda)\right]
            - c_2(\lambda^*) e^{i B(\lambda) \tau} \right|\\
        =& \mathcal{O}\left(|A(\lambda)|\right) + \mathcal{O}\left( A(\lambda)^2 \right) + \mathcal{O}\left( \left| c_1(\lambda) - c_1(\lambda^*) \right| \right).
    \end{align*}
    From the definition of $c_1(\lambda)$ for any $\lambda\leq\lambda^*$ and from the fact that 
    \begin{align*}
        \left| \left\langle \mathbf{e}_j^*(\lambda), Q \mathbf{e}_j^*(\lambda) \right\rangle - \left\langle \mathbf{e}_j^*(\lambda^*), Q \mathbf{e}_j^*(\lambda^*) \right\rangle \right| = \cO\left(\left|\left| \mathbf{e}_j^*(\lambda) - \mathbf{e}_j^*(\lambda^*) \right|\right|\right)
    \end{align*}
    for any $j\in\{1,2\}$, it follows that
    \begin{align*}
        \left| c_1(\lambda) - c_1(\lambda^*) \right| = \cO\left(\left|\left| \mathbf{e}_1^*(\lambda) - \mathbf{e}_1^*(\lambda^*) \right|\right|\right) + \cO\left(\left|\left| \mathbf{e}_2^*(\lambda) - \mathbf{e}_2^*(\lambda^*) \right|\right|\right).
    \end{align*}
    The proof is implied by Theorem \ref{thm:autocov fast system complex} and Table \ref{tab: tiny}, which state that
    \begin{align*}
        &\left| \frac{ \overline{\left\langle \mathbf{v}, \mathbf{e}_1(\lambda) \right\rangle} \left\langle \mathbf{v}, \mathbf{e}_2(\lambda) \right\rangle V_\infty(s)\left[\mathbf{e}_1^*(\lambda),\mathbf{e}_2^*(\lambda)\right]}{\left|\left\langle \mathbf{v}, \mathbf{e}_1(\lambda) \right\rangle\right|^2 V_\infty(s)\left[\mathbf{e}_1^*(\lambda),\mathbf{e}_1^*(\lambda)\right]
        + \left|\left\langle \mathbf{v}, \mathbf{e}_2(\lambda) \right\rangle\right|^2 V_\infty(s)\left[\mathbf{e}_2^*(\lambda),\mathbf{e}_2^*(\lambda)\right]} \right| = \mathcal{O}\left(|A(\lambda)|\right),
    \end{align*}
    for any fixed $s\geq 0$ and by the fact that $|MC(\tau)[\mathbf{v}]|=\mathcal{O}(1)$.
    \qed
\end{refproof}

\end{document}